\newif\ifjournal

\journalfalse

\ifjournal

\else

\documentclass[a4paper,11pt]{amsart}

\setcounter{tocdepth}{1}

\fi

\usepackage{amsmath, amssymb}

\usepackage{graphics}
\usepackage{mathrsfs}
\usepackage[utf8]{inputenc}
\usepackage[english]{babel}
\usepackage[utf8]{inputenc}
\usepackage[cmtip,all]{xy}
\usepackage{graphicx}
\usepackage{etoolbox}
% \makeatletter
% \patchcmd{\@endtheorem}{\@endpefalse}{}{}{}
% \patchcmd{\endproof}{\@endpefalse}{}{}{}
% \makeatother
\usepackage{a4wide}
\usepackage{float}
\usepackage{verbatim}
\usepackage{enumerate}
\usepackage[pdfborder={1 1 0.3},bookmarks=false]{hyperref}

% fine tuning
% \parindent 5mm
% \setcounter{tocdepth}{1}
% \setlength{\headheight}{15pt}
\usepackage{amsthm}
\newtheorem{theorem}{Theorem}[section]

\newtheorem{lemma}[theorem]{Lemma}
\newtheorem{proposition}[theorem]{Proposition}
\newtheorem{corollary}[theorem]{Corollary}

\theoremstyle{definition}

\newtheorem{definition}[theorem]{Definition}

\newcommand\frakfamily{\usefont{U}{yfrak}{m}{n}}
\DeclareTextFontCommand{\textfrak}{\frakfamily}

\newcommand{\frakp}{\mathfrak{p}}
\newcommand{\frakq}{\mathfrak{q}}

\DeclareMathOperator{\Aut}{Aut}

\DeclareMathOperator{\Hom}{Hom}
\DeclareMathOperator{\End}{End}

\DeclareMathOperator{\Gal}{Gal}
\DeclareMathOperator{\Spec}{Spec}

\DeclareMathOperator{\Ext}{Ext}
\DeclareMathOperator{\Ind}{Ind}

\DeclareMathOperator{\Gl}{GL}

\DeclareMathOperator{\Tatel}{T_{\ell}}

\newcommand\Z{\ensuremath{\mathbf{Z}}}
\newcommand\N{\ensuremath{\mathbf{N}}}
\newcommand\R{\ensuremath{\mathbf{R}}}

\newcommand\Q{\ensuremath{\mathbf{Q}}}
\newcommand\F{\ensuremath{\mathbf{F}}}

\newcommand{\calA}{\mathcal{A}}
\newcommand{\calB}{\mathcal{B}}

\newcommand{\calD}{\mathcal{D}}
\newcommand{\calE}{\mathcal{E}}
\newcommand{\calF}{\mathcal{F}}

\newcommand{\calJ}{\mathcal{J}}
\newcommand{\calK}{\mathcal{K}}
\newcommand{\calL}{\mathcal{L}}

\newcommand{\pibar}{{\bar{\pi}}}
\newcommand{\Alg}[1]{\overline{#1}}
\newcommand{\bad}{\pi}
\newcommand{\badrat}{p}
\newcommand{\badover}{\underline{\pi}}
\newcommand{\frakbad}{\frakp}
\newcommand{\good}{\ell}
\newcommand{\gooda}{\ell_{1}}
\newcommand{\goodb}{\ell_{2}}
\newcommand{\frakgood}{\underline{\good}}
\newcommand{\gsc}{J}

\newcommand{\fpbar}{\Alg{\F}_{p}}

\newcommand{\rhobar}{\overline{\rho}}
\newcommand{\shg}{\text{SH} _{16}}

\newcommand{\catb}{{\mathscr{C}_3}}
\newcommand{\catd}{{\mathscr{C}_2}}
\newcommand{\catf}{{\mathscr{C}_7}}
\newcommand{\cate}{{\mathscr{D}_7}}
\newcommand{\catc}{\mathscr{C}}
\newcommand{\catgen}{\mathscr{C}_\good(\bad,i)}

\newcommand{\maxextcatc}{T_\catc}
\newcommand{\maxextcatb}{T_\catb}
\newcommand{\maxextcatd}{T_\catd}

\newcommand{\locfield}{\calK}
\newcommand{\locfieldext}{\calL}
\newcommand{\locfieldalt}{\calF}

\DeclareMathOperator{\Tategood}{T_{\good}}
\newcommand{\Tate}[1]{T_{#1}}
\newcommand*{\longhookrightarrow}{\ensuremath{\lhook\joinrel\relbar\joinrel\rightarrow}}

\newcommand{\extt}[3]{\scalebox{0.9}{$\Ext^1_{#1}(#2,#3)$}}

\begin{document}
\ifjournal

\else

\title{Modularity of abelian varieties over $\Q$ with bad reduction in one prime only}
\author{Hendrik Verhoek}% etc

\begin{abstract}
We show that certain abelian varieties over $\Q$ with bad reduction
at one prime only are modular
by using methods based on the tables of Odlyzko and class field theory.
\end{abstract}

%
%\date{Received: date / Revised version: date}
% The correct dates will be entered by the editor
%
\maketitle
\tableofcontents

\fi

\section{Introduction} \label{sec:intro}

In this article we consider abelian varieties over $\Q$ with bad reduction in one prime only.
Fontaine (\cite{Fontaine:1985}) and Abrashkin (\cite{Abrashkin:1987})
showed that abelian varieties over $\Q$  (and some other number fields with small discriminant) 
with good reduction everywhere do not exist.
Abelian varieties over $\Q$ with bad reduction in one prime only were then
considered in \cite{Schoof:2005} and \cite{BrumerKramer:2001}.
The following is a summary of results proven in
\cite{Schoof:2005}, \cite{Schoof:2009} and \cite{Schoof:2011}:

\begin{itemize}
\item For $\badrat \in \{2,3,5,7,13\}$, there do not exist non-zero semi-stable
abelian varieties over $\Q$ with good reduction outside $\badrat$.

\item For $\badrat \in \{2,3,5\}$, there do not exist non-zero abelian varieties over
$\Q$ with good reduction outside $\badrat$ that become semi-stable over an at most tamely ramified extension
at $p$ of $\Q$.

\item Every semi-stable abelian variety over $\Q$ with good reduction outside $11$ is isogenous to a product of copies
of $J_0(11)$.

\item Every semi-stable abelian variety over $\Q$ with good reduction outside $23$ is isogenous to a product of copies of
$J_0(23)$.
\end{itemize}

In any of these results, 
every such non-zero abelian variety is \emph{modular}:
Let $f$ be a newform of weight $2$ and level $N$.
Then one can associate to $f$ an abelian variety $A_f$ over $\Q$
and hence a representation $\rho_{A_f, \good} : G_\Q \rightarrow \Aut(\Tatel(A_f))$.
An abelian variety that is isogenous to a product of abelian varieties of the form
$A_f$ is called a \emph{a (classical) modular abelian variety}.

We want to extend these existence results by proving the modularity of 
any abelian variety $A$ over $\Q$ with good reduction outside a fixed prime $\badrat$
that satisfies some ramification condition on the Galois representation
$\rho_{A,\good} : G_\Q \rightarrow \Aut(\Tatel(A))$ at $\badrat$.
The ramification condition on $\rho_{A,\good}$ will replace the ramification condition of semi-stability at $\badrat$.

The above results indicate that we have
to consider abelian varieties that become semi-stable over an extension that is not necessarily at most tamely ramified
at either $2,3,5,7$ or $13$.
Even though such an extension may not be at most tamely ramified, 
the methods we use to prove modularity of certain abelian varieties over $\Q$
require that we do bound the ramification at $\badrat$ in some way.
The next definition provides a method to do this in a fairly canonical way using ramification groups with upper numbering
(see Section \ref{sec:ramificationgroups}):
 
\begin{definition}
Let $i \in \R_{\geq -1}$ and let $G_\locfield$ be the Galois group of an extension
$\locfield$ of $\Q_\badrat$ inside $\Alg{\Q}_\badrat$
such that $\locfield/\Q_\badrat$ is finitely ramified.
Let $I_\locfield$ be the inertia subgroup of $G_\locfield$.
A Galois representation $\rho$ of $G_\locfield$ is said to be \emph{ramified of level $i$} if
for all $u \in \R$ with $u > i$ the ramification group with upper numbering $I_\locfield^{u}$ is contained in $\ker(\rho)$.
\end{definition}

We define the level of ramification for an abelian variety over $K$ using the natural action
of the absolute Galois group $G_K$ on its $\good$-adic Tate module $T_{\good}(A)$:

\begin{definition} \label{def:Tategoodaction}
Let $i \in \R_{\geq -1}$. Let $A$ be an abelian variety over $\locfield$ and
let $\good$ be a rational prime different than the residue field characteristic of $\locfield$.
Then $A$ is \emph{ramified of level $i$} if $\rho_{A,\good} : G_\locfield \rightarrow \Aut(T_{\good}(A))$
is ramified of level $i$.
\end{definition}

Just as the conductor of an abelian variety $A$ does not depend on the choice of $\good$ in the definition of
the representation $\rho_{A,\good}$, 
so the level of ramification of $A$ does not depend on the choice of $\good$.
We will show this in Section \ref{sec:levelram}.

In Section \ref{sec:modforms}, 
we find all newforms $f$ with level $N$ a power of $\badrat$, 
such that the level of ramification of $A_f$ satisfies a certain bound.
This bound is such that we are still able to use the tables of Odlyzko \cite{Martinet:1982}.
For such newforms $f$, these tables then enable us to prove the modularity of an abelian variety over
$\Q$ with good reduction outside $\badrat$ and with level of ramification at $p$ at most 
the level of ramification of $A_f$ at $\badrat$.
These newforms turn out to have level $27,32$ or $49$.
Using this information, we then prove the
modularity of abelian varieties over $\Q$ with good reduction
outside either $2$, $3$ or $7$ that have the same level of ramification as the newforms of level
resp. $32$, $27$ or $49$:

\begin{theorem} \label{maintheorem}
Let $A$ be an abelian variety over $\Q$.
\begin{itemize}
\item If $A$ has good reduction outside $2$
and is ramified of level $\frac{3}{2}$ at $2$,
then under the assumption of the generalized Riemann hypothesis,
$A$ is isogenous to a product of $J_{0}(32)$.

\item If $A$ has good reduction outside $3$
and is ramified of level $\frac{1}{2}$ at $3$,
then $A$ is isogenous to a product of $J_{0}(27)$.

\item If $A$ has good reduction outside $7$
and is ramified of level $0$ at $7$,
then under the assumption of the generalized Riemann hypothesis,
$A$ is isogenous to a product of copies of $J_{0}(49)$ and
a product of copies of a certain $2$-dimensional modular abelian variety $B$.
\end{itemize}

\end{theorem}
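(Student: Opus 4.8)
I would treat the three assertions in parallel. Write $(\badrat,i)$ for one of the pairs $(2,\tfrac32)$, $(3,\tfrac12)$, $(7,0)$, let $\calM$ be the corresponding set of ``target'' abelian varieties — $\{J_{0}(32)\}$, $\{J_{0}(27)\}$ or $\{J_{0}(49),B\}$ — and assume $A\neq 0$. The plan is to fix an auxiliary prime $\good\neq\badrat$ (in practice the prime, or primes, attached to the categories of Section~\ref{sec:modforms}), to pass from $A$ to its $\good$-divisible group $\calG=A[\good^{\infty}]$, to pin down the representation $\rho_{A,\good}\colon G_{\Q}\to\Aut(\Tate{\good}(A))$, and to conclude with Faltings' isogeny theorem.

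First I would place $\calG$ in a suitable category $\catc$ of $\good$-divisible groups over $\basering$ that are ramified of level at most $i$ at $\badrat$. That $\calG$ belongs to $\catc$ is clear: $A$ has good reduction at $\good$, so $\calG$ is even a $\good$-divisible group over $\Z_{\good}$, and the condition at $\badrat$ is exactly the hypothesis that $A$ be ramified of level $i$, which by Section~\ref{sec:levelram} does not depend on the choice of $\good$.

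The heart of the matter is a structure theorem for $\catc$: every object of $\catc$ is, up to isogeny, an iterated extension of copies of $\Q_{\good}/\Z_{\good}$, $\mu_{\good^{\infty}}$ and $M[\good^{\infty}]$ for $M\in\calM$, and the relevant $\Ext^{1}$-groups inside $\catc$ are tightly constrained. To prove this I would argue as Fontaine, Abrashkin and Schoof: a simple object of $\catc$ killed by $\good$ cuts out a number field unramified outside $\good\badrat$; its ramification at $\good$ is bounded because the associated finite flat group scheme over $\Z_{\good}$ has small different (\cite{Fontaine:1985}), and its ramification at $\badrat$ is bounded by the level condition, so the field has small root discriminant. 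The discriminant bounds of Odlyzko \cite{Martinet:1982} — applied unconditionally when $\badrat=3$, and under the generalized Riemann hypothesis when $\badrat\in\{2,7\}$, where the level $i$ is too large for the unconditional tables to suffice — then force this field, and hence the simple object, into the short list above; the $\Ext^{1}$ computations are carried out by Kummer theory and the (near-)triviality of suitable ray class groups. Granting the structure theorem, applying it to $\calG$ and passing to Tate modules shows that the Jordan--Hölder constituents of $V_{\good}(A):=\Tate{\good}(A)\otimes_{\Z_{\good}}\Q_{\good}$ lie among $\Q_{\good}$, $\Q_{\good}(1)$ and the constituents of the $V_{\good}(M)$ with $M\in\calM$.

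Finally, $V_{\good}(A)$ is pure of weight one — its Frobenius eigenvalues at primes $q\nmid\good\badrat$ have absolute value $q^{1/2}$ — so neither $\Q_{\good}$ nor $\Q_{\good}(1)$ occurs as a constituent, and the constituents of $V_{\good}(A)$ lie among those of $\bigoplus_{M\in\calM}V_{\good}(M)$. By Faltings' isogeny theorem $V_{\good}(A)$ is semisimple and $\Hom(A,M)\otimes\Q_{\good}\xrightarrow{\ \sim\ }\Hom_{G_{\Q}}(V_{\good}(A),V_{\good}(M))$ for every $M$; since the members of $\calM$ are simple abelian varieties, non-isogenous simple abelian varieties share no $\good$-adic constituent, and $A$ is isogenous to a product of simple abelian varieties, it follows that $A\sim\prod_{M\in\calM}M^{n_{M}}$ for suitable integers $n_{M}\geq 0$ — which is the assertion, the case $\badrat=7$ giving a product of copies of $J_{0}(49)$ times a product of copies of $B$. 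I expect the structure theorem to be the real obstacle: one must rule out every exotic simple object of $\catc$ and every nontrivial extension not already visible inside $\calM$, and it is precisely in order to keep the root discriminants of the fields cut out by $\calG[\good]$ below the Odlyzko thresholds that the narrow levels $\tfrac32,\tfrac12,0$ — and the GRH in two of the three cases — are forced on us; the class field theory bounding the $\Ext^{1}$ groups is the technical core of the whole argument.
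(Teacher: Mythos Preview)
Your outline follows the same architecture as the paper --- classify the simple finite flat group schemes in a ramification-constrained category via Fontaine's bounds and the Odlyzko tables, control the $\Ext^{1}$ groups by class field theory, and finish with Faltings --- and you correctly identify where GRH enters. The real divergence is in how you dispose of the \'etale and multiplicative pieces.

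You propose to let $\Q_{\good}$ and $\Q_{\good}(1)$ survive into the structure theorem for $\calG$ and then kill them by a weight argument on $V_{\good}(A)$. The paper does the opposite: it eliminates \'etale and multiplicative subquotients \emph{at the level of $\calA[\good]$}, using the criterion of Theorem~\ref{cat-thm:abvar_torsionfilter} for $\badrat\in\{2,3\}$ (which needs the specific vanishing $\Ext^{1}(T,E)=0$ for $T$ simple non-\'etale, $E$ simple \'etale, together with a cyclicity condition on a tame ray class field), and a bounded-rank filtration argument (Lemma~\ref{j049-lem:isogeny}) for $\badrat=7$. Only after $\calA[\good]$ is known to be filtered purely by copies of $\calE[\good]$ does the paper invoke Schoof's d\'evissage (\cite[Theorem~8.3]{Schoof:2005}, or the variant Theorem~\ref{j049-thm:grpschm_filtration}) to pass to the $\good$-divisible group.

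This difference exposes a gap in your sketch. Your ``structure theorem'' asserts that every $\good$-divisible group in $\catc$ is, up to isogeny, built from $\Q_{\good}/\Z_{\good}$, $\mu_{\good^{\infty}}$, and the $M[\good^{\infty}]$; granting that, your weight argument and Faltings do finish the job. But knowing the simple \emph{finite} group schemes and their $\Ext^{1}$ groups does not by itself yield this: it tells you the Jordan--H\"older factors of $\calA[\good]$, not the irreducible constituents of $V_{\good}(A)$. A priori there could be an irreducible $\Q_{\good}[G_{\Q}]$-constituent $W$ of $V_{\good}(A)$ whose mod-$\good$ semisimplification is $\overline{\rho}_{M,\good}$ but with $W\not\simeq V_{\good}(M)$. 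Ruling this out is exactly what the combination ``no \'etale/multiplicative subquotients in $\calA[\good]$'' $+$ ``$\Ext^{1}(\calE[\good],\calE[\good])$ is generated by $\calE[\good^{2}]$'' $+$ Schoof's theorem accomplishes, by producing an actual isomorphism $\calA[\good^{\infty}]\simeq \calE[\good^{\infty}]^{g}$ of $\good$-divisible groups. Your weight argument is correct and elegant once the structure theorem is in hand, but it does not let you bypass the integral d\'evissage that makes that theorem true --- and once that d\'evissage is done, the weight argument is no longer needed.
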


We will show that these modularity results are optimal using methods 
based on the tables of Odlyzko and class field theory.
As corollaries of Theorem \ref{maintheorem}, we obtain:

\begin{theorem} \label{thm:conductorj027}
Every abelian variety over $\Q$ of conductor $27$ is isogenous to $J_{0}(27)$
\end{theorem}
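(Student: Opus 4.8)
The plan is to deduce the statement from the second item of Theorem~\ref{maintheorem} by verifying its hypotheses for an abelian variety $A/\Q$ of conductor $27$, and then to pin down the number of isogeny factors by a conductor comparison. Since $27=3^{3}$ is a power of $3$, the primes of bad reduction of $A$ lie in $\{3\}$, so $A$ has good reduction outside $3$ and its conductor exponent $f_{3}$ at $3$ equals $3$. The one substantive point is to show that $A$ is \emph{ramified of level $\tfrac12$ at $3$}; granting this, Theorem~\ref{maintheorem} (whose assertion at the prime $3$ is unconditional) gives that $A$ is isogenous to $J_{0}(27)^{n}$ for some $n\ge 1$, and a conductor count then finishes the proof.

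To prove that $A$ is ramified of level $\tfrac12$ at $3$ I would argue locally, with $V=V_{\ell}(A)$ for a fixed prime $\ell\neq 3$. As $\ell\neq 3$, the $\ell$-adic cyclotomic character, and hence $\det\rho_{A,\ell}$, is unramified at $3$; in particular $\det\rho_{A,\ell}$ is trivial on the inertia group $I_{3}$. The Artin conductor exponent at $3$ is additive along Jordan--H\"older constituents of $V$ (its Swan part is additive and its tame part can only grow in an extension), so $f_{3}=3$ bounds the sum of the conductor exponents of the $G_{\Q_{3}}$-constituents of $V$ by $3$. Let $W$ be a constituent that is wildly ramified; being irreducible and ramified, $W$ has no inertia invariants, so its tame part is $\dim W$ and its Swan part $\mathrm{Sw}(W)$ is a positive integer, whence $f_{3}(W)=\dim W+\mathrm{Sw}(W)$ and the only options with $f_{3}(W)\le 3$ are $\dim W=1$ with $\mathrm{Sw}(W)\in\{1,2\}$, or $\dim W=2$ with $\mathrm{Sw}(W)=1$. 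If $\dim W=1$, then $W$ is a wildly ramified character of break $\mathrm{Sw}(W)\ge 1$; but since $\det\rho_{A,\ell}|_{I_{3}}$ is trivial, the product of the determinants of the remaining constituents is the wildly ramified character $W^{-1}$ on $I_{3}$, forcing the remaining conductor exponents to sum to at least $f_{3}(W)$, hence $f_{3}\ge 2f_{3}(W)\ge 4$ --- impossible. If $\dim W=2$, then $\rho_{A,\ell}(P)$, where $P=I_{3}^{0+}$ is the wild inertia, is a finite $3$-group in $\Gl_{2}$, hence abelian (a $3$-subgroup of $\mathrm{PGL}_{2}$ is cyclic), so $W|_{P}=\xi_{1}\oplus\xi_{2}$ with $\xi_{1},\xi_{2}$ characters of $P$, not both trivial, and $\mathrm{Sw}(\xi_{1})+\mathrm{Sw}(\xi_{2})=1$. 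Now $\det W$ is a character of $G_{\Q_{3}}$, so $\mathrm{Sw}(\det W)\in\Z_{\ge 0}$ and $\mathrm{Sw}(\det W)=\mathrm{Sw}(\xi_{1}\xi_{2})\le\mathrm{Sw}(\xi_{1})+\mathrm{Sw}(\xi_{2})=1$; if it equalled $1$ then $\det W$ would be wildly ramified and, by the same argument as before, push $f_{3}$ up to at least $4$. Hence $\mathrm{Sw}(\det W)=0$, so $\xi_{2}=\xi_{1}^{-1}$ on $P$, and since inverse characters have equal Swan conductor, $\mathrm{Sw}(\xi_{1})=\mathrm{Sw}(\xi_{2})=\tfrac12$, so the break of $W$ is $\tfrac12$. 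Thus every wildly ramified constituent of $V$ at $3$ has break $\le\tfrac12$, which means $\rho_{A,\ell}(I_{3}^{u})$ acts trivially on each Jordan--H\"older constituent of $V$ for $u>\tfrac12$; as $\rho_{A,\ell}(I_{3}^{u})$ is a finite group for $u>0$, it then acts trivially on $V$ itself, i.e.\ $A$ is ramified of level $\tfrac12$ at $3$.

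To determine $n$, note that the modular curve $X_{0}(27)$ has genus $1$, so $J_{0}(27)=X_{0}(27)$ is an elliptic curve; it has complex multiplication by $\Z[\zeta_{3}]$ and conductor $27$. Since the conductor of an abelian variety is an isogeny invariant and is multiplicative in products, $J_{0}(27)^{n}$ has conductor $27^{n}$; comparing with the conductor $27$ of $A$ forces $n=1$, so $A$ is isogenous to $J_{0}(27)$.

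The step I expect to be the main obstacle is the local ramification analysis of the middle paragraph: making rigorous, via additivity of the Artin conductor, integrality of Swan conductors (the form of the Hasse--Arf theorem that excludes a break equal to $1$ --- a value still permitted by Fontaine's general bound on ramification at $3$), and triviality of $\det\rho_{A,\ell}|_{I_{3}}$, that conductor exponent $3$ at $3$ forces the break to be $\tfrac12$. Much of this is presumably already isolated in Section~\ref{sec:levelram}.
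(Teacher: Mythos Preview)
Your argument is correct and reaches the same conclusion as the paper, but by a genuinely different route.

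The paper's proof works entirely with the $2$-torsion and the decomposition $3=2u_A+t_A+\delta_A$ of the conductor exponent into its unipotent, toric, and wild parts. After eliminating $u_A=0$ and $(u_A,t_A,\delta_A)=(1,1,0)$ by appealing to Theorem~\ref{j027-thm:main}, it is left with $\delta_A=1$. It then invokes a small group-theoretic lemma (Lemma~\ref{j027-lem:3actson2}): a nontrivial $3$-group acting on $\F_2^{2g}$ has fixed space of even codimension, hence codimension $\ge 2$. Plugging this into Serre's formula for $\delta_A$ in terms of the lower ramification filtration of $\Q(A[2])/\Q$ at $3$ gives $\sum_{i\ge 1,\,G_i\ne 1}\lvert G_i\rvert/\lvert G_0\rvert\le\tfrac12$, which is exactly $u_{\Q(A[2])/\Q}(3)\le\tfrac12$.

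Your approach instead analyses the Jordan--H\"older constituents of $V_\ell(A)$ for an auxiliary $\ell\ne 3$, using three ingredients: subadditivity of the Artin conductor on constituents, integrality of Swan conductors (Hasse--Arf), and the constraint $\det\rho_{A,\ell}|_{I_3}=1$. The determinant constraint is what forces, in the unique $2$-dimensional wildly ramified constituent, the two wild characters of $P$ to be inverse to each other and hence to share the break $\tfrac12$. This sidesteps the preliminary case analysis of $u_A,t_A,\delta_A$ and never touches the mod-$2$ picture. The price is that you rely on somewhat heavier local machinery (Hasse--Arf, break decompositions, the fact that $\rho_{A,\ell}(P)$ is finite --- which follows from Grothendieck's semistable reduction theorem but which you should state explicitly), whereas the paper's argument is more elementary once Lemma~\ref{j027-lem:3actson2} is isolated. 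Two small points worth tightening: you should note that $\dim W\ge 3$ is excluded since then $f_3(W)\ge 3+1=4$; and in the final step, the passage from ``trivial on every constituent'' to ``trivial on $V$'' uses that a unipotent element of finite order in characteristic $0$ is the identity, which deserves a sentence.
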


\begin{theorem} \label{thm:conductorj049}
Every abelian variety over $\Q$ of conductor $49$ is isogenous to $J_{0}(49)$.
\end{theorem}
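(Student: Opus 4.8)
The plan is to obtain this as an application of the third bullet of Theorem \ref{maintheorem}. If $A/\Q$ has conductor $49 = 7^{2}$, then by the definition of the conductor $A$ has good reduction at every prime $\ell \neq 7$ and the exponent of its conductor at $7$ is $f_{7}(A) = 2$. So the one thing to establish before invoking Theorem \ref{maintheorem} is that $A$ is ramified of level $0$ at $7$, i.e. that $\rho_{A,\good}|_{G_{\Q_{7}}}$ is tamely ramified (equivalently, the wild inertia $P_{7}$ lies in $\ker\rho_{A,\good}$); after that a conductor count finishes the argument.

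First I would prove the tameness at $7$. Write $f_{7}(A) = \epsilon_{7} + \delta_{7}$, where $\delta_{7}$ is the Swan conductor at $7$ and $\epsilon_{7} = 2\dim A - \dim_{\Q_{\good}} V^{I_{7}}$ is the tame part, with $V := \Tategood(A)\otimes_{\Z_{\good}}\Q_{\good}$. Bad reduction at $7$ makes $\rho_{A,\good}$ ramified there (N\'eron--Ogg--Shafarevich), so $\epsilon_{7} \geq 1$. If $\delta_{7} \geq 1$, then $\epsilon_{7}+\delta_{7}=2$ forces $\epsilon_{7} = \delta_{7} = 1$, hence $\dim V^{I_{7}} = 2\dim A - 1$ and, since $V^{I_{7}} \subseteq V^{P_{7}}$, also $\dim V^{P_{7}} \geq 2\dim A - 1$. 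But a polarization of $A$ over $\Q$ gives, via the Weil pairing, a perfect alternating $G_{\Q}$-equivariant pairing $V \times V \to \Q_{\good}(1)$; as $\good \neq 7$ the group $I_{7}$ acts trivially on $\Q_{\good}(1)$, so this pairing is $I_{7}$-invariant, and (again using $\good\neq 7$) the $P_{7}$-action on $V$ is semisimple, so $V = V^{P_{7}} \oplus W$ with $W$ the sum of the non-trivial $P_{7}$-isotypic parts; using the $I_{7}$-invariance one checks that $V^{P_{7}}$ and $W$ are orthogonal complements, so the pairing restricts to a perfect alternating form on $W$ and $\dim W = \dim(V/V^{P_{7}})$ is even. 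Since $\delta_{7} \geq 1$ forces $W \neq 0$, we get $\dim(V/V^{P_{7}}) \geq 2$, contradicting $\dim V^{P_{7}} \geq 2\dim A - 1$. Hence $\delta_{7} = 0$ and $A$ is ramified of level $0$ at $7$. (Should Section \ref{sec:levelram} already record that an abelian variety over $\Q_{p}$ with $p$ odd and $f_{p}\le 2$ is ramified of level $0$, this paragraph collapses to a citation.)

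The third bullet of Theorem \ref{maintheorem} now applies (conditionally on the generalized Riemann hypothesis, as in that bullet): $A$ is isogenous to $J_{0}(49)^{a} \times B^{b}$ for some integers $a,b \geq 0$, with $B$ the $2$-dimensional modular abelian variety identified in Section \ref{sec:modforms}. Since $X_{0}(49)$ has genus $1$, $J_{0}(49)$ is an elliptic curve of conductor $49$, so $f_{7}(J_{0}(49)) = 2$; and $B = A_{f}$ for a newform $f$ of level $49$ with non-trivial nebentypus, so $B$ has conductor $49^{2} = 7^{4}$ and $f_{7}(B) = 4 \geq 3$. As the conductor exponent at $7$ is additive over products, $2 = f_{7}(A) = 2a + f_{7}(B)\,b$ with $a,b \in \Z_{\geq 0}$ and $f_{7}(B)\geq 3$ forces $b = 0$ and then $a = 1$. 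Therefore $A$ is isogenous to $J_{0}(49)$.

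The one point requiring care beyond Theorem \ref{maintheorem} is the parity argument of the second paragraph --- that a nonzero Swan conductor of an abelian variety at an odd prime is incompatible with conductor exponent $2$ --- which is precisely the sort of statement Section \ref{sec:levelram} is built to provide. It is also what makes conductor $49$ especially clean: unlike conductor $27$, where $f_{3} = 3$ leaves room for a wild part of Swan conductor $1$, the equation $\epsilon_{7} + \delta_{7} = 2$ together with $\epsilon_{7} \geq 1$ and the evenness of $\dim(V/V^{P_{7}})$ forces $\delta_{7} = 0$ outright.
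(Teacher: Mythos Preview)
Your proof is correct, and your final conductor count ($2=2a+f_7(B)\,b$ forcing $b=0$, $a=1$) is in fact more explicit than the paper's, which simply says ``apply Theorem~\ref{j049-thm:main} and conclude that $A$ is isogenous to $J_0(49)$'' without spelling out why the factor $B$ cannot occur.

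The one substantive difference is in how you obtain $\delta_7=0$. The paper writes the conductor exponent as $2=2u_A+t_A+\delta_A$ in terms of the N\'eron special fibre and then invokes \cite[Theorem~1.1]{Schoof:2005}: there are no non-zero \emph{semi-stable} abelian varieties over $\Q$ with good reduction outside $7$, so $u_A\ge 1$, whence $2u_A\ge 2$ and $t_A=\delta_A=0$ in a single stroke. You instead run a parity argument via the Weil pairing, showing that $\dim(V/V^{P_7})$ is even; combined with $\epsilon_7=1$ this forces $V^{P_7}=V$, a contradiction. Your route is more self-contained---it does not need the external non-existence result---while the paper's is shorter once that input is in hand. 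Your idea is in fact close in spirit to what the paper does for conductor $27$ (Lemma~\ref{j027-lem:3actson2}), where an analogous evenness statement is proved at the mod-$2$ level; for $49$ the paper found the cleaner shortcut through $u_A\ge 1$.

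One small remark: your parenthetical hope that Section~\ref{sec:levelram} ``already records'' the tameness statement for $f_p\le 2$ at odd $p$ is optimistic---it does not. That section is about independence of $\ell$ and stability under isogeny, not about bounding the level of ramification in terms of the conductor exponent.
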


\section{Group schemes and level of ramification} \label{sec:grplevelram}

In order to possibly generalize our results to abelian varieties over other number fields than $\Q$,
we work in a more general setting than necessary
in Section \ref{sec:grplevelram}.

\subsection{Ramification groups with upper numbering} \label{sec:ramificationgroups}

Let $\locfieldext/\locfield$ be a non-trivial Galois extension of local fields.
For $i \geq -1$, let $I_{\locfieldext/\locfield}^{i}$ be the $i$-th ramification group with upper numbering
as defined in \cite[p. 61, p. 73 and p. 75 Remark 1]{Serre:1979}.
 
Let $u_{\locfieldext/\locfield} := \sup \{ i \in \R_{\geq -1} : G_{\locfieldext/\locfield}^{i} \not= 1\}$
resp. $i_{\locfieldext/\locfield} := \sup \{ i \in \R_{\geq -1} : (G_{\locfieldext/\locfield})_{i} \not= 1\}$
indicate the maximum value of the upper resp. lower numbering
of the non-trivial ramification groups that occur in $\locfieldext/\locfield$.
Then $\varphi_{\locfieldext/\locfield}(i_{\locfieldext/\locfield}) = u_{\locfieldext/\locfield}$.

\begin{definition} \label{def:levramlocalext}
The Galois extension $\locfieldext/\locfield$ is \emph{ramified of level $i$} if $u_{\locfieldext/\locfield} \leq i$.
\end{definition}
Ramified of level $-1$ simply means unramified,
and ramified of level $0$ simply means at most tamely ramified.

\begin{definition} \label{def:levramglobext}
Let $L$ be a Galois extension of number field $K$ and let $\bad$ be a prime ideal in $O_{K}$.
The extension $L/K$ is said to be \emph{ramified of level $i$ at $\bad$} if
for any prime ideal $\badover$ in $L$ above $\bad$ the extension
$L_{\badover} / K_{\bad}$ is ramified of level $i$ as in Definition \ref{def:levramlocalext}.
\end{definition}
Let $I_{L/K}(\bad)$ denote the ramification group of $L_{\badover}/K_{\bad}$,
which depends only up to conjugacy on the choice of $\badover$.
Furthermore, we let $u_{L/K}(\bad) := u_{L_{\badover}/K_{\bad}}$ and $i_{L/K}(\bad) := i_{L_{\badover}/K_{\bad}}$.
The numbers $u_{L/K}(\bad)$ and $i_{L/K}(\bad)$ do not dependent on the choice of $\badover$.

We recall a corollary of Herbrand's Theorem which says that ramification groups with
upper numbering behave well under quotients:

\begin{lemma} \label{thm:unumquotient}
Let $\locfieldext/\locfield$ be a finite Galois extension of local fields with Galois group $G$,
$v \in \R_{\geq -1}$ and $H$ be a normal subgroup of $G$. Then $G^{v}H/H = (G/H)^{v}$.
\end{lemma}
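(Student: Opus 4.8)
The plan is to deduce the lemma from two standard inputs about ramification in towers, both available in \cite{Serre:1979}, so that no essentially new work is required; indeed the assertion is precisely the upper-numbering reformulation of Herbrand's theorem (\cite[Ch.~IV, \S3]{Serre:1979}), and one could simply cite it, but here is the argument I would give. Write $H' = \locfieldext^{H}$ for the fixed field of $H$, so that $\Gal(\locfieldext/H') = H$, $\Gal(H'/\locfield) = G/H$ (the latter is again Galois since $H$ is normal), and $\locfield \subseteq H' \subseteq \locfieldext$ is a tower of local fields. Let $\psi_{\locfieldext/\locfield}$ denote the inverse of the Herbrand function $\varphi_{\locfieldext/\locfield}$; by the definition of the upper numbering one has $G^{v} = G_{\psi_{\locfieldext/\locfield}(v)}$, and likewise $(G/H)^{v} = (G/H)_{\psi_{H'/\locfield}(v)}$. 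Thus the whole statement transports to the lower numbering once one understands how lower-numbering ramification groups behave under $G \to G/H$.

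First I would record Herbrand's theorem in lower numbering for the tower $\locfield \subseteq H' \subseteq \locfieldext$: for every $u \in \R_{\geq -1}$,
$G_{u}H/H = (G/H)_{\varphi_{\locfieldext/H'}(u)}$. The content of this is the comparison $i_{G/H}(\bar{\sigma}) - 1 = e_{\locfieldext/H'}^{-1}\sum_{\tau \in \sigma H}(i_{G}(\tau) - 1)$ between the ``distance to the identity'' functions upstairs and downstairs, which is proved by choosing a uniformizer of $\locfieldext$ whose ring of integers is monogenic over that of $H'$ and chasing valuations — this is the one genuinely technical point, and I would simply invoke \cite[Ch.~IV, \S3, Lemme~3 and Prop.~14]{Serre:1979} rather than reproduce it. Second, I would invoke transitivity of the Herbrand functions in the tower, $\varphi_{\locfieldext/\locfield} = \varphi_{H'/\locfield}\circ\varphi_{\locfieldext/H'}$, equivalently $\psi_{\locfieldext/\locfield} = \psi_{\locfieldext/H'}\circ\psi_{H'/\locfield}$ (\cite[Ch.~IV, \S3, Prop.~15]{Serre:1979}).

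Granting these, the conclusion is a formal manipulation. Set $u = \psi_{\locfieldext/\locfield}(v)$. Then $G^{v}H/H = G_{u}H/H = (G/H)_{\varphi_{\locfieldext/H'}(u)}$ by Herbrand's theorem, while the transitivity formula gives $\varphi_{\locfieldext/H'}(u) = \varphi_{\locfieldext/H'}\bigl(\psi_{\locfieldext/H'}(\psi_{H'/\locfield}(v))\bigr) = \psi_{H'/\locfield}(v)$. Hence $G^{v}H/H = (G/H)_{\psi_{H'/\locfield}(v)} = (G/H)^{v}$, which is the claim. The extreme cases $v = -1$ (both sides equal $G/H$, since $G^{-1} = G$) and $v$ large (both sides trivial, as the filtrations are eventually trivial in the finite case) are immediate and serve as sanity checks. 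The main obstacle is thus located entirely inside Herbrand's theorem itself; everything after it is bookkeeping with the piecewise-linear functions $\varphi$ and $\psi$.
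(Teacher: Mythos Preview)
Your proposal is correct and is precisely the argument Serre gives; the paper's own proof consists solely of the citation ``See \cite[Proposition 14, p.~74]{Serre:1979}'', so you have simply unpacked the content of that reference. One minor point: the lower-numbering Herbrand formula $G_{u}H/H=(G/H)_{\varphi_{\locfieldext/H'}(u)}$ is Lemme~5 (not Lemme~3) in \cite[Ch.~IV, \S3]{Serre:1979}, and Proposition~14 there is already the upper-numbering statement you are proving, so your internal citations are slightly circular as written---but the mathematics is entirely sound.
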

\begin{proof}
See \cite[Proposition 14, p. 74]{Serre:1979}.
\end{proof}

\begin{lemma} \label{lem:levramquotient}
Let $\locfieldext$ be a Galois extension of a local field $\locfield$ that is ramified of level $i$.
Then every subextension $\locfieldalt/\locfield$ is ramified of level $i$. 
\end{lemma}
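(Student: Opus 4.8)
The plan is to present $G_{\locfieldalt/\locfield}$ as a quotient of $G_{\locfieldext/\locfield}$ and then quote the compatibility of the upper numbering with quotients recorded in Lemma \ref{thm:unumquotient}. Implicit in the statement is that $\locfieldalt/\locfield$ is Galois (this being the only situation in which Definition \ref{def:levramlocalext} applies), so I would assume that; set $G = G_{\locfieldext/\locfield}$ and let $H = G_{\locfieldext/\locfieldalt}$, which is a normal subgroup of $G$, with $G/H \cong G_{\locfieldalt/\locfield}$ by the fundamental theorem of Galois theory.

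Next I would unwind the hypothesis. Since $u_{\locfieldext/\locfield}$ is by definition the supremum of the set of $v \in \R_{\geq -1}$ with $G^{v} \neq 1$, the assumption $u_{\locfieldext/\locfield} \leq i$ says precisely that $G^{v} = 1$ for every real number $v > i$. For such a $v$, Lemma \ref{thm:unumquotient} gives $(G/H)^{v} = G^{v}H/H = H/H = 1$. Hence the set of $v \in \R_{\geq -1}$ with $(G/H)^{v} \neq 1$ is contained in the interval $[-1,i]$, so that $u_{\locfieldalt/\locfield} \leq i$; by Definition \ref{def:levramlocalext} this is exactly the assertion that $\locfieldalt/\locfield$ is ramified of level $i$.

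I do not expect a genuine obstacle here: the mathematical content is entirely contained in Lemma \ref{thm:unumquotient} (Herbrand's theorem), and what remains is bookkeeping. The two points worth stating carefully are the elementary translation ``$u_{\locfieldext/\locfield} \leq i$ if and only if $G^{v} = 1$ for all real $v > i$'', which is immediate from the definition of the supremum, and---should one want the lemma in the generality of a possibly infinite Galois extension $\locfieldext/\locfield$---the remark that the upper-numbering ramification groups of an infinite Galois extension are defined by passing to the limit over finite Galois subextensions, a process under which Herbrand's theorem continues to hold, so that the same one-line computation $(G/H)^{v} = G^{v}H/H$ applies verbatim.
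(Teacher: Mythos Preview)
Your proposal is correct and follows essentially the same approach as the paper: apply Lemma~\ref{thm:unumquotient} with $G=\Gal(\locfieldext/\locfield)$ and $H=\Gal(\locfieldext/\locfieldalt)$ in the finite case, and reduce the infinite case to the finite one via the limit definition of the upper numbering. Your write-up is in fact slightly more careful than the paper's (you correctly apply the lemma for every $v>i$, whereas the paper somewhat loosely writes ``$v=i$''), but the underlying argument is identical.
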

\begin{proof}
First suppose that $\locfieldext/\locfield$ is finite and let $\locfieldalt/\locfield$ be a subextension.
Then apply Lemma \ref{thm:unumquotient}
with $H=\Gal(\locfieldext/\locfieldalt)$, $G=\Gal(\locfieldext/\locfield)$ and $v=i$.
If $\locfieldext/\locfield$ is infinite, the assertion follows by definition of the ramification groups with upper numbering
together with the finite case.
\end{proof}

\begin{proposition} \label{prop:levramextension}
Let $i \in \R_{\geq -1}$ and let $\locfieldext/\locfieldalt$ be finite Galois extensions
of a local field $\locfield$ such that $\locfieldalt/\locfield$ is finite and
ramified of level $i$, and such that $\locfieldext/\locfieldalt$ is ramified of level $\min(i,0)$.
Let $f$ be the inertial degree and $e$ be the ramification degree of $\locfieldalt/\locfield$.
Then $\locfieldext/\locfield$ is ramified of level $i$.
\end{proposition}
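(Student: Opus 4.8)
The plan is to analyze the ramification filtration of $G = \Gal(\locfieldext/\locfield)$ by comparing it with that of the normal subgroup $H = \Gal(\locfieldext/\locfieldalt)$ and the quotient $G/H = \Gal(\locfieldalt/\locfield)$. We want to show $G^u = 1$ for all real $u > i$, which by the infinite case (Lemma~\ref{lem:levramquotient}'s remark on passing between finite and infinite extensions) and the finite assumptions given, reduces to a purely finite computation. First I would treat $H$: since $\locfieldext/\locfieldalt$ is ramified of level $\min(i,0)$, we know the ramification of $H$ in \emph{its own} upper numbering (relative to $\locfieldalt$) dies above $\min(i,0) \le 0$. The subtlety is that inside $G$ the subgroup $H$ carries the \emph{lower} numbering inherited from $G$, and lower numbering is compatible with subgroups: $(G_v) \cap H = H_v$. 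So I would first convert the hypothesis on $H$ into a statement about $H_v$, then feed that into Herbrand's function $\psi_{\locfieldalt/\locfield}$.

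The key steps, in order, are: (1) Using Lemma~\ref{thm:unumquotient} (Herbrand), the hypothesis that $\locfieldalt/\locfield$ is ramified of level $i$ gives $(G/H)^u = G^u H / H = 1$ for $u > i$, i.e. $G^u \subseteq H$ for all $u > i$. So it suffices to show $G^u \cap H = 1$ for $u > i$, equivalently $G^u = 1$ for $u>i$. (2) Translate to lower numbering: $G^u = G_{\psi_{\locfieldext/\locfield}(u)}$, and I want to bound $\psi_{\locfieldext/\locfield}(u)$ from below for $u > i$, or rather show the lower-numbered group it names is trivial. (3) Use the composition formula $\psi_{\locfieldext/\locfield} = \psi_{\locfieldext/\locfieldalt} \circ \psi_{\locfieldalt/\locfield}$ together with the subgroup-compatibility of lower numbering. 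Since $G^u \subseteq H$ when $u>i$, and for elements of $H$ the lower numbering within $G$ relates to the lower numbering of $H = \Gal(\locfieldext/\locfieldalt)$ by the formula $\psi_{\locfieldext/\locfieldalt}$ shifted by the ramification of $\locfieldalt/\locfield$, I would show that an element surviving in $G^u$ for $u > i$ would have to survive in $H^{w}$ for some $w > \min(i,0)$, contradicting the hypothesis on $\locfieldext/\locfieldalt$. This is where the inertial degree $f$ and ramification degree $e$ of $\locfieldalt/\locfield$ enter: the slope of $\psi_{\locfieldalt/\locfield}$ past the last break is $e$ (or a product of the ramification degrees of the successive break quotients), and the conversion between the two upper numberings acquires a factor governed by $e$. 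The cases $i \ge 0$ (where $\min(i,0)=0$, so $\locfieldext/\locfieldalt$ is tame and $H^w = 1$ for $w>0$) and $-1 \le i < 0$ (where $\locfieldext/\locfieldalt$ is unramified, $H = H_{-1}$, so $\locfieldext/\locfield$ and $\locfieldalt/\locfield$ have literally the same ramification filtration) should be handled separately; the second case is essentially immediate.

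The main obstacle I anticipate is step (3): carefully tracking how the lower numbering on $H$ as a subgroup of $G$ relates to the intrinsic lower numbering of $H = \Gal(\locfieldext/\locfieldalt)$, which involves the "Herbrand shift" by $i_{\locfieldalt/\locfield}$ and interacts with $e$ and $f$. One must be careful that $\psi_{\locfieldext/\locfieldalt}$ is computed relative to the \emph{base} $\locfieldalt$, not $\locfield$, so a change-of-base adjustment is needed; getting the inequality to come out with $i$ (rather than something slightly larger) on the nose is the delicate point, and is presumably exactly why the hypothesis on $\locfieldext/\locfieldalt$ is the sharp value $\min(i,0)$ and not merely "level $i$". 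I expect the proof to reduce, after these identifications, to the elementary inequality that $\psi_{\locfieldext/\locfield}(u) = \psi_{\locfieldext/\locfieldalt}(\psi_{\locfieldalt/\locfield}(u))$ exceeds the last break of $G_\bullet$ whenever $u > i$, using that $\psi_{\locfieldalt/\locfield}(u) > \psi_{\locfieldalt/\locfield}(i) = i_{\locfieldalt/\locfield}$ and that beyond $i_{\locfieldalt/\locfield}$ the only ramification left in $\locfieldext/\locfield$ comes from $H$, which by hypothesis is exhausted by level $\min(i,0)$.
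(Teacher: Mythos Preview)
Your outline is correct and aligns with the paper's argument, but you have manufactured an obstacle that is not there, and as a result your plan is far more elaborate than necessary. You correctly carry out step~(1): Herbrand's theorem on the quotient gives $G^{j}\subseteq H$ for $j>i$. You also correctly record that lower numbering is compatible with subgroups, $H_{v}=G_{v}\cap H$. But then in your ``main obstacle'' paragraph you worry about a Herbrand shift relating the lower numbering of $H$ as a subgroup of $G$ to the intrinsic lower numbering of $\Gal(\locfieldext/\locfieldalt)$. There is no such shift: both filtrations are computed using the valuation of the top field $\locfieldext$, so they are literally equal. (The shift you are thinking of concerns \emph{upper} numbering and subgroups, or equivalently lower numbering and \emph{quotients}.) Once this is clear, the case $i>0$ is one line after step~(1): for $j>i>0$ one has $G^{j}\subseteq G_{1}$, hence $\sigma\in G_{1}\cap H=H_{1}$, and $H_{1}=1$ is precisely the hypothesis that $\locfieldext/\locfieldalt$ is at most tamely ramified (level $\min(i,0)=0$). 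Phrased via your composition formula this is the same computation: $G^{j}\cap H\subseteq H^{\psi_{\locfieldalt/\locfield}(j)}$ and $\psi_{\locfieldalt/\locfield}(j)>0$ since $j>0$.

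Two further remarks. First, the invariants $e$ and $f$ appearing in the statement are not used in the proof; your expectation that they enter the inequality is unfounded. Second, the case $i\le 0$ really is immediate (a tower of tame, respectively unramified, extensions is tame, respectively unramified), so there is no need for the careful case split you sketch. The paper's entire proof is three sentences: dispose of $i\le 0$; for $i>0$ and $\sigma\in G^{j}$ with $j>i$, Herbrand gives $\sigma\in H$; then $\sigma=1$ because $\locfieldext/\locfieldalt$ is tame (implicitly via $\sigma\in G_{1}\cap H=H_{1}=1$).
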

\begin{proof}
If $i \leq 0$, then the proposition is clear.
We assume that $i>0$.
Let $G=\Gal(\locfieldext/\locfield)$, $H=\Gal(\locfieldext/\locfieldalt)$
and let $\sigma \in G^j$ with $j > i$.
By Lemma \ref{thm:unumquotient} we have $\sigma \in H$.
Hence $\sigma = 1$ by hypothesis that $\locfieldext/\locfieldalt$ is ramified of level $\min(i,0)$.
\end{proof}

Now we will translate the information of the level of ramification into a bound on the discriminant.
Let $\calD_{\locfieldext/\locfield}$ denote the different of $\locfieldext/\locfield$
and $\Delta_{\locfieldext/\locfield}$ the discriminant of $\locfieldext/\locfield$.

\begin{lemma} \label{lem:localdiscbound}
If $i \geq 0$ and $\locfieldext/\locfield$ is ramified of level $i$, 
then $v_{\locfield}(\Delta_{\locfieldext/\locfield}) < [\locfieldext:\locfield] ( i+1)$. 
\end{lemma}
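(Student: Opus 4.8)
The key is the classical conductor–discriminant formula expressing the different of $\locfieldext/\locfield$ in terms of the lower-numbered ramification groups:
\[
v_\locfieldext(\calD_{\locfieldext/\locfield}) = \sum_{k=0}^{\infty} \bigl( |(G)_k| - 1 \bigr),
\]
where $G = \Gal(\locfieldext/\locfield)$ and $(G)_k$ denotes the $k$-th ramification group in the lower numbering (Serre, \emph{Local Fields}, Ch.\ IV). First I would reduce to the lower numbering: since $\locfieldext/\locfield$ is ramified of level $i$, we have $u_{\locfieldext/\locfield} \le i$, and by the relation $\varphi_{\locfieldext/\locfield}(i_{\locfieldext/\locfield}) = u_{\locfieldext/\locfield}$ recorded in the excerpt, together with the fact that $\varphi_{\locfieldext/\locfield}$ is the identity on $[-1,0]$ and has slope $\le 1$ thereafter, one gets $i_{\locfieldext/\locfield} \le \varphi_{\locfieldext/\locfield}^{-1}(i)$. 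The cleanest route, though, is to estimate the sum directly without inverting $\varphi$, using Herbrand's function in its integral form.

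The main computation I would carry out is to bound $v_\locfieldext(\calD_{\locfieldext/\locfield})$ using the identity
\[
v_\locfieldext(\calD_{\locfieldext/\locfield}) = \int_{-1}^{\infty} \bigl( |G_u| - 1 \bigr)\, du
\]
(the integral form over the \emph{upper} numbering $G_u = G^u$, which follows from the definition of $\varphi$ as the integral of $1/[G_0 : G_u]$ — see Serre, Ch.\ IV \S3). Since $G^u = 1$ for $u > i$ by the level-$i$ hypothesis, the integrand vanishes for $u > i$; for $-1 \le u \le i$ we bound $|G^u| - 1 < |G^0|$, and $|G^0| = e$ is the ramification index, which equals $[\locfieldext:\locfield]$ when $\locfieldext/\locfield$ is totally ramified and is $\le [\locfieldext:\locfield]$ in general. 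This gives
\[
v_\locfieldext(\calD_{\locfieldext/\locfield}) < [\locfieldext:\locfield]\,(i+1).
\]
Finally I would pass from the different to the discriminant via $v_\locfield(\Delta_{\locfieldext/\locfield}) = f \cdot v_\locfieldext(\calD_{\locfieldext/\locfield})$, where $f$ is the residue degree; combined with $ef \le [\locfieldext:\locfield]$ this yields $v_\locfield(\Delta_{\locfieldext/\locfield}) < [\locfieldext:\locfield](i+1)$, possibly after noting that the strict inequality is preserved because the integrand is strictly less than $e$ on a set of positive measure (or simply because $|G^{-1}| - 1 = [\locfieldext:\locfield] - 1 < e f$ already when $f > 1$, and when $f = 1$ one has $|G^u| - 1 \le e - 1 < e$ throughout).

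**The main obstacle.** The delicate point is handling the non-totally-ramified case and keeping the inequality strict. When $f>1$ the naive bound $|G^{-1}|-1 = [\locfieldext:\locfield]-1$ is already comfortably below $[\locfieldext:\locfield]$, so there is slack; the tight case is $f=1$, $e = [\locfieldext:\locfield]$, and there one must argue that $|G^u|-1$ is strictly smaller than $e$ on the whole interval $(-1,i]$ — which holds since $|G^u| \le e$ always and $|G^u| = e$ forces the integrand to be $e-1 < e$. So the strictness is in fact automatic, and the real content is just correctly setting up the integral formula for the different over the upper numbering and invoking that $G^u$ is trivial beyond $u = i$. I would double-check the edge behaviour at $u=-1$, where $G^{-1} = G$ has order $[\locfieldext:\locfield]$ (not just $e$), contributing $[\locfieldext:\locfield]-1$ to no term of $v_\locfieldext(\calD)$ — in the different formula the sum runs from $k=0$, i.e.\ the integral effectively starts at $u=0$, so the $u=-1$ value never enters and the bound $\int_0^i (|G^u|-1)\,du < e\,(i+1) \le [\locfieldext:\locfield](i+1)$ is what one actually proves, with room to spare.
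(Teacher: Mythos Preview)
Your overall strategy---bound the different via the ramification filtration and then pass to the discriminant---is exactly what the paper does. The paper, however, simply quotes Fontaine's closed formula
\[
v_{\locfield}(\calD_{\locfieldext/\locfield}) \;=\; u_{\locfieldext/\locfield} + 1 - \frac{i_{\locfieldext/\locfield}+1}{e},
\]
from which $v_{\locfield}(\calD_{\locfieldext/\locfield}) < i+1$ is immediate, and then multiplies by $[\locfieldext:\locfield]$.

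Your attempt to derive this bound directly contains a genuine error: the displayed identity
\[
v_{\locfieldext}(\calD_{\locfieldext/\locfield}) \;=\; \int_{-1}^{\infty}\bigl(|G^{u}|-1\bigr)\,du
\]
in the \emph{upper} numbering is false. The classical formula $v_{\locfieldext}(\calD)=\sum_{k\ge 0}(|G_{k}|-1)=\int_{-1}^{\infty}(|G_{t}|-1)\,dt$ is in the \emph{lower} numbering; after the change of variables $u=\varphi(t)$ one picks up the Jacobian $\psi'(u)=e/|G^{u}|$ on $u>0$, and the correct upper-numbering expression is
\[
v_{\locfield}(\calD_{\locfieldext/\locfield}) \;=\; \int_{-1}^{\infty}\Bigl(1-\frac{1}{|G^{u}|}\Bigr)\,du,
\]
which is precisely Fontaine's formula in integral form. (A quick check: for a totally ramified extension of degree $p^{2}$ with lower jumps at $a<b$, the true value of $v_{\locfieldext}(\calD)$ is $(a+1)(p^{2}-1)+(b-a)(p-1)$, whereas your integral gives only $(a+1)(p^{2}-1)+(b-a)(p-1)/p$.) Since your formula \emph{underestimates} $v_{\locfieldext}(\calD)$, bounding it above by $e(i+1)$ does not establish the lemma. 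With the correct integrand $1-1/|G^{u}|<1$, vanishing for $u>i$, you get $v_{\locfield}(\calD)<i+1$ directly and the rest of your argument goes through; but as written the key step is not valid. The closing paragraph about the integral ``effectively starting at $u=0$'' is also off: the $k=0$ term in the sum corresponds to the interval $(-1,0]$, so the integral really does run from $-1$, and that is where the extra ``$+1$'' in $i+1$ comes from.
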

\begin{proof}
If $I_{\locfieldext/\locfield}^{u}$ is trivial for $u > i$, then $u_{\locfieldext/\locfield} \leq i$.
By \cite[Proposition 1.3]{Fontaine:1985}, we have
$$
v_{\locfield}(\calD_{\locfieldext/\locfield}) =  u_{\locfieldext/\locfield} + 1 - (i_{\locfieldext/\locfield} +1)/e .
$$
This gives the inequality $v_{\locfield} (\calD_{\locfieldext/\locfield}) < i+1$. 
The statement follows since $v_{\locfield} (\Delta_{\locfieldext/\locfield}) = [\locfieldext:\locfield] \cdot v_{\locfield} (\calD_{\locfieldext/\locfield})$.
\end{proof}

\begin{proposition} \label{prop:globalrootdisc}
Let $L/K$ be a Galois extension of number fields and $\badover$ a prime in $O_{L}$ above the prime $\bad$ in $K$.
If $i \geq 0$ and $L/K$ is ramified of level $i$ at $\bad$, then $v_{ K_{\bad}  }(\Delta_{L/K}) < [L:K] (i+1)$. 
\end{proposition}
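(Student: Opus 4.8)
The plan is to reduce the global statement to the local Lemma~\ref{lem:localdiscbound} by factoring the discriminant according to the primes above $\bad$. Recall that for a Galois extension $L/K$ of number fields the (absolute value of the) discriminant ideal factors as $\Delta_{L/K} = \prod_{\badover \mid \bad} \Delta_{L_{\badover}/K_{\bad}}$, where the product runs over the primes $\badover$ of $O_L$ lying above $\bad$, viewed inside the completion $K_{\bad}$; equivalently, taking $\bad$-valuations, $v_{K_{\bad}}(\Delta_{L/K}) = \sum_{\badover \mid \bad} v_{K_{\bad}}(\Delta_{L_{\badover}/K_{\bad}})$. This is the standard compatibility of discriminants with completion, together with the fact that $O_L \otimes_{O_K} (O_K)_{\bad} \cong \prod_{\badover \mid \bad} (O_L)_{\badover}$.

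Next I would invoke the hypothesis: $L/K$ is ramified of level $i$ at $\bad$ means, by Definition~\ref{def:levramglobext}, that for every prime $\badover$ above $\bad$ the local extension $L_{\badover}/K_{\bad}$ is ramified of level $i$ in the sense of Definition~\ref{def:levramlocalext}. Since $i \geq 0$, Lemma~\ref{lem:localdiscbound} applies to each such local extension and gives
\[
v_{K_{\bad}}(\Delta_{L_{\badover}/K_{\bad}}) < [L_{\badover} : K_{\bad}] \, (i+1).
\]
Summing over the primes $\badover \mid \bad$ and using the displayed factorization of the discriminant, I get
\[
v_{K_{\bad}}(\Delta_{L/K}) = \sum_{\badover \mid \bad} v_{K_{\bad}}(\Delta_{L_{\badover}/K_{\bad}}) < (i+1) \sum_{\badover \mid \bad} [L_{\badover} : K_{\bad}].
\]
Finally, the fundamental identity $\sum_{\badover \mid \bad} [L_{\badover} : K_{\bad}] = \sum_{\badover \mid \bad} e(\badover \mid \bad) f(\badover \mid \bad) = [L : K]$ (using that $L/K$ is Galois, or just the general degree formula) yields $v_{K_{\bad}}(\Delta_{L/K}) < [L:K]\,(i+1)$, as desired.

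I do not expect a genuine obstacle here; the only point requiring a little care is to cite correctly the factorization of the global discriminant into local discriminants at the completions (this is classical, e.g.\ in Serre's \emph{Local Fields} or Neukirch), and to make sure the $\bad$-adic valuation is normalized consistently with the normalization used in Lemma~\ref{lem:localdiscbound}. If one wants to avoid quoting the global-to-local discriminant factorization, an alternative is to argue directly at the level of differents: the different $\calD_{L/K}$ localizes to $\calD_{L_{\badover}/K_{\bad}}$ at each $\badover$, and then take norms down to $K$ and valuations at $\bad$; this is essentially the same computation. Either way the proof is short, so I would simply present the factorization, apply the local lemma termwise, and conclude with the degree identity.
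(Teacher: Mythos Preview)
Your proposal is correct and follows essentially the same route as the paper: apply Lemma~\ref{lem:localdiscbound} to the local extensions $L_{\badover}/K_{\bad}$ and use the compatibility of the global discriminant with completions. The paper's proof is just a terser version of yours, recording the single identity $\frac{v_{K_{\bad}}(\Delta_{L_{\badover}/K_{\bad}})}{[L_{\badover}:K_{\bad}]}=\frac{v_{K_{\bad}}(\Delta_{L/K})}{[L:K]}$ (which follows from your sum because $L/K$ is Galois, so all local contributions are equal) and then invoking Lemma~\ref{lem:localdiscbound} once.
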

\begin{proof}
This follows by Lemma \ref{lem:localdiscbound} and 
from the equality 
$\frac{v_{K_{\bad}}(\Delta_{L_{\badover} / K_{\bad}  } ) }{ [L_{\badover} : K_{\bad}  ] }=
\frac{v_{K_{\bad}}(\Delta_{L / K} ) }{ [L:K]}$.
\end{proof}

\subsection{Group schemes}
 
Let $O_K$ be the ring of integers of a number field $K$.
Let $\bad$ be a prime in $O_K$ and let $O_{K}[\frac{1}{\bad}]$ be the ring of
$\bad$-integers of $K$. Let $\good$ be a rational prime such that $\bad$ does not divide $\good$.
In the introduction we defined the level of ramification for abelian varieties.
In this section we introduce a similar notion for finite flat commutative group schemes over $O_{K}[\frac{1}{\bad}]$.
The $\good$-torsion points of abelian varieties with good reduction outside $\bad$
or, more precisely, the $\good$-torsion of N\'eron models of these abelian varieties,
are examples of such group schemes.
Let $\gsc$ be a finite flat commutative group scheme over $O_{K}[\frac{1}{\bad}]$ and let
$$
\rho_{\gsc}: G_{K} \longrightarrow \Aut(\gsc(\Alg{K}))
$$
be the representation given by the natural Galois action on $\gsc(\Alg{K})$.
For $\frakq$ a prime in $O_{K}$, let $G_{\frakq}  \subset G_{K}$ be a decomposition group of $\frakq$.
Denote as usual the restriction of $\rho_{\gsc}$ to $G_\frakq$ by $\rho_{\gsc} | G_\frakq$.

\begin{definition}
Let $\gsc$ be a finite flat commutative group scheme over
$O_{K}[\frac{1}{\bad}]$ and $\frakq$ be a prime in $O_{K}$.
Then $\gsc$ is \emph{ramified of level $i$ at $\frakq$} if $\rho_{\gsc} | G_\frakq$ is
ramified of level $i$.
\end{definition}

We are now able to define the following category:

\begin{definition}
Let $i \in \R_{\geq -1}$, let $\good$ be a rational prime
and let $\bad$ be a prime in $O_{K}$ such that $\bad \nmid \good$.
Define $\catgen$ to be the category whose objects are finite flat commutative
group schemes $\gsc$ over $O_{K}[\frac{1}{\bad}]$ of
$\good$-power order such that $\gsc$ is ramified of level $i$ at $\bad$,
and whose morphisms are group scheme morphisms over $O_{K}[\frac{1}{\bad}]$.
\end{definition}

For finite flat commutative group schemes $\gsc'$ and $\gsc''$ over $O_K[\frac{1}{\bad}]$, 
we denote the set of equivalence classes of commutative extensions of $\gsc'$ by $\gsc''$
by $\extt{O_K[\frac{1}{\bad}]}{\gsc''}{\gsc'}$.
By \cite[Corollary 17.5]{Oort:1966}, this set is actually an abelian group.

\begin{proposition} \label{prop:categoryprop}
Let $i \geq 0$.
Any extension of group schemes in $\catgen$ is again
a group scheme in $\catgen$.
Conversely, 
if $\gsc$ is a group scheme in $\catgen$, then any closed flat subgroup scheme of $\gsc$ or
quotient by a closed flat subgroup scheme of $\gsc$ is again in $\catgen$.
\end{proposition}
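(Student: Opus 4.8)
The plan is to separate the statement into a ``formal'' part about finite flat commutative group schemes of $\good$-power order over $O_{K}[\frac{1}{\bad}]$ and the part that is special to being ramified of level $i$ at $\bad$. For the formal part one uses that $\Spec O_{K}[\frac{1}{\bad}]$ is connected, so the order of a finite flat group scheme over it is a well-defined power of a fixed prime, computable on the generic fibre. Thus for $\gsc \in \catgen$ and a closed flat subgroup scheme $H \subseteq \gsc$, the scheme $H$ is finite with $|H|$ dividing $|\gsc|$, the quotient $\gsc/H$ is finite flat with $|\gsc| = |H| \cdot |\gsc/H|$, and both orders are then powers of $\good$; and for a short exact sequence $0 \to \gsc' \to \gsc \to \gsc'' \to 0$ with $\gsc', \gsc''$ finite flat of $\good$-power order, the map $\gsc \to \gsc''$ is finite and faithfully flat (it makes $\gsc$ a torsor under $\gsc'$ over $\gsc''$), so $\gsc$ is finite flat of order $|\gsc'| \cdot |\gsc''|$, again a power of $\good$. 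Hence in each case it remains only to check the ramification condition, which only involves the $G_{K}$-module $M := \gsc(\Alg{K})$ with its action of $G_{\bad}$: since $K$ has characteristic zero, every finite flat group scheme over $\Alg{K}$ is \'etale, so $\gsc \mapsto \gsc(\Alg{K})$ is exact and carries the exact sequences above to exact sequences of $G_{K}$-modules.

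The sub- and quotient cases are then immediate: for $H \subseteq \gsc$ with $\gsc \in \catgen$, the module $H(\Alg{K})$ is a $G_{K}$-submodule of $\gsc(\Alg{K})$ and $(\gsc/H)(\Alg{K})$ is the corresponding quotient module, so if $I_{\bad}^{u}$ is contained in $\ker(\rho_{\gsc} | G_{\bad})$ for all $u > i$, then it is contained in $\ker(\rho_{H} | G_{\bad})$ and in $\ker(\rho_{\gsc/H} | G_{\bad})$ as well; hence $H$ and $\gsc/H$ lie in $\catgen$.

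The substantial point is stability under extensions. Write the associated exact sequence of $G_{\bad}$-modules as $0 \to M' \to M \to M'' \to 0$, where by hypothesis $I_{\bad}^{u}$ acts trivially on $M'$ and on $M''$ for every $u > i$; we must show it acts trivially on $M$. Let $\badrat$ be the rational prime below $\bad$; since $\bad \nmid \good$ we have $\badrat \neq \good$, and since $i \geq 0$ every $u > i$ satisfies $u > 0$, so $I_{\bad}^{u}$ lies in the wild inertia subgroup and is a pro-$\badrat$ group (see \cite{Serre:1979}). Fix such a $u$ and a $\sigma \in I_{\bad}^{u}$. For $m \in M$ we have $\sigma m - m \in M'$, and because $\sigma$ acts trivially on $M'$ an easy induction gives $\sigma^{n} m - m = n(\sigma m - m)$ for all $n \geq 1$; as $M$ is annihilated by $\good^{N}$ for some $N$, the automorphism $\rho_{\gsc}(\sigma)$ has order dividing $\good^{N}$. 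On the other hand $\rho_{\gsc}(I_{\bad}^{u})$ is a continuous image of a pro-$\badrat$ group, hence a finite $\badrat$-group, so $\rho_{\gsc}(\sigma)$ has $\badrat$-power order. Since $\badrat \neq \good$ this forces $\rho_{\gsc}(\sigma) = 1$, and therefore $I_{\bad}^{u} \subseteq \ker(\rho_{\gsc} | G_{\bad})$ for all $u > i$, i.e. $\gsc$ lies in $\catgen$.

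I expect the extension step to be the only real obstacle, and its crux is the elementary fact that an automorphism of a finite abelian $\good$-group which is trivial both on a submodule and on the corresponding quotient — hence unipotent — and which has $\badrat$-power order with $\badrat \neq \good$ must be the identity. The role of the hypothesis $i \geq 0$ is precisely to force $u > 0$, placing $I_{\bad}^{u}$ inside the pro-$\badrat$ wild inertia; everything else reduces to the standard facts about finite flat group schemes over Dedekind bases used in the first paragraph.
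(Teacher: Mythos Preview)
Your proof is correct and follows essentially the same line as the paper's: the sub/quotient case is immediate, and for extensions both arguments show $(\sigma-1)^2=0$ on $\gsc(\Alg K)$, deduce that $\rho_{\gsc}(\sigma)$ has $\good$-power order, and then use that $I_{\bad}^{u}$ for $u>0$ lies in the pro-$\badrat$ wild inertia with $\badrat\neq\good$ to conclude $\rho_{\gsc}(\sigma)=1$. The only cosmetic difference is that the paper routes this last step through Proposition~\ref{prop:levramextension}, whereas you argue it directly.
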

\begin{proof}
Let $\gsc'$ and $\gsc''$ be two group schemes in $\catgen$.
Consider an extension $\gsc$ of $\gsc'$ by $\gsc''$.
Then $\gsc$ has $\ell$-power order.
It remains to check that $\gsc$ is ramified of level $i$ at $\bad$.
For $u > i$, we let $\sigma$ be an element in $I_{K}(\bad)^{u}$.
By the property of the upper numbering being stable under quotients,
it follows by hypothesis that $\sigma$ acts trivially on $\gsc'$ and $\gsc''$.
We let $\sigma$ act on $x \in \gsc(\Alg{K})$. 
The image of $(\sigma -1)x$ under $\gsc(\Alg{K}) \rightarrow \gsc'(\Alg{K})$ is zero,
hence $y := (\sigma-1)x \in \gsc''(\Alg{K})$.
It follows that $(\sigma-1)^{2}x=(\sigma-1)y = 0$. 
The order of $\gsc$ is $\good^{e}$ for some $e \in \N$.
Since $(\sigma-1)^{2}=0$ and because the order of $\gsc$ is $\good^{e}$, 
the group scheme $\gsc$ is killed by $\good^e$
and we obtain that $\sigma^{\good^{e}}=(\sigma-1 + 1)^{\good^{e}}=1$.
Since $\bad \nmid \good$, this means $\sigma$ is in the tame part of $I_{K}(\bad)$.
Apply Proposition \ref{prop:levramextension} to deduce that $\gsc$ is ramified of level $i$
at $\bad$.

For the converse, 
consider an 
extension $\gsc$ of finite flat commutative group schemes $\gsc''$
by $\gsc'$ over $O_{K}[\frac{1}{\bad}]$.
Suppose $\gsc$ is a group scheme in $\catgen$.
Then $\gsc'$ and $\gsc''$ have $\good$-power order.
By Lemma \ref{lem:levramquotient},
both $\Q(\gsc')/\Q$ and $\Q(\gsc'')/\Q$ are Galois
extensions having level of ramification $i$.
Therefore $\gsc'$ and $\gsc''$ are group schemes in $\catgen$.
\end{proof}

Note that it is possible that for arbitrary commutative finite flat group schemes $\gsc'$ and $\gsc''$
such that $\rho_{\gsc'}$ and $\rho_{\gsc''}$ are unramified at $\bad$,
the $\Alg{\Q}$-points of an extension $\gsc$ of $\gsc'$ by $\gsc''$
generate an extension over $K$ that is tamely ramified at $\bad$.
An example is 
the extension $T[\good,\badrat]$ of $\Z/\good\Z$ by $\mu_\good$
defined below.

\begin{definition} \label{def:katzmazur}
Let $T[\good,\badrat]$ be the group scheme 
$
T[\good,\badrat] = \Spec(
\bigoplus_{i=0}^{\good-1} 
\Z[\frac{1}{\badrat}][X_{i}]/(X_{i}^{\good} - \badrat^{-i}))
$
with group law as defined in \cite[p. 251, Section 8.7]{KatzMazur:1985}).
\end{definition}
Next we recall the definition of the root discriminant of a number field:

\begin{definition}
The \emph{root discriminant of a number field $K$} is $\delta_{K} := \Delta_{K/\Q}^{\frac{1}{[K:\Q]}}$.
\end{definition}

We determine an upper bound of the root discriminant of the extension
$\Q(\gsc)/\Q$ for a simple group scheme $\gsc$ in the category $\catgen$,
or more generally, for a group scheme in $\catgen$ annihilated by $\good$.
Let $\frakgood$ be a prime ideal in $O_{K}$ above $\good$.
By \cite[Theorem 2.1]{Fontaine:1985}, if $e_{\frakgood}$ denotes the ramification index of the extension
$K/\Q$ at $\frakgood$,
then every group scheme $\gsc$ in $\catgen$ that is annihilated by $\good^{n}$
has level of ramification $(e_{\frakgood}( n + \frac{1}{\good-1}) -1)$ at $\frakgood$.
If $\gsc$ is a group scheme in $\catgen$, then $K(\gsc)/K$ is ramified of level $i$ at $\bad$
and ramified of level $(e_{\frakgood}( n + \frac{1}{\good-1}) -1)$ at every prime ideal $\frakgood$ above $\good$.

Let $T_{\catgen}$ be the compositum of 
all fields $K(\gsc)$, where the $\gsc$ are group schemes in $\catc$ that are annihilated by $\good$.
We call $T_{\catgen}$ the \emph{maximal $\good$-torsion extension} of $\catgen$.
The extension $T_{\catgen}$ is contained in the maximal extension of $K$ inside $\Alg{K}$
that is ramified of level $i$ at $\bad$
and ramified of level $(e_{\frakgood}( 1 + \frac{1}{\good-1}) -1)$ at every prime $\frakgood$ above $\good$.
Using these two levels of ramifications, we have the following bound on the root
discriminant of the maximal $\good$-torsion extension:

\begin{proposition} \label{prop:actionrootdiscb}
Let $\badrat$ be the residue characteristic of the prime ideal $\bad$.
Then the root discriminant of the 
maximal $\good$-torsion extension $L=T_{\catgen}$ of $\catgen$ satisfies the following inequality:
$$
\delta_{L} < \delta_{K} \cdot \badrat^{1+i} \cdot \good^{1 + \frac{1}{\good-1}}.
$$
\end{proposition}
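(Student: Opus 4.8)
The plan is to express $\delta_{L}$ via the multiplicativity of discriminants in the tower $\Q\subseteq K\subseteq L$ and then to bound the contributions of $\bad$ and of the primes above $\good$ to the relative discriminant $\Delta_{L/K}$ separately, using Proposition \ref{prop:globalrootdisc} together with the two levels of ramification of $L/K$ recalled just before the statement.

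First I would reduce to the finite case. By definition $\delta_{L}=\sup_{M}\delta_{M}$, where $M$ ranges over the finite subextensions $K\subseteq M\subseteq L$; by Lemma \ref{lem:levramquotient} each such $M/K$ is again ramified of level $i$ at $\bad$ and of level $e_{\frakgood}(1+\frac{1}{\good-1})-1$ at every prime $\frakgood\mid\good$, and is unramified at every other prime of $O_{K}$ (each group scheme in $\catgen$ killed by $\good$ is unramified away from $\bad$ and $\good$, as recalled above). So it suffices to prove the inequality for a finite Galois extension with these ramification properties, which I now call $L$ again; moreover I may assume $i\ge 0$, since for $-1\le i<0$ the extension $L/K$ is unramified at $\bad$ and the factor $\badrat^{1+i}\ge 1$ then trivially absorbs the (absent) $\bad$-contribution.

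From the tower formula $\Delta_{L/\Q}=\Delta_{K/\Q}^{[L:K]}\cdot N_{K/\Q}(\Delta_{L/K})$ and $[L:\Q]=[L:K]\,[K:\Q]$ one gets $\delta_{L}=\delta_{K}\cdot N_{K/\Q}(\Delta_{L/K})^{1/[L:\Q]}$. Since $L/K$ is unramified outside $\bad$ and the primes above $\good$, write $\Delta_{L/K}=\bad^{a}\prod_{\frakgood\mid\good}\frakgood^{a_{\frakgood}}$, so that, with $f_{\bad}$ and $f_{\frakgood}$ the residue degrees over $\Q$,
$$
N_{K/\Q}(\Delta_{L/K})=\badrat^{\,f_{\bad}a}\cdot\good^{\,\sum_{\frakgood\mid\good}f_{\frakgood}a_{\frakgood}}.
$$
Proposition \ref{prop:globalrootdisc} applied at $\bad$ gives $a<[L:K](i+1)$, and applied at each $\frakgood$ (with $e_{\frakgood}(1+\frac{1}{\good-1})-1\ge 0$ in place of $i$) it gives $a_{\frakgood}<[L:K]\,e_{\frakgood}(1+\frac{1}{\good-1})$. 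Substituting, and using $f_{\bad}\le[K:\Q]$ together with the fundamental identity $\sum_{\frakgood\mid\good}e_{\frakgood}f_{\frakgood}=[K:\Q]$, yields
$$
\frac{f_{\bad}a}{[L:\Q]}<\frac{f_{\bad}(i+1)}{[K:\Q]}\le i+1,\qquad
\frac{\sum_{\frakgood\mid\good}f_{\frakgood}a_{\frakgood}}{[L:\Q]}<\bigl(1+\tfrac{1}{\good-1}\bigr)\frac{\sum_{\frakgood\mid\good}e_{\frakgood}f_{\frakgood}}{[K:\Q]}=1+\tfrac{1}{\good-1},
$$
hence $\delta_{L}=\delta_{K}\cdot\badrat^{\,f_{\bad}a/[L:\Q]}\cdot\good^{\,\sum_{\frakgood\mid\good}f_{\frakgood}a_{\frakgood}/[L:\Q]}<\delta_{K}\cdot\badrat^{1+i}\cdot\good^{1+\frac{1}{\good-1}}$. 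Passing back to the supremum over the finite $M$ then gives the stated bound for the original, possibly infinite, $L$.

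The argument is in essence bookkeeping on top of Proposition \ref{prop:globalrootdisc}; the only points that require care are the reduction to finite subextensions (so that $[L:K]$ is meaningful and Proposition \ref{prop:globalrootdisc} applies) and the exact accounting of all primes above $\good$ through $\sum_{\frakgood\mid\good}e_{\frakgood}f_{\frakgood}=[K:\Q]$ — without this identity one would obtain only a weaker bound depending on the splitting type of $\good$ in $K$. I do not expect any genuine obstacle.
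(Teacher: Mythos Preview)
Your proof is correct and follows essentially the same route as the paper: the tower formula for discriminants, then Proposition~\ref{prop:globalrootdisc} to bound the $\bad$-contribution, with $f_{\bad}\le[K:\Q]$ absorbing the residue degree. The only difference is that for the primes above $\good$ the paper quotes Fontaine's Corollary~3.3.2 directly, whereas you apply Proposition~\ref{prop:globalrootdisc} again using the level $e_{\frakgood}(1+\tfrac{1}{\good-1})-1$ and the identity $\sum_{\frakgood\mid\good}e_{\frakgood}f_{\frakgood}=[K:\Q]$; this is more self-contained, and you are also more careful than the paper about the reduction to finite subextensions.
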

\begin{proof}
Let $f_{K/\Q}$ be the inertial degree of $\bad$ in $K/\Q$.
Fontaine \cite[Corollary 3.3.2]{Fontaine:1985} shows that
$$
v_{\Q_{\good}}(\delta_{L}) < v_{\Q_{\good}} (\delta_{K}) + 1 + \frac{1}{\good-1} .
$$
It therefore suffices to show that
$v_{\Q_{\badrat}} (\delta_L) < v_{\Q_\badrat} ( \delta_K ) + (1+i)$.
Recall the well-known identity:
$$
\Delta_{L/\Q} = \Delta_{K/\Q}^{[L:K]} N_{K/\Q}(\Delta_{L/K}) .
$$
In terms of root discriminants this becomes
$\delta_{L} = \delta_{K} \cdot N_{K/\Q}(\Delta_{L/K})^{\frac{1}{[L:\Q]} }$.
By Proposition \ref{prop:globalrootdisc} we have $v_{ K_{\bad} }(\Delta_{L/K}) < [L:K](i+1)$,
from which we deduce that:
\begin{eqnarray*}
 v_{\Q_\badrat} ( N_{K/\Q}(\Delta_{L/K})  ) & < & v_{\Q_\badrat} ( N_{K/\Q}(\bad^{[L:K] (i+1)})) = 
v_{\Q_\badrat} ( \badrat^{ (f_{K/\Q}) [L:K] (i+1)}) \\
& = & f_{K/\Q} \cdot [L:K] (i+1) \leq [L:\Q] (i+1) . 
\end{eqnarray*}
Therefore we obtain the desired inequality $v_{\Q_\badrat} (  \delta_{L} ) < v_{\Q_\badrat} (\delta_{K} ) + (i+1)$.
\end{proof}

If the bound on the root discriminant of Proposition \ref{prop:actionrootdiscb}
is smaller than $22$ (or $42$ under the assumption of the generalized Riemann hypothesis),
the extension $L$ will be finite; cf. \cite{Martinet:1982}.

\section{Abelian varieties} \label{sec:levelram}

We will show that the level of ramification at $\bad$ of an abelian variety over $K$ 
does not depend on the choice of the prime $\good$ as long as $\bad \nmid \good$.
Then we relate abelian varieties with level of ramification $i$ with the category $\catgen$
defined in Section \ref{sec:grplevelram}.
Because the level of ramification only concerns 
Galois representations of the decomposition group (or ramification group) of $\bad$,
we reduce to the local case: we assume $\locfield$ to be a local field with uniformizer $\pi$ and residue 
field characteristic $\badrat$.
Let $A$ be an abelian variety over $\locfield$.
The absolute Galois group $G_{\locfield}$ acts on $\good$-adic Tate module $\Tategood(A)$ 
by the continuous representation:
$$
\rho_{A,\good} : G_{\locfield} \longrightarrow \Aut(\Tategood(A)) .
$$

\begin{lemma} \label{lem:independence_levram}
Let $H \subset I_\locfield$ be a pro-$p$-group and let $\gooda,\goodb$ be two
rational primes different from $\badrat$.
Then
$H \subset \ker(\rho_{A,\gooda})$
if and only if $H \subset \ker(\rho_{A,\goodb})$.
\end{lemma}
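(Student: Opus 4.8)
The plan is to descend to a finite extension of $\locfield$ over which $A$ acquires semi-stable reduction, chosen so that $H$ automatically sits inside its inertia group, and then to invoke the fact that the $\good$-adic monodromy of a semi-stable abelian variety is tamely ramified for every prime $\good$ different from the residue characteristic $\badrat$. Since the hypothesis and the conclusion are interchanged by swapping $\gooda$ and $\goodb$, it is enough to assume $H \subseteq \ker(\rho_{A,\gooda})$ and to deduce $H \subseteq \ker(\rho_{A,\goodb})$.

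First I would pick an integer $n \geq 3$ that is prime to $\badrat$ and on which $H$ acts trivially: take $n = \gooda$ if $\gooda$ is odd, and $n = 4$ if $\gooda = 2$ --- in the latter case $\badrat \neq 2$, so in either case $n$ is a power of $\gooda$ and hence prime to $\badrat$. Since $H$ acts trivially on $\Tate{\gooda}(A)$, it acts trivially on $A[n]$, so $H$ fixes the field $\locfield(A[n])$; and since $H \subseteq I_\locfield$, it fixes the maximal unramified extension $\locfield^{\mathrm{ur}}$ as well. Set $\locfieldext := \locfield^{\mathrm{ur}} \cdot \locfield(A[n])$. Then $H \subseteq \Gal(\Alg{\locfield}/\locfieldext)$; moreover $\locfieldext$ has algebraically closed residue field (it contains $\locfield^{\mathrm{ur}}$ and is finite over it), so it admits no nontrivial unramified extension, and therefore $\Gal(\Alg{\locfield}/\locfieldext)$ is exactly the inertia group $I_\locfieldext$. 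As the tame quotient of $I_\locfieldext$ has pro-order prime to $\badrat$ (cf.\ \cite{Serre:1979}) while $H$ is pro-$\badrat$, the image of $H$ in that quotient is trivial; hence $H$ lies in the wild inertia subgroup $P_\locfieldext$ of $I_\locfieldext$.

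The essential input is Grothendieck's theory of semi-stable reduction. By construction $I_\locfieldext$ acts trivially on $A[n]$ with $n \geq 3$ prime to $\badrat$, so Grothendieck's criterion for semi-stable reduction (SGA 7 I, Expos\'e IX) shows that $A$ has semi-stable reduction over $\locfieldext$. For a semi-stable abelian variety over $\locfieldext$ and any prime $\good \neq \badrat$, every $\sigma \in I_\locfieldext$ acts unipotently on $\Tate{\good}(A)$, namely as $1 + t_\good(\sigma) N$ for a fixed nilpotent $N$ with $N^2 = 0$, where $t_\good$ is the tame character of $I_\locfieldext$; hence $\rho_{A,\good}|_{I_\locfieldext}$ factors through the tame quotient of $I_\locfieldext$ and in particular kills the wild inertia $P_\locfieldext$. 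Applying this with $\good = \goodb$ gives $H \subseteq P_\locfieldext \subseteq \ker(\rho_{A,\goodb})$, the desired implication; the converse follows by exchanging $\gooda$ and $\goodb$.

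I expect the only substantive ingredients to be the two appeals to Grothendieck's semi-stable reduction theory --- the semi-stability criterion and the tameness of the $\good$-adic monodromy of a semi-stable abelian variety --- which are precisely where one uses that the Galois module under consideration is the Tate module of an abelian variety; the remainder is formal bookkeeping with the ramification filtration recalled in Section~\ref{sec:ramificationgroups}. The one point to handle with some care in the write-up is the descent to $\locfieldext$: one must verify that $H$ really is contained in $I_\locfieldext$ and that $\locfieldext$ carries no residual extension, so that the wild-inertia argument applies as stated.
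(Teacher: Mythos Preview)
Your proof is correct and follows essentially the same strategy as the paper's: pass to an extension over which $A$ is semi-stable, use that the $\good$-adic monodromy of a semi-stable abelian variety is unipotent (hence tame), and conclude via the incompatibility of pro-$\badrat$ with pro-$\goodb$. The only cosmetic difference is in how the auxiliary extension is produced: the paper invokes directly the \emph{minimal} extension $\locfieldext$ of $\locfield^{\mathrm{nr}}$ over which $A$ becomes semi-stable (SGA~7~I, Expos\'e~IX) and argues that $H \subset \ker(\rho_{A,\gooda})$ forces $H \subset G_\locfieldext$ by minimality, whereas you build $\locfieldext$ explicitly as $\locfield^{\mathrm{ur}} \cdot \locfield(A[n])$ with $n$ a suitable $\gooda$-power and then apply Grothendieck's criterion to get semi-stability. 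Your route has the advantage of making the containment $H \subset I_\locfieldext$ transparent; the paper's route is slightly shorter but leans more heavily on the properties of the minimal extension.
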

\begin{proof}
We may assume $\locfield=\locfield^{nr}$.
In this case there exists a minimal extension $\locfieldext/\locfield$
such that $A$ is semi-stable over $\locfieldext$ (see \cite[Expos\'e IX, p. 355]{Grothendieck:1972};
it is a corollary of the semi-stable reduction Theorem).
This minimal extension $\locfieldext$ has the property that for all $\sigma \in G_\locfieldext$
we have $(\sigma-1)^2=0$.
Assume that $H \subset \ker(\rho_{A,\gooda})$.
Then $H \subset \Gal(\Alg{\locfield}/\locfieldext)$ by minimality of the extension $\locfieldext/\locfield$. 
This means that $(\rho_{A,\goodb}(h)-1)^2 = 0$ for some $h \in H$.
Hence the image of $\sigma$ under $\rho_{A,\goodb}$ is contained in a pro-$\goodb$-group,
while $H$ is a pro-$p$ group.
This implies that $h \in \ker(\rho_{A,\goodb})$.
Hence $H \subset \ker(\rho_{A,\goodb})$.
\end{proof}

As a corollary we get:

\begin{theorem} \label{thm:independence_levram}
Let $\locfield$ be a local field, let $A$ be an abelian variety over $\locfield$
and $\gooda,\goodb$ be two rational primes different from $\badrat$.
Let $\rho_{A,\gooda} : G_{\locfield} \longrightarrow \Aut(\Tate{\gooda}(A)$
and $\rho_{A,\goodb} : G_{\locfield} \longrightarrow \Aut(\Tate{\goodb}(A))$
be the representations of $G_{\locfield}$ on the $\gooda$ resp. $\goodb$-adic Tate module.
Then for all $u \in \R_{\geq -1 }$ it holds that
$I_{\locfield}^{u} \subset \ker(\rho_{A,\gooda})$ if and only if 
$I_{\locfield}^{u} \subset \ker(\rho_{A,\goodb})$.
\end{theorem}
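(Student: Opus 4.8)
The plan is to split into the two ranges $u>0$ and $-1\le u\le 0$, reducing the first to Lemma~\ref{lem:independence_levram} and the second to the criterion of N\'eron--Ogg--Shafarevich. Suppose first that $u>0$. The point is that $I_\locfield^{u}$ is then a pro-$p$-group: for each finite Galois subextension of $\Alg{\locfield}/\locfield$, the upper-numbering ramification group in degree $u>0$ is contained in the wild inertia, that is, in the unique pro-$p$-Sylow subgroup of the inertia, by the standard theory of ramification groups (see~\cite[Ch.~IV]{Serre:1979}); this property persists in the inverse limit that defines $I_\locfield^{u}$ in the infinite setting. Since $I_\locfield^{u}$ is a pro-$p$-subgroup of $I_\locfield$, I would apply Lemma~\ref{lem:independence_levram} with $H=I_\locfield^{u}$ to obtain that $I_\locfield^{u}\subseteq\ker(\rho_{A,\gooda})$ if and only if $I_\locfield^{u}\subseteq\ker(\rho_{A,\goodb})$.

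Now suppose $-1\le u\le 0$. In this range $I_\locfield^{u}=I_\locfield$: the residue field of $\locfield^{nr}$ is separably closed, so the lower-numbering ramification groups $(I_\locfield)_{s}$ are all equal to $I_\locfield$ for $-1\le s\le 0$, and the Herbrand function is the identity on $[-1,0]$, whence $I_\locfield^{u}=I_\locfield^{0}=I_\locfield$. Consequently the asserted equivalence says exactly that $\rho_{A,\gooda}$ is unramified if and only if $\rho_{A,\goodb}$ is unramified; by N\'eron--Ogg--Shafarevich each of these is equivalent to $A$ having good reduction over $\locfield$, a condition on $A$ that involves no choice of auxiliary prime. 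Combining the two ranges of $u$ proves the theorem.

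The step I expect to need the most care is the first one: one has to check that $I_\locfield^{u}$ really is pro-$p$ for \emph{every} real $u>0$, not merely at the jumps of the filtration, and that the relevant containments are compatible with the limit over finite subextensions used to define the upper-numbering groups in the infinite case. Once this is granted, the argument is just a direct appeal to Lemma~\ref{lem:independence_levram} together with N\'eron--Ogg--Shafarevich, and nothing further is needed.
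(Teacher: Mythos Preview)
Your proof is correct and follows essentially the same approach as the paper: both handle $u>0$ by observing that $I_\locfield^{u}$ is pro-$p$ and invoking Lemma~\ref{lem:independence_levram}, and handle $-1\le u\le 0$ via the criterion of N\'eron--Ogg--Shafarevich. Your organization is in fact slightly cleaner than the paper's, which first passes to $\locfield^{nr}$ and then treats $u=0$ by a somewhat roundabout case split; your direct identification $I_\locfield^{u}=I_\locfield$ for $-1\le u\le 0$ makes the reduction to good-reduction immediate.
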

\begin{proof}
We may assume that $\locfield$ is equal to the maximal unramified extension $\locfield^{nr}$ of $\locfield$:
if the extension $\Alg{\locfield}^{\ker(\rho_{A,\good})}/\locfield$ is unramified, then $A$ has good
reduction and the result follows from \cite{SerreTate:1968}.
Hence $u \geq 0$.
Furthermore, we may assume that $u > 0$:
if $u=0$ and $I_{\locfield}^0 \subset \ker(\rho_{A,\gooda})$, then either $I_{\locfield}^0 \subset \ker(\rho_{A,\goodb})$, in which case we are done,
or this does not hold in which case we interchange $\gooda$ and $\goodb$.
If $u > 0$ we apply Lemma \ref{lem:independence_levram} with $H=I_\locfield^u$.
\end{proof}

Let $K$ be a number field and let $A$ be an abelian variety over $K$ having good reduction
outside $\bad$. If $\calA$ is the N{\'e}ron model of $A$, consider the $\good^{n}$-torsion
subgroup scheme ${\calA} [\good^{n}]$, which is a finite flat commutative group scheme of $\good$-power order over 
$O_{K}[\frac{1}{\bad}]$.
The group scheme ${\calA} [\good^{n}]$ is ramified of level $i$ for some $i \in \R_{\geq -1}$.
Hence, the scheme ${\calA} [\good^{n}]$ is an object of the category $\catgen$.
The extension $K({\calA} [\good^{n}])/K$ ramifies only at $\bad$ and primes lying over $\good$.
The next proposition relates the action of inertia on the finite group schemes
of $\good^{n}$-torsion points and the $\good$-adic Tate module:

\begin{proposition} \label{prop:linktatecat}
Let $A$ be an abelian variety over a number field $K$ with good reduction away from 
the prime $\bad$ and let $\good$ be any rational prime such that $\bad \nmid \good$.
If $i \geq -1$, then:
$$
A  \text{ is ramified of level i at }  \bad \iff \calA[\good^{n}] \in \catgen \, \text{ for all } n \in \N
$$
If $i > 0$, then:
$$
A \text{ is ramified of level i at } \bad \iff \calA[\good]  \text{ is ramified of level i at } \bad.
$$
\end{proposition}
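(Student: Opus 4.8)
The plan is to prove both equivalences by passing between the Tate module $\Tategood(A)$ and its finite quotients $\calA[\good^n]$, using that $\Tategood(A) = \varprojlim_n \calA[\good^n](\Alg{K})$ as a $G_K$-module. First I would record the key translation: for $u \in \R_{\geq -1}$, the inertia group $I_K(\bad)^u$ lies in $\ker(\rho_{A,\good})$ if and only if it lies in $\ker(\rho_{\calA[\good^n]})$ for every $n$; this is immediate from the inverse-limit description, since an element of $G_K$ acts trivially on $\Tategood(A)$ exactly when it acts trivially on every $\calA[\good^n](\Alg{K})$. Unwinding the definition of ``ramified of level $i$'' (namely $I_K(\bad)^u \subset \ker(\rho)$ for all $u > i$), this gives directly the first equivalence: $A$ is ramified of level $i$ at $\bad$ iff each $\calA[\good^n]$ is ramified of level $i$ at $\bad$, i.e.\ iff $\calA[\good^n] \in \catgen$ for all $n$ (the $\good$-power order and finite flatness being automatic from the N\'eron model, as noted in the paragraph preceding the proposition).

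For the second equivalence, assume $i > 0$. The direction ($\Rightarrow$) is trivial from the first equivalence applied with $n=1$. For ($\Leftarrow$), suppose $\calA[\good]$ is ramified of level $i$ at $\bad$; I must upgrade this to $\calA[\good^n]$ for all $n$, and for that I would invoke Theorem \ref{thm:independence_levram} together with the structure already exploited in Lemma \ref{lem:independence_levram}. Concretely, reduce to the local and unramified-base case $\locfield = \locfield^{nr}$, so that we are comparing ramification subgroups of $I_\locfield$ acting on the Tate module. Let $\locfieldext/\locfield$ be the minimal extension over which $A$ acquires semi-stable reduction, so that $(\sigma - 1)^2 = 0$ on $\Tategood(A)$ for all $\sigma \in G_\locfieldext$. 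The hypothesis that $\calA[\good]$ is ramified of level $i$ at $\bad$ says $I_\locfield^u$ acts trivially on $\calA[\good](\Alg{\locfield})$ for $u > i$; since $i > 0$, such an element $\sigma \in I_\locfield^u$ is wildly ramified, hence lies in a pro-$\badrat$ subgroup. I would then argue, exactly as in Lemma \ref{lem:independence_levram}, that $\sigma$ must lie in $G_\locfieldext$ (minimality: the subgroup of $G_\locfield$ acting trivially on enough torsion forces semistability), so $(\sigma-1)^2 = 0$ on all of $\Tategood(A)$; combined with $\sigma$ lying in a pro-$\badrat$ group and $\bad \nmid \good$, a standard computation ($\sigma^{\good^m} = (1 + (\sigma-1))^{\good^m} = 1$ whenever $(\sigma-1)^2=0$ and $\good^m$ kills the module) forces $\sigma$ into the tame part, whence $\sigma = 1$ on $\calA[\good^n]$ for all $n$. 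This yields $A$ ramified of level $i$ at $\bad$.

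The main obstacle I anticipate is the ($\Leftarrow$) direction of the second equivalence: one genuinely needs that trivial action on the single layer $\calA[\good]$ controls the full Tate module, and this is exactly where the hypothesis $i > 0$ (wild ramification) and the semistable-reduction/minimality input are essential — without wildness the argument collapses, as the remark on $T[\good,\badrat]$ in Section \ref{sec:grplevelram} illustrates. I would be careful to check that the element $\sigma \in I_\locfield^u$ with $u > i > 0$ really does act trivially on $\calA[\good]$ for the relevant range of $u$ and that the reduction to $\locfield = \locfield^{nr}$ does not lose information about upper-numbering ramification groups (it does not, since passing to the maximal unramified subextension only truncates the trivial part $I^{-1}/I^0$). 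Once the key translation via $\varprojlim$ and Lemma \ref{lem:independence_levram} are in place, the remaining steps are routine bookkeeping with the definitions.
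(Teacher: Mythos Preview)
Your treatment of the first equivalence is correct and matches the paper's (terse) ``obvious''.

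For the second equivalence, however, your route through the minimal semi-stable extension $\locfieldext$ has a gap at the step ``$\sigma$ must lie in $G_\locfieldext$''. You invoke Lemma~\ref{lem:independence_levram}, but that lemma starts from the hypothesis $H \subset \ker(\rho_{A,\good})$ on the \emph{full} Tate module, whereas here you only know triviality on the single layer $\calA[\good]$. Minimality of $\locfieldext$ does not yield $\sigma \in G_\locfieldext$ from this weaker information; to get it you would need an extra input such as the Grothendieck--Raynaud criterion that $A$ acquires semi-stable reduction already over $\locfield(A[\good])$, which requires $\good \geq 3$ and is a nontrivial result you have not cited. As written, the step is circular: the most natural way to prove $\sigma \in G_\locfieldext$ is to first show $\sigma \in \ker(\rho_{A,\good})$, which is exactly the goal.

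The paper's argument bypasses semi-stable reduction entirely with a one-line observation you are circling around but never use directly: $\Gal(K(\calA[\good^\infty])/K(\calA[\good]))$ is a pro-$\good$-group, being contained in the kernel of $\Gl_{2g}(\Z_\good) \to \Gl_{2g}(\F_\good)$. Since the wild part of $I_K(\bad)$ is pro-$\badrat$ with $\badrat \neq \good$, any element of wild inertia acting trivially on $\calA[\good]$ has image in this pro-$\good$ group and hence acts trivially on all of $\calA[\good^\infty]$. This gives $(\Leftarrow)$ immediately. Your later steps (the unipotency $(\sigma-1)^2=0$ and the binomial computation $\sigma^{\good^m}=1$) are in fact a rediscovery of this same pro-$\good$/pro-$\badrat$ incompatibility, routed unnecessarily through the semi-stable filtration rather than through the congruence subgroup.
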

\begin{proof}
The first implication is obvious.
For the second statement, let $I_\bad$ be the inertia group of any decomposition group of $\bad$.
Any element in the wild part of $I_\bad$ acts trivially on $\calA[\good]$
if and only if it acts trivially on $\calA[\good^\infty]$, because 
$\Gal(K(\calA[\good^\infty])  / K(\calA[\good])$ is a pro-$\good$-group.
\end{proof}

Besides the independence of $\good$, we would like that the level of ramification
is well-defined on isogeny classes of abelian varieties, so that it can be used in order
to study these classes.
This is indeed the case:

\begin{proposition}
Let $A$ and $A'$ be two isogenous abelian varieties over $\locfield$.
Then $A$ is ramified of level $i$ if and only if $A'$ is ramified of level $i$.
\end{proposition}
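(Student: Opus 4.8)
The plan is to reduce the statement to a comparison of Tate modules under isogeny and then invoke Theorem \ref{thm:independence_levram} to gain freedom in the choice of the prime $\good$. Let $\varphi : A \to A'$ be an isogeny defined over $\locfield$; it is Galois-equivariant, so $\varphi$ induces a $G_\locfield$-equivariant map $\Tategood(A) \to \Tategood(A')$ for every prime $\good$. Since the degree of $\varphi$ is a fixed integer $d$, for any $\good$ not dividing $d$ (and different from $\badrat$) the induced map on $\good$-adic Tate modules is an isomorphism of $\Z_\good[G_\locfield]$-modules. Hence $\rho_{A,\good}$ and $\rho_{A',\good}$ have the same kernel, and in particular, for every $u \in \R_{\geq -1}$, $I_\locfield^u \subset \ker(\rho_{A,\good})$ if and only if $I_\locfield^u \subset \ker(\rho_{A',\good})$. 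This says exactly that $A$ is ramified of level $i$ with respect to such a $\good$ if and only if $A'$ is.

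The remaining point is to remove the restriction that $\good$ avoid the degree of $\varphi$. Here Theorem \ref{thm:independence_levram} does the work: the level of ramification of an abelian variety over $\locfield$, computed via $\rho_{A,\good}$, is the same for every rational prime $\good \neq \badrat$. So I would argue: pick any prime $\good$ with $\good \nmid d$ and $\good \neq \badrat$ — such a prime certainly exists. By the previous paragraph, $A$ is ramified of level $i$ with respect to $\good$ if and only if $A'$ is ramified of level $i$ with respect to $\good$. By Theorem \ref{thm:independence_levram} applied to $A$ (comparing $\good$ with an arbitrary $\goodb \neq \badrat$), $A$ being ramified of level $i$ does not depend on which prime $\neq \badrat$ is used; likewise for $A'$. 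Combining these equivalences gives that $A$ is ramified of level $i$ if and only if $A'$ is.

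One small subtlety worth spelling out is the definition of "ramified of level $i$" for an abelian variety: it means $I_\locfield^u \subset \ker(\rho_{A,\good})$ for all $u > i$ (Definitions in the Introduction and Definition \ref{def:Tategoodaction}). So the statements "$A$ is ramified of level $i$" and "$I_\locfield^u \subset \ker(\rho_{A,\good})$ for all real $u > i$" are interchangeable, and the isomorphism $\Tategood(A) \cong \Tategood(A')$ of $G_\locfield$-modules transports this condition verbatim. I should also note that for the inclusion of kernels it suffices to have a $G_\locfield$-equivariant isomorphism in either direction; the isogeny dual to $\varphi$ provides the other direction if one prefers a symmetric argument, but it is not needed.

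I do not anticipate a serious obstacle here: the only thing to be careful about is that an isogeny need not induce an isomorphism on $\good$-adic Tate modules when $\good$ divides the degree, which is precisely why the reduction to a well-chosen $\good$ followed by an appeal to Theorem \ref{thm:independence_levram} is the right route rather than working with a fixed $\good$ throughout.
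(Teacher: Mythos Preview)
Your proof is correct, but it takes a different route from the paper's. The paper argues directly for an arbitrary prime $\good \neq \badrat$: the isogeny $\psi : A \to A'$ induces a $G_\locfield$-equivariant map $\Tategood(A) \to \Tategood(A')$ whose image is a finite-index (hence full-rank) $\Z_\good$-sublattice of $\Tategood(A')$. If a subgroup $H \subset G_\locfield$ acts trivially on $\Tategood(A)$, it acts trivially on this sublattice, and therefore (by $\Z_\good$-linearity and torsion-freeness) on all of $\Tategood(A')$. Applying this with $H = I_\locfield^u$ for $u > i$ gives the result immediately, with no restriction on $\good$ and no appeal to Theorem \ref{thm:independence_levram}.

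Your approach instead restricts to a prime $\good$ coprime to the isogeny degree so that the Tate modules are genuinely isomorphic, and then invokes Theorem \ref{thm:independence_levram} to remove the restriction. This is perfectly valid, but it is less self-contained: Theorem \ref{thm:independence_levram} ultimately rests on the semi-stable reduction theorem, whereas the paper's finite-index argument is elementary linear algebra. On the other hand, your route makes explicit how the independence-of-$\good$ result can absorb this kind of difficulty, which is a useful perspective.
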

\begin{proof}
Let $\psi: A \rightarrow A'$ be an isogeny between $A$ and $A'$.
Consider $\rho_{A,\good} : G_{\locfield} \rightarrow \Aut(\Tategood(A))$
and $\rho_{A',\good} : G_{\locfield} \rightarrow \Aut(\Tategood(A'))$.
If any group $H \subset G_{\locfield}$ acts trivially on $\Tategood(A)$
then it also acts trivially on $\Tategood(A')$, because $\psi(\Tategood(A))$
is a finite index subgroup of $\Tategood(A')$.
In particular this holds for the ramification groups $I_{\locfield}^{i}$.
\end{proof}

Before we move on to the next section, we will cite a theorem that
allows us to prove that certain abelian varieties are modular:

\begin{theorem} \label{cat-thm:abvar_torsionfilter}
Let $A$ be an abelian variety over $\Q$ such that $A[\good^n]$ is an object
in $\catgen$ for all $n \in \N$.
Suppose that
\begin{itemize}
\item for all simple non-\'etale group schemes $T$ in $\catgen$ 
and all simple \'etale group schemes $E$ in $\catgen$,
the group $\extt{\catgen}{T}{E}$ is trivial.
\item the maximal abelian extension $R$ of 
$F$, where $F$ is the compositum of all $\Q(E)$ with $E$ running over all simple \'etale group
schemes $E$ in $\catgen$,
that is unramified outside $\badrat$ and at most tamely
ramified at primes over $\badrat$, is a finite cyclic extension $F$.
\end{itemize}
Then $A[\good]$ does not have subquotients that are \'etale or 
of multiplicative type.
\end{theorem}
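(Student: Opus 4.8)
The plan is to argue by contradiction, in three moves: a reduction to the Cartier self‑dual case, a sorting of a composition series of $A[\good]$ using the $\Ext$‑vanishing hypothesis, and a final elimination using the Odlyzko tables together with the cyclicity hypothesis. First I would reduce to the case where $M:=A[\good]$ is self‑dual: replacing $A$ by $A\times A^{\vee}$ costs nothing, since $A^{\vee}$ is isogenous to $A$ and hence, by the isogeny‑invariance of the level of ramification proved in Section~\ref{sec:levelram}, also has good reduction outside $\badrat$ with all $(A\times A^{\vee})[\good^{n}]$ in $\catgen$, while $A\times A^{\vee}$ is principally polarized, so the Weil pairing makes $(A\times A^{\vee})[\good]$ self‑dual and $A[\good]$ is a direct factor of it. Since ``\'etale'' and ``of multiplicative type'' are each stable under passage to flat subgroup schemes and to quotients over $\basering$ (both properties can be checked on the fibre at $\good$), a composition‑series argument shows that $M$ has an \'etale (resp.\ multiplicative) subquotient exactly when it has a simple \'etale (resp.\ multiplicative) composition factor, and by self‑duality these two conditions are equivalent. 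So I assume $M$ has a simple \'etale composition factor and seek a contradiction.

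Next I would exploit the first hypothesis. The category $\catgen$ is stable under extensions, subobjects and quotients (Proposition~\ref{prop:categoryprop}), and under Cartier duality (wild inertia at $\badrat$ is pro‑$\badrat$, hence acts trivially on $\mu_{\good}$ because $\badrat\neq\good$, so levels of ramification are preserved by dualising). Combining the vanishing of $\Ext^{1}_{\catgen}(T,E)$ for $T$ simple non‑\'etale and $E$ simple \'etale with this duality, one also gets $\Ext^{1}_{\catgen}(\text{mult},\text{\'etale})=\Ext^{1}_{\catgen}(\text{mult},\text{non-mult})=0$ on simple objects (where ``non-mult'' means \'etale, biconnected, or having both an \'etale and a connected fibre at $\good$). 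Therefore, in any composition series of $M$, a simple \'etale factor may be transposed above any simple non‑\'etale factor lying directly above it, and a simple multiplicative factor may be transposed below any simple non‑multiplicative factor lying directly below it. Bubble‑sorting, $M$ acquires the canonical filtration $0\subseteq\calM_{m}\subseteq\calM_{m,bc}\subseteq M$ over $\basering$, where $\calM_{m}$ is the maximal multiplicative‑type subgroup scheme, $\calM^{\mathrm{et}}:=M/\calM_{m,bc}$ is the maximal \'etale quotient, and $\calM_{m,bc}=\ker(M\to\calM^{\mathrm{et}})$ so that $\calM_{m,bc}/\calM_{m}$ has no \'etale and no multiplicative composition factor; this filtration is autodual, with $\calM_{m}=(\calM^{\mathrm{et}})^{\vee}$. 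By our assumption $\calM^{\mathrm{et}}\neq 0$, whence also $\calM_{m}\neq 0$, and it now suffices to derive a contradiction from $\calM^{\mathrm{et}}\neq0$.

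Finally I would bring in the Odlyzko tables and the cyclicity hypothesis. Being \'etale over $\basering$, $\calM^{\mathrm{et}}$ is unramified outside $\badrat$ and ramified of level $i$ at $\badrat$; its simple composition factors are simple \'etale objects of $\catgen$, hence are split by $F$, so over $F$ the $G_{F}$‑module $\calM^{\mathrm{et}}(\Alg{\Q})$ is a successive extension of trivial modules, and dually $\calM_{m}(\Alg{\Q})$ is a successive extension of copies of $\mu_{\good}$. I would then study the $\good$‑extension of $F$ generated by $\calM^{\mathrm{et}}$, by $\calM_{m}$ and by the perfect self‑pairing between them: its Galois group acts unipotently on each graded piece, it is unramified outside $\badrat$ (the modules $\calM^{\mathrm{et}}$ and $\calM_{m}$ are unramified at $\good$ up to the cyclotomic twist inside $\mu_{\good}$), and — because $\badrat\neq\good$ forces wild inertia at $\badrat$ to act trivially on the $\good$‑torsion modules in play while the level‑of‑ramification bound controls the tame part, the root‑discriminant estimate of Proposition~\ref{prop:actionrootdiscb} keeping everything below the Odlyzko threshold — it is at most tamely ramified at the primes above $\badrat$; hence it is contained in $R$. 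Since $R/F$ is finite cyclic by the second hypothesis, this extension is cyclic, and, combined with the self‑duality $\calM_{m}=(\calM^{\mathrm{et}})^{\vee}$ and with the fact that $M$ is the $\good$‑torsion of a polarized abelian variety over $\Q$ with good reduction outside $\badrat$, this is incompatible with $\calM^{\mathrm{et}}\neq 0$. This contradiction proves $\calM^{\mathrm{et}}=0$, i.e.\ that $A[\good]$ has no \'etale and no multiplicative‑type subquotients.

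The main obstacle is this last move. The first two steps are essentially formal once the two hypotheses are available, but turning the cyclicity of $R/F$ into the \emph{vanishing} of $\calM^{\mathrm{et}}$ — rather than a mere bound on its dimension — requires two genuinely delicate points: that the cohomology classes governing the mixed extension $0\to\calM_{m,bc}\to M\to\calM^{\mathrm{et}}\to 0$ are tamely ramified at $\badrat$ (this is where the defining level‑of‑ramification condition of $\catgen$ and the coprimality $\badrat\neq\good$ are used in an essential way), and that the self‑duality of $M$ together with its origin as the $\good$‑torsion of an abelian variety closes the remaining numerical gap. This is precisely where the tables of Odlyzko and the class field theory of $F$ enter.
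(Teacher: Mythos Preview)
The paper does not prove this theorem in the text; the proof reads ``See \cite{Verhoek:2009}''. So one can only compare your proposal against what the intended argument must look like from the way the theorem is applied later and from the explicit reference to \cite[Proposition~2.6.3]{Verhoek:2009} in the proof of Lemma~\ref{j049-lem:isogeny}.

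Your first two moves are correct: the reduction to the principally polarized case via $A\times A^{\vee}$, and the bubble-sorting of a composition series using the $\Ext$-vanishing hypothesis together with Cartier duality of $\catgen$, do yield the three-step filtration $0\subseteq \calM_{m}\subseteq \calM_{m,bc}\subseteq A[\good]$ with multiplicative, biconnected, and \'etale graded pieces, and the filtration is autodual as you say.

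The gap in your third move is real and more structural than you diagnose. You attempt to reach a contradiction from $\calM^{\mathrm{et}}\neq 0$ using only $A[\good]$, the self-pairing, and the cyclicity of $R/F$, gesturing at the end toward ``the fact that $M$ is the $\good$-torsion of a polarized abelian variety'' without saying how this enters. But cyclicity of $R/F$ only bounds the \emph{field} $F(\calM^{\mathrm{et}})$; it says nothing about the order of $\calM^{\mathrm{et}}$, since a constant \'etale group scheme of arbitrary rank generates a trivial extension of $F$. Your invocation of the Odlyzko tables here is also misplaced: in this paper those tables are used to \emph{verify} the two hypotheses in the concrete categories $\catd$, $\catb$, $\catf$, not in the proof of Theorem~\ref{cat-thm:abvar_torsionfilter} itself. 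What is missing from your outline is any use of the hypothesis that $A[\good^{n}]\in\catgen$ for \emph{all} $n$, which is exactly where the abelian-variety structure does its work. The argument in \cite{Verhoek:2009} (following Schoof) runs the sorting for every $n$, producing filtrations $0\subseteq M_{m}(n)\subseteq M_{bc}(n)\subseteq A[\good^{n}]$, and the decisive technical step --- precisely \cite[Proposition~2.6.3]{Verhoek:2009}, invoked verbatim in the proof of Lemma~\ref{j049-lem:isogeny} --- is that the \'etale and multiplicative graded pieces have order bounded independently of $n$; the finiteness and cyclicity of $R/F$ is what makes this uniform bound possible. One then exploits the $\good$-divisibility of $A[\good^{\infty}]$ to pass from boundedness to vanishing. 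Your sketch contains neither the boundedness step nor the passage from boundedness to vanishing, and the two ``delicate points'' you name (tameness of the extension classes at $\badrat$, and self-duality closing a numerical gap) are not the actual obstacles.
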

\begin{proof}
See \cite{Verhoek:2009}.
\end{proof}

\section{Modular forms} \label{sec:modforms}

Let $f$ be a newform of weight $2$ and level $N$.
One can associate to $f$ an abelian variety $A_f$ over $\Q$
and a representation $\rho_{A_f}$.
The representation $\rho_{A_f}$ obtained like this 
is the case $k=2$ of the next theorem:

\begin{theorem} \label{thm:repr_to_newform}
Let $f \in S_k(\Gamma_1(N),\epsilon)$ be a newform, let $E$ be its coefficient field
and let $a_p$ denote its Hecke eigenvalues for $p \nmid N$. 
Let $\lambda$ be a prime ideal in $E$ with residue characteristic $\good$.
Then there exists an irreducible representation
$\rho_f : G_\Q \longrightarrow \Gl_2(E_\lambda)$
such that $\rho_f$ is unramified outside $\good N$.
\end{theorem}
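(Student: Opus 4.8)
The plan is to break the statement into three parts: the construction of a two-dimensional $E_\lambda$-representation attached to $f$, the determination of its ramification, and its irreducibility. For the construction in weight $k=2$ (the case actually used in the paper) I would realize $\rho_f$ geometrically: the Hecke algebra $\mathbb{T}\subset\End(J_1(N))$ of the Jacobian of the modular curve $X_1(N)$ acts on $J_1(N)$, the newform $f$ corresponds to a ring homomorphism $\mathbb{T}\to\calO_E$, and dividing $J_1(N)$ by the kernel of this homomorphism produces an abelian variety $A_f$ whose $\good$-adic Tate module $\Tategood(A_f)\otimes\Q_\good$ is free of rank two over $E\otimes_\Q\Q_\good=\prod_{\lambda\mid\good}E_\lambda$; the $\lambda$-component is $\rho_f$, giving $\rho_f:G_\Q\to\Gl_2(E_\lambda)$. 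For general weight $k\geq2$ I would replace $J_1(N)$ by the Kuga--Sato variety: take a suitable model of the universal elliptic curve $\calE\to X_1(N)$, let $\calW$ be a smooth compactification of its $(k-2)$-fold fibre product, and cut out the $f$-isotypic piece of the middle-degree étale cohomology $H^{k-1}_{\text{\'et}}(\calW_{\qbar},\Q_\good)$ by the Hecke projector attached to $f$; this piece is again two-dimensional over $E_\lambda$. This is Deligne's construction.

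Next, the ramification. In the weight-two case Igusa's good-reduction theorem for $X_1(N)$ gives that $J_1(N)$, and hence the quotient $A_f$, has good reduction outside $N$; by the criterion of N\'eron--Ogg--Shafarevich the representation $\rho_{A_f,\good}$ on $\Tategood(A_f)$ is unramified at every prime $p\nmid N\good$, and therefore so is $\rho_f$. For general $k$ the same conclusion follows by smooth and proper base change applied to $\calW$, which has good reduction away from $N$, so that its $\good$-adic cohomology is unramified outside $N\good$. Along the way the Eichler--Shimura congruence relation $T_p\equiv\Frob_p+p\langle p\rangle\Frob_p^{-1}$ in characteristic $p$ identifies $\Trace\rho_f(\Frob_p)=a_p$ and $\det\rho_f(\Frob_p)=\epsilon(p)\,p^{\,k-1}$ for $p\nmid N\good$, which is the description of $\rho_f$ actually needed later.

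For irreducibility, suppose $\rho_f$ were reducible. Then its semisimplification would split as a sum $\chi_1\oplus\chi_2$ of characters of $G_\Q$, each unramified outside $N\good$, and comparing Frobenius traces would give $a_p=\chi_1(\Frob_p)+\chi_2(\Frob_p)$ for all $p\nmid N\good$. A short argument—either via the factorization of the associated $L$-function, or concretely by playing the Ramanujan bound $|a_p|\leq2p^{(k-1)/2}$ against the fact that each $\chi_i$ is, up to a finite-order twist, an integral power of the cyclotomic character—forces $f$ to be an Eisenstein series, contradicting that $f$ is a cusp form. Alternatively one may simply invoke the irreducibility theorems of Ribet and Deligne--Serre.

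The main obstacle is the construction in the higher-weight case: producing the Kuga--Sato variety together with a good integral model, controlling the Hecke action on its middle-degree étale cohomology, and checking that the relevant projector isolates exactly a rank-two $E_\lambda$-piece. Once the representation is in hand the ramification statement is a formal consequence of N\'eron--Ogg--Shafarevich (resp. smooth proper base change), and the irreducibility is comparatively soft.
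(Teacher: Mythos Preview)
Your sketch is correct, but you should know that the paper does not actually prove this theorem: its entire proof is the single line ``See for example \cite[Theorem 2]{Carayol:1987}.'' So there is no approach to compare against; the statement is treated as a black box from the literature.

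What you have written is a faithful outline of what lies behind that citation: Shimura's construction of $A_f$ for weight $2$, Deligne's Kuga--Sato construction for $k\geq 2$, good reduction of the relevant varieties giving unramifiedness via N\'eron--Ogg--Shafarevich or smooth proper base change, and Ribet's irreducibility argument. Each of these steps is standard and your summary is accurate. If anything, one could add that descending the coefficients from $\overline{\Q}_\ell$ down to $E_\lambda$ requires a small extra argument (the traces lie in $E$ by construction, so one conjugates), and that the paper only ever uses the case $k=2$, where your first paragraph already suffices.
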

\begin{proof}
See for example \cite[Theorem 2]{Carayol:1987}.
\end{proof}

We will apply this to the case that $N$ is a power of $\badrat$.
We want to consider the mod-$\good$ representation $\rhobar_f$ 
coming from the above $\good$-adic representation $\rho_f$.
The representation $\rhobar_f$ is well-defined up to semi-simplification.
Suppose $\rhobar_f$ is ramified of level $i$ at $\badrat$.
Then the finite flat group scheme $A_f[\good]$ over $\Z[\frac{1}{\badrat}]$
is an object in the category $\catgen$.
To prove that all abelian varieties over $\Q$ with good reduction outside $\badrat$
and with level of ramification $i$ at $\badrat$ are modular,
we try to find all simple objects in the category $\catgen$.
By Proposition \ref{prop:actionrootdiscb},
the root discriminant $\delta_L$ of the maximal $\good$-torsion extension $L$ of $\catgen$
satisfies $\delta_L < \good^{1+\frac{1}{\good-1}} \badrat^{u+1}$.
If the bound on the root discriminant $\delta_L$ is smaller than $42$,
we can use the tables in \cite{Martinet:1982} to get an upper bound
on the degree of $L$ and prove that $L$ is finite.

Since there are no abelian varieties over $\Q$ with good
reduction everywhere, we assume that $i \geq 0$.
For $\badrat \geq 13$ and $\good=2$, the bound $\good^{1+\frac{1}{\good-1}} \badrat^{u+1}$ is 
larger than $42$.
Therefore, we consider only the primes $2,3,5$ and $7$.
Let $\badrat$ be one of these primes
and let $\good = 2$ if $\badrat \not=2$, otherwise let $\good=3$.

What we do next is translate a bound on the level of ramification at $p$
of $\rhobar_{f}$ into a bound on the exponent $n$ of the level $\badrat^n$ of the newform $f$.
Before we make this translation though, we recall the definitions of the exponent of the Artin conductor and the Swan conductor
that we will use in making this translation.

Let $F$ be a field of characteristic different from $p$.
Let $\rho : G_{\Q_p} \rightarrow \Gl(V)$ be a continuous representation
of $G_{\Q_p}$ on a finite dimensional topological $F$-vector space $V$.
Then \emph{the exponent of the Artin conductor} is  
$$
a(\rho) = \dim(V) - \dim(V^{I_p}) + \int_1^{\infty} \dim(V/V^{G^u})  \mathrm{d} u .
$$

We introduce additional notation for the representation $\rho$:
If $K' = \Alg{\Q_p}^{\ker{\rho}}$, then we define 
$$
u(\rho) := u_{K'/K} \quad \text{and} \quad i(\rho) := i_{K'/K} .
$$
We note that for a character $\chi$, the exponent $a(\chi)$ is equal to $u(\chi)+1$.

\begin{theorem} \label{thm:carayol}
If $f \in S_k(\Gamma_1(N))$ is a newform, then for all $p|N$ and $p \nmid \ell$
the equality $v_p(a(\rho_{f,p})) = v_p(N)$ holds. 
\end{theorem}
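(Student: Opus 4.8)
The plan is to deduce this from the local--global compatibility of the Langlands correspondence for $\Gl_{2}$ over $\Q$ (due to Langlands, Deligne and Carayol): the statement is the well-known compatibility between the level of a newform and the Artin conductor of its attached Galois representation, so the work consists of assembling the right references and matching up definitions rather than of a genuinely new argument. Concretely, I would realise $f$ inside an automorphic representation, recall that its level records the local conductors, and transport this through local Langlands to the Galois side.

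First I would recall that the newform $f$ generates an irreducible cuspidal automorphic representation $\pi \cong \bigotimes'_{v} \pi_{v}$ of $\Gl_{2}$ over $\Q$, and that by the theory of newvectors (Casselman, Deligne--Langlands) the level $N$ is characterised by $N = \prod_{p} p^{c(\pi_{p})}$, where $c(\pi_{p})$ is the conductor exponent of the local component $\pi_{p}$; in particular $v_{p}(N) = c(\pi_{p})$ for every prime $p$. Next, fixing $p \mid N$ with $p \neq \good$, I would use Grothendieck's $\good$-adic monodromy theorem to attach to the restriction of $\rho_{f}$ to a decomposition group at $p$ a Weil--Deligne representation $\mathrm{WD}(\rho_{f,p})$, whose Artin conductor equals $a(\rho_{f,p})$ in the sense defined above: the Swan term is the wild inertia contribution and is left unchanged, while the drop in the inertia invariants of the $\good$-adic representation computes precisely the monodromy contribution. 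By Carayol's theorem \cite{Carayol:1987} (building on Langlands and Deligne), $\mathrm{WD}(\rho_{f,p})$ corresponds to $\pi_{p}$ under the local Langlands correspondence for $\Gl_{2}(\Q_{p})$, and this correspondence preserves conductors; hence $a(\rho_{f,p}) = c(\pi_{p}) = v_{p}(N)$, which is the asserted identity.

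The hard part is entirely contained in the local--global compatibility of Langlands--Deligne--Carayol together with the conductor-preserving property of local Langlands; I would not attempt to reprove these and would simply cite them. For the weight-two case actually used in this paper one may instead run the argument through the $\good$-adic Tate module of $A_{f}$, the N\'eron--Ogg--Shafarevich criterion and the conductor--discriminant formula, which reduces the claim to the identity $\mathrm{cond}(A_{f}) = N^{\dim A_{f}}$; but since that identity rests on the same compatibility statement, nothing is really gained, and I would present the citation route above.
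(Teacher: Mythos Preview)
Your proposal is correct and takes essentially the same approach as the paper: the paper's entire proof is the single line ``See \cite[Theorem 4]{Carayol:1987}'', so both you and the author are simply invoking Carayol's local--global compatibility result. Your write-up is just a more expansive unpacking of what that citation contains (newvector theory giving $v_p(N)=c(\pi_p)$, local Langlands preserving conductors, and the Weil--Deligne formalism matching $a(\rho_{f,p})$ with the automorphic conductor), which is perfectly fine and arguably more informative, but there is no substantive difference in strategy.
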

\begin{proof}
See \cite[Theorem 4]{Carayol:1987}.
\end{proof}

\begin{lemma} \label{lem:newform_level_ram}
Let $n \in \N_{\geq 2}$ and let $f \in S_2(\Gamma_1(\badrat^n),\epsilon)$ be a newform.
If $\rho_{f,\badrat}$ is
\begin{enumerate}[(a)]
\item irreducible, then $u(\rho_{f,\badrat})=\frac{n}{2}-1$.
\item decomposable such that 
$$
\rho_{f,\badrat} \simeq
\left(\begin{array}{cc}
\epsilon \omega_\good \chi & 0 \\0 & \chi^{-1}
\end{array}\right) ,
$$
then $u(\rho_{f,\badrat})=n-\min(a(\chi), a(\epsilon \chi))-1$. 
\item reducible and indecomposable such that 
$$
\rho_{f,\badrat} \simeq
\left(\begin{array}{cc}
\omega_\good \chi & * \\0 & \chi
\end{array}\right) ,
$$
where $\chi$ is a character with $\chi^2 = \epsilon$,
then $u(\rho_{f,\badrat})= \max( a(\chi) -1, 0)$.
\end{enumerate}
\end{lemma}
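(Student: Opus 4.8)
The plan is to restrict $\rho := \rho_{f,\badrat}$ to a decomposition group $G_{\Q_\badrat}$ of $\badrat$ (everything below is well defined up to the choice of such a group), to write $V$ for the underlying two-dimensional space, and to feed Carayol's Theorem~\ref{thm:carayol}, which gives $a(\rho)=v_\badrat(\badrat^n)=n$, into the conductor formula $a(\rho)=\dim V-\dim V^{I_\badrat}+\mathrm{Sw}(\rho)$ together with the definition $u(\rho)=\sup\{u : G^u\not\subseteq\ker\rho\}$. Throughout I use: the cyclotomic character $\omega_\good$ is unramified at $\badrat$ (since $\good\neq\badrat$), so twisting by it changes neither $a$, nor $u$, nor the restriction to $I_\badrat$; for a character $\psi$ one has $a(\psi)=u(\psi)+1$ (recalled in the excerpt), and $a$, $u$ are unchanged under inversion; $a$ is additive on direct sums, and $u$ of a direct sum is the largest of the $u$'s of the summands; and the ramification groups $G^u$ are normal in $G_{\Q_\badrat}$ (Herbrand's theorem, cf.\ Lemma~\ref{thm:unumquotient}).

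For (a): an unramified representation factors through the abelian group $G_{\Q_\badrat}/I_\badrat$ and is therefore reducible, so $\rho$ is ramified; moreover, for every $u$ the subspace $V^{G^u}$ is a $G_{\Q_\badrat}$-subrepresentation of the irreducible $V$, hence is $0$ or $V$, and it equals $V$ exactly for $u$ larger than $u(\rho)$. Thus $\dim V-\dim V^{I_\badrat}=2$ and the wild term $\mathrm{Sw}(\rho)$, whose integrand $\dim(V/V^{G^u})$ is now identically $2$ on its range, equals $2u(\rho)$; hence $n=a(\rho)=2+2u(\rho)$, i.e.\ $u(\rho)=\tfrac n2-1$. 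For (b): here $\rho|_{G_{\Q_\badrat}}\cong\epsilon\omega_\good\chi\oplus\chi^{-1}$, so additivity of $a$ and unramifiedness of $\omega_\good$ give $n=a(\epsilon\chi)+a(\chi)$, while $u(\rho)=\max\bigl(u(\epsilon\omega_\good\chi),\,u(\chi^{-1})\bigr)=\max\bigl(a(\epsilon\chi),\,a(\chi)\bigr)-1$; rewriting $\max(x,y)=x+y-\min(x,y)$ with $x+y=n$ yields $u(\rho)=n-\min\bigl(a(\chi),\,a(\epsilon\chi)\bigr)-1$.

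For (c): $u(\rho)$ depends only on $\rho$ restricted to the wild inertia $P$ of $I_\badrat$, since $G^u\subseteq P$ for $u>0$. Now $P$ is pro-$\badrat$, so its image under $\rho$ is a finite $\badrat$-group (as $\good\neq\badrat$), and $\rho|_P$ is thus a representation of a finite group over a field of characteristic zero, hence semisimple; being reducible, $\rho|_P$ is a sum of two characters of $P$, and since a composition series of $V$ as a $G_{\Q_\badrat}$-module restricts to a filtration of $\rho|_P$ with graded pieces $\omega_\good\chi|_P$ and $\chi|_P$, both equal to $\chi|_P$, we conclude $\rho|_P\cong\chi|_P\oplus\chi|_P$. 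Hence $u(\rho)$ is the break of $\chi|_P$: it equals $u(\chi)=a(\chi)-1$ when $\chi$ is wildly ramified, and otherwise $G^u$ acts trivially for every $u>0$, so $\rho$ is at most tamely ramified at $\badrat$ and $u(\rho)\in\{-1,0\}$; the value $-1$ would force $a(\rho)=0$, contradicting $n\geq 2$, so $u(\rho)=0$. (In this last situation $a(\chi)=1$ once $n\geq 2$, the case $a(\chi)=0$ being the Steinberg case with $n=1$, for which the formula also returns $0$.) In every case $u(\rho)=\max(a(\chi)-1,0)$.

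The step I expect to be the main obstacle is (c): one must be sure that $u(\rho)$ is genuinely computed by $\rho|_P$ alone --- equivalently that $G^u\subseteq P$ for all $u>0$ --- and that the given non-split extension of characters becomes split after restriction to $P$; the finite-image-plus-semisimplicity argument is what makes this precise. (Alternatively: for $u>u(\chi)$ the characters $\chi$ and $\omega_\good\chi$ agree on and are trivial on $G^u$, so the off-diagonal entry restricted to $G^u$ is a continuous homomorphism from a pro-$\badrat$ group to a $\good$-adic vector space, which is trivial since $\good\neq\badrat$.) The edge cases in (c) where $\chi$ is at most tamely ramified need the hypothesis $n\geq 2$ and a short separate computation. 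Cases (a) and (b) are comparatively routine once one has the normality of the $G^u$ and the behaviour of conductors of characters under unramified twist in hand; in (a) the only delicate point is that $V^{G^u}$ really is a $G_{\Q_\badrat}$-subrepresentation, which is exactly Herbrand's theorem as recalled in Lemma~\ref{thm:unumquotient}.
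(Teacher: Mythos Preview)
Your proof is correct and follows essentially the same route as the paper's: Carayol's theorem gives $n=a(\rho)$, and then in each case you read off $u(\rho)$ from the shape of $\rho$ on inertia. Your treatment of (c) is in fact more careful than the paper's terse version---the paper simply asserts that $u(\rho_{f,\badrat})=u(\chi)$ once $a(\chi)>1$, whereas you justify this by showing $\rho|_P\cong\chi|_P\oplus\chi|_P$ via the finite-$\badrat$-group/Maschke argument (your parenthetical alternative, that the off-diagonal entry restricted to $P$ is a continuous homomorphism from a pro-$\badrat$ group to an $\good$-adic additive group and hence vanishes, is the same mechanism unpacked).
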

\begin{proof}

Let $E_\lambda$ be the $\lambda$-adic completion of the number field $E$
of Theorem \ref{thm:repr_to_newform}.
Let $V$ be the $2$-dimensional $E_\lambda$-vector space on which $G=G_{\Q_p}$ acts.
By Theorem \ref{thm:carayol} we have that $n=a(\rho_{f,p})$. 

Part (a) follows because $V$ is irreducible, so $\dim(V/V^{G^i})$ is either $2$ or $0$
depending on whether $G^i$ acts trivially or not.

For part (b), note that $\omega_\good$ is unramified and that 
$n = a(\rho_{f,p})  = a(\chi^{-1}) + a(\epsilon \chi) = a(\chi) + a(\epsilon \chi)$.
We may suppose without loss of generality that $a(\chi) \geq a(\epsilon \chi)$.
Then $n-a(\epsilon \chi)-1= a(\chi)-1=u(\chi)=u(\rho_{f,p})$. 

To prove part (c), we suppose that the $*$ does not vanish when restricted to the inertia subgroup, otherwise
the analysis is the same as the one in part (b).
We remark that if $a(\chi) = 0$ or $a(\chi) = 1$, then $u(\rho_{f,p}) = 0$.
Therefore we suppose that $a(\chi) > 1$, so that the representation $\rho_{f,p}$ is wildly ramified.
Then $u(\rho_{f,p}) = u(\chi)=a(\chi)-1 > 0$.
\end{proof}

\begin{proposition} \label{prop:newform_level_ram_lowerbound}
Let $f$ be a newform of level $p^n$.
If $\rho_f$ is ramified of level $i$ at $p$, then $n \leq 2(i+1)$.
If $\rhobar_f$ is ramified of level $i \geq 0$ at $p$, then $n \leq 2(i+1)$.
If $\rhobar_f$ is unramified at $p$, then $n \leq 2$.
\end{proposition}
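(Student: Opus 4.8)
The plan is to deduce all three assertions from Lemma \ref{lem:newform_level_ram}, together with the observation that reduction mod $\good$ cannot increase the level of ramification. First I would note that, since there are no newforms of level $1$ or $p$ in weight $2$ (the relevant spaces $S_2(\Gamma_1(1))$ and $S_2(\Gamma_1(p))$ being trivial for the small primes in play, or more generally by invoking that $n\geq 2$ is the only interesting case as arranged in the text), we may assume $n\geq 2$ and apply Lemma \ref{lem:newform_level_ram}. For the $\good$-adic statement: if $\rho_f$ is ramified of level $i$ at $p$, then by Definition of level of ramification $u(\rho_{f,p})\leq i$, and we run through the three cases of the lemma. In case (a), $u(\rho_{f,p})=\frac{n}{2}-1\leq i$, so $n\leq 2(i+1)$. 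In case (b), $u(\rho_{f,p})=n-\min(a(\chi),a(\epsilon\chi))-1\leq i$; since $a(\chi)+a(\epsilon\chi)=n$ and each conductor exponent is a nonnegative integer, $\min(a(\chi),a(\epsilon\chi))\leq n/2$, hence $n=u(\rho_{f,p})+\min(\cdots)+1\leq i+\frac{n}{2}+1$, giving $\frac{n}{2}\leq i+1$, i.e. $n\leq 2(i+1)$. In case (c), $u(\rho_{f,p})=\max(a(\chi)-1,0)\leq i$; here $\chi^2=\epsilon$ so $a(\rho_{f,p})=2a(\chi)=n$ when $\chi$ is ramified (and the bound is immediate when it is not), whence $a(\chi)=n/2$ and $n/2-1\leq i$, again $n\leq 2(i+1)$.

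For the second assertion, the point is that $\rhobar_f$ is obtained from $\rho_{f,p}$ by restriction to the residual action, so that $\ker(\rho_{f,p})\subseteq\ker(\rhobar_f)$ (up to semisimplification, which does not enlarge the kernel on the wild inertia side). Consequently, for every $u$, if $I_p^u\subseteq\ker(\rho_{f,p})$ then $I_p^u\subseteq\ker(\rhobar_f)$; equivalently $u(\rhobar_f)\leq u(\rho_{f,p})$. Therefore if $\rhobar_f$ is ramified of level $i\geq 0$ at $p$, this does not directly bound $\rho_{f,p}$ — so instead I would re-run the case analysis of Lemma \ref{lem:newform_level_ram} directly at the residual level, which is legitimate because the proof of that lemma only uses the shape of $\rho_{f,p}$ on inertia and the conductor identity $n=a(\rho_{f,p})$, both of which are visible after reduction when $\rhobar_f$ has the same inertia image type. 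More carefully, the characters $\chi$, $\epsilon$, $\omega_\good$ appearing there reduce mod $\good$, the ramification filtration jumps of a character are unchanged under reduction mod a prime of different residue characteristic (wild inertia is a pro-$p$ group, $p\neq\good$, so a character of $\good$-power order is trivial on wild inertia whether or not one reduces), so $a(\bar\chi)=a(\chi)$ for the relevant characters, and the same three inequalities yield $n\leq 2(i+1)$.

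For the third assertion, $\rhobar_f$ unramified at $p$ means $u(\rhobar_f)=-1<0$, so in each case of the (residual) lemma the corresponding formula for $u$ must be $\leq -1$: in case (a) that forces $\frac{n}{2}-1\leq -1$, impossible for $n\geq 2$ unless... in fact it gives $n\leq 0$, so case (a) with $n\geq 2$ cannot occur; in case (b), $n-\min(a(\bar\chi),a(\epsilon\bar\chi))-1=-1$ forces $\min(a(\bar\chi),a(\epsilon\bar\chi))=n$, hence both equal $n$ and $a(\rho)=a(\bar\chi)+a(\epsilon\bar\chi)=2n=n$, so $n=0$; in case (c), $\max(a(\bar\chi)-1,0)=-1$ is impossible. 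The conclusion is that if $\rhobar_f$ is unramified at $p$ then $n\leq 2$ (the newform must have level $1$ or $p$ or $p^2$, and in the last case it is case (c) with $a(\bar\chi)\in\{0,1\}$). I expect the main obstacle to be the second paragraph: making precise that the ramification data entering Lemma \ref{lem:newform_level_ram} — the conductor exponents of the constituent characters and the isomorphism type of the inertia action — survive reduction mod $\good$ intact, so that the lemma can be applied verbatim to $\rhobar_f$. This requires knowing that the semisimplified $\rhobar_f$ has the same decomposition-indecomposition-irreducibility pattern on inertia as $\rho_{f,p}$ (or at least one with no larger $u$), which follows from $\good\neq p$ and the structure of the tame quotient of $I_p$, but deserves a careful sentence.
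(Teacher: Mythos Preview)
Your treatment of the first assertion is correct and matches the paper: a direct case analysis via Lemma~\ref{lem:newform_level_ram}, using that $\min(a(\chi),a(\epsilon\chi))\leq n/2$ in case~(b).

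For the second and third assertions you take a genuinely more complicated route than the paper, and you nearly stumble on the simpler argument without recognizing it. You correctly observe that $\ker(\rho_{f,p})\subseteq\ker(\rhobar_f)$, note that this inequality goes the wrong way, and then embark on a residual re-run of Lemma~\ref{lem:newform_level_ram}. But the paper's point is that the \emph{reverse} inclusion holds on wild inertia: for $u>0$ the group $I_p^u$ is pro-$p$, while the kernel of reduction $\Gl_2(O_\lambda)\to\Gl_2(k_\lambda)$ is pro-$\good$; since $p\neq\good$, a pro-$p$ group mapping into a pro-$\good$ group has trivial image. Hence $I_p^u\subset\ker(\rhobar_f)$ forces $I_p^u\subset\ker(\rho_{f,p})$ for every $u>0$. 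So if $\rhobar_f$ is ramified of level $i\geq 0$, then $\rho_f$ is too, and the first assertion applies directly. Likewise, if $\rhobar_f$ is unramified then $\rho_f$ is at most tamely ramified, i.e.\ ramified of level $0$, and the first assertion gives $n\leq 2$. You invoke exactly this pro-$p$/pro-$\good$ principle yourself, but only for the constituent characters $\chi$, $\epsilon\chi$ in cases~(b) and~(c); applying it to $\rho_f$ itself eliminates the need to worry about whether the irreducible/decomposable/indecomposable pattern survives reduction, which is the obstacle you flag at the end. Your detour is salvageable but the direct argument is both shorter and avoids the delicate point you identify.
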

\begin{proof}
The first part follows from Lemma $\ref{lem:newform_level_ram}$.
For example for the semi-simple case, the maximum value of $\min(a(\chi), a(\epsilon \chi))$ is bounded by $n/2$. 
The second statement follows because if $\rhobar_f$ is ramified of level $i \geq 0$ at $p$,
then so is $\rho_f$.

Finally, if $\rhobar_f$ is unramified at $p$, then $\rho_f$ is at most
tamely ramified and the third statement then follows again from
Lemma $\ref{lem:newform_level_ram}$.
\end{proof}

\begin{table}[htb]
  \centering
  \begin{tabular}{@{} |c|c|c|c|c|c|c| @{}}
    \hline
    newform & level & dim & conductor nebentypus & rep & $u(\rho_{f,p})$\\ %& discriminant bound   \\ 
     & & & & & \\
    \hline
    16A & $16$ & $2$ & $16$ & dec. &  $3$ \\%& $3^{3/2} \cdot 2^4 \approx 83.14$  \\ 
    32A & $32$ & $1$ & $1$ & irr. & $3/2$ \\%& $3^{3/2} \cdot 2^{\frac{5}{2}} \approx 29.39$  \\
    32B & $32$ & $4$ & $32$ & dec. &  $4$ \\%& $3^{3/2} \cdot 32 \approx 166.28$  \\
    32C & $32$ & $8$ & $32$ & dec. & $4$ \\%& $3^{3/2} \cdot 32 \approx 166.28$  \\
    %Modular symbols space of level 16, weight 2, character $.2, and dimension 1 over Cyclotomic Field of order 4 
    %and degree 2,
    64A & $64$ & $1$ & $1$ & irr. & $2$ \\%& $3^{3/2} \cdot 2^3 \approx 41.56$  \\ 
    27A &  $27$ & $1$ & $1$ & irr. &  $1/2$ \\%& $4 \cdot 3^{3/2} \approx 20.785$   \\ 
    27B & $27$ & $12$ & $27$ & dec. & $2$ \\%& $4 \cdot 3^3 = 108$   \\     
    81A & $81$ & $2$ & $1$ & irr. & $1$ \\%& $4 \cdot 3^2 = 36$   \\ 
    81B & $81$ & $2$ & $9$ & irr. & $1$ \\%& $4 \cdot 3^2 = 36$   \\ 
    81C & $81$ & $4$ & $9$ & irr. &  $1$ \\%& $4 \cdot 3^2 = 36$   \\ 
    81D & $81$ & $12$ & $27$ & dec. & $2$ \\%& $4 \cdot 3^3 = 108$   \\ 
    81E & $81$ & $144$ & $81$ & dec. &  $3$ \\%& $4 \cdot 81$   \\ 
 % Modular symbols space of level 27, weight 2, character $.1^2, and dimension 
 % 2 over Cyclotomic Field of order 9 and degree 6,
    25A & $25$ & $4$ & $25$ & dec. &  $1$ \\%& $4 \cdot 5^2 = 100$  \\ 
    25B & $25$ & $8$ & $25$ & dec. &  $1$ \\%& $4 \cdot 5^2 = 100$  \\ 
%    & $125$ & $2$ & $1$ & irr. & $1/2$ & $4 \cdot 5^{3/2} \approx 44.72$   \\ 
%    & $125$ & $2$ & $1$ & irr. & $1/2$ & $4 \cdot 5^{3/2} \approx 44.72$  \\ 
%    & $125$ & $4$ & $1$ & irr. & $1/2$ & $4 \cdot 5^{3/2} \approx  44.72$ \\ 
    49A & $49$ & $1$ & $1$ & irr. &  $0$ \\%& $4 \cdot 7 = 28$  \\ 
    49B & $49$ & $2$ & $7$ & irr. &  $0$ \\%& $4 \cdot 7 = 28$  \\ 
    49C & $49$ & $6$ & $49$ & dec. &  $1$ \\%& $4 \cdot 49 = 196$  \\ 
    49D & $49$ & $12$ & $49$ & dec. & $1$ \\%& $4 \cdot 49 = 196$  \\ 
    49E & $49$ & $48$ & $49$ & dec. & $1$ \\%& $4 \cdot 49 = 196$  \\ 
    \hline
  \end{tabular}
\vskip 3pt
  \caption{The "first" occurring newforms of prime power level.}

  \label{tab:modformlevel}
\end{table}

Proposition \ref{prop:newform_level_ram_lowerbound} allows us
to determine the maximal exponent $n$ of the level $\badrat^n$ of newforms $f$ of weight $2$ for which we
hope to determine all abelian varieties over $\Q$ having the same ramification level at $\badrat$ as $A_f$.
For instance, we determine the maximal exponent in the case $p=3$:
For this, we require that the inequality $2^{1+\frac{1}{2-1}} 3^{i+1} < 42$ is satisfied, so $i < 1.140$. 
We then use Proposition \ref{prop:newform_level_ram_lowerbound} to obtain that $n$ is at most $4$.
Doing the same for the primes $\badrat=2,5$ and $7$, we see that the exponent $n$ is at
most $6,2$ or $2$ respectively: Thus we are interested in newforms of level $16,32,64,27,81,25$ and $49$. 

The article \cite{LoefflerWeinstein:2010} permits us to determine for each newform $f$ 
if the representation $\rho_{f,p}$ is irreducible (coming from a supercuspidal automorphic representation),
decomposable (coming from a principal series automorphic representation) or reducible and indecomposable
(coming from a special automorphic representation).
In Table \ref{tab:modformlevel} each row corresponds to a newform of weight $2$.
In the column named \emph{rep} we indicate the form of the representation $\rho_{f,p}$:
in all cases considered, the representation turns out to be either decomposable or irreducible.

After a straight-forward computation we conclude that
only the newforms $32A,27A,49A$ and $49B$ satisfy $\good^{1+\frac{1}{\good-1}} \badrat^{i+1} < 42$,
where $\badrat$ is the prime dividing the level and $\good$ is chosen as above. 
In the next chapters we look at the newforms $32A,27A,49A$ and $49B$ and we
show that indeed all abelian varieties over $\Q$, that have the same ramification level at $p$ as
these modular forms, are modular.

\section{Good reduction outside $2$} \label{sec:j032}

In this section we prove the following theorem:

\begin{theorem} \label{thm:mainj032}
Let $A$ be an abelian variety over $\Q$ with good reduction outside $2$
such that $A$ is ramified of level $\frac{3}{2}$ at $2$.
Then, under the assumption of the generalized Riemann hypothesis,
$A$ is isogenous to a product of copies of the elliptic curve $J_{0}(32)$.
\end{theorem}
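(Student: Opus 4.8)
The plan is to study the mod-$3$ Galois representation attached to the $3$-torsion of $A$ and to show it comes from $J_0(32)$, then bootstrap from mod-$3$ to the $3$-adic Tate module and conclude by Faltings' isogeny theorem. First I would apply Proposition \ref{prop:linktatecat}: since $A$ has good reduction outside $2$ and is ramified of level $i = \frac32 > 0$ at $2$, the finite flat group scheme $\calA[3^n]$ lies in the category $\mathscr{C}_3(2,\tfrac32)$ for every $n$, and it suffices to understand $\calA[3]$, which is an object of this category. By Proposition \ref{prop:actionrootdiscb} the maximal $3$-torsion extension $L = T_{\mathscr{C}_3(2,3/2)}$ has root discriminant bounded by $3^{1 + \frac12}\cdot 2^{1 + 3/2} = 3^{3/2}\,2^{5/2}$, which is well below $42$; hence, under GRH and using Odlyzko's tables \cite{Martinet:1982}, $L/\Q$ is a finite extension of bounded degree. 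This is the step that makes everything else a finite computation.

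Next I would invoke Theorem \ref{cat-thm:abvar_torsionfilter} to rule out \'etale and multiplicative subquotients of $\calA[3]$. Concretely one must verify its two hypotheses for $\mathscr{C}_3(2,\tfrac32)$: that $\Ext^1$ between simple \'etale and simple non-\'etale objects vanishes, and that the relevant maximal abelian extension $R/F$ (with $F$ generated by the simple \'etale objects) unramified outside $2$ and tamely ramified above $2$ is finite cyclic over $F$. Both follow from class field theory applied to the number field $L$ of controlled root discriminant, together with the classification of simple group schemes of $3$-power order in the category — the simple \'etale ones correspond to characters $G_\Q \to \F_3^\times$ unramified outside $2$, and $\mu_3$ (and its Galois twists) exhaust the multiplicative ones, since the compositum stays inside the finite field $L$. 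Once Theorem \ref{cat-thm:abvar_torsionfilter} applies, $\calA[3]$ has no \'etale or multiplicative subquotients; combined with the action of $I_3$ on $\calA[3]$ at the prime $3$ (via \cite[Theorem 2.1]{Fontaine:1985}) this forces $\calA[3]$ to be, up to semisimplification, built out of the simple object $J_0(32)[3]$ — i.e. $\bar\rho_{A,3} \cong \bar\rho_{J_0(32),3}^{\oplus g}$ where $g = \dim A$.

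Having identified $\bar\rho_{A,3}$ with the mod-$3$ representation of $E := J_0(32)$, I would pass to the $3$-adic Tate module. The representation $\bar\rho_{E,3}$ has image large enough (one checks $J_0(32)$ has no $3$-isogenies, so $\bar\rho_{E,3}$ is surjective onto $\Gl_2(\F_3)$, or at least its image is big enough to apply a deformation-theoretic or congruence argument) that one can lift: every abelian variety $A$ as in the theorem has $A[3^n] \in \mathscr{C}_3(2,\tfrac32)$ for all $n$, and running the same torsion-filtration analysis at each level — using that $\Gal(\Q(\calA[3^\infty])/\Q(\calA[3]))$ is pro-$3$ — shows $\rho_{A,3} \otimes \Q_3$ is a successive extension of copies of $\rho_{E,3}\otimes\Q_3$ with no new \'etale or multiplicative pieces entering. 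Since $\rho_{E,3}$ is (up to twist) self-dual and the extension groups controlling such filtrations are again killed by the class-field-theoretic input, the filtration splits and $\rho_{A,3}\otimes\Q_3 \cong (\rho_{E,3}\otimes\Q_3)^{\oplus g}$. By Faltings' theorem this isomorphism of Galois representations implies $A \sim E^g = J_0(32)^g$, which is the assertion.

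The main obstacle I anticipate is the middle step: verifying the two hypotheses of Theorem \ref{cat-thm:abvar_torsionfilter} for the specific category $\mathscr{C}_3(2,\tfrac32)$. This requires (i) an explicit enumeration of the simple objects of $3$-power order in the category — in particular pinning down which characters of $G_\Q$ unramified outside $2$ and of the correct ramification level at $2$ actually occur, and which twists of $\mu_3$ are admissible — and (ii) a genuine class field theory computation over the number field $L$ cut out by these objects, whose root discriminant bound places it on Odlyzko's GRH list but still leaves several candidate fields to eliminate by hand. The vanishing of the relevant $\Ext^1$ and the cyclicity of $R/F$ are where the hypothesis "ramified of level exactly $\tfrac32$" is used in an essential way, since a larger level would push the root discriminant past the usable range. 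The passage from mod-$3$ to $3$-adic and the final appeal to Faltings are comparatively routine once the mod-$3$ picture and the extension-group vanishing are established.
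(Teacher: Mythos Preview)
Your overall architecture matches the paper's: work in the category $\catd$ of finite flat $3$-power group schemes over $\Z[\frac{1}{2}]$ ramified of level $\tfrac32$ at $2$, bound the root discriminant of the maximal $3$-torsion extension by $3^{3/2}2^{5/2}$ and invoke Odlyzko under GRH, classify the simple objects, verify the two hypotheses of Theorem \ref{cat-thm:abvar_torsionfilter}, and finish with Faltings. The paper does exactly this; the simple objects turn out to be $\Z/3\Z$, $\Z/3\Z(\chi_2)$, $\mu_3$, $\mu_3(\chi_2)$ and $\calE[3]$, and the maximal $3$-torsion extension is $\Q(\calE[3],\sqrt[3]{2})$.

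Two points in your sketch do not hold up. First, $\bar\rho_{E,3}$ is \emph{not} surjective onto $\Gl_2(\F_3)$: the curve $E=J_0(32)$ has CM by $\Q(i)$, and the image is the semidihedral group $\shg$ of order $16$. So no ``big image'' or deformation-theoretic lifting argument is available. Second, and more seriously, your passage from mod-$3$ to $3$-adic is where the real work lies, and ``the filtration splits'' is not what happens. Knowing that every simple subquotient of $\calA[3]$ is $\calE[3]$ does not give $\calA[3]\cong\calE[3]^g$; indeed $\calE[9]$ is a nonsplit self-extension of $\calE[3]$. The paper's mechanism is a precise computation (under GRH, using that the maximal $3$-torsion extension is $\Q(\calE[3],\sqrt[3]{2})$) showing that $\Ext^1_{\catd}(\calE[3],\calE[3])$ has order exactly $3$ and is generated by $\calE[9]$. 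Together with $\End_{G_\Q}(T_3(E))\simeq\Z_3$ (Schur, using that $E$ is supersingular at $3$ so $\bar\rho_{E,3}$ is absolutely irreducible), this is exactly the input to \cite[Theorem 8.3]{Schoof:2005}, a structure theorem for $p$-divisible groups, which yields $\calA[3^\infty]\simeq\calE[3^\infty]^{\dim A}$ directly. You should replace the vague deformation step by this Ext computation plus the appeal to Schoof's theorem.
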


We remark that we also proved the following weaker statement 
without the assumption of the generalized Riemann hypothesis:
any non-zero abelian variety $A$ over $\Q$ with good reduction outside $2$
such that $A_\locfieldext$, where $\locfieldext$ is the ray class field of conductor $(1+i)^3$ of $\Q_2(i)$,
has good reduction, is isogenous to a product of copies of the elliptic curve $J_{0}(32)$.

From Table \ref{tab:modformlevel} we see that
there is only one isogeny class of elliptic curves of conductor $32$.
We consider the elliptic curve $E : y^{2}=x^{3}-x$ over $\Q$ of conductor $32$.
The elliptic curve $E$ has additive reduction at $2$ and is supersingular at $3$.
We continue to work with the elliptic curve $E$ in this section and write
$\calE$ for the N\'eron model of $E$.
Since the curve has complex multiplication by $\Q(i)$,
the Galois representation $\rho_{E,3}$ is an induction of a character and $E$ has potential
good reduction.
By the criterion of N\'eron-Ogg-Shafarevich, it obtains good reduction over the field $\Q(\calE[3])$ of degree $16$ over $\Q$. 
Let $\overline{\rho_{E,3}}$ be the reduction of the representation $\rho_{E,3}$ modulo the prime $3$,
which is well-defined up to semi-simplification.
A calculation (see the Appendix) using Magma \cite{BosmaCannonPlayoust:1997}
shows that the image of the representation $\overline{\rho_{E,3}}$ is isomorphic to 
the group $\shg$, the semi-hedral group of order $16$ mentioned
above. 

\begin{lemma} \label{lem:end_gal_equivariant}
We have $\End_{G_\Q}(T_{3}(E)) \simeq \Z_{3}$.
In particular, $\End_{G_\Q}(E[3]_\Q)$
has order $3$.
\end{lemma}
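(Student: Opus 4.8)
\emph{Proof proposal.} The plan is to reduce the whole statement to the Appendix computation that $\im(\overline{\rho_{E,3}})=\shg$, by means of Schur's lemma together with a lattice argument. First I would extract from $\shg\subset\Gl_{2}(\F_{3})$ the two facts I actually use: that $\shg$ acts irreducibly on $\F_{3}^{2}$ and that it is non-abelian. Both are elementary: $|\shg|=16$ does not divide the order $12$ of a Borel subgroup of $\Gl_{2}(\F_{3})$, so $\shg$ lies in no Borel, hence fixes no line of $\F_{3}^{2}$ and acts irreducibly; and $\shg\cong\mathrm{SD}_{16}$ is one of the non-abelian groups of order $16$ (non-abelianness also follows from the complex multiplication of $E$ by $\Z[i]$, since $3$ is inert in $\Z[i]$). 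In particular the $\F_{3}[G_{\Q}]$-module $E[3]_{\Q}$ is irreducible.

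For the second assertion of the lemma I would argue as follows. By Schur's lemma, $\End_{G_{\Q}}(E[3]_{\Q})$ is a finite division ring containing $\F_{3}$, hence a finite field over which $\F_{3}^{2}$ is a module; so it equals $\F_{3}$ or $\F_{9}$. If it were $\F_{9}$, then $\shg=\im(\overline{\rho_{E,3}})$ would be contained in $\Aut_{\F_{9}}(\F_{9})=\F_{9}^{\times}$ and thus abelian, a contradiction. Hence $\End_{G_{\Q}}(E[3]_{\Q})=\F_{3}$, which has order $3$.

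For the Tate module I would set $V:=T_{3}(E)\otimes_{\Z_{3}}\Q_{3}$ and first check that $V$ is an irreducible $\Q_{3}[G_{\Q}]$-module: a proper non-zero submodule $W\subset V$ would meet $T_{3}(E)$ in a saturated rank-one $G_{\Q}$-stable $\Z_{3}$-submodule, whose reduction modulo $3$ would be a $G_{\Q}$-stable line in $E[3]_{\Q}$, contradicting the irreducibility just proved. By Schur's lemma $D:=\End_{\Q_{3}[G_{\Q}]}(V)$ is then a division algebra over $\Q_{3}$, and since $V$ is two-dimensional over $\Q_{3}$ and is a $D$-module, $\dim_{\Q_{3}}D\in\{1,2\}$. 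If $\dim_{\Q_{3}}D=2$ then $D$ is a field, $V$ is one-dimensional over $D$, and $\rho_{E,3}\colon G_{\Q}\to\Aut_{D}(V)\cong D^{\times}$ would have abelian image, forcing $\overline{\rho_{E,3}}$ to be abelian — impossible. Hence $D=\Q_{3}$, i.e. $\End_{\Q_{3}[G_{\Q}]}(V)$ consists of scalars. Finally $\End_{G_{\Q}}(T_{3}(E))$ is the set of $\phi\in\End_{\Q_{3}[G_{\Q}]}(V)$ with $\phi(T_{3}(E))\subseteq T_{3}(E)$, namely $\{c\in\Q_{3}:c\,T_{3}(E)\subseteq T_{3}(E)\}=\Z_{3}$.

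I do not expect a genuine obstacle: the argument is formal once $\im(\overline{\rho_{E,3}})=\shg$ is known, and the hypotheses on $E$ (complex multiplication by $\Q(i)$, additive reduction at $2$) enter only through that computation and through the elementary group theory of $\shg$. The one point worth spelling out carefully is the identification of $\shg$ with the normalizer of a non-split Cartan subgroup of $\Gl_{2}(\F_{3})$, which is what makes the irreducibility and non-abelianness transparent.
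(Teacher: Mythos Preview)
Your argument is correct, and it reaches the same conclusion as the paper by a genuinely different route. The paper does not use the explicit image $\shg$ at all: it observes that $E$ has good reduction and is supersingular at $3$, invokes \cite[Theorem 1.1]{Conrad:1997} to conclude that $\overline{\rho_{E,3}}$ is \emph{absolutely} irreducible, lifts this to absolute irreducibility of $\rho_{E,3}$, and then quotes Schur's Lemma in the form of \cite[Paragraph 4, Corollary, p.~252]{Mazur:1997} to obtain $\End_{G_\Q}(T_3(E))\simeq\Z_3$ directly. Your approach instead feeds off the Appendix computation of $\im(\overline{\rho_{E,3}})=\shg$, extracts irreducibility from the order count against the Borel and absolute irreducibility from non-abelianness (your ``if $D=\F_9$ or $\dim_{\Q_3}D=2$ then the image is abelian'' step is exactly the hands-on substitute for absolute irreducibility), and then makes the lattice descent from $V$ to $T_3(E)$ explicit. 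The trade-off is that the paper's proof is shorter and pinpoints the arithmetic reason (supersingularity at $3$) behind the result, while yours is more self-contained, avoids the black-box citation to Conrad, and spells out the saturated-sublattice and Schur steps rather than absorbing them into a reference; both are perfectly valid, and your version arguably makes clearer why the ``in particular'' clause about $\End_{G_\Q}(E[3]_\Q)$ follows.
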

\begin{proof}
Since $E$ is supersingular at $3$ and has good reduction at $3$,
it follows from \cite[Theorem 1.1]{Conrad:1997} that $\overline{\rho_{E,3}}$ is absolutely irreducible.
We deduce that also $\rho_{E,3}$ is absolutely irreducible.
Now apply Schur's Lemma, see for instance \cite[Paragraph 4, Corollary, p. 252]{Mazur:1997}.
\end{proof}

We will prove Theorem \ref{thm:mainj032} by working in a certain category of group schemes.
Let $\gsc$ be a finite flat commutative group scheme over $\Z[\frac{1}{2}]$.
Then $G_\Q$ acts on the $\Alg{\Q}$-points of $\gsc$ by
$\rho_{\gsc}: G_{\Q} \longrightarrow \Aut(\gsc(\Alg{\Q}))$.
We denote by $\Q(\gsc)$ the fixed field of the kernel of the representation $\rho_\gsc$.
To prove Theorem \ref{thm:mainj032}, we use the following category:

\begin{definition}
Let $\catd$ be the category whose objects are finite flat commutative
group schemes $\gsc$ over $\Z[\frac{1}{2}]$ of
$3$-power order such that $\gsc$ is ramified of level $\frac{3}{2}$ at $2$,
and whose morphisms are group scheme morphisms over $\Z[\frac{1}{2}]$.
\end{definition}

The category $\catd$ is closed under taking closed flat subgroup schemes, quotients and products.
It is even closed under taking extensions by Proposition \ref{prop:categoryprop}.

\subsection{Simple group schemes} \label{sec:simplej032}

From Table \ref{tab:modformlevel} we see that $J_{0}(32)$ is
ramified of level $\frac{3}{2}$ at $2$.
Hence the group schemes $\calE[3^{n}]$ are 
objects in the category $\catd$. 
The group scheme $\calE[3]$ helps us to prove that the maximal
$3$-torsion extension of $\catd$ is the number field $\Q(\calE[3],\sqrt[3]{2})$.
First we state the following lemma:

\begin{lemma} \label{lem:combinedconductors}
Let $i,j \in \N$, let $K$ be a number field and let $\frakp, \frakq$ be two primes
in $K$ with residue characteristics $p$ resp. $q$ such that $p \not= q$.
Suppose that the ray class fields of conductor $\frakp^{i} \frakq$ and
of conductor $\frakp \frakq^{j}$ are trivial.
Then for all $m \leq i$ and $n \leq j$
the ray class field of conductor $\frakp^{m} \frakq^{n}$ is trivial.
\end{lemma}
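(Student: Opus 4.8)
The plan is to translate everything into statements about ray class fields and then to exploit that $\frakp$ and $\frakq$ lie over distinct rational primes $p\neq q$. Write $K(\frakm)$ for the ray class field of $K$ of conductor $\frakm$, fixing once and for all the same archimedean part of the modulus for every conductor occurring below, so that ``trivial'' means $K(\frakm)=K$, equivalently $\Gal(K(\frakm)/K)=1$. Two standard facts from class field theory will be used: monotonicity, namely $\frakm\mid\frakm'$ implies $K(\frakm)\subseteq K(\frakm')$; and the characterization of $K(\frakm)$ as the maximal abelian extension of $K$ whose conductor divides $\frakm$. From the latter I extract what is actually needed: for a finite abelian extension $L/K$, the conductor of any subextension divides that of $L/K$, and the exponent at a prime of the conductor of $L/K$ is at most $1$ precisely when $L/K$ is at most tamely ramified at that prime.

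First I would dispose of the trivial cases: if $m\leq 1$ then $\frakp^{m}\frakq^{n}$ divides $\frakp\frakq^{j}$ (since $n\leq j$), so $K(\frakp^{m}\frakq^{n})\subseteq K(\frakp\frakq^{j})=K$ by monotonicity; likewise if $n\leq 1$. So I may assume $m,n\geq 2$, hence also $i,j\geq 2$. Put $F:=K(\frakp^{m}\frakq^{n})$ and $G:=\Gal(F/K)$; then $G$ is finite abelian and $F/K$ is unramified outside $\frakp$, $\frakq$ and the archimedean places, with conductor dividing $\frakp^{m}\frakq^{n}$ (times the fixed archimedean part). The heart of the argument is to show that $G$ is at the same time a $p$-group and a $q$-group. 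Let $F_{\frakp}$ be the maximal subextension of $F/K$ that is at most tamely ramified at $\frakp$; because $G$ is abelian, $\Gal(F/F_{\frakp})$ is exactly the wild inertia group at $\frakp$, hence a $p$-group. On the other hand, the conductor of $F_{\frakp}/K$ divides that of $F/K$ but with the exponent at $\frakp$ capped at $1$, so it divides $\frakp\,\frakq^{n}$ (times the archimedean part), which divides $\frakp\,\frakq^{j}$ since $n\leq j$. Thus $F_{\frakp}\subseteq K(\frakp\frakq^{j})=K$ by hypothesis, i.e. $F_{\frakp}=K$, and therefore $G=\Gal(F/F_{\frakp})$ is a $p$-group. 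Interchanging the roles of $\frakp$ and $\frakq$ and using instead the hypothesis $K(\frakp^{i}\frakq)=K$ together with $m\leq i$ shows in the same way that $G$ is a $q$-group. Since $p\neq q$, this forces $G=1$, that is, $K(\frakp^{m}\frakq^{n})=K$.

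The step I would watch most carefully is the conductor bookkeeping: one must keep the archimedean part of the modulus fixed across all three ray class fields, and verify that replacing $F$ by the maximal subextension $F_{\frakp}$ tamely ramified at $\frakp$ affects only the $\frakp$-exponent of the conductor (lowering it to at most $1$) while the $\frakq$- and archimedean exponents stay bounded by those of $F$. Once this is granted, the rest is just the monotonicity of ray class fields and the coprimality $p\neq q$; no genuinely hard estimate is involved.
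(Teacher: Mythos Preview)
Your argument is correct. The paper's proof uses the same underlying idea---pass to a subextension whose conductor is forced to divide one of the two given moduli, apply the hypothesis, then exploit $p\neq q$---but organizes the decomposition differently. The paper takes the $p$-Sylow and $q$-Sylow subgroups of $\Gal(L/K)$: the $p$-part of $L/K$ is tame at $\frakq$, hence has conductor dividing $\frakp^{i}\frakq$, hence is trivial; likewise the $q$-part is trivial. This still leaves the possibility of other primes dividing $|\Gal(L/K)|$, so a short third step is needed: with the $p$- and $q$-parts gone, $L/K$ is tame at both $\frakp$ and $\frakq$, so its conductor divides $\frakp\frakq$, and one invokes the hypothesis once more. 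Your version instead takes the wild inertia subgroup at $\frakp$ and shows it is all of $G$ (since the tame-at-$\frakp$ subextension $F_\frakp$ already collapses to $K$), making $G$ a $p$-group outright; the symmetric argument makes $G$ a $q$-group, and $G=1$ follows immediately. Your route avoids the extra third step and is slightly more direct; the paper's route is perhaps more symmetric in its use of the two hypotheses. Either way the content is the same elementary class field theory.
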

\begin{proof}
Suppose the ray class field $\locfieldext$ of conductor 
$c=\frakp^{i} \frakq^{j}$ is non-trivial.
Look at the $p$-part of the extension $L/K$.
Since $L/K$ is abelian the $p$-part of $L/K$
is an abelian extension of $K$ and contained in the ray class field
of conductor at most $\frakp^{i} \frakq$
because it is at most tamely ramified at $\frakq$.
But by assumption this ray class field must be trivial.
We do the same for the $q$-part of $L/K$.
Then it follows that $L/K$ is tamely ramified and that
$L$ is equal to the ray class field of conductor $\frakp \frakq$.
But this ray class field is contained in the ray class
field of conductor $\frakp^{i} \frakq$, which is trivial: contradiction.
\end{proof}

\begin{proposition} \label{prop:max3ext_grh}
Under assumption of the generalized Riemann hypothesis,
the maximal $3$-torsion extension of $\catd$ is given by $\maxextcatd = \Q(\calE[3],\sqrt[3]{2})$.
\end{proposition}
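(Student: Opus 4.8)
The plan is to prove the two inclusions $\Q(\calE[3],\sqrt[3]{2})\subseteq\maxextcatd$ and $\maxextcatd\subseteq\Q(\calE[3],\sqrt[3]{2})$ separately. For the first I would exhibit each generator as coming from an explicit object of $\catd$ killed by $3$. Since $J_{0}(32)$ is ramified of level $\tfrac32$ at $2$, the group schemes $\calE[3^{n}]$ lie in $\catd$; in particular $\calE[3]\in\catd$ is killed by $3$, so $\Q(\calE[3])\subseteq\maxextcatd$, and this field contains $\zeta_{3}$ by the Weil pairing. The Katz--Mazur group scheme $T[3,2]$ of Definition~\ref{def:katzmazur} is a finite flat commutative group scheme over $\Z[\tfrac12]$ of order $9$ killed by $3$, its field of points is $\Q(\zeta_{3},\sqrt[3]{2})$, and over $\Q_{2}$ it is at most tamely ramified (ramification index $3$, prime to the residue characteristic), hence ramified of level $0\le\tfrac32$ at $2$. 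Thus $T[3,2]\in\catd$ and $\sqrt[3]{2}\in\maxextcatd$, whence $\Q(\calE[3],\sqrt[3]{2})\subseteq\maxextcatd$.

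For the reverse inclusion, write $L:=\maxextcatd$ and $M:=\Q(\calE[3],\sqrt[3]{2})$, so $M\subseteq L$. Here $[M:\Q]=48$, since $\Q(\sqrt[3]{2})\not\subseteq\Q(\calE[3])$: otherwise the degree-$6$ Galois closure $\Q(\zeta_{3},\sqrt[3]{2})$ of $\Q(\sqrt[3]{2})/\Q$ would embed in $\Q(\calE[3])$, which is impossible as $\shg$ is a $2$-group. By Proposition~\ref{prop:actionrootdiscb} (base field $\Q$, bad prime $2$, auxiliary prime $3$, $i=\tfrac32$) the root discriminant satisfies $\delta_{L}<2^{1+3/2}\cdot3^{1+1/(3-1)}=\sqrt{864}<29.4$, which is below $42$, so under the generalized Riemann hypothesis the tables of Martinet~(\cite{Martinet:1982}) show that $L/\Q$ is finite. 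Moreover $L/\Q$ is Galois (a compositum of the Galois extensions $\Q(\gsc)$), ramified only at $2$ and $3$, ramified of level $\tfrac32$ at $2$, and --- since an object killed by $3$ has ramification level $1+\tfrac1{3-1}-1=\tfrac12$ at $3$ over $\Q$ --- ramified of level $\tfrac12$ at $3$.

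Now suppose $L\supsetneq M$, and set $H:=\Gal(L/M)\ne 1$, a normal subgroup of $G:=\Gal(L/\Q)$. Since $[H,H]$ is characteristic in $H$, hence normal in $G$, the field $L^{[H,H]}$ is Galois over $\Q$; replacing $L$ by $L^{[H,H]}$ I may assume that either $H$ is perfect, or $L/M$ is a nontrivial abelian extension that is Galois over $\Q$. The perfect case would give a non-solvable Galois extension of $\Q$ unramified outside $\{2,3\}$ of root discriminant $<29.4$; I expect to exclude it using Martinet's degree bound under GRH (possibly after extracting a subextension with simple non-abelian Galois group), and I regard this as the delicate point. In the abelian case $L/M$ is unramified outside the primes of $M$ over $2$ and $3$; I would bound its conductor $\frakc=\prod_{\frakp\mid 2}\frakp^{a_{\frakp}}\prod_{\frakq\mid 3}\frakq^{b_{\frakq}}$ from above by combining the local estimate of Lemma~\ref{lem:localdiscbound}, applied to $L_{\frakl}/\Q_{2}$ and $L_{\frakl}/\Q_{3}$ for a prime $\frakl$ of $L$, with the tower identity $v_{\frakl}(\calD_{L_{\frakl}/\Q_{p}})=v_{\frakl}(\calD_{L_{\frakl}/M_{\frakm}})+v_{\frakl}(\calD_{M_{\frakm}/\Q_{p}})$ at each $p\in\{2,3\}$ (with $\frakm$ the prime of $M$ below $\frakl$), and then with the bound $\delta_{L}<29.4$ via Proposition~\ref{prop:globalrootdisc}; this forces the exponents $a_{\frakp}$ and $b_{\frakq}$ to be small. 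Finally, by Lemma~\ref{lem:combinedconductors} it suffices to verify that the ray class field of $M$ of conductor $\frakp^{a}$ (for each $\frakp\mid 2$ and admissible $a$) and of conductor $\frakq^{b}$ (for each $\frakq\mid 3$ and admissible $b$) is trivial; a Magma computation of the relevant ray class groups of the degree-$48$ field $M$ (itself conditional on GRH) should show that they all vanish. This contradicts $L\supsetneq M$, so $\maxextcatd=M=\Q(\calE[3],\sqrt[3]{2})$.

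The main obstacle is this reverse inclusion, in two respects: disposing cleanly of the potentially non-solvable case --- for which one would like either a usable non-solvable root-discriminant lower bound under GRH or an a priori proof that $\Gal(L/\Q)$ is solvable --- and the explicit class field theory over the degree-$48$ field $M$, where one must make the conductor bound tight enough that only finitely many ray class fields need be inspected and then actually compute the corresponding ray class groups. Lemma~\ref{lem:combinedconductors} is precisely what makes the last step feasible, by reducing the two-prime conductor computation to one prime at a time.
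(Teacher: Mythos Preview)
Your first inclusion is correct and essentially matches the paper. For the reverse inclusion your outline is sound, but you are making the ``delicate point'' harder than it is and choosing a heavier computational route than the paper.

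The non-solvable case is not delicate at all. The Odlyzko tables under GRH give not merely finiteness but an explicit bound $[L:\Q]<1200$ for root discriminant below $29.4$; since $[M:\Q]=48$ this forces $[L:M]<25$, and every group of order less than $60$ is solvable. Hence $H=\Gal(L/M)$ is solvable and the perfect case is vacuous. The paper uses exactly this: it records $[L:K]\le 25$ and then only has to show that the abelianization of $\Gal(L/K)$ is trivial.

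For the abelian part the paper takes a genuinely different route: it never computes ray class groups of the degree-$48$ field. Instead it (i) proves a sharpened bound showing that any extension of $K=M$ that is \emph{tamely} ramified at both primes has root discriminant below $18.9$, hence is trivial under GRH; (ii) computes ray class fields over $\Q(\calE[3])$ (degree $16$) to show $K$ is the maximal abelian extension of $\Q(\calE[3])$ inside $L$, and then a $3$-Sylow argument gives $3\nmid\#H^{\mathrm{ab}}$; (iii) computes ray class fields over $\Q(\zeta_{12},\sqrt[3]{2})$ (degree $12$) and a parallel $2$-Sylow argument gives $2\nmid\#H^{\mathrm{ab}}$. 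Descending to degree-$12$ and degree-$16$ fields is cheaper than working over degree $48$, and the tame-extension root-discriminant trick in (i) is the key extra idea that makes the descent close up. Your direct approach over $M$ should also work, but you would still have to carry out the ray class computations and check they really vanish within your conductor bounds; note also that Lemma~\ref{lem:combinedconductors} reduces to conductors of the form $\frakp^{i}\frakq$ and $\frakp\,\frakq^{j}$, not to pure prime powers $\frakp^{a}$ and $\frakq^{b}$ as you state.
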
 
\begin{proof}
Let $\maxextcatd$ be the maximal $3$-torsion extension of $\catd$.
The root discriminant of $\maxextcatd$ is bounded by $3^{3/2} 2^{5/2}=6^{3/2}\cdot 2 \approx 29.39$.
Under the assumption of the generalized Riemann hypothesis,
we apply Odlyzko's discriminant bounds in \cite[Table 3, p. 179]{Martinet:1982}
to find that $[\maxextcatd:\Q] < 1200$.
Hence $\maxextcatd$ is finite.
Group schemes annihilated by $3$ that we know of are $\calE[3]$, $\mu_{3}$, $\Z/3\Z$ and the
group schemes $T[3,2]$ and $T[3,-1]$ as in Definition \ref{def:katzmazur} .
We have the following extensions of number fields:
$$
\Q \subset_{2} \Q(i) \subset_{8} \Q(\calE[3]) \subset_{3} K := \Q(\calE[3],T[3,2]) = \Q(\calE[3],\sqrt[3]{2}) \subset_{\leq 25} \maxextcatd .
$$
The extension $\Q(\calE[3])/\Q$ is ramified of level $\frac{3}{2}$ at $2$ and tamely ramified at $3$.
The absolute discriminant of $\Q(\calE[3])$ is $2^{32} 3^{14}$.
The number field $K$ is equal to the ray class field of $\Q(\calE[3])$ of conductor $\bad_{2}\bad_{3}^{5}$
where $\bad_{2}$ and $\bad_{3}$ are the unique primes lying over resp. $2$ and $3$.
This tells us that $K/\Q$ is ramified of level $\frac{3}{2}$ at $2$, that $K/\Q$ is ramified of level
$\frac{1}{2}$ at $3$ and that the discriminant of $K$ is $3^{62} 2^{100}$.

\vskip 10pt
\noindent {\bf Claim 1}:
There does not exist a non-trivial Galois extension of $K$ that is
at most tamely ramified at the primes $\badover_{2}$ and $\badover_{3}$ in $K$ above resp. $2$ and $3$.
\begin{proof}
For any Galois extension $K'/K$ that is tamely ramified at both $\badover_{2}$ and $\badover_{3}$ we have:
$$
\Delta_{K'/K} = \badover_{2}^{f_{2}g_{2} (e_{2}-1)} \badover_{3}^{f_{3} g_{3} (e_{3}-1)} ,
$$
where $e_{i},f_{i}$ and $g_{i}$ are resp. the ramification, inertia and splitting degree of $K'/K$ at $\badover_{i}$ 
for $i=2,3$. Then
$$
N_{K/\Q}( \Delta_{K'/K} )^{\frac{1}{[K':\Q]}} = 2^{\frac{2}{[K:\Q]} (1-\frac{1}{e_{2}}) } 3^{\frac{2}{[K:\Q]} (1-\frac{1}{e_{3}})} 
< 6^{\frac{2}{[K:\Q]}} = 6^{\frac{2}{48}} .
$$
The numerator $2$ in the exponents in the second term
accounts for the fact that both the primes $2$ and $3$ have
inertial degree $2$ in $K/\Q$ and ramification degree $24$. 
Multiplying the root discriminant of $K$ with this value gives
$\delta_{K'} < 2^{\frac{102}{48}} 3^{\frac{64}{48}} \approx 18.873$.
Under the assumption of the generalized Riemann hypothesis, it follows from 
Odlyzko's discriminant bounds in \cite[Table 3, p. 179]{Martinet:1982} that $[K':\Q] < 96$.
This implies that $[K':K] < 2$ and therefore $K'=K$.
In particular, we see that the class number of $K$ is $1$.
\end{proof}

\noindent
{\bf Claim 2}: The group $\Gal(K/\Q(\calE[3]))$ is the maximal abelian quotient of $\Gal(\maxextcatd/\Q(\calE[3]))$.
\begin{proof}
In other words, the number field $K$ is the maximal abelian extension of $\Q(\calE[3])$ inside $\maxextcatd$.
Certainly $K/\Q(\calE[3])$ is abelian and it suffices to show that there is no larger abelian extension in $\maxextcatd$.
Therefore we consider ray class fields of conductor
of the form $\bad_{2}^{i} \bad_{3}^{j}$ with $i,j \in \N$, and show that they are either trivial or cannot exist inside $\maxextcatd$.
By Lemma \ref{lem:combinedconductors}, it is enough to consider conductors
for which either $i \in \{0,1\}$ or $j \in \{0,1\}$.

First we consider ray class fields of conductor $\bad_{2}^{i} \bad_{3}^{j}$ with $i \in \N$ and $j \in \{0,1\}$.
The smallest $i$ and $j$ that give a non-trivial ray class field turn out to be $i=8$ and $j=0$.
See the Appendix for this computation and the computations that will follow.
The ray class field of conductor $\bad_{2}^{8}$ is a degree $4$ extension of $\Q(\calE[3])$.
Consider the subextension $F/\Q(\calE[3])$ of degree $2$ inside this ray class field.
It admits a non-trivial character of conductor $\bad_{2}^{8}$, and using the conductor-discriminant
formula we see that the $2$-adic part of the root discriminant $\delta_{F}$ is equal to $\frac{5}{2}$,
contradicting the root discriminant bounds of Fontaine \cite[Corollary 3.3.2]{Fontaine:1985}.

Next, consider ray class fields of conductor $\bad_{2}^{i} \bad_{3}^{j}$ with $j \in \N$ and $i \in \{0,1\}$.
The smallest $i$ and $j$ that give a non-trivial ray class field turn out to be $i=1$ and $j=5$.
The ray class field of conductor $\bad_{2}^{1} \bad_{3}^{5}$ is exactly the field $K$. 
The next value of $j$ that gives a non-trivial extension is $j=8$.
Consider any subextension $F/\Q(\calE[3])$ of degree $3$ inside a ray class field of conductor $c$ such
that the $\bad_{3}$-part of $c$ has exponent at least $8$.
Then the $3$-adic part of the root discriminant $\delta_{F}$ is at least $\frac{16}{24}+\frac{14}{16}=\frac{37}{24}>\frac{3}{2}$ 
(where $\frac{16}{24}$ comes from the ramification in $F/\Q(\calE[3])$ and $\frac{14}{16}$ from
the ramification in $\Q(\calE[3])/\Q$).
This contradicts the fact that the $3$-adic valuation of the root discriminant should be smaller than $\frac{3}{2}$
by the bounds proven by Fontaine \cite[Corollary 3.3.2]{Fontaine:1985}.
Alternatively, we calculate the ramification at $\bad_3$ in the extension $F/\Q(\calE[3])$ in the following way:
The extension $\Q(\calE[3])/\Q$ is tamely ramified at $3$ and since the conductor at $\bad_{3}$ 
of $F/\Q(\calE[3])$ is $\bad_{3}^{8}$, we have that
$$
I_{F/\Q(\calE[3])}(3)_{i}  = 
\left\{
\begin{array}{ll}
3 & \text{ if } 0 \leq i \leq 8 \\
1 & \text{ if }  i > 8 .
\end{array}
\right.
$$
This implies that the extension $F/\Q$ is ramified of level $\frac{21}{24}$ at $3$, contradicting the fact that
the extension $\maxextcatd/\Q$ is only ramified of level $\frac{1}{2}$ at $3$
as follows from \cite[Theorem 2.1]{Fontaine:1985}.
\end{proof}

\noindent
{\bf Claim 3}:  The order of the maximal abelian quotient of $\Gal(\maxextcatd/K)$ is not divisible by $3$.
\begin{proof}
Consider the commutator subgroup $N$ of $\Gal(\maxextcatd/K)$
with fixed field $\maxextcatd'$.
The subgroup $N$ is normal in the group $\Gal(\maxextcatd/\Q(\calE[3]))$.
Therefore $\maxextcatd'/\Q(\calE[3])$ is Galois.
The $2$-sylow subgroup $H$ of
$\Gal(\maxextcatd'/K)$ is normal because $\Gal(\maxextcatd'/K)$ is abelian.
It follows that $H$ is normal in $\Gal(\maxextcatd'/\Q(\calE[3]))$.
Therefore the fixed field of $H$
corresponds to a Galois extension of $\Q(\calE[3])$ 
with a Galois group $P$.
The group $P$ is a $3$-group
and $P/P' = \Gal(K/\Q(\calE[3]))$ is cyclic of order $3$.
Hence $P = \Gal(K/\Q(\calE[3]))$.
\end{proof}

By Claim 1, the degree of 
the abelian quotient of $\maxextcatd/K$ must be a power of $2$ less than $25$, so either $2,4,8$ or $16$.

\vskip 10pt
\noindent
{\bf Claim 4}: Let $M=\Q(\zeta_{12},\sqrt[3]{2})$. Then $\Gal(K/M)$ is the maximal abelian quotient of $\Gal(\maxextcatd/M)$.
\begin{proof}
Let $\badover_{2}$ and $\badover_{3}$
denote the unique primes above respectively
$2$ and $3$ in $O_{M}$.
By Claim 3, the degree of any abelian subextension of $\maxextcatd/K$ cannot be divisible by $3$.
It follows that the maximal abelian quotient of $\Gal(\maxextcatd/M)$ is at most
tamely ramified at $3$ and therefore we only consider ray class fields
of conductor $\badover_{2}^i \badover_{3}$ for some positive integer $i$.

The extension $K/M$ is cyclic of order $4$.
There are two ray class fields over $M$ of degree $4$,
one of conductor $\badover_{2}^{7} \badover_{3}$ 
and one of conductor $\badover_{2}^{8}$.
They are disjoint over $M$ because one ray class field is totally ramified at $\badover_{3}$
while the other ray class field is unramified at $\badover_{3}$.
Since the ramification degree of $K/\Q$ at $3$ is $24$,  
it follows that $K/M$ is ramified at $\badover_{3}$.
Hence $K$ can be interpreted as the ray class field of conductor
$\badover_{2}^{7} \badover_{3}$.

The ray class field $R$ of conductor $\badover_{2}^{8} \badover_{3}$ is 
an extension of degree $16$ over $M$.
The Galois group $\Gal(R/M)$ is isomorphic to $\Z/4\Z \times \Z/4\Z$.
It is the compositum of the two ray class fields discussed above.
The extension $R/M$ admits three characters of conductor $\badover_{2}^{7} \badover_{3}$, three characters of conductor $\badover_{2}^{8}$,
nine characters of conductor $\badover_{2}^{8} \badover_{3}$ and one trivial character.
The conductor-discriminant formula then gives that $\Delta_{R/M} = \badover_{2}^{117} \badover_{3}^{12}$.
The root discriminant of the extension $M/\Q$ is equal to $\delta_{M/\Q} = 2^{\frac{16}{12}} 3^{\frac{14}{12}}$, 
and we calculate the root discriminant of the extension $R/\Q$ to be:
$$
\delta_{R/\Q} = 2^{\frac{117}{96}+ \frac{16}{12} } 3^{ \frac{14}{12} + \frac{12}{96} } = 2^{\frac{245}{96}}
3^{\frac{124}{96}} .
$$
But $\frac{245}{96} > \frac{5}{2}$, contradicting the bound on
the $2$-adic part of the root discriminant bound of the extension $\maxextcatd/\Q$.

This implies that any ray class field of conductor
$\badover_{2}^{i} \badover_{3}$ with $i \geq 8$ cannot be inside $\maxextcatd$.
It also implies that the ray class field of conductor
$\badover_2^8$ cannot occur inside $\maxextcatd$: otherwise
$\badover_2^8 \badover_3$ would be inside $\maxextcatd$.
This concludes the proof of the claim.
\end{proof}

We deduce, using exactly the same argument as given to prove Claim 3,
that the order of the abelian quotient of $\Gal(\maxextcatd/K)$ is not divisible by $2$.
We can now finish the proof of Proposition \ref{prop:max3ext_grh}. 
Since $2$ and $3$ are the only possible primes dividing the 
degree of the abelian quotient of $\Gal(\maxextcatd/K)$,
we conclude that there is no abelian extension inside $\maxextcatd/K$.
But $\Gal(\maxextcatd/K)$ is solvable, so it must be trivial as we wanted to show.
\end{proof}

To find the simple group schemes in $\catd$, we consider irreducible $\F_{3}[\shg]$-modules.
The Galois group of $\Q(\calE[3])/\Q$ is isomorphic to the semihedral group $\shg$ of order 16.
The group $\shg$ has presentation
$$
\langle s,t : s^{8} = t^{2} = 1 ,\, st = ts^{3} \rangle .
$$ 
The field $\F_{9}$ obtained by adjoining the $8$th roots of unity to $\F_{3}$ is a sufficiently large field
with respect to the group $\shg$ (see \cite[Chapter 14, p. 115]{Serre:1977}).
Since $\gcd(2,3)=1$, all $\F_{9}[\shg]$-modules are semi-simple.
There are $7$ conjugacy classes of the group $\shg$ (or $3$-regular conjugacy classes, cf.
\cite[Corollary, p. 150]{Serre:1977}). 
Therefore, we have that $\sum_{i=1}^{7} n_{i}^{2} = 16$, where $n_{i}$ is the degree of the $i$-th irreducible
submodule of $\F_{9}[\shg]$.
This is only possible if three simple modules have degree $2$
and the other four have degree $1$. 
The character table of these representations is as follows:

\begin{table}[H]
  \centering
  \begin{tabular}{@{} c|ccccccc @{}}
    length & 1 & 1 & 4 & 2 & 4 & 2 & 2  \\  
    ord & 1 & 2 & 2 & 4 & 4 & 8 & 8  \\ 
    
    \hline
    1 & 1 & 1 & 1 & 1 & 1 & 1 & 1 \\ 
    $\chi_{2}$ & 1 & 1 & -1 & 1 & -1 & 1 & 1 \\ 
    $\chi_{3}$ & 1 & 1 & -1 & 1 & 1 & -1 & -1 \\ 
    $\chi_{2} \chi_{3}$ & 1 & 1 & 1 & 1 & -1 & -1 & -1 \\ 
    $\rho_{\calE[3]}$ & 2 & -2 & 0 & 0 & 0 & -1 & 1 \\
    $\chi_{3 } \rho_{\calE[3]}$ & 2 & -2 & 0 & 0 & 0 & 1 & -1 \\ 
    $\rho_{g}$  & 2 & 2 & 0 & -2 & 0 & 0 & 0 \\ 
  \end{tabular}
  \caption{The character table of $\F_{3}[\shg]$.}
  \label{tab:q16char}
\end{table}

Length stands for the length of the conjugacy class and ord for the order of any element in a conjugacy class.
We check which of the above irreducible representations are flat over $\Z_3$:

\begin{definition}
A continuous representation $\rho: G_{\Q_p} \rightarrow \Gl_n(\fpbar)$ is \emph{flat over $\Z_p$}
if there exists a finite flat commutative group scheme $\gsc$ over $\Z_p$ such that
the $G_{\Q_p}$-action on $\gsc(\Alg{\Q}_p)$ is isomorphic to the representation $\rho$.
\end{definition}

\subsubsection*{Flat $1$-dimensional representations}

All the $1$-dimensional representations in Table \ref{tab:q16char} are flat:
The trivial character corresponds to the group scheme $\Z/3\Z$.
The cyclotomic character $\chi_3$ at the prime $3$ corresponds
to $\mu_3$.
The cyclotomic character $\chi_2$ at the prime $2$ is the generic fiber of the group scheme $\Z/3\Z(\chi_2)$.
The character $\chi_2 \chi_3$ is the generic fiber of the group scheme $\mu_3(\chi_2)$.

\subsubsection*{Flat $2$-dimensional representations}

The action of $\shg$ on $\calE[3]$ gives an irreducible
representation $\rho_{\calE[3]}$ of degree $2$.
We will prove that the only representation of the group $\shg$ of degree $2$
extending to a finite flat group scheme over $\Z_3$ is
$\rho_{\calE[3]}$.
The representation $\rho_{\calE[3]}$ has 
trace $1$ or $-1$ on elements of order $8$, because $\shg$
acts faithfully on $\calE[3]$ and all elements of
order $8$ in $\Gl_{2}(\F_{3})$ have trace $\pm 1$.
To prove that the remaining $2$-dimensional representations in Table \ref{tab:q16char} are not flat,
we use the theory of finite Honda systems (see \cite{Fontaine:1977}).

\begin{lemma}
The representations $\rho_{g}$ and $\chi_3 \rho_{\calE[3]}$ 
are not flat over $\Z_{3}$.
\end{lemma}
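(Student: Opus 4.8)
The plan is to compare the restrictions of $\rho_g$ and of $\chi_3\rho_{\calE[3]}$ to an inertia subgroup $I_3$ at $3$ with the (very short) list of tame inertia characters that a finite flat group scheme over $\Z_3$ killed by $3$ can produce, and to derive a contradiction in each case. Throughout I regard $\rho_g$ and $\chi_3\rho_{\calE[3]}$ as representations of $G_\Q$ through the quotient $\shg=\Gal(\Q(\calE[3])/\Q)$, and recall that flatness over $\Z_3$ is a property of the restriction to a decomposition group at $3$.

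\emph{Step 1: local structure at $3$.} Since $E$ has good supersingular reduction at $3$, the restriction $\rho_{\calE[3]}\vert_{I_3}$ to the inertia subgroup $I_3$ of a decomposition group at $3$ is isomorphic to $\psi_2\oplus\psi_2^{3}$, where $\psi_2\colon I_3\to\F_9^\times$ is the fundamental character of level $2$ (of exact order $8$) and $\psi_2^{3}$ is its $\Gal(\F_9/\F_3)$-conjugate. Hence $I_3$ acts through a cyclic quotient of order $8$; since the only cyclic subgroup of order $8$ of $\shg$ is $\langle s\rangle$, a generator $\sigma$ of $I_3$ is one of the four elements of order $8$ of $\shg$, $\sigma^{4}$ is the central involution, and $\psi_2(\sigma)$ is a generator of $\F_9^\times$. (One could also check that the decomposition group at $3$ is all of $\shg$, but this is not needed.)

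\emph{Step 2: reading off the two restrictions from Table \ref{tab:q16char}.} As $\rho_g$ has trace $2$ on the class of $\sigma^{4}$, the matrix $\rho_g(\sigma^{4})$ is the identity, so $\rho_g(\sigma)$ has order dividing $4$; from $\Trace\rho_g(\sigma)=0$ and $\Trace\rho_g(\sigma^{2})=-2$ one gets that the eigenvalues of $\rho_g(\sigma)$ are $\pm\sqrt{-1}$, that is $\{\psi_2(\sigma)^{2},\psi_2(\sigma)^{-2}\}$, so $\rho_g\vert_{I_3}\simeq\psi_2^{2}\oplus\psi_2^{6}$. For the other representation, $\chi_3$ is the mod-$3$ cyclotomic character, hence $\chi_3\vert_{I_3}$ is the level-$1$ fundamental character $\psi_2^{4}$, and therefore $(\chi_3\rho_{\calE[3]})\vert_{I_3}\simeq\psi_2^{4}\cdot(\psi_2\oplus\psi_2^{3})=\psi_2^{5}\oplus\psi_2^{7}$.

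\emph{Step 3: the constraint from finite Honda systems, and conclusion.} Because $\Z_3$ is absolutely unramified and $\good=3$ is odd, every finite flat commutative group scheme $G$ over $\Z_3$ killed by $3$ is classified by a finite Honda system \cite{Fontaine:1977}; writing $G$ as a successive extension of simple such group schemes, one deduces that the tame inertia subgroup at $3$ acts on $G(\Alg{\Q}_3)$ only through characters $\psi_2^{j}$ with $j\bmod 8\in\{0,1,3,4\}$ (equivalently, the base-$3$ digits of $j$ lie in $\{0,1\}$); in particular, among the $2$-dimensional representations of $\shg$, only $\rho_{\calE[3]}$ has admissible tame inertia, namely $\psi_2\oplus\psi_2^{3}$. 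By Step 2 the tame characters occurring in $\rho_g\vert_{I_3}$ are $\psi_2^{2},\psi_2^{6}$ and those in $(\chi_3\rho_{\calE[3]})\vert_{I_3}$ are $\psi_2^{5},\psi_2^{7}$, and $\{2,6\}$ and $\{5,7\}$ are disjoint from $\{0,1,3,4\}$ modulo $8$; hence neither module can occur inside $G(\Alg{\Q}_3)$ for any such $G$, so neither $\rho_g$ nor $\chi_3\rho_{\calE[3]}$ is flat over $\Z_3$.

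\emph{Expected main obstacle.} The arithmetic with the character table in Step 2 is short; the substantive point is Step 3, namely extracting from the classification by finite Honda systems the precise statement that the admissible tame exponents at $3$ are exactly those with base-$3$ digits in $\{0,1\}$ (and the accompanying d\'evissage), together with the input from the supersingular reduction of $E$ at $3$ used in Step 1 to pin down $\rho_{\calE[3]}\vert_{I_3}$ and the generator $\sigma$ of $I_3$.
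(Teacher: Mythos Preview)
Your proof is correct and takes a genuinely different route from the paper's. The paper argues by explicit enumeration: it invokes \cite[Proposition 6.1]{Schoof:2003} to list the two local-local group schemes of order $9$ over $\Z_3$ (with splitting fields $\Q_3(i,\sqrt[8]{3})$ and $\Q_3(i,\sqrt[8]{6})$), then rules out $\chi_3\rho_{\calE[3]}$ by observing it would give a third local-local scheme with the same splitting field as $\calE[3]_{\Z_3}$, and rules out $\rho_g$ by noting its image is only $D_4$ (so the splitting field is too small to be local-local) and then handling the \'etale/multiplicative cases via Cartier duality and Raynaud's uniqueness.

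Your approach is more uniform: you compute the tame inertia characters directly ($\psi_2^2\oplus\psi_2^6$ for $\rho_g$, $\psi_2^5\oplus\psi_2^7$ for $\chi_3\rho_{\calE[3]}$) and kill both at once with Raynaud's constraint that finite flat $\F_3$-vector space schemes over $\Z_3$ have inertia exponents with base-$3$ digits in $\{0,1\}$, i.e.\ in $\{0,1,3,4\}$ modulo $8$. This avoids the case split entirely and makes transparent why exactly $\rho_{\calE[3]}$ is the unique flat $2$-dimensional character of $\shg$. The paper's approach has the merit of being very concrete (one sees the actual group schemes), while yours is the ``right'' argument from the viewpoint of $p$-adic Hodge theory and would generalize immediately to other primes or larger inertia images. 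One small point worth making explicit in your Step~3: the d\'evissage works because tame inertia, being pro-cyclic of order prime to $3$, acts semisimply, so the inertia characters on any flat $G$ are exactly those on its simple subquotients.
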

\begin{proof}
Since the $2$-dimensional representations in Table \ref{tab:q16char} are also irreducible
considered as representations of $\Gal(\Q_3(\calE[3])/\Q_3)$ (since $3$ does not split in this extension),
the finite flat group schemes corresponding to flat $2$-dimensional representations are simple.
Note that a simple finite flat commutative group scheme of $p$-power order
over $\Z_p$ is either local-local, local-\'etale or \'etale-local.
For local-local schemes we can apply \cite[Proposition 6.1]{Schoof:2003} with $p=3$ and $K=\Q_3$,
that tells us that there are exactly two non-isomorphic finite flat commutative
local-local group schemes over $\Z_3$ of order $9$.
The $\Alg{\Q}_3$-valued points of these two group schemes generate the field extensions
$\Q_3(i,\sqrt[8]{3})$ and $\Q_3(i,\sqrt[8]{6})$.
These are different extensions.

Assume that the representation $\chi_3 \rho_{\calE[3]}$
extends over $\Z_3$ to a finite flat commutative group scheme $\gsc'$.
The group scheme $\gsc'$ is a local-local group scheme
since the group scheme $\calE[3]$ over $\Z_3$ is a local-local group scheme.
The group schemes $\gsc'$ and $\calE[3]_{\Z_3}$ are not isomorphic
because their generic fibers are not isomorphic.
But we know that the representations $\chi_3 \rho_{\calE[3]}$ and
$\rho_{\calE[3]}$ factor through the same Galois group $\shg$,
which is contrary to the above. Hence the representation $\chi_3 \rho_{\calE[3]}$ is not flat
over $\Z_3$.

Similarly, assume that $\rho_{g}$ is a flat representation and extends to a 
finite flat commutative group scheme $\gsc''$ over $\Z_3$.
The representation $\rho_{g} : G_{\Q} \rightarrow \Gl_2(\F_{3})$ is not faithful
because the image has no elements of order $8$: 
All elements of order $8$ inside the unique subgroup
of $\Gl_{2}(\F_{3})$ of order $16$ have trace $\pm 1$,
yet $\rho_g$ has trace zero on elements of order $8$.
Hence the image is isomorphic to a quotient of $\shg$.
This quotient turns out to be isomorphic to the dihedral group $D_{4}$.
Therefore the extension generated by the points of $\gsc''$
cannot be $\Q_3(i,\sqrt[8]{3})$ or $\Q_3(i,\sqrt[8]{6})$,
since they both have degree $16$.
Hence $\gsc''$ is not local-local.
The Cartier dual of $\gsc''$ is also a simple finite flat
commutative group scheme of order $9$ which has an irreducible generic fiber
that is not isomorphic to $\rho_g$:
By \cite[Theorem 3.3.3]{Raynaud:1974}, the prolongation of $\rho_g$ must be unique since $1 < p-1$ holds. 
But the generic fiber of the Cartier dual of $\gsc''$ does not occur in Table \ref{tab:q16char}.
Therefore we conclude that $\rho_g$ is not flat.
\end{proof}

\begin{proposition} \label{prop:j032_simple_groupschemes}
Any simple group scheme in $\catd$ is isomorphic to either 
$\Z/3\Z$, $\Z/3\Z(\chi_{2})$, $\mu_{3}$, $\mu_{3}(\chi_{2})$ or $\calE[3]$.
\end{proposition}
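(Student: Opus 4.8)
The plan is to combine the structural reductions already established with the local flatness analysis just completed. Let $\gsc$ be a simple group scheme in $\catd$; it has $3$-power order and is ramified of level $\tfrac32$ at $2$. The first step is to reduce to the case that $\gsc$ is annihilated by $3$: a simple object has no proper nontrivial closed flat subgroup schemes, and the kernel of multiplication by $3$ is such a subgroup scheme (nonzero since $\gsc$ is a nonzero $3$-group), so $\gsc[3]=\gsc$. Thus the representation $\rho_\gsc$ factors through a field $\Q(\gsc)$ which, being generated by $3$-torsion points of an object of $\catd$, is contained in the maximal $3$-torsion extension $\maxextcatd$ of $\catd$. By Proposition~\ref{prop:max3ext_grh} (under GRH) we have $\maxextcatd = \Q(\calE[3],\sqrt[3]{2})$, whose Galois group over $\Q$ is an extension of $\shg$ by the abelian group $\Gal(K/\Q(\calE[3]))$ of order $3$.

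The second step is to pin down the image of $\rho_\gsc$. Since $\gsc$ is simple over $\Z[\tfrac12]$, the $\flbar$-representation $\rho_\gsc$ is irreducible (a nontrivial invariant subspace would, by the anti-equivalence between such group schemes and their Galois modules away from $2$, produce a proper nontrivial closed flat subgroup scheme). An irreducible representation of $\Gal(\maxextcatd/\Q)$ must have the $3$-group $\Gal(\maxextcatd/\Q(\calE[3]))$ acting trivially — here one uses that a normal $p$-subgroup acts trivially on any irreducible representation in characteristic $p$, so the representation factors through $\Gal(\Q(\calE[3])/\Q)\cong\shg$. Hence $\rho_\gsc$ is one of the seven irreducible $\flbar$-representations of $\shg$ tabulated in Table~\ref{tab:q16char}, and we must decide which of these arise from an actual group scheme over $\Z[\tfrac12]$ that is, in particular, flat over $\Z_3$.

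The third step is the local obstruction, where the real work lies. For each of the four $1$-dimensional characters we exhibit an explicit group scheme over $\Z[\tfrac12]$ realizing it — $\Z/3\Z$, $\mu_3$, $\Z/3\Z(\chi_2)$, $\mu_3(\chi_2)$ — and we must check each lies in $\catd$, i.e.\ is ramified of level $\tfrac32$ at $2$; this is immediate for $\Z/3\Z$ and $\mu_3$ (unramified at $2$) and follows for the twists by $\chi_2$ since $\chi_2$ is ramified at $2$ with conductor exponent $5$ so that, via Lemma~\ref{lem:localdiscbound} and the explicit ramification filtration of $\Q(\zeta_8)/\Q_2$, the level of ramification at $2$ is exactly $\tfrac32$. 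For the $2$-dimensional representations, $\rho_{\calE[3]}$ is realized by $\calE[3]$, which lies in $\catd$ because $J_0(32)$ is ramified of level $\tfrac32$ at $2$ (Table~\ref{tab:modformlevel}); the remaining two $2$-dimensional representations $\rho_g$ and $\chi_3\rho_{\calE[3]}$ are eliminated by the preceding lemma, which shows they are not flat over $\Z_3$ and hence cannot underlie any group scheme over $\Z[\tfrac12]$ of $3$-power order. Collecting these cases gives exactly the five group schemes in the statement.

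The main obstacle is the flatness-at-$3$ step for the $2$-dimensional representations, which is precisely the content of the cited lemma and rests on the classification of simple local-local group schemes of order $9$ over $\Z_3$ via finite Honda systems together with the Raynaud uniqueness-of-prolongation bound $1<p-1$; granting that lemma, the argument above is essentially bookkeeping over the character table. A secondary point requiring care is the verification that the twisted characters $\Z/3\Z(\chi_2)$ and $\mu_3(\chi_2)$ genuinely have level of ramification $\tfrac32$ rather than something smaller, since the category $\catd$ is defined by an equality-type condition packaged through the upper-numbering filtration; this is a direct computation with $\Q(\zeta_8)/\Q_2$ using the results of Section~\ref{sec:ramificationgroups}.
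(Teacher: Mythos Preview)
Your reduction to one of five possible generic fibers is correct and matches the paper's approach: factor through $\Gal(\maxextcatd/\Q)$, use that the normal $3$-subgroup acts trivially on any irreducible $\F_3$-representation to descend to $\shg$, and invoke the flatness lemma to rule out $\rho_g$ and $\chi_3\rho_{\calE[3]}$. However, you stop at the level of generic fibers. Identifying the representation $\rho_\gsc$ does not yet determine $\gsc$ as a group scheme over $\Z[\tfrac12]$: two nonisomorphic finite flat group schemes can share a generic fiber, and the proposition asserts an isomorphism of group schemes, not merely of Galois modules. The paper closes this gap explicitly. For order $9$ it uses Raynaud's uniqueness of prolongation over $\Z_3$ (the case $e<p-1$) to pin down $\gsc_{\Z_3}\simeq\calE[3]_{\Z_3}$, and then an Artin gluing argument (as in \cite[Proposition~2.3]{Schoof:2003}) together with $\End_{G_\Q}(\calE[3]_\Q)=\F_3$ from Lemma~\ref{lem:end_gal_equivariant} to show that the gluing datum over $\Q_3$ is forced, whence $\gsc\simeq\calE[3]$ globally. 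For order $3$ it invokes the Oort--Tate classification over $\Z[\tfrac12]$ to see that the only freedom beyond the generic fiber is a twist by a quadratic character unramified outside $2$, i.e.\ a power of $\chi_2$, which is already accounted for in the list. Without these steps your argument shows only that the five listed schemes realize every possible generic fiber, not that they exhaust the isomorphism classes in $\catd$.

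Two smaller points. First, ``ramified of level $i$'' is an inequality ($u\le i$), not an equality; so to place $\Z/3\Z(\chi_2)$ in $\catd$ you need only $u_{\Q(i)/\Q_2}\le\tfrac32$, and in fact $u_{\Q(i)/\Q_2}=1$. Your remarks about ``conductor exponent $5$'' and the ramification filtration of $\Q(\zeta_8)/\Q_2$ are off: $\chi_2$ here is the quadratic character cutting out $\Q(i)$, with $2$-adic conductor exponent $2$. Second, and consequently, the worry in your final paragraph about an ``equality-type condition'' defining $\catd$ is unfounded.
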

\begin{proof}
It is clear that the group schemes in the statement are objects $\catd$.
We prove that they are the only ones up to isomorphism.
A simple group scheme in $\catd$ prolongates the
$1$-dimensional representations in Table \ref{tab:q16char} or the representation
$\rho_{\calE[3]}$.
In particular, all simple group schemes have
either order $3$ or $9$.
We start with the group schemes of order $9$.

The generic fiber of a simple group scheme of order $9$ in $\catd$
is isomorphic to the generic fiber of $\calE[3]$.
Suppose that $G$ is a simple object in $\catd$ and has order $9$.
The local group scheme $G \times_{\Z[\frac{1}{2}]} \Z_{3}$ prolongs $\calE[3] \times_{\Z[\frac{1}{2}]} \Q_{3}$.
According to \cite[Proposition 3.3.2]{Raynaud:1974}, 
the group scheme $G \times_{\Z[\frac{1}{2}]} \Z_{3}$ is isomorphic to $\calE[3] \times_{\Z[\frac{1}{2}]} \Z_{3}$.

We apply the result of \cite{Artin:1970} described in \cite[Proposition 2.3]{Schoof:2003}.
The scheme $\gsc$ corresponds to a triple $(\gsc',\gsc'',\theta_{\gsc})$
where $\gsc' = \gsc \times_{\Z[\frac{1}{2}]} \Q$,
$\gsc'' = \gsc \times_{\Z[\frac{1}{2}]} \Z_{3}$
and where $\theta_{\gsc} : \gsc' \times_{\Q} \Q_{3} \rightarrow \gsc'' \times_{\Z_{3}} \Q_{3}$ is a gluing homomorphism.
Similarly, write $\calE[3]$ as $(\calE[3]_{1},\calE[3]_{2},\theta_{\calE[3]})$.
By hypothesis, there exist isomorphisms 
$f_1 : \gsc' \rightarrow \calE[3]_{1}$ and 
$f_2 : \gsc'' \rightarrow \calE[3]_{2}$.
We want that $f_1$ and $f_2$, when base changed to $\Q_{3}$,
are compatible with $\theta_{\calE[3]}$ and $\theta_\gsc$.
Locally over $\Q_{3}$ we can always do this:
we can adjust $(f_2)_{\Q_{3}}$ by composing it with
an automorphism of $\calE[3]_2 \times_{\Z[\frac{1}{2}]}  \Q_3$.
The only $G_\Q$-equivariant (resp. $G_{\Q_3}$-equivariant)
automorphism of $\calE[3]_2$ (resp. $\calE[3]_2 \times_{\Z_{3}} \Q_{3}$) is the identity.
This implies that $\gsc$ is isomorphic to $\calE[3]$. 
It follows that there is only one simple group scheme
of order $9$ up to isomorphism in $\catd$, namely 
the group scheme $\calE[3]$.

We now consider group schemes of order $3$.
The generic fiber of a group scheme $\gsc$ of order $3$
is isomorphic to the generic fiber of one of
the group schemes
$\Z/3\Z, \Z/3\Z(\chi_{2}), \mu_{3}$ or $\mu_{3}(\chi_{2})$.
We apply \cite[Theorem 3]{TateOort:1970} that says that
the generic fiber determines $\gsc$ up to a twist.
Such a twist is only ramified at $2$ and has order divisible
by the order of $\F_3^*$, the automorphism group of $\gsc \times_{\Z[\frac{1}{2}]} \Z_{3}$.
Hence the twist must be a power of $\chi_2$.
\end{proof}

\subsection{Extensions of simple group schemes} \label{sec:extj032}

Now that we classified the simple group schemes
in $\catd$, we compute extension groups
of simple group schemes in $\catd$.
In particular, we will show that any extension in $\catd$
of a simple non-\'etale group scheme by a simple \'etale group scheme, that is, $\Z/3\Z$
or $\Z/3\Z(\chi)$, is trivial. 
To show this for the category $\catd$, 
we will use that, under assumption of the generalized Riemann hypothesis,
the maximal $3$-torsion extension $\maxextcatd$ of $\catd$ is equal to $\Q(\calE[3],\sqrt[3]{2})$.
The results that we obtain using this, will therefore a fortiori 
also hold under assumption of the generalized Riemann hypothesis.
When calculating extension groups, we make use of the following result:

\begin{proposition}[\cite{Schoof:2003}, Corollary 2.4] \label{prop:mayervietoris}
Let $\badrat, \good$ be two distinct rational primes.
Let $\gsc'$ and $\gsc''$ be two finite flat commutative group schemes
over $\Z[\frac{1}{\badrat}]$. 
The sequence
\begin{align*}
0 \longrightarrow \Hom_{\Z[\frac{1}{\badrat}]}(\gsc'',\gsc') \longrightarrow 
\Hom_{ \Z_\good  }(\gsc'',\gsc') \times
\Hom_{  \Z[\frac{1}{\good \badrat}] }(\gsc'',\gsc') \longrightarrow
\Hom_{  \Q_\good }(\gsc'',\gsc') \\ 
\longrightarrow \extt{ \Z[\frac{1}{\badrat}]  }{\gsc''}{\gsc'}
\longrightarrow  \extt{ \Z_\good}{\gsc''}{\gsc'} \times
\extt{ \Z[\frac{1}{\good \badrat}] }{\gsc''}{\gsc'} \longrightarrow 
\extt{ \Q_\good }{\gsc''}{\gsc'}
\end{align*}
is exact.
\end{proposition}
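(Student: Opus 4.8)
The plan is to realise the sequence as a Mayer--Vietoris sequence attached to the ``covering'' of $\Spec\Z[\frac{1}{\badrat}]$ by the open subscheme $\Spec\Z[\frac{1}{\good\badrat}]$ together with the flat neighbourhood $\Spec\Z_\good$ of the point $(\good)$, whose overlap is $\Spec\Q_\good$. The underlying square of rings
\[
\begin{array}{ccc}
\Z[\tfrac{1}{\badrat}] & \longrightarrow & \Z_\good \\
\downarrow & & \downarrow \\
\Z[\tfrac{1}{\good\badrat}] & \longrightarrow & \Q_\good
\end{array}
\]
is Cartesian (one checks $\Z[\frac{1}{\good\badrat}]\cap\Z_\good=\Z[\frac{1}{\badrat}]$ inside $\Q_\good$), and $\Z[\frac{1}{\badrat}]\hookrightarrow\Z_\good\times\Z[\frac{1}{\good\badrat}]$ is faithfully flat. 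First I would invoke the gluing theorem of Artin used in \cite[Proposition 2.3]{Schoof:2003}: a finite flat commutative group scheme over $\Z[\frac{1}{\badrat}]$ is the same datum as a triple $(G_1,G_2,\theta)$ with $G_1$ finite flat over $\Z_\good$, $G_2$ finite flat over $\Z[\frac{1}{\good\badrat}]$, and $\theta$ an isomorphism between their base changes to $\Q_\good$, and a morphism over $\Z[\frac{1}{\badrat}]$ is the same as a pair of morphisms over $\Z_\good$ and over $\Z[\frac{1}{\good\badrat}]$ agreeing over $\Q_\good$. Since over each of the Noetherian rings in play an extension of a finite flat commutative group scheme by one is again finite flat --- so that the $\Ext^{1}$-groups occurring are the Yoneda extension groups in these categories, abelian by \cite[Corollary 17.5]{Oort:1966} --- the same gluing applies to short exact sequences.

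Granting this, the sequence is a diagram chase. Exactness at $\Hom_{\Z[\frac{1}{\badrat}]}(\gsc'',\gsc')$ is faithful flatness: a morphism dying over both $\Z_\good$ and $\Z[\frac{1}{\good\badrat}]$ dies. Exactness at $\Hom_{\Z_\good}(\gsc'',\gsc')\times\Hom_{\Z[\frac{1}{\good\badrat}]}(\gsc'',\gsc')$ is exactly the gluing statement for morphisms. The connecting map $\Hom_{\Q_\good}(\gsc'',\gsc')\to\extt{\Z[\frac{1}{\badrat}]}{\gsc''}{\gsc'}$ sends $g$ to the class of the extension obtained by gluing the split extension $\gsc'\oplus\gsc''$ over $\Z_\good$ to the split extension over $\Z[\frac{1}{\good\badrat}]$ along the automorphism $\left(\begin{smallmatrix}1 & g\\ 0 & 1\end{smallmatrix}\right)$ over $\Q_\good$; conjugating the gluing datum by the unipotent automorphisms attached to morphisms $h_1$ over $\Z_\good$ and $h_2$ over $\Z[\frac{1}{\good\badrat}]$ replaces $g$ by $g+h_1|_{\Q_\good}-h_2|_{\Q_\good}$ without changing the isomorphism class, which makes the connecting map well defined on, and injective on, the cokernel of the previous arrow.

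For the remaining two spots: an extension over $\Z[\frac{1}{\badrat}]$ restricts to zero over both $\Z_\good$ and $\Z[\frac{1}{\good\badrat}]$ iff it admits splittings $s_1,s_2$ there, and then the difference $s_1-s_2$ over $\Q_\good$ is a morphism $g\colon\gsc''\to\gsc'$ whose connecting class is the given one; conversely the connecting map visibly produces classes split on both pieces, which gives exactness at $\extt{\Z[\frac{1}{\badrat}]}{\gsc''}{\gsc'}$. Finally, the last arrow sends a pair $(E_1,E_2)$ to the difference of the base changes to $\Q_\good$ in $\extt{\Q_\good}{\gsc''}{\gsc'}$, which vanishes precisely when $E_1$ and $E_2$ become isomorphic as extensions over $\Q_\good$; by the gluing equivalence applied to the short exact sequences themselves, such a pair descends to a finite flat extension over $\Z[\frac{1}{\badrat}]$, giving exactness at $\extt{\Z_\good}{\gsc''}{\gsc'}\times\extt{\Z[\frac{1}{\good\badrat}]}{\gsc''}{\gsc'}$.

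The main obstacle is entirely the first step: establishing that finite flat commutative group schemes, their morphisms, \emph{and} their extensions can be glued along the non-Zariski decomposition $\Spec\Z_\good\cup\Spec\Z[\frac{1}{\good\badrat}]$, in particular that finite-flatness and the relevant exactness survive the gluing construction. This is the content of Artin's theorem; once it is available, the rest is the routine verification sketched above. In the paper itself one simply cites \cite[Corollary 2.4]{Schoof:2003}.
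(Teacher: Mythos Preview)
Your proposal is correct and mirrors the actual argument behind the cited result: the paper does not give its own proof but simply invokes \cite[Corollary 2.4]{Schoof:2003}, whose proof in Schoof's paper proceeds exactly via Artin's gluing equivalence (the result of \cite{Artin:1970}, stated as \cite[Proposition 2.3]{Schoof:2003}) followed by the standard Mayer--Vietoris diagram chase you describe. You have correctly identified both the strategy and the one nontrivial input.
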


This allows us to relate extensions over $\Z[\frac{1}{2}]$ with
local extensions and extensions of Galois modules,
as will be demonstrated in the next lemma.

\begin{lemma} \label{lem:ext_e3_z3z_grh}
Under assumption of the generalized Riemann hypothesis,
the $\F_3$-vector spaces \\
$\extt{\Z[\frac{1}{2}]}{\calE[3]}{\Z/3\Z}$ and 
$\extt{\Z[\frac{1}{2}]}{\calE[3]}{\Z/3\Z(\chi_2)}$ are trivial.
\end{lemma}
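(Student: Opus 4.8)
The plan is to feed the Mayer--Vietoris sequence of Proposition~\ref{prop:mayervietoris} (taken with $\badrat=2$ and $\good=3$) the explicit description of $\maxextcatd$ obtained in Proposition~\ref{prop:max3ext_grh}. Write $M=\Hom(\calE[3],\Z/3\Z)$ and $M'=\Hom(\calE[3],\Z/3\Z(\chi_2))$. Since $3$ is inert in $\Q(\calE[3])/\Q$, the generic fibre $\calE[3]\times_{\Z[\frac12]}\Q_3$ is an irreducible $2$-dimensional Galois module, so $\Hom_{\Q_3}(\calE[3],\Z/3\Z)=\Hom_{\Q_3}(\calE[3],\Z/3\Z(\chi_2))=0$. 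Hence the connecting map $\Hom_{\Q_3}(\calE[3],-)\to \Ext^1_{\Z[\frac12]}(\calE[3],-)$ in the Mayer--Vietoris sequence vanishes, and $\Ext^1_{\Z[\frac12]}(\calE[3],\Z/3\Z)$ embeds into $\Ext^1_{\Z_3}(\calE[3],\Z/3\Z)\times \Ext^1_{\Z[\frac16]}(\calE[3],\Z/3\Z)$, and similarly after twisting by $\chi_2$.

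Next I would kill the local term at $3$. The group scheme $\calE[3]\times_{\Z[\frac12]}\Z_3$ is connected (local--local, as $E$ is supersingular at $3$), whereas $\Z/3\Z$ and $\Z/3\Z(\chi_2)$ are étale over $\Z_3$. Given an extension $0\to\Z/3\Z\to G\to\calE[3]\to 0$ over $\Z_3$, the connected component $G^{0}$ meets the étale subgroup trivially, hence maps by a closed immersion into $\calE[3]$; since $\calE[3]$ is simple over $\Z_3$ (irreducible generic fibre) and $G$ cannot be étale (it surjects onto the connected group $\calE[3]$), we get $G^{0}\xrightarrow{\ \sim\ }\calE[3]$, which splits the extension. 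Thus $\Ext^1_{\Z_3}(\calE[3],\Z/3\Z)=\Ext^1_{\Z_3}(\calE[3],\Z/3\Z(\chi_2))=0$, and it remains to show that no non-zero class of $\Ext^1_{\Z[\frac16]}(\calE[3],\Z/3\Z)$ or of $\Ext^1_{\Z[\frac16]}(\calE[3],\Z/3\Z(\chi_2))$ comes from $\Z[\frac12]$.

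Over $\Z[\frac16]$ every group scheme of $3$-power order is étale, so these two $\Ext$ groups are $H^1(G_{\Q,S},M)$ and $H^1(G_{\Q,S},M')$, where $G_{\Q,S}$ is the Galois group of the maximal extension of $\Q$ unramified outside $S=\{2,3,\infty\}$. The Weil pairing gives $\calE[3]^{\vee}\cong\calE[3]$ and the complex multiplication by $\Q(i)$ gives $\calE[3]\otimes\chi_2\cong\calE[3]$ (as $\chi_2$ cuts out $\Q(i)$), so both $M$ and $M'$ are isomorphic to $\calE[3]\otimes\chi_3$ as modules over $\shg=\Gal(\Q(\calE[3])/\Q)$. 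Now take a class coming from $\Z[\frac12]$, say corresponding to a group scheme $G$; by Proposition~\ref{prop:categoryprop} $G$ lies in $\catd$, and since $\calE[3]$ is killed by $3$ the map $3\colon G\to G$ lands in $\Z/3\Z$ and kills it, hence factors through a $G_\Q$-homomorphism $\calE[3]\to\Z/3\Z$, which is zero because $\calE[3]$ is an irreducible $G_\Q$-module; thus $G$ is killed by $3$ and $\Q(G)\subseteq\maxextcatd=\Q(\calE[3],\sqrt[3]{2})$ by Proposition~\ref{prop:max3ext_grh} (this is the only use of the generalized Riemann hypothesis). As $|\shg|=16$ is prime to $3$, inflation--restriction gives $H^1(G_{\Q,S},M)\cong H^1(G_{\Q(\calE[3]),S},M)^{\shg}=\Hom_{\shg}(G_{\Q(\calE[3]),S}^{\mathrm{ab}}\otimes\F_3,\,M)$, and the restriction of our class factors through the quotient $\Gal(\maxextcatd/\Q(\calE[3]))\cong\F_3(\chi_3)$ (Kummer theory, using $\mu_3\subset\Q(\calE[3])$). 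Therefore the class lies in $\Hom_{\shg}(\F_3(\chi_3),\calE[3]\otimes\chi_3)=\Hom_{\shg}(\F_3,\calE[3])=\calE[3]^{\shg}=0$, since $\rho_{\calE[3]}$ is a non-trivial irreducible representation of $\shg$. Hence $\Ext^1_{\Z[\frac12]}(\calE[3],\Z/3\Z)=\Ext^1_{\Z[\frac12]}(\calE[3],\Z/3\Z(\chi_2))=0$.

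The main obstacle is this last step: one must get the $\shg$-module structures of $M$ and $M'$ exactly right — the twist by $\chi_3$ coming from the Weil pairing and the twist by $\chi_2$ coming from the CM are both essential — identify the $\shg$-action on $\Gal(\maxextcatd/\Q(\calE[3]))$ as $\chi_3$, and verify that the relevant cohomology class genuinely factors through this one-dimensional quotient, which itself rests on $G$ being annihilated by $3$ and on the GRH-conditional equality $\maxextcatd=\Q(\calE[3],\sqrt[3]{2})$.
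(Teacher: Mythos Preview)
Your argument is correct and shares the overall architecture of the paper's proof: feed Proposition~\ref{prop:mayervietoris} with $\badrat=2$, $\good=3$, kill the $\Z_3$-component via the connected--\'etale splitting (using that $\calE[3]_{\Z_3}$ is local--local), reduce to showing that the image in $\Ext^1_{\Z[\frac16]}$ vanishes, and use the GRH-conditional identification $\maxextcatd=\Q(\calE[3],\sqrt[3]{2})$ to control $\Q(\gsc)$.

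The genuine divergence is in how the generic splitting is extracted from $\Q(\gsc)\subset\maxextcatd$. You compute the $\shg$-module structures $M\cong M'\cong\calE[3]\otimes\chi_3$ (via the Weil pairing and the CM self-twist $\calE[3]\otimes\chi_2\cong\calE[3]$), identify the conjugation action on $\Gal(\maxextcatd/\Q(\calE[3]))$ as $\F_3(\chi_3)$, and conclude by $\Hom_{\shg}(\F_3(\chi_3),\calE[3]\otimes\chi_3)=\calE[3]^{\shg}=0$ through inflation--restriction. The paper instead recycles the already-established local triviality at $3$: since $\gsc_{\Z_3}$ splits, the extension $\Q(\gsc)/\Q(\calE[3])$ is unramified at the prime above $3$, whereas $\Q(\calE[3],\sqrt[3]{2})/\Q(\calE[3])$ is ramified there; hence $\Q(\gsc)=\Q(\calE[3])$, and semisimplicity of $\F_3[\shg]$-modules (as $\gcd(16,3)=1$) finishes. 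The paper's route is shorter and avoids pinning down the module structures of $M$, $M'$, and $\Gal(\maxextcatd/\Q(\calE[3]))$; your route is more systematically cohomological and would adapt readily to situations where no convenient ramification obstruction is available, at the cost of the extra representation-theoretic bookkeeping you correctly flag as the main obstacle.
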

\begin{proof}
Let $T$ be one of the group schemes $\Z/3\Z$ or $\Z/3\Z(\chi_2)$.
Since $\Gal(\Q(\calE[3])/\Q)$ is equal to the decomposition group at $3$,
it follows that $\Hom_{\Q}(\calE[3], T) \simeq \Hom_{\Q_3}(\calE[3], T)$.
Because $\Hom_{\Q}(\calE[3], T)$ is trivial,  
Proposition \ref{prop:mayervietoris} with $\gsc'' = \calE[3]$ and $\gsc' = T$ 
implies that  $\extt{\Z[\frac{1}{2}]}{\calE[3]}{T}$ is mapped injectively
into $\extt{\Z[\frac{1}{6}]}^{1}{\calE[3]}{T} \times \extt{\Z_3}{\calE[3]}{T}$.
Let $\gsc$ be an extension of $\calE[3]$ by $T$
of group schemes over $\Z[\frac{1}{2}]$.
Then $\gsc$ splits locally at $3$ because $\calE[3]$ is connected and $T$ is \'etale.
Thus $\gsc_{\Z_{3}}$ is annihilated by $3$.
Because $\Z_3$ is flat over $\Z[\frac{1}{2}]$, the group scheme
$\gsc$ is also annihilated by $3$.
The group scheme $\gsc$ is also generically split by the following argument.
The number field $\Q(J)$ contains $\Q(\calE[3])$.
Since $\gsc$ is annihilated by $3$, the extension
$\Q(J)/\Q$ must be a subextension of the maximal extension $\Q(\calE[3],\sqrt[3]{2})$.
Hence $\Q(\gsc)$ is either $\Q(\calE[3],\sqrt[3]{2})$ or $\Q(\calE[3]))$.
Since $\gsc$ is locally a trivial extension, the Galois extension $\Q(J)/\Q(\calE[3])$
is unramified at the prime lying above $3$ in $\Q(\calE[3])$.
However,
the extension $\Q(\calE[3],\sqrt[3]{2})/\Q(\calE[3])$ is ramified 
at the prime lying above $3$.
Therefore $\Q(J) = \Q(\calE[3])$.
It follows that $J_\Q$ is an $\F_3[\shg]$-module.
Since $\F_3[\shg]$-modules are semi-simple, the sequence splits generically.
Hence also $\gsc$ is a split extension.
\end{proof}

\begin{lemma} \label{lem:ext_twisted}
The $\F_3$-vector spaces
\begin{eqnarray*}
& \extt{\Z[\frac{1}{2}]}{\mu_3}{\Z/3\Z} ,\quad \extt{\Z[\frac{1}{2}]}{\mu_3}{\Z/3\Z(\chi_2)} , \quad \extt{\Z[\frac{1}{2}]}{\mu_3(\chi_2)}{\Z/3\Z)} & \\
& \text{and} \quad \extt{\Z[\frac{1}{2}]}{\mu_3(\chi_2)}{\Z/3\Z(\chi_2)}  &
\end{eqnarray*}
are trivial.
\end{lemma}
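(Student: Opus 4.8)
The plan is to imitate the proof of Lemma~\ref{lem:ext_e3_z3z_grh} by running the Mayer--Vietoris sequence of Proposition~\ref{prop:mayervietoris} with $\badrat=2$ and $\good=3$, but to conclude \emph{unconditionally} by class field theory instead of invoking $\maxextcatd$ (whose finiteness requires the generalized Riemann hypothesis). Write $\gsc''$ for the first entry of each $\Ext$-group ($\mu_3$ or $\mu_3(\chi_2)$) and $\gsc'$ for the second ($\Z/3\Z$ or $\Z/3\Z(\chi_2)$); since $\chi_2$ cuts out $\Q(i)$, which is ramified of level $1\le\frac{3}{2}$ at $2$ and unramified at $3$, all four of these group schemes lie in $\catd$. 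Moreover $\gsc''$ is ramified at $3$ whereas $\gsc'$ is unramified there, so over $\Q_3$ their generic fibres have no common Jordan--Hölder constituent and $\Hom_{\Q_3}(\gsc'',\gsc')=0$. Hence Proposition~\ref{prop:mayervietoris} shows that $\Ext^1_{\Z[\frac12]}(\gsc'',\gsc')$ injects into $\Ext^1_{\Z_3}(\gsc'',\gsc')\times\Ext^1_{\Z[\frac16]}(\gsc'',\gsc')$, with image contained in the kernel of the difference of the two restriction maps to $\Ext^1_{\Q_3}(\gsc'',\gsc')$.

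The first step is to kill the local term at $3$, exactly as in Lemma~\ref{lem:ext_e3_z3z_grh}. Over $\Z_3$ the group scheme $\gsc''$ is connected and $\gsc'$ is étale (recall $\chi_2$ is unramified at $3$). Given an extension $0\to\gsc'\to\gsc\to\gsc''\to 0$ over $\Z_3$, the connected component $\gsc^0$ meets $\gsc'$ trivially (an étale closed subgroup scheme of a connected finite flat group scheme over $\Z_3$ is trivial), so $\gsc^0$ maps injectively to $\gsc/\gsc'\cong\gsc''$; as $|\gsc|=9$ this map is an isomorphism and splits the extension. Thus $\Ext^1_{\Z_3}(\gsc'',\gsc')=0$, and we are reduced to showing that the kernel of $\Ext^1_{\Z[\frac16]}(\gsc'',\gsc')\to\Ext^1_{\Q_3}(\gsc'',\gsc')$ is trivial. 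Since the extension splits over $\Z_3$ it is killed by $3$ there, hence $\gsc$ is killed by $3$ (flatness of $\Z_3$ over $\Z[\frac12]$ and constancy of the rank over the connected base $\Spec\Z[\frac12]$), so over $\Z[\frac16]$, where everything is étale, our class lies in $H^1(G_{\Q,S},M)$ with $S=\{2,3,\infty\}$ and $M=\Hom_{\F_3}(\gsc''(\qbar),\gsc'(\qbar))$, a one-dimensional $\F_3[G_\Q]$-module.

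It therefore suffices to prove that $H^1(G_{\Q,S},M)\to H^1(G_{\Q_3},M)$ is injective for each of the four modules $M$. Using that the mod-$3$ cyclotomic character $\chi$ is its own inverse, one finds $M\cong\mu_3$ when $\gsc''$ and $\gsc'$ carry the same $\chi_2$-twist — the pairs $(\mu_3,\Z/3\Z)$ and $(\mu_3(\chi_2),\Z/3\Z(\chi_2))$ — and $M\cong\mu_3(\chi_2)=\Z/3\Z(\chi\chi_2)$ for the remaining two. When $M=\mu_3$, Kummer theory together with $\mathrm{Cl}(\Z[\frac16])=1$ gives $H^1(G_{\Q,S},\mu_3)=\langle 2,3\rangle\subset\Q^*/(\Q^*)^3$, which restricts injectively into $\Q_3^*/(\Q_3^*)^3$: indeed $3$ has valuation prime to $3$, and $2$ is not a cube in $\Q_3$ because $x^3-2$ becomes Eisenstein at $3$ after the substitution $x\mapsto x-1$. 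When $M=\Z/3\Z(\chi\chi_2)$ one shows directly that $H^1(G_{\Q,S},M)=0$: if $F=\Q(\sqrt3)$ is the quadratic field cut out by $\chi\chi_2$, then (the degree $2$ being prime to $3$) inflation--restriction identifies $H^1(G_{\Q,S},M)$ with the $(-1)$-eigenspace of $H^1(G_{F,S_F},\F_3)$ under $\Gal(F/\Q)$; since $\mathrm{Cl}(F)=1$ and no place of $F$ over $S$ contains a primitive cube root of unity, the global Euler-characteristic formula (with the resulting Poitou--Tate vanishing of $H^2$) gives $\dim_{\F_3}H^1(G_{F,S_F},\F_3)=1$, and that class — the inflation of the character of $\Q$ cutting out the cubic subfield of $\Q(\zeta_9)$ — is $\Gal(F/\Q)$-invariant, so the $(-1)$-eigenspace is $0$. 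In every case the relevant kernel vanishes, so all four $\Ext^1$-groups over $\Z[\frac12]$ are trivial.

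I expect the last step, and specifically the vanishing of $H^1(G_{\Q,S},\Z/3\Z(\chi\chi_2))$, to be the main obstacle: because GRH is not available for this lemma we cannot route through $\maxextcatd$ as in Lemma~\ref{lem:ext_e3_z3z_grh}, so this has to be carried out by an honest — though finite and classical — class field theory computation over $\Q(\sqrt3)$, using that its class number is $1$ and the behaviour of the cube map at the primes of $\Q(\sqrt3)$ above $2$ and $3$. The rest (the Mayer--Vietoris reduction, the splitting at $3$, and the bookkeeping with the twists by $\chi_2$) is routine.
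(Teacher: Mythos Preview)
Your argument is correct, but it proceeds quite differently from the paper's proof. The paper does not compute the Galois cohomology groups $H^1(G_{\Q,S},\mu_3)$ and $H^1(G_{\Q,S},\F_3(\chi\chi_2))$ directly. Instead it observes that all four extensions split locally at $3$ (for the same connected--\'etale reason you give), obtains the injection $\extt{\Z[\frac12]}{\gsc''}{\gsc'}\hookrightarrow\extt{\Q}{\gsc''}{\gsc'}$, and then base-changes to $\Q(i)$: since $\chi_2$ trivializes over $\Q(i)$, all four cases become the single group $\extt{\Z[\frac12,i]}{\mu_3}{\Z/3\Z}$, which the paper cites as being trivial from \cite{Verhoek:2010}. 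An inflation--restriction sequence for $\Gamma=\Gal(\Q(i)/\Q)$, together with $\Hom_{G_\Q}(\gsc'',\gsc')=0$, then gives the vanishing over $\Q$.

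So the paper's route is shorter because it collapses the four cases to one by passing to $\Q(i)$ and outsources the remaining computation, while your route is self-contained and stays over $\Q$ at the cost of two separate cohomology computations (Kummer theory for $M\cong\mu_3$, and the class-field-theoretic analysis over $\Q(\sqrt 3)$ for $M\cong\F_3(\chi\chi_2)$). Your verification that $\dim_{\F_3}H^1(G_{F,S_F},\F_3)=1$ with the unique class coming from $\Q(\zeta_9)^+$, hence $\Gal(F/\Q)$-invariant, is exactly the ``honest'' computation you anticipated; it goes through because $\Cl(\Q(\sqrt3))=1$, because $\zeta_3$ lies in none of the completions $F_v$ for $v\in S_F$, and because the fundamental unit $2+\sqrt3$ already accounts for one of the two $\Z_3$-directions in the local units at the prime above $3$.
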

\begin{proof}
It follows from \cite{Verhoek:2010} that the $\F_3$-vector space
$\extt{\Z[\frac{1}{2},i]}{\mu_3}{\Z/3\Z}$ is trivial.
Let $\gsc''$ be either $\mu_3$ or $\mu_3(\chi_2)$
and let $\gsc'$ be either $\Z/3\Z$ or $\Z/3\Z(\chi_2)$.
Since every extension of $\gsc''$ by $\gsc'$ splits locally,
the long exact sequence of Proposition \ref{prop:mayervietoris}
yields the inclusion
$
\extt{\Z[\frac{1}{2}]}{\gsc''}{\gsc'} \hookrightarrow 
\extt{\Q}{\gsc''}{\gsc'}
$.
It suffices to show that $\extt{\Q}{\gsc''}{\gsc'}$ is trivial.
Let $\Gamma = \Gal(\Q(i)/\Q)$.
We have the following exact sequence:
$$
0 \longrightarrow H^1(\Gamma,\Hom_{G_\Q}(\gsc'',\gsc'))
\longrightarrow \extt{\Q}{\gsc''}{\gsc'} \longrightarrow 
\extt{\Q(i)}{\gsc''}{\gsc'}^{\Gamma} .
$$
By the above,  $\extt{\Q(i)}{\gsc''}{\gsc'}^{\Gamma}$ is trivial.
The cohomology group $H^1(\Gamma,\Hom_{G_\Q}(\gsc'',\gsc'))$
is trivial because $\Hom_{G_\Q}(\gsc'',\gsc')$ is trivial.
\end{proof}

\begin{lemma} \label{lem:ext_3tors_3tors_grh}
Under the assumption of the generalized Riemann hypothesis,
the group of extensions $\extt{\Z[\frac{1}{2}]}{\calE[3]}{\calE[3]}$
has order $3$ and is generated by $\calE[9]$.
\end{lemma}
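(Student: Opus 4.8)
The plan is to prove the non‑vanishing with the concrete extension $\calE[9]$, and then to bound the group from above by a Mayer--Vietoris argument that reduces everything to a computation over $\Z_3$.

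For the lower bound: over $\Z[\frac{1}{2}]$ the curve $E$ has good reduction, so $0 \to \calE[3] \to \calE[9] \to \calE[3] \to 0$ (the inclusion, followed by multiplication by $3$) is a short exact sequence of finite flat group schemes over $\Z[\frac{1}{2}]$, defining a class $[\calE[9]]$ in $\extt{\Z[\frac{1}{2}]}{\calE[3]}{\calE[3]}$. This class is non-zero, since otherwise $\calE[9]$ would be killed by $3$. As $\calE[3]$ is killed by $3$, the group $\extt{\Z[\frac{1}{2}]}{\calE[3]}{\calE[3]}$ is an $\F_3$-vector space, so it remains to bound its dimension by $1$.

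For this I would apply the Mayer--Vietoris sequence of Proposition \ref{prop:mayervietoris} for the primes $2$ and $3$ with $\gsc'=\gsc''=\calE[3]$, first identifying the four $\Hom$-terms. Group-scheme homomorphisms inject into homomorphisms of the attached Galois modules, and Lemma \ref{lem:end_gal_equivariant} --- together with the absolute irreducibility of the $G_{\Q_3}$-representation $\rho_{\calE[3]}$ used in its proof (valid because $E$ is supersingular at $3$) --- gives that $\End_{G_\Q}(\calE[3]_\Q)$ and $\End_{G_{\Q_3}}(\calE[3])$ both have order $3$. Hence $\Hom_{\Z[\frac{1}{2}]}$, $\Hom_{\Z_3}$, $\Hom_{\Z[\frac{1}{6}]}$ and $\Hom_{\Q_3}$ of $\calE[3]$ with itself all equal $\F_3\cdot\mathrm{id}$, so $\Hom_{\Z_3}\times\Hom_{\Z[\frac{1}{6}]}\to\Hom_{\Q_3}$, $(a,b)\mapsto a-b$, is surjective and the connecting map $\Hom_{\Q_3}(\calE[3],\calE[3])\to\extt{\Z[\frac{1}{2}]}{\calE[3]}{\calE[3]}$ vanishes. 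Therefore
$$
\extt{\Z[\frac{1}{2}]}{\calE[3]}{\calE[3]}\hookrightarrow\extt{\Z_3}{\calE[3]}{\calE[3]}\times\extt{\Z[\frac{1}{6}]}{\calE[3]}{\calE[3]},
$$
and, more precisely, a class mapping to zero in both factors is zero.

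Next I would show the first projection is already injective. Let $\gsc$ be an extension of $\calE[3]$ by $\calE[3]$ over $\Z[\frac{1}{2}]$ with $\gsc_{\Z_3}$ split. Then $\gsc$ is killed by $3$ (it is so over $\Z_3$, and $\Z_3$ is flat over $\Z[\frac{1}{2}]$), so $\gsc$ lies in $\catd$, and by Proposition \ref{prop:max3ext_grh} --- where the generalized Riemann hypothesis enters --- the field $\Q(\gsc)$ is contained in $\maxextcatd=\Q(\calE[3],\sqrt[3]{2})$. Thus the generic fibre of $\gsc$ is a module over $\Gamma:=\Gal(\maxextcatd/\Q)$, and its extension class comes from $\extt{\F_3[\Gamma]}{\calE[3]}{\calE[3]}$. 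This $\Ext$-group vanishes: writing $N:=\Gal(\maxextcatd/\Q(\calE[3]))\cong\Z/3\Z$ and using that $H^i(\shg,-)=0$ for $i\geq 1$ (as $|\shg|$ is prime to $3$), inflation--restriction identifies it with the $\shg$-invariants of $\Hom(N,\End_{\F_3}(\calE[3](\qbar)))$; the scalar summand contributes nothing because $\shg$ acts on $N$ through the non-trivial order-$2$ mod-$3$ cyclotomic character, and the trace-zero summand contributes nothing because $\rho_{\calE[3]}$ is not isomorphic to its contragredient $\rho_{\calE[3]}^{\vee}\cong\rho_{\calE[3]}\otimes\chi_3$ (visible from Table \ref{tab:q16char}). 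Hence the generic fibre of $\gsc$ splits; then $\gsc_{\Z[\frac{1}{6}]}$ splits as well (finite flat group schemes of $3$-power order over $\Z[\frac{1}{6}]$ are étale and equivalent to their generic fibres) and $\gsc_{\Z_3}$ splits, so by the refined statement above $\gsc$ splits over $\Z[\frac{1}{2}]$. Therefore $\extt{\Z[\frac{1}{2}]}{\calE[3]}{\calE[3]}\hookrightarrow\extt{\Z_3}{\calE[3]}{\calE[3]}$.

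Finally I would compute $\extt{\Z_3}{\calE[3]}{\calE[3]}$. Since $E$ has supersingular good reduction at $3$, $\calE[3]_{\Z_3}$ is the unique self-dual local--local finite flat group scheme of order $9$ over $\Z_3$, and via Fontaine's classification of finite flat group schemes over $\Z_3$ by finite Honda systems (equivalently, Fontaine--Laffaille modules) one checks that its self-extensions form a group of order $3$, generated by $\calE[9]_{\Z_3}$. Combined with the injection just obtained and the non-vanishing of $[\calE[9]]$ from the first paragraph, this forces $\extt{\Z[\frac{1}{2}]}{\calE[3]}{\calE[3]}$ to have order $3$ and to be generated by $\calE[9]$. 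I expect the main obstacle to be precisely this last step --- pinning down the finite Honda system of the supersingular group scheme $\calE[3]_{\Z_3}$ and its self-extensions accurately enough to get the exact order $3$ --- together with the group-cohomology vanishing $\extt{\F_3[\Gamma]}{\calE[3]}{\calE[3]}=0$ used in the third step.
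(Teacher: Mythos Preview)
Your first three steps are sound, and the vanishing $\extt{\F_3[\Gamma]}{\calE[3]}{\calE[3]}=0$ via inflation--restriction is a clean alternative to the paper's argument for generic splitting. The genuine gap is your step (4): the group $\extt{\Z_3}{\calE[3]}{\calE[3]}$ does \emph{not} have order $3$. Already its $3$-killed part is non-trivial. Concretely, let $c\colon G_{\Q_3}\to\F_3$ be the unramified homomorphism with $c(\Frob)=1$; the class $c\cdot\mathrm{id}\in H^1(G_{\Q_3},\mathrm{ad}\,\rho_{\calE[3]})$ is non-zero (the map $H^1(G_{\Q_3},\F_3)\to H^1(G_{\Q_3},\mathrm{ad}\,\rho)$ is injective because $H^0(G_{\Q_3},\mathrm{ad}^0\rho)=0$), and the corresponding extension prolongs to a finite flat group scheme over $\Z_3$ by \'etale descent from $\Z_3^{nr}$ --- the twisted Frobenius descent datum on $\calE[3]^2_{\Z_3^{nr}}$ is the group-scheme automorphism $\left(\begin{smallmatrix}1&1\\0&1\end{smallmatrix}\right)$ composed with the standard one. (A direct Fontaine--Laffaille computation gives in fact $\dim_{\F_3}\extt{\Z_3,[3]}{\calE[3]}{\calE[3]}=2$, so $\extt{\Z_3}{\calE[3]}{\calE[3]}$ has order $27$.) Hence the injection you establish into $\extt{\Z_3}{\calE[3]}{\calE[3]}$ is too coarse to conclude.

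The fix is to reorganise your third step: assume only that $J$ is killed by $3$ (rather than that $J_{\Z_3}$ splits). Your cohomological argument still applies --- under GRH $\Q(J)\subset\maxextcatd$, so $J_\Q$ is an $\F_3[\Gamma]$-module and splits. Then $J_{\Q_3}$ splits, and since $e=1<p-1=2$ Raynaud's uniqueness \cite[Theorem 3.3.3]{Raynaud:1974} forces $J_{\Z_3}$ to split too; together with the \'etale splitting over $\Z[\tfrac{1}{6}]$ and the Mayer--Vietoris injection this gives $J=0$. Thus $\extt{\Z[\frac{1}{2}],[3]}{\calE[3]}{\calE[3]}=0$, and the exact sequence of \cite[Lemma 2.1]{Schoof:2009} then bounds $\extt{\Z[\frac{1}{2}]}{\calE[3]}{\calE[3]}$ by $\Hom_{G_\Q}(\calE[3],\calE[3])\cong\F_3$. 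This is exactly the paper's endgame; what your approach contributes is an alternative proof of the generic splitting, replacing the paper's appeal to \cite[Proposition 6.4]{Schoof:2003} plus semi-simplicity of $\F_3[\shg]$ by the direct computation $\extt{\F_3[\Gamma]}{\calE[3]}{\calE[3]}=0$ --- but the passage through Raynaud and \cite[Lemma 2.1]{Schoof:2009} cannot be replaced by a purely local bound at $3$.
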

\begin{proof}
By Lemma \ref{lem:end_gal_equivariant} the group $\Hom_{\Q}(\calE[3],\calE[3])$ 
has order $3$.
The same is true for $\Q_3$ instead of $\Q$ because $3$ does not split
in $\Q(\calE[3])$.
In the long exact sequence of Proposition \ref{prop:mayervietoris}, the map
$$
\Hom_{\Q}(\calE[3],\calE[3]) \times
\Hom_{ \Z_3}(\calE[3],\calE[3]) \longrightarrow
\Hom_{\Q_3}(\calE[3],\calE[3])
$$
is surjective.
As a consequence we get the following inclusion: 
\begin{align} \label{eqn:extincl2}
\extt{\Z[\frac{1}{2}]}^{1}{\calE[3]}{\calE[3]} \longhookrightarrow
\extt{\Z_3}^{1}{\calE[3]}{\calE[3]} \times \extt{\Z[\frac{1}{6}]}^{1}{\calE[3]}{\calE[3]} .
\end{align}
Let $\gsc$ be an extension of $\calE[3]$ by $\calE[3]$ 
of group schemes over $\Z[\frac{1}{2}]$
and suppose that $\gsc$ is annihilated by $3$.
By Proposition \ref{prop:max3ext_grh}
we know that $\Q(\gsc) \subset \Q(\calE[3],\sqrt[3]{2})$.
Apply \cite[Proposition 6.4]{Schoof:2003} with $p=3$,
that says that the number field $\Q(\gsc)$ is either:
$\Q(\calE[3])$, an unramified extension of $\Q(\calE[3])$ or an extension of degree 
at least $9$ over $\Q(\calE[3])$.
We conclude that the only possibility is that 
$\Q(\gsc) = \Q(\calE[3])$.
Since all $\F_3[\shg]$-modules are semi-simple, the group scheme
$\gsc$ splits generically.

Next, we consider the group scheme $\gsc_{\Z_3}$.
It is an extension of $\calE[3]_{\Z_3}$
by itself.
According to \cite[Theorem 3.3.3]{Raynaud:1974}, the local group scheme
$\gsc_{\Z_3}$ is uniquely determined
by its generic fiber because the condition $e < p-1$ holds,
where $e$ is the ramification at $3$ of $\Q_3/\Q_3$ and where $p=3$.
But $\gsc_{\Z_3}$ splits generically, that is, $\gsc_{\Q_3}$ is the product of $\calE[3]_{\Q_3}$ with itself because
the Galois action is trivial and $\gsc_{\Q_3}$ is annihilated by $3$.
Hence by \cite[Theorem 3.3.3]{Raynaud:1974} the group scheme $\gsc_{\Z_3}$ splits.
Now use inclusion (\ref{eqn:extincl2}) above to deduce that extensions annihilated by $3$ split, i.e.,
the group $\extt{\Z[\frac{1}{2}],[3]}^{1}{\calE[3]}{\calE[3]}$ is trivial.

By \cite[Lemma 2.1]{Schoof:2009}, the following sequence is exact:
$$
0 \longrightarrow \extt{\Z[\frac{1}{2}],[3]}{\calE[3]}{\calE[3]}  \longrightarrow \extt{\Z[\frac{1}{2}]}{\calE[3]}{\calE[3]} \longrightarrow 
\Hom_{G_{\Q}}(\calE[3],\calE[3]) .
$$
Since $\Hom_{G_{\Q}}(\calE[3],\calE[3])$ has order $3$ and $\extt{\Z[\frac{1}{2}],[3]}^{1}{\calE[3]}{\calE[3]}$ is trivial,
the group scheme $\calE[9]$ generates the group $\extt{\Z[\frac{1}{2}]}{\calE[3]}{\calE[3]}$.
\end{proof}

\subsection{Proof of Theorem \ref{thm:mainj032}} \label{sec:modularityj032}

\begin{proof}
Let $A$ be an abelian variety over $\Q$
with good reduction outside $2$
such that $A_\locfieldext$ has good reduction (resp. such that $A$ is ramified of level $\frac{3}{2}$ at $2$).
Let $\calA$ be the N\'eron model of $A$ over $\Z[\frac{1}{2}]$ and 
let $\calA[3]$ be the finite flat commutative
group scheme of $3$-torsion points of $\calA$.
Then $\calA[3]$ is an object in $\catd$ and has order $3^{2g}$.
Because any extension in $\catd$ of a simple
non-\'etale group scheme and an \'etale group scheme is split,
and because the ray class field of $\Q(i)$ of conductor $3$ is cyclic of degree $2$,
we can apply Theorem \ref{cat-thm:abvar_torsionfilter}
that says that $\calA[3]$ does not contain \'etale or multiplicative subquotients.
Hence, the only simple subquotients of $\calA[3]$ are isomorphic to $\calE[3]$.

To prove the theorem, we show that the $3$-divisible groups of $A$ and $E$ are isomorphic.
For this we apply \cite[Theorem 8.3]{Schoof:2005} and verify that
all conditions of Theorem \cite[Theorem 8.3]{Schoof:2005} are satisfied.
Let $\calA[3^{\infty}]$ be the $3$-divisible group associated to $A$ and let
$\calE[3^{\infty}]$ be the $3$-divisible group associated to $E$. 
By Lemma \ref{lem:end_gal_equivariant} and Tate's Theorem \cite[Corollary 1 of Theorem 4]{Tate:1967},
we have $\End(\calE[3^{\infty}]) \simeq \Z_{3}$.
Together with Lemma \ref{lem:ext_3tors_3tors_grh} we see that 
both $\calE[3^{\infty}]$ and $\calA[3^{\infty}]$ satisfy all hypotheses
of \cite[Theorem 8.3]{Schoof:2005}
and so $\calA[3^{\infty}] \simeq \calE[3^{\infty}]^{\dim(A)}$.
By \cite{Faltings:1983}, we deduce that $A$ is isogenous to $J_{0}(32)^{\dim(A)}$.
\end{proof}

\section{Good reduction outside $3$} \label{sec:elliptic_curve}

In this section, we prove the part of Theorem \ref{maintheorem} regarding good reduction outside $3$:

\begin{theorem} \label{j027-thm:main}
Let $A$ be an abelian variety over $\Q$ with good reduction outside $3$
such that $A$ is ramified of level $\frac{1}{2}$ at $3$.
Then $A$ is isogenous to a product of copies of $J_{0}(27)$.
\end{theorem}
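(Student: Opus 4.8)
The plan is to follow the same strategy as in the proof of Theorem~\ref{thm:mainj032}, working in the category $\catb$ of finite flat commutative group schemes over $\Z[\frac{1}{3}]$ of $2$-power order that are ramified of level $\frac{1}{2}$ at $3$. Let $E$ be the elliptic curve $J_0(27)$, which has complex multiplication by $\Q(\sqrt{-3})$, additive potentially good reduction at $3$ (it acquires good reduction over a tamely ramified extension, consistent with level $\frac12$), and let $\calE$ be its N\'eron model over $\Z[\frac13]$. Since $27=3^3$ and $J_0(27)$ is ramified of level $\frac12$ at $3$ by Table~\ref{tab:modformlevel} (row $27A$, with $u(\rho_{f,p})=1/2$), the group schemes $\calE[2^n]$ lie in $\catb$. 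First I would compute the image of $\rhobar_{E,2}\colon G_\Q\to\Gl_2(\F_2)\cong S_3$ and the field $\Q(\calE[2])$; because $E$ has CM by $\Q(\sqrt{-3})$ and $2$ is inert (or splits appropriately) there, $\rhobar_{E,2}$ is (ir)reducible with small image, and $\Q(\calE[2])$ is a small field ramified only at $2$ and $3$. I would record an analogue of Lemma~\ref{lem:end_gal_equivariant}, namely $\End_{G_\Q}(\calE[2])=\F_2$ (using that $\rho_{E,2}$ is absolutely irreducible, e.g.\ via Conrad's criterion at $2$ together with the CM structure, or directly), so Schur's lemma applies.

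Second, I would determine the maximal $2$-torsion extension $\maxextcatb=T_\catb$. By Proposition~\ref{prop:actionrootdiscb} its root discriminant is bounded by $\delta_{T_\catb}<2^{1+\frac{1}{2-1}}\,3^{\frac12+1}=4\cdot 3^{3/2}\approx 20.78$, which is below the unconditional Odlyzko bound of $22$ from \cite{Martinet:1982}; so $\maxextcatb$ is \emph{finite without assuming GRH} — this is the reason the theorem for $p=3$ needs no Riemann hypothesis. Using Odlyzko's tables I would bound $[\maxextcatb:\Q]$ and then, mimicking the proof of Proposition~\ref{prop:max3ext_grh}, identify $\maxextcatb$ explicitly: I expect $\maxextcatb=\Q(\calE[2],\sqrt[2]{\pm 3})$ or a similarly small extension, obtained as a ray class field of $\Q(\calE[2])$ of conductor supported at the primes over $2$ and $3$. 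Concretely I would (i) list the group schemes over $\Z[\frac13]$ annihilated by $2$ that we know of — $\calE[2]$, $\mu_2$, $\Z/2\Z$ and the Katz--Mazur type schemes $T[2,3]$, $T[2,-1]$ of Definition~\ref{def:katzmazur}; (ii) show via conductor--discriminant and Fontaine's bounds \cite[Cor.\ 3.3.2, Thm.\ 2.1]{Fontaine:1985} that no further abelian extension of $\Q(\calE[2])$ fits under the root discriminant bound (the analogues of Claims 1--4); and (iii) conclude that $\Gal(\maxextcatb/K)$ is solvable with no nontrivial abelian quotient, hence trivial.

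Third, I would classify the simple objects of $\catb$. Since $\Gal(\Q(\calE[2])/\Q)$ is small (a subgroup of $S_3$), its irreducible $\F_2$-representations are few; checking which prolong to finite flat group schemes over $\Z_3$ (using finite Honda systems / the theory of \cite{Raynaud:1974} and \cite{TateOort:1970}, exactly as in Proposition~\ref{prop:j032_simple_groupschemes}, noting $e<p-1$ may or may not hold at $p=3$ with $e$ the ramification of $\Q_3(\calE[2])/\Q_3$ — if it fails I would instead appeal to Raynaud's bound $1<p-1$ on prolongations and a direct local analysis), I expect the simple group schemes in $\catb$ to be $\Z/2\Z$, $\mu_2$, possibly their twists by the character cutting out $\Q(\sqrt{-3})$ or $\Q(\sqrt{\pm3})$, and $\calE[2]$ (when the latter is irreducible). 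Then I would compute the relevant $\Ext^1$ groups over $\Z[\frac13]$ using the Mayer--Vietoris sequence of Proposition~\ref{prop:mayervietoris} with $(\badrat,\good)=(3,2)$, together with the explicit description of $\maxextcatb$, showing as in Lemmas~\ref{lem:ext_e3_z3z_grh}--\ref{lem:ext_twisted} that every extension in $\catb$ of a simple non-\'etale group scheme by a simple \'etale one is split, and that $\extt{\Z[\frac13]}{\calE[2]}{\calE[2]}$ has order $2$, generated by $\calE[4]$.

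Finally I would assemble the pieces exactly as in Section~\ref{sec:modularityj032}. For an abelian variety $A/\Q$ with good reduction outside $3$ and ramified of level $\frac12$ at $3$, the N\'eron model gives $\calA[2]\in\catb$ of order $2^{2g}$; Theorem~\ref{cat-thm:abvar_torsionfilter} — whose hypotheses I will have verified (vanishing of $\Ext^1$ between simple non-\'etale and simple \'etale objects, and finiteness/cyclicity of the relevant maximal abelian extension of the compositum of the $\Q(E)$'s, which holds since the ray class field computations are unconditional here) — shows $\calA[2]$ has no \'etale or multiplicative subquotients, so all its simple subquotients are $\cong\calE[2]$. Then $\End(\calE[2^\infty])\cong\Z_2$ by Tate's theorem \cite[Cor.\ 1 of Thm.\ 4]{Tate:1967} and Lemma~\ref{lem:end_gal_equivariant}-analogue, together with the $\Ext^1$ computation, verify the hypotheses of \cite[Theorem~8.3]{Schoof:2005}, giving $\calA[2^\infty]\cong\calE[2^\infty]^{\dim A}$; Faltings' isogeny theorem \cite{Faltings:1983} then yields that $A$ is isogenous to $J_0(27)^{\dim A}$. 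The main obstacle I anticipate is Step~2: pinning down $\maxextcatb$ precisely, i.e.\ proving that no ray class fields beyond the expected ones occur inside it — this is the delicate case-by-case argument using Odlyzko's bounds and Fontaine's discriminant estimates, and the value $p=3$ (with $2$-power group schemes, so $\ell=2 < p$) is where the local prolongation theory is tightest and must be handled with care.
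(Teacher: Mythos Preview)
Your strategy is exactly the paper's: work in $\catb$, exploit the unconditional Odlyzko bound (since $4\cdot 3^{3/2}\approx 20.78<22$), pin down $\maxextcatb$, classify simple objects and the relevant $\Ext^1$ groups, then apply Theorem~\ref{cat-thm:abvar_torsionfilter}, \cite[Theorem~8.3]{Schoof:2005}, and Faltings. A few execution details need correction, though none changes the architecture.

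First, the local prolongation analysis is at $\Z_2$, not $\Z_3$. Objects of $\catb$ have $2$-power order over $\Z[\frac13]$, so the nontrivial flatness constraint is at $2$; over $\Z_3$ they are \'etale. The paper uses that $E$ is supersingular at $2$, so $\calE[2]_{\Z_2}$ is local-local, and invokes \cite[Prop.~3.3.2]{Raynaud:1974} over $\Z_2$. Your concern about $e<p-1$ at $p=3$ is misplaced. Relatedly, there are no twists of $\Z/2\Z$ or $\mu_2$ to worry about, since $\Aut(\Z/2\Z)=\F_2^\ast$ is trivial; the simple objects are precisely $\Z/2\Z$, $\mu_2$, $\calE[2]$.

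Second, your guess for $\maxextcatb$ is off, and this is where the paper's argument differs most from the $p=2$ template. The key observation (Lemma~\ref{j027-lem:max2ext}) is that the $4$-torsion field $\Q(\calE[4])$, of degree $24$, already satisfies the ramification bounds (level $1$ at $2$, level $\tfrac12$ at $3$), and ray class field computations are carried out over $\Q(\calE[4])$ rather than $\Q(\calE[2])$; this shows directly that $\maxextcatb\subset\Q(\calE[4])$ and replaces the elaborate Claims~1--4 machinery. Finally, note that $\F_2[S_3]$ is \emph{not} semisimple (unlike $\F_3[\mathrm{SH}_{16}]$), so a separate lemma (Lemma~\ref{j027-lem:f2s3_modules}) is needed to force generic splitting in the $\Ext^1$ computations.
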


\begin{definition}
Let $\catb$ be the category whose objects are finite flat commutative group schemes
of $2$-power order over $\Z[\frac{1}{3}]$ that are ramified
of level $\frac{1}{2}$ at the prime $3$ and whose morphisms
are group scheme morphisms.
\end{definition}

To prove Theorem \ref{j027-thm:main}, we consider
the abelian variety attached to the newform of level $27$ and weight $2$.
This abelian variety is an elliptic curve of conductor $27$.
Because Theorem \ref{j027-thm:main} only considers abelian varieties up to isogeny, 
we are only interested in abelian varieties up to isogeny.
We may therefore take any elliptic curve of conductor $27$.
Here we pick the elliptic curve labeled 27A3 in the tables of \cite{BirchKuyk:1975}:
$$
E : y^{2} + y = x^{3}  .
$$
Let $\calE$ be the N\'eron model of $E$ 
over $\Z[\frac{1}{3}]$.
The elliptic curve $E$ has additive reduction at the prime $3$.
Since the curve has complex multiplication over $\Q(\zeta_3)$,
it has potential good reduction. 
By the criterion of N{\'e}ron-Ogg-Shafarevich,
it obtains good reduction over the field $\Q(\calE[4])$ of degree $24$.
Furthermore, a short calculation shows that the Galois group through which $\rho_{\calE[2]}$
factors is isomorphic to the symmetric group $S_3$.

\begin{lemma} \label{lem:end_gal_equivariant2}
We have $\End_{G_{\Q}}(T_{2}(E)) \simeq \Z_{2}$.
In particular,
$\End_{G_{\Q}}(\calE[2])$ is cyclic of order $2$.
\end{lemma}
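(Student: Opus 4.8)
The plan is to mirror the proof of Lemma \ref{lem:end_gal_equivariant}. The essential input is that $E$ has good reduction at $2$ — its only additive prime being $3$ — and in fact a short count of $\#E(\F_2)$ gives $a_2(E)=0$, so $E$ is supersingular at $2$; alternatively, and more robustly, one may simply use the fact recorded just above, namely that the mod-$2$ representation $\overline{\rho_{E,2}}$ has image the full group $S_3 \cong \Gl_2(\F_2)$.

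First I would establish that $\overline{\rho_{E,2}}$ is absolutely irreducible. This follows either from \cite[Theorem 1.1]{Conrad:1997} (good supersingular reduction at $2$), or directly from the identification of the image with $\Gl_2(\F_2)$: the natural module $\F_2^2$ is absolutely irreducible, since an element of order $3$ in $\Gl_2(\F_2)$ has two distinct eigenvalues in $\Alg{\F}_2$, so any endomorphism commuting with the $S_3$-action is diagonal in the corresponding eigenbasis, while a transposition interchanges the two eigenlines and hence forces that endomorphism to be scalar. In particular $\End_{G_\Q}(\calE[2]) \simeq \F_2$ is cyclic of order $2$, which is the second assertion of the lemma.

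Next, absolute irreducibility of the reduction $\overline{\rho_{E,2}}$ implies absolute irreducibility of $\rho_{E,2} : G_\Q \rightarrow \Gl_2(\Z_2)$, and hence of the $\Q_2$-representation $V_2(E) = T_2(E) \otimes_{\Z_2} \Q_2$. By Schur's Lemma (\cite[Paragraph 4, Corollary, p. 252]{Mazur:1997}) we obtain $\End_{G_\Q}(V_2(E)) \simeq \Q_2$. Finally, $\End_{G_\Q}(T_2(E))$ is a $\Z_2$-subalgebra of $\End_{G_\Q}(V_2(E)) = \Q_2$ containing $\Z_2$, and it is finitely generated over $\Z_2$ because it embeds into $\End_{\Z_2}(T_2(E)) \simeq M_2(\Z_2)$; any such subalgebra of $\Q_2$ must equal $\Z_2$, giving $\End_{G_\Q}(T_2(E)) \simeq \Z_2$.

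There is essentially no obstacle here beyond bookkeeping, since the argument is a transcription of Lemma \ref{lem:end_gal_equivariant}. The only point requiring a moment's care is whether the supersingular irreducibility criterion of \cite{Conrad:1997} is stated at the prime $2$; if one prefers to avoid that subtlety, one falls back entirely on the explicit description of the image as $S_3$, which makes the absolute irreducibility of $\overline{\rho_{E,2}}$ immediate.
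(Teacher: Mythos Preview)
Your proof is correct and follows essentially the same route as the paper: good supersingular reduction at $2$ gives absolute irreducibility of $\overline{\rho_{E,2}}$ via \cite[Theorem 1.1]{Conrad:1997}, hence of $\rho_{E,2}$, and Schur's Lemma \cite[Paragraph 4, Corollary, p.~252]{Mazur:1997} finishes the argument. Your alternative via the explicit $S_3$-image and the added care in descending from $V_2(E)$ to $T_2(E)$ are welcome elaborations but not departures from the paper's approach.
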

\begin{proof}
Since $E$ is supersingular at $2$ and has good reduction at $2$,
it follows from \cite[Theorem 1.1]{Conrad:1997} that $\overline{\rho_{E,2}}$ is absolutely irreducible.
We deduce that also $\rho_{E,2}$ is absolutely irreducible.
Now apply Schur's Lemma, see for instance \cite[Paragraph 4, Corollary, p. 252]{Mazur:1997}.
\end{proof}

We see from Table \ref{tab:modformlevel} that $J_{0}(27)$ is ramified of level $\frac{1}{2}$ at $3$
and hence that $\calE[2^{n}]$ is an object in the category $\catb$ for all positive integers $n$. 
Hence we will work in the category $\catb$.

\subsection{Simple group schemes} \label{j027-sec:simple}

We find all simple group schemes of $\catb$ using the maximal
$2$-torsion extension $\maxextcatb$ extension of the category $\catb$.
Recall that this extension is defined to be the compositum of all number fields
generated by the $\Alg{\Q}$-points of group schemes in $\catb$ that are killed by $2$.

\begin{lemma} \label{j027-lem:conductorsubfield}
Let $\good$ be a prime, $K$ be a number field and let $L/K$ be an extension contained in the ray class field
of conductor $c$ over $K$, such that $L$ is not contained in any ray class field of conductor strictly dividing $c$.
If $[L:K]=p$, then the discriminant $\Delta_{L/K}$ is equal to $c^{p-1}$.
\end{lemma}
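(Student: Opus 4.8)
The statement is a clean consequence of the conductor--discriminant formula together with the description of the ramification filtration for a degree-$p$ extension, so the plan is to reduce to a purely local computation at the primes dividing $c$. First I would invoke the conductor--discriminant formula, which expresses $\Delta_{L/K}$ as the product of the conductors of the characters of $\Gal(L/K)$. Since $[L:K]=\good$ is prime, $\Gal(L/K)$ is cyclic of order $\good$, so it has exactly $\good$ characters: the trivial one (conductor $(1)$) and $\good-1$ nontrivial ones, each of which is a faithful character of $\Z/\good\Z$. Hence $\Delta_{L/K} = \prod_{\chi \neq 1} \mathfrak{f}(\chi)$, and everything comes down to showing that every nontrivial $\chi$ has conductor exactly $c$.

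**Key steps.** The hypothesis says $L$ sits inside the ray class field of conductor $c$ but in no ray class field of conductor strictly dividing $c$; equivalently, $c$ is the conductor of the subgroup of the ray class group (or the idele class group) cut out by $L$. For a degree-$\good$ extension, each nontrivial character $\chi$ of $\Gal(L/K)$ has the same kernel, hence the same conductor, and that common conductor is precisely the conductor of $L/K$ as a class field, namely $c$. So $\mathfrak{f}(\chi) = c$ for all $\good-1$ nontrivial $\chi$, and the conductor--discriminant formula gives $\Delta_{L/K} = c^{\good-1}$ directly. Concretely one checks this prime by prime: at a prime $\frakq \mid c$ with $v_\frakq(c) = m \geq 1$, the ramification filtration of the local Galois group $\Gal(L_\frakQ/K_\frakq) \cong \Z/\good\Z$ in lower numbering has a single jump, and by the definition of the conductor of $L/K$ as the smallest modulus, each nontrivial local character has $\frakq$-exponent exactly $m$; summing over nontrivial characters contributes $\frakq^{m(\good-1)}$ to the discriminant. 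Multiplying over all $\frakq \mid c$ yields $\Delta_{L/K} = c^{\good-1}$.

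**Main obstacle.** The one point requiring care is the claim that \emph{every} nontrivial character of $\Gal(L/K)$ has conductor exactly $c$, rather than some proper divisor. This is where the hypothesis ``$L$ is not contained in any ray class field of conductor strictly dividing $c$'' is essential: it forces the conductor of the class field $L/K$ to equal $c$ on the nose, and since for a cyclic group of prime order all nontrivial characters share the same kernel (hence the same conductor), none of them can have smaller conductor. If one instead worked with a non-prime degree, distinct characters could have distinct conductors and the formula would fail; the primality of $\good$ is exactly what makes the answer a clean power. I would present this reduction and the local exponent computation, and not belabor the elementary verification that a cyclic group of prime order has all nontrivial characters with the same kernel.
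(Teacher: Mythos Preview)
Your proof is correct and follows exactly the approach the paper intends: the paper's own proof is a single sentence, ``Immediate by using the conductor-discriminant formula,'' and you have unpacked precisely that argument. The key observation you make---that for a cyclic group of prime order all nontrivial characters share the same kernel and hence the same conductor, which by the minimality hypothesis must equal $c$---is exactly what makes the conductor-discriminant formula collapse to $c^{p-1}$.
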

\begin{proof}
Immediate by using the conductor-discriminant formula.
\end{proof}

\begin{lemma} \label{j027-lem:max2ext}
The extension $\Q(\calE[4])$ is the largest extension of $\Q$ that is 
ramified of level $\frac{1}{2}$ at $3$ and ramified of level $1$ at $2$.
The maximal $2$-torsion extension $\maxextcatb$ of $\catb$ is contained in $\Q(\calE[4])$. 
\end{lemma}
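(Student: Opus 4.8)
The strategy is to bound the root discriminant of $\maxextcatb$ and apply Odlyzko's tables together with the calculation of $\Q(\calE[4])$. First, observe that by Proposition \ref{prop:actionrootdiscb} applied with $K=\Q$, $\badrat=3$, $i=\frac12$ and $\good=2$, the root discriminant of $\maxextcatb$ satisfies $\delta_{\maxextcatb} < 3^{3/2}\cdot 2^{1+1} = 4\cdot 3^{3/2} \approx 20.78$. This is below the unconditional Odlyzko bound of $22$, so $\maxextcatb/\Q$ is a \emph{finite} extension, and the tables in \cite{Martinet:1982} give an explicit upper bound on $[\maxextcatb:\Q]$.

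The heart of the argument is then to pin down $\maxextcatb$ exactly. The chain of known subfields is
$$
\Q \subset \Q(\zeta_3) \subset \Q(\calE[2]) \subset \Q(\calE[4]),
$$
where $\Q(\calE[2])$ has Galois group $S_3$ over $\Q$ and $\Q(\calE[4])/\Q$ has degree $24$; since $J_0(27)$ is ramified of level $\frac12$ at $3$ and (being an elliptic curve with good reduction at $2$) is ramified of level $1$ at $2$, the field $\Q(\calE[4])$ is ramified of level $\frac12$ at $3$ and level $1$ at $2$, and by Theorem \ref{thm:independence_levram}/Fontaine's \cite[Theorem 2.1]{Fontaine:1985} no extension of $\Q$ can do better at $2$; this gives the first assertion of the lemma (maximality of $\Q(\calE[4])$ among extensions ramified of level $\frac12$ at $3$ and level $1$ at $2$). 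For the second assertion, I would argue that $\maxextcatb$ is itself such an extension: any group scheme in $\catb$ killed by $2$ is, by the discussion preceding Proposition \ref{prop:actionrootdiscb}, ramified of level $\frac12$ at $3$ and of level $e_\good(1+\frac{1}{\good-1})-1 = 1$ at $2$, so the same holds for the compositum $\maxextcatb$. Combining with the maximality just established forces $\maxextcatb \subseteq \Q(\calE[4])$.

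A cleaner variant, which is probably how the paper proceeds, is to use the numerical bound directly: compute the absolute discriminant of $\Q(\calE[4])$, note $[\maxextcatb : \Q(\calE[4])]$ is small by the Odlyzko estimate, and then rule out the remaining small-degree extensions by a root-discriminant/ray-class-field computation exactly as in Claim 1 of Proposition \ref{prop:max3ext_grh} — any Galois extension $K'/\Q(\calE[4])$ at most tamely ramified at the primes above $2$ and $3$ would push $\delta_{K'}$ below the Fontaine/Odlyzko bounds unless $K'=\Q(\calE[4])$, and wild ramification is excluded by the level-of-ramification constraints. The main obstacle is this last step: one must be sure that the level-$\frac12$ condition at $3$ and the level-$1$ condition at $2$ leave \emph{no room at all} for a proper extension of $\Q(\calE[4])$ inside $\maxextcatb$, which requires either the explicit ray-class-field computations referenced in the Appendix or a careful application of Fontaine's discriminant bound \cite[Corollary 3.3.2]{Fontaine:1985} to each conceivable subextension, as in the proof of Proposition \ref{prop:max3ext_grh}.
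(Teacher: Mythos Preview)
Your overall strategy is right and matches the paper's: bound the root discriminant via Proposition~\ref{prop:actionrootdiscb}, invoke Odlyzko to get $[L:\Q]<900$ (unconditionally), and then eliminate any proper extension of $\Q(\calE[4])$ inside $L$ by ray class field computations over $M=\Q(\calE[4])$ combined with Fontaine's discriminant bounds. Your ``cleaner variant'' paragraph is essentially the paper's argument.

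However, there is a genuine gap in the step where you place $\Q(\calE[4])$ inside $L$. You write that $J_0(27)$, ``being an elliptic curve with good reduction at $2$, is ramified of level $1$ at $2$,'' and conclude that $\Q(\calE[4])$ is ramified of level $1$ at $2$. This is not justified. The level-of-ramification of the abelian variety (Definition~\ref{def:Tategoodaction}) concerns the $\ell$-adic Tate module at a prime of \emph{bad} reduction with $\ell$ different from the residue characteristic; it says nothing about the $2$-adic ramification of $\Q(\calE[4])$. What Fontaine's theorem \cite[Theorem~2.1]{Fontaine:1985} does give for a group scheme over $\Z_2$ annihilated by $2^n$ is level $e(n+\tfrac{1}{2-1})-1$; for $\calE[4]$ (so $n=2$, $e=1$) this yields level $2$, not $1$. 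That $\Q(\calE[4])$ is in fact ramified of level $1$ at $2$ is a special feature, and the paper establishes it by an explicit Magma computation of the lower ramification filtration $\#I_{\Q(\calE[4])/\Q}(2)_i = 24,12,4,4,4,1,\ldots$, from which one reads off $u_{\Q(\calE[4])/\Q}(2)=1$ directly. Without this computation you cannot assert $\Q(\calE[4])\subset L$, and the whole argument collapses.

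A second, smaller issue: your first attempt at the maximality assertion (``no extension of $\Q$ can do better at $2$; this gives the first assertion'') is circular. Fontaine's bound is an \emph{upper} constraint on ramification, not a statement that $\Q(\calE[4])$ exhausts all extensions satisfying the constraint. Maximality only follows after the ray class field analysis you sketch afterwards; the paper carries this out over $M=\Q(\calE[4])$ (using that $[L:M]\le 37$ forces $\Gal(L/M)$ solvable), checking conductors $\badover_2^j\badover_3$ and $\badover_2\badover_3^i$ and ruling each nontrivial case out via \cite[Corollary~3.3.2]{Fontaine:1985}.
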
 
\begin{proof}
The latter statement follows immediately from the first.
Let $L$ be the largest extension of $\Q$ that is 
ramified of level $\frac{1}{2}$ at $3$ and ramified of level $1$ at $2$.
We will prove that $L$ is equal to $\Q(\calE[4])$.
From the fact that group schemes annihilated by $2$ are ramified of level $\frac{1}{2}$ at $3$,
we have by Proposition \ref{prop:actionrootdiscb} that $\delta_{L} < 4 \cdot 3^{3/2} \approx 20.785$.
We apply Odlyzko's discriminant bounds (see \cite[Table 4, p. 187]{Martinet:1982}) to find that $[L:\Q] < 900$.
Hence $L/\Q$ is finite and in particular, the extension $\maxextcatb/\Q$ is finite.

Group schemes in $\catb$ annihilated by $2$ that we know of are $\calE[2]$, $\mu_{2}$, $\Z/2\Z$,
$T[2,-1]$ and $T[2,3]$.
Consider the following inclusions:
$$
\Q \subset_{2} \Q(\zeta_{3}) \subset_{3} \Q(\calE[2]) =
\Q(\zeta_{3},\sqrt[3]{4}) \subset_{2} \Q(\zeta_{3},\sqrt[3]{4}, i) \subset_{2} M := \Q(\calE[4]) \subset_{\leq 37} L .
$$
We explain why $M = \Q(\calE[4])$ is contained in $L$.
Consider the ramification at $2$ of $\calE[4]$.
By a computation using Magma (see the Appendix),
one finds that the orders of the ramification groups with lower numbering
are
$$
\# I_{\Q(\calE[4]) /\Q}(2)_{i} =
\left\{
\begin{array}{ll}
24 & \text{ if } i = 0 \\
12 & \text{ if }  i =1 \\
4 & \text{ if }  1 < i  \leq 4 \\
1 & \text{ if }  i > 4 .
\end{array}
\right. 
$$
We verify immediately that $\Q(\calE[4])$ is ramified of level $1$ at $2$.
Hence the group scheme $\calE[4]$ satisfies the same bounds on the 
ramification at $2$ as group schemes annihilated by $2$.
Since $[L:M]\leq 37$, the group $\Gal(L/M)$ is solvable and we can apply class field theory to find $L$.
Therefore, by Lemma \ref{lem:combinedconductors}, we consider
ray class fields of conductor $\badover_{2} \badover_{3}^{i}$ and $\badover_{2}^{j} \badover_{3}$,
where $\badover_{2}$ and $\badover_{3}$ denote the unique primes above $2$ resp. $3$ in $M$
and where $i, j$ are positive integers.
We show that for all $i,j \in \N$ there are no ray class fields of such conductors,
that are strictly larger than $M$ and that are contained in $L$.

We computed (see Appendix) that 
the ray class groups of conductor $\badover_{2}^j \badover_{3} $ with $j \leq 9$ are trivial
and that there is a degree $2$ extension ${L}_{3}/M$ of conductor $\badover_{2}^{10} \badover_{3}$.
Lemma \ref{j027-lem:conductorsubfield} applied with $c=\badover_{2}^{10} \badover_{3}$ and $p=2$
shows that the root discriminant $\delta_{{L}_{3}}$ has $2$-adic valuation $100/48 \geq 2$.
This contradicts the upper bounds given in \cite[Corollary 3.3.2]{Fontaine:1985}.
We conclude that all subfields of ray class fields of conductor $\badover_{2}^j \badover_{3}$,
for $j$ a positive integer, cannot be contained in $L$.

We also computed that the ray class fields of conductor $\badover_{2} \badover_{3}^{i}$ are trivial for $1 \leq i \leq 5$.
Suppose there is an abelian extension $L_3$ of $M$ in $L$ of degree $3$ and inside a ray class field
of conductor $\badover_{2} \badover_{3}^{6}$ (or even with a higher power than $6$).
Then, by Lemma \ref{j027-lem:conductorsubfield},
the $3$-adic valuation of the root discriminant of $L_3$ would be at least $\frac{5}{4} + \frac{1}{3} > \frac{3}{2}$.
This is again in contradiction to the upper bounds given in \cite[Corollary 3.3.2]{Fontaine:1985}.
\end{proof}

Before we continue to classify all simple group schemes in $\catb$, we prove
a lemma that tells us when an $\F_2[S_3]$-module is semi-simple.
Not all $\F_2[S_3]$-modules are semi-simple:
Consider for example the $\F_2[S_3]$-module $\F_2[\epsilon]/\epsilon^2$ with trivial action of $\sigma$
and $\tau$ acting as multiplication by $\epsilon$.

\begin{lemma} \label{j027-lem:f2s3_modules}
Let the symmetric group $S_3$ be represented by 
$$
\langle \sigma , \tau : \sigma^3=1 \, , \tau^2=1 \, , \tau \sigma \tau = \sigma^2 \rangle .
$$
There are exactly two irreducible $\F_2[S_3]$-modules:
the module $\F_2$ of order $2$ and the $\F_2$-algebra in $M_2(\F_2)$
of order $4$ generated by $\psi(\sigma)$, where $\psi$ is the isomorphism
$
\F_2[S_3]/(\sigma^2 + \sigma + 1)\F_2[S_3] \stackrel{\sim }{\longrightarrow}  M_2(\F_2)
$.
Moreover, any $\F_2[S_3]$-module that has at most
one irreducible non-zero submodule with trivial $\sigma$ action is semi-simple.
% \end{enumerate}
\end{lemma}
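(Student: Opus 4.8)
The plan is to exploit that the cyclic normal subgroup $A_3=\langle\sigma\rangle$ has order prime to the characteristic $2$. Then $e:=1+\sigma+\sigma^2\in\F_2[S_3]$ is idempotent (in characteristic $2$ one has $e^2=1+\sigma^2+\sigma^4=e$, using $\sigma^4=\sigma$) and central (conjugation by any element of $S_3$ permutes $A_3$, so $geg^{-1}=e$). Hence there is a product decomposition of $\F_2$-algebras
$$
\F_2[S_3]\;=\;e\,\F_2[S_3]\ \times\ (1-e)\,\F_2[S_3],
$$
and every $\F_2[S_3]$-module $M$ splits canonically as $M=eM\oplus(1-e)M$, with $eM$ a module over the first factor and $(1-e)M$ a module over the second. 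Both parts of the lemma then reduce to identifying these two factors explicitly.

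On $e\,\F_2[S_3]$ one has $\sigma e=e$, so $\sigma$ acts as the identity, and $e\,\F_2[S_3]\cong\F_2[S_3/A_3]=\F_2[\tau]/(\tau^2-1)=\F_2[t]/(t^2)$ with $t=\tau-1$; this is a local $\F_2$-algebra whose only simple module is the trivial module $\F_2$. The other factor $(1-e)\F_2[S_3]=\F_2[S_3]/(\sigma^2+\sigma+1)\F_2[S_3]$ is $4$-dimensional over $\F_2$; I would identify it with $\End_{\F_2}(V)=M_2(\F_2)$ via the $2$-dimensional irreducible representation $V$ of the lemma (on $V\cong\F_4$, $\sigma$ acts as a cube root of unity and $\tau$ as Frobenius, which is the isomorphism $\psi$): since $V$ is irreducible with $\End_{S_3}(V)=\F_2$, the image of $\F_2[S_3]$ in $M_2(\F_2)$ is all of $M_2(\F_2)$, and as $\sigma^2+\sigma+1$ annihilates $V$ while $\dim_{\F_2}e\F_2[S_3]=2=6-4$, the kernel is exactly $e\,\F_2[S_3]$. (Equivalently, $(1-e)\F_2[S_3]$ is a $4$-dimensional central simple $\F_2$-algebra, the cyclic algebra built from $\tau\sigma\tau=\sigma^2$, hence $\cong M_2(\F_2)$ by Wedderburn's theorem since finite division rings are fields.) Its unique simple module is $V$, which is $2$-dimensional over $\F_2$; moreover $\sigma$ acts on $V$ with order $3$, so $V^\sigma=0$. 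This proves the first assertion.

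For the second assertion, given $M$ write $M=eM\oplus(1-e)M$. The summand $(1-e)M$ is a module over the simple artinian ring $M_2(\F_2)$, hence a direct sum of copies of $V$; in particular it is semisimple, has no subquotient isomorphic to the trivial module, and carries no nonzero $\sigma$-fixed vector, so that $M^\sigma=eM$. The summand $eM$ is a module over $\F_2[t]/(t^2)$, hence a direct sum of copies of $\F_2[t]/(t^2)$ and of the trivial module, and it is semisimple exactly when $t\cdot eM=0$. Therefore $M$ is semisimple iff $eM$ is; every irreducible submodule of $M$ with trivial $\sigma$-action lies in $M^\sigma=eM$ and is a trivial module; and if $M$ is \emph{not} semisimple, then $t\cdot eM\neq 0$, so choosing $x\in eM$ with $tx\neq0$ the subspace $\F_2 x+\F_2 tx$ is a submodule isomorphic to $\F_2[t]/(t^2)$, whose composition series has both factors the trivial module. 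Thus a non-semisimple $M$ contains the trivial module at least twice among its Jordan–Hölder constituents; equivalently, if the trivial module occurs in $M$ at most once as a constituent, $M$ is semisimple. The module $\F_2[\epsilon]/(\epsilon^2)$ exhibited just before the lemma is precisely the case $M=eM=\F_2[t]/(t^2)$, showing that the hypothesis cannot be dropped.

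The two points needing care are the identification $(1-e)\F_2[S_3]\cong M_2(\F_2)$ — most cleanly via the faithful irreducible $2$-dimensional representation together with the double-centralizer theorem, or via Wedderburn's theorem — and the bookkeeping in the final step, namely making precise that a non-semisimple $\F_2[t]/(t^2)$-summand of $M$ really forces one more copy of the trivial module than the hypothesis permits. Everything else is routine manipulation inside the six-dimensional algebra $\F_2[S_3]$.
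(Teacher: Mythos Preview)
Your argument is correct and rests on the same decomposition as the paper: split $\F_2[S_3]$ according to the action of the normal $3$-subgroup $\langle\sigma\rangle$ (the paper writes this as $T\simeq T/(\sigma-1)T\times T/(\sigma^2+\sigma+1)T$, you write it via the central idempotent $e=1+\sigma+\sigma^2$). The treatments diverge in the second half. For semisimplicity of the $(1-e)$-part, the paper invokes Morita equivalence of $M_2(\F_2)$ with $\F_2$, while you simply observe that $M_2(\F_2)$ is simple artinian; these are equivalent but yours is more self-contained. More substantively, the paper's ``it suffices'' reduces to the case where $\sigma$ acts nontrivially on every nonzero submodule, i.e.\ $eM=0$, and says nothing about the case of exactly one trivial constituent; you instead analyze $eM$ as an $\F_2[t]/(t^2)$-module and show directly that non-semisimplicity forces a copy of $\F_2[t]/(t^2)$, hence two trivial Jordan--H\"older factors. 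Your reading of the hypothesis as ``at most one trivial composition factor'' rather than ``at most one trivial simple submodule'' is the one that is actually true (the example $\F_2[\epsilon]/\epsilon^2$ has a unique simple submodule yet is not semisimple) and is exactly what is used in the two applications later in the paper, so your version is in fact the sharper statement.
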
 
\begin{proof}
It suffices to consider simple modules with either trivial action of $\sigma$
or annihilated by the $\sigma$-norm $1+\sigma + \sigma^2$ with non-trivial $\sigma$-action,
since for any $\F_2[S_3]$-module $T$ we have
$T \simeq T /(\sigma-1)T \times T/(\sigma^2 + \sigma + 1)T$.
The modules with trivial action of $\sigma$ are $\F_2[\tau]$-modules.
Such a simple module is isomorphic to $\F_2$.
Next we consider simple modules with non-trivial $\sigma$-action that are
annihilated by the $\sigma$-norm.
One checks that the $\F_2$-algebra in $M_2(\F_2)$
generated by $\psi(\sigma)$ is simple, 
has non-trivial action of $\sigma$ and has order $4$.
On the other hand, any other non-zero
submodule of $M_2(\F_2)$ that has non-trivial action of $\sigma$
is the non-simple $\F_2$-algebra $M_2(\F_2)$ itself.

For the second part of the statement, it suffices to prove that an $\F_2[S_3]$-module $T$
such that $\sigma$ acts non-trivially on all non-zero submodules of $T$, is semi-simple.
Such modules are $M_2(\F_2)$-modules.
The rings $M_2(\F_2)$ and $\F_2$ are Morita equivalent, see \cite[Corollary 22.6, p. 265]{AndersonFuller:1992}.
Morita equivalent rings $R$ and $S$ behave well with respect to semi-simplicity:
All $R$-modules are semi-simple if and only if all $S$-modules are semi-simple, see \cite[Proposition 21.8, p. 258]{AndersonFuller:1992}.
Since $\F_2$-modules are semi-simple, the result follows.
\end{proof}

\noindent
Our preliminary work above now allows us to classify all simple objects in the category $\catb$:

\begin{proposition}
The simple group schemes in $\catb$ are isomorphic to either $\mu_{2}, \Z/2\Z$ or $\calE[2]$.
\end{proposition}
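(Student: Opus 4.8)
The plan is to follow the proof of Proposition~\ref{prop:j032_simple_groupschemes}, with the prime $2$ and the curve $E$ of conductor $27$ playing the roles of $3$ and the curve of conductor $32$; that $\mu_{2}$, $\Z/2\Z$ and $\calE[2]$ belong to $\catb$ is clear, so the content is that there are no others. First I would note that a simple group scheme $\gsc$ in $\catb$ is annihilated by $2$: the scheme-theoretic closure in $\gsc$ of the $2$-torsion of its generic fibre is a non-zero closed flat subgroup scheme, hence all of $\gsc$, and being killed by $2$ generically it is killed by $2$. Thus $\gsc$ has order $2$ or $4$ and $\gsc_{\Q}$ is a simple $\F_{2}[G_{\Q}]$-module. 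By Lemma~\ref{j027-lem:max2ext}, $\Q(\gsc)\subseteq\maxextcatb\subseteq\Q(\calE[4])$, so $\gsc_{\Q}$ factors through $\Gal(\Q(\calE[4])/\Q)$; since $\Gal(\Q(\calE[4])/\Q(\calE[2]))$ is a $2$-group, its space of invariants in $\gsc_{\Q}$ is non-zero and, being normal, $G_{\Q}$-stable, hence all of $\gsc_{\Q}$ by simplicity, so $\gsc_{\Q}$ is a simple $\F_{2}[S_{3}]$-module for $S_{3}\simeq\Gal(\Q(\calE[2])/\Q)$. By Lemma~\ref{j027-lem:f2s3_modules} there are exactly two cases: $\gsc_{\Q}$ is the trivial module $\F_{2}$, so $\gsc$ has order $2$; or $\gsc_{\Q}$ is the unique $2$-dimensional irreducible module, namely $\rho_{\calE[2]}\simeq\calE[2]_{\Q}$, so $\gsc$ has order $4$. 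No twisting enters, since $\F_{2}^{\ast}$ is trivial.

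In the order-$2$ case, $\gsc$ is a finite flat group scheme of order $2$ over $\Z[\frac{1}{3}]$ with generic fibre $\Z/2\Z$. By the Tate--Oort classification \cite[Theorem~3]{TateOort:1970} together with the gluing description \cite[Proposition~2.3]{Schoof:2003}, $\gsc$ is determined by its restriction $\gsc_{\Z_{2}}$, and the only finite flat group schemes of order $2$ over $\Z_{2}$ are $\mu_{2}$ and $\Z/2\Z$; since $\Aut(\gsc)$ is trivial, this forces $\gsc\simeq\mu_{2}$ or $\gsc\simeq\Z/2\Z$.

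In the order-$4$ case, $\gsc_{\Q}\simeq\calE[2]_{\Q}$. I would then identify $\gsc_{\Z_{2}}$ with $\calE[2]_{\Z_{2}}$ and conclude exactly as in Proposition~\ref{prop:j032_simple_groupschemes}, using the Artin-gluing triple $(\gsc_{\Q},\gsc_{\Z_{2}},\theta_{\gsc})$ of \cite[Proposition~2.3]{Schoof:2003}: the gluing datum is rigid because the only $G_{\Q}$- (resp.\ $G_{\Q_{2}}$-) equivariant automorphism of $\calE[2]$ is the identity --- this is Lemma~\ref{lem:end_gal_equivariant2}, together with the observation that the image of $G_{\Q_{2}}$ in $\Gl_{2}(\F_{2})$ is all of $S_{3}$ (since $\sqrt[3]{4}$ is totally ramified over $\Q_{2}$ while $\zeta_{3}$ generates the unramified quadratic extension) --- so $\gsc\simeq\calE[2]$.

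The main obstacle is the identification $\gsc_{\Z_{2}}\simeq\calE[2]_{\Z_{2}}$ for an order-$4$ simple scheme. Since $E$ has good supersingular reduction at $2$, $\calE[2]_{\Z_{2}}$ is a local--local group scheme of order $4$ with irreducible generic fibre, and one must show it is the only prolongation of $\calE[2]_{\Q_{2}}$ of this kind. Raynaud's uniqueness of prolongations is unavailable because $p=2$ lies outside its range --- the pair $\mu_{2}$, $\Z/2\Z$ already shows that the generic fibre does not determine a finite flat group scheme over $\Z_{2}$ --- so I expect to analyze the connected finite flat group schemes of order $4$ over $\Z_{2}$ directly, via Fontaine's theory of finite Honda systems \cite{Fontaine:1977}, and match generic fibres. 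By comparison, the $\F_{2}[S_{3}]$-module bookkeeping and the order-$2$ case are routine.
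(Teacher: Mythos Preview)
Your approach is correct and matches the paper's proof closely. The one place you diverge --- flagging the identification $\gsc_{\Z_2}\simeq\calE[2]_{\Z_2}$ as the main obstacle and proposing a detour through finite Honda systems --- is unnecessary. You are thinking of Raynaud's full-faithfulness theorem \cite[Th\'eor\`eme~3.3.3]{Raynaud:1974}, which requires $e<p-1$ and indeed fails for $p=2$. The paper instead invokes \cite[Proposition~3.3.2]{Raynaud:1974}, which concerns \emph{simple} group schemes and carries no such restriction: since $E$ has good supersingular reduction at $2$, the scheme $\calE[2]_{\Z_2}$ is local--local, hence neither \'etale nor of multiplicative type, and Raynaud's proposition then yields directly that it is the unique prolongation of its generic fibre over $\Z_2$. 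Your example $\mu_2,\Z/2\Z$ is not a counterexample to this statement, as those schemes are respectively of multiplicative type and \'etale --- exactly the cases excluded by the hypothesis. So the order-$4$ case goes through just as the order-$9$ case did in Proposition~\ref{prop:j032_simple_groupschemes}, and no Honda-system computation is needed.
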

\begin{proof}
Let $\gsc$ be a simple group scheme in $\catb$.
Its generic fiber $\gsc_{\Q}$ corresponds to an
irreducible Galois module.
By Lemma \ref{j027-lem:max2ext},
the group scheme $\gsc_{\Q}$ corresponds to an irreducible $\F_{2}[S_{3}]$-module.
By Lemma \ref{j027-lem:f2s3_modules}, there are only two irreducible $\F_{2}[S_{3}]$-modules, one of order $2$ and one of order $4$.
Therefore $\gsc$ has either order $2$ or order $4$.
For group schemes of order $2$ we use \cite{TateOort:1970}: 
the group scheme $\gsc$ is isomorphic to either
$\mu_{2}$ or $\Z/2\Z$.

It remains to classify the simple group schemes of order $4$.
Suppose that $\gsc$ has order $4$.
Then $\gsc_{\Z_{2}}$ as well as
the local group scheme $\calE[2]_{\Z_2}$ prolong
the generic fiber of $\calE[2]_{\Z_2}$.
The group scheme $\calE[2]_{\Z_2}$
is local-local because the elliptic curve $E$ is supersingular at $2$.
Therefore $\calE[2]_{\Z_2}$ is neither \'etale nor multiplicative.
According to \cite[Proposition 3.3.2]{Raynaud:1974}, 
the group scheme $\calE[2]_{\Z_2}$ has to be the unique group scheme up to isomorphism
that prolongs its own generic fiber.
Hence $\calE[2]_{\Z_2} \simeq \gsc_{\Z_2}$.

To prove that $\gsc \simeq \calE[2]$,
we apply the result of \cite{Artin:1970} as described in \cite[Proposition 2.3]{Schoof:2003}:
the group scheme $\gsc$ can be described by a triple $(\gsc_1,\gsc_2,\theta_{\gsc})$,
where $\gsc_1$ is a group scheme over $\Q$, $\gsc_2$ a group scheme over $\Z_2$ and
where $\theta_{\gsc}$ is a glueing morphism of group schemes over $\Q_2$. 
Similarly for $\calE[2]$ we have $(\calE[2],\calE[2],\theta_{\calE[2]})$.
Since the prime $2$ doesn't split in $\Q(\calE[2])/\Q$ and
the only $G_\Q$-equivariant (or $G_{\Q_2}$-equivariant)
automorphism of $\calE[2]$ is the identity,
we have $\gsc \simeq \calE[2]$.
It follows that $\calE[2]$ is the only simple group scheme
of order $4$ in the category $\catb$ up to isomorphism.
\end{proof}

\subsection{Extensions of simple group schemes} \label{j027-sec:extensions}

We consider extensions of the simple group schemes in the category
$\catb$ and we will prove 
that any extension in $\catb$
of a simple non-\'etale group scheme by $\Z/2\Z$ is split.

\begin{proposition}  \label{j027-prop:ext_mu2_z2}
The $\F_{2}$-vector spaces $\extt{\Z[\frac{1}{3}]}{\calE[2]}{\Z/2\Z}$ and  $\extt{\Z[\frac{1}{3}]}{\mu_{2}}{\Z/2\Z}$
are trivial.
\end{proposition}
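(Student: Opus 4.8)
The plan is to handle the two $\F_2$-vector spaces by a common device, namely the Mayer--Vietoris sequence of Proposition~\ref{prop:mayervietoris} with $\badrat=3$ and $\good=2$, together with the observation that every extension in sight splits after base change to $\Z_2$. Over $\Z_2$ the group schemes $\calE[2]$ and $\mu_2$ are connected (the first is even local-local, $E$ being supersingular at $2$), whereas $\Z/2\Z$ is \'etale; as $\Z_2$ is Henselian with perfect residue field every finite flat commutative group scheme over it is canonically the product of its connected and \'etale parts, and a short diagram chase then shows that any extension over $\Z_2$ of a connected group scheme by an \'etale one, in either order, is split. Hence $\extt{\Z_2}{\calE[2]}{\Z/2\Z}=\extt{\Z_2}{\mu_2}{\Z/2\Z}=0$. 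Since moreover $\Hom_{\Z_2}(\calE[2],\Z/2\Z)=0$ and $\Hom_{\Q_2}(\calE[2],\Z/2\Z)=0$ (the restriction of $\calE[2]$ to $G_{\Q_2}$ is irreducible, $E$ being supersingular at $2$), and since $\Hom_{\Z[\frac16]}(\mu_2,\Z/2\Z)\to\Hom_{\Q_2}(\mu_2,\Z/2\Z)$ is an isomorphism (both group schemes become constant over $\Z[\frac16]$ and over $\Q_2$), Proposition~\ref{prop:mayervietoris} yields injections of $\extt{\Z[\frac13]}{\calE[2]}{\Z/2\Z}$ into $\extt{\Z[\frac16]}{\calE[2]}{\Z/2\Z}$ and of $\extt{\Z[\frac13]}{\mu_2}{\Z/2\Z}$ into $\extt{\Z[\frac16]}{\mu_2}{\Z/2\Z}$; exactness of the sequence further shows that in each case the image consists of classes that become trivial after base change to $\Q_2$.

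For the first space I would then argue along the lines of Lemma~\ref{lem:ext_e3_z3z_grh}. By Lemma~\ref{j027-lem:f2s3_modules} the module $\calE[2]$ is the $2$-dimensional irreducible $\F_2[S_3]$-module, so $\Hom_{G_\Q}(\calE[2],\Z/2\Z)=\Hom_{G_\Q}(\Z/2\Z,\calE[2])=0$, and the exact sequence of \cite[Lemma~2.1]{Schoof:2009} used in the proof of Lemma~\ref{lem:ext_3tors_3tors_grh} shows that every extension $\gsc$ of $\calE[2]$ and $\Z/2\Z$ over $\Z[\frac13]$ is killed by $2$. By Proposition~\ref{prop:categoryprop} such a $\gsc$ lies in $\catb$, so $\Q(\gsc)\subseteq\maxextcatb\subseteq\Q(\calE[4])$ by Lemma~\ref{j027-lem:max2ext}, while $\Q(\gsc)\supseteq\Q(\calE[2])$ because $\calE[2]$ is a subquotient of $\gsc$. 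Over $\Q(\calE[2])$ the extension $\gsc_\Q$ is unipotent, so $\Gal(\Q(\gsc)/\Q(\calE[2]))$ is the image of an $S_3$-equivariant homomorphism into $\Hom_{\F_2}(\calE[2],\Z/2\Z)\cong\calE[2]$; being an $\F_2[S_3]$-submodule of the irreducible module $\calE[2]$ it is either $0$ or everything, so $\Q(\gsc)$ is either $\Q(\calE[2])$ or the unique degree-$4$ extension of $\Q(\calE[2])$ inside $\Q(\calE[4])$, that is, $\Q(\calE[4])$. The latter is impossible: $\gsc$ splits over $\Z_2$, hence $\Q_2(\gsc)=\Q_2(\calE[2])$, whereas $\Q(\calE[4])\supseteq\Q(i)$ and $\Q_2(\calE[2])=\Q_2(\zeta_3,\sqrt[3]{4})$ has ramification index $3$ over $\Q_2$, so $i\notin\Q_2(\calE[2])$ and $\Q(\calE[4])$ is strictly larger at the prime above $2$. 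Hence $\Q(\gsc)=\Q(\calE[2])$, so $\gsc_\Q$ is an extension of $\F_2[S_3]$-modules; since $\Ext^1_{\F_2[S_3]}(\calE[2],\Z/2\Z)\cong H^1(S_3,\calE[2])=0$ (the restriction to the Sylow $2$-subgroup vanishes, a transposition acting on $\F_2^2$ as a transvection), $\gsc_\Q$ is split, and then $\gsc$ has class $0$ in $\extt{\Z[\frac16]}{\calE[2]}{\Z/2\Z}$ (both group schemes being \'etale there), so by the injection above $\gsc$ is split. Thus $\extt{\Z[\frac13]}{\calE[2]}{\Z/2\Z}=0$.

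For the second space the reduction to killed-by-$2$ extensions is not available, as $\Hom_{G_\Q}(\Z/2\Z,\mu_2)\ne0$, so I would finish from the Mayer--Vietoris injection directly. Over $\Z[\frac16]$ both $\mu_2$ and $\Z/2\Z$ are the constant group scheme $\Z/2\Z$ and every finite flat group scheme is \'etale, so $\extt{\Z[\frac16]}{\mu_2}{\Z/2\Z}$ is the group of extensions of $\Z/2\Z$ by $\Z/2\Z$ as modules over $G_S:=\Gal(\Q_S/\Q)$, where $\Q_S$ is the maximal extension of $\Q$ unramified outside $S=\{2,3\}$. Such an extension becomes trivial over $\Q_2$ only if its underlying abelian group is $\Z/2\Z\oplus\Z/2\Z$ (ruling out the $\Z/4\Z$-type extensions, which are already non-split over $\Q_2$) and the associated class in $H^1(G_S,\Z/2\Z)$ restricts to $0$ in $H^1(\Q_2,\Z/2\Z)$; but the latter kernel is trivial, because $\langle-1,2,3\rangle/(\Q^\times)^2$ injects into $\Q_2^\times/(\Q_2^\times)^2$ (none of $-1,\pm2,\pm3,\pm6$ is a square in $\Q_2$). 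Hence the image of $\extt{\Z[\frac13]}{\mu_2}{\Z/2\Z}$ in $\extt{\Z[\frac16]}{\mu_2}{\Z/2\Z}$ is zero, so $\extt{\Z[\frac13]}{\mu_2}{\Z/2\Z}=0$.

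The step I expect to be the main obstacle is the middle one for $\calE[2]$: pinning $\Q(\gsc)$ down to $\Q(\calE[2])$ or $\Q(\calE[4])$ and then excluding the second possibility. This leans on the precise description $\maxextcatb\subseteq\Q(\calE[4])$ and on the ramification of $\Q(\calE[4])/\Q$ at $2$ supplied by Lemma~\ref{j027-lem:max2ext}, together with careful bookkeeping of the $S_3$-action; by contrast the vanishing $H^1(S_3,\calE[2])=0$ and the local splitting statements over $\Z_2$ and $\Q_2$ are elementary.
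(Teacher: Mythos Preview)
Your argument is correct, and the overall architecture for the $\calE[2]$ case matches the paper's: use Proposition~\ref{prop:mayervietoris}, observe that every extension splits over $\Z_2$ by the connected--\'etale decomposition (hence is killed by $2$), reduce to showing generic splitting, pin down $\Q(\gsc)=\Q(\calE[2])$, and then invoke semisimplicity of the relevant $\F_2[S_3]$-module. Where you and the paper diverge is in how $\Q(\gsc)=\Q(\calE[2])$ is obtained. The paper does this by a direct class field theory computation on $\Q(\calE[2])$: since $\gsc_{\Z_2}$ splits, $\Q(\gsc)/\Q(\calE[2])$ is unramified at the prime over~$2$; being of $2$-power degree it is automatically tame at~$3$, and there is no nontrivial abelian extension of $\Q(\calE[2])$ unramified outside $3\infty$ and tame at~$3$. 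You instead invoke the heavier Lemma~\ref{j027-lem:max2ext} to trap $\Q(\gsc)$ inside $\Q(\calE[4])$, use the $S_3$-equivariance to force $[\Q(\gsc):\Q(\calE[2])]\in\{1,4\}$, and eliminate the degree-$4$ case by comparing completions at~$2$. Both routes work; the paper's is lighter on prerequisites, while yours makes the $S_3$-structure do more of the work. For the generic splitting once $\Q(\gsc)=\Q(\calE[2])$, the paper appeals to Lemma~\ref{j027-lem:f2s3_modules}, whereas your direct computation $H^1(S_3,\calE[2])=0$ via restriction to a Sylow $2$-subgroup is an equally clean alternative.

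For $\extt{\Z[\frac{1}{3}]}{\mu_2}{\Z/2\Z}$ the paper simply cites \cite[Corollary~4.2]{Schoof:2005}. Your self-contained argument---reducing via Mayer--Vietoris to classes in $\extt{\Z[\frac16]}{\mu_2}{\Z/2\Z}$ that die over $\Q_2$, and then checking that $\langle -1,2,3\rangle$ injects into $\Q_2^\times/(\Q_2^\times)^2$---is a nice explicit substitute and is correct.
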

\begin{proof}
That $\extt{\Z[\frac{1}{3}]}{\mu_{2}}{\Z/2\Z}$ is trivial follows from
\cite[Corollary 4.2]{Schoof:2005}.
To prove that the group $\extt{\Z[\frac{1}{3}]}{\calE[2]}{\Z/2\Z}$ is trivial, 
we use the long exact sequence of Proposition \ref{prop:mayervietoris}
and the fact that both $\Hom_{\Q}(\calE[2], \Z/2\Z)$ and $\Hom_{\Z_{2}}(\calE[2], \Z/2\Z)$ are trivial.
We deduce that the map
\begin{equation} \label{eqn:j027_ext_mu2_z2}
\extt{\Z[\frac{1}{3}]}{\calE[2]}{\Z/2\Z}
\longhookrightarrow
\extt{\Z_{2}}^{1}{\calE[2]}{\Z/2\Z} \times \extt{\Z[\frac{1}{6}]}{\calE[2]}{\Z/2\Z} .
\end{equation}
is an injection.
We show that the image
of $\extt{\Z[\frac{1}{3}]}{\calE[2]}{\Z/2\Z}$ in this product is trivial.
The sequence 
$$
0 \longrightarrow \Z/2\Z \longrightarrow \gsc \longrightarrow \calE[2] \longrightarrow 0 
$$
splits locally because the group scheme $\calE[2]$ is connected and $\Z/2\Z$ is \'etale. 
So $\gsc_{\Z_{2}}$ is annihilated by $2$. 
Because $\Z_{2}$ is flat over $\Z[\frac{1}{3}]$, also the group scheme $\gsc$ over 
$\Z[\frac{1}{3}]$ is annihilated by $2$.

The group scheme $\gsc$ is generically split:
the degree of $\Q(\gsc)/\Q(\calE[2])$ divides $2$, hence in particular $\Q(\gsc)/\Q(\calE[2])$ is abelian.
Since the extension $\gsc_{\Z_2}$ is trivial, $\Q(\gsc)/\Q(\calE[2])$ can only ramify at $3\infty$.
But the class number of $\Q(\calE[2])$ is $1$ and there are no abelian extensions over $\Q(\calE[2])$ with tame ramification at $3$.
Hence $\Q(\gsc) = \Q(\calE[2])$ and $\gsc_\Q$ is an $\F_2[S_3]$-module.
By Lemma \ref{j027-lem:f2s3_modules}, this module is semi-simple
and we deduce that the sequence splits generically.

Hence by the injectivity of (\ref{eqn:j027_ext_mu2_z2}), the group scheme $\gsc$ splits
and we proved that the $\F_2$-vector space $\extt{\Z[\frac{1}{3}]}{\calE[2]}{\Z/2\Z}$ is trivial.
\end{proof}

\begin{lemma} \label{j027-lem:ext_2tors_2tors}
The vector space $\extt{\Z[\frac{1}{3}]}{(\calE[2]}{\calE[2]}$ is generated by the non-trivial extension $\calE[4]$.
\end{lemma}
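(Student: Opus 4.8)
The plan is to run, for the prime $2$, the argument used to prove Lemma~\ref{lem:ext_3tors_3tors_grh}; since $\maxextcatb$ has already been bounded \emph{unconditionally} in Lemma~\ref{j027-lem:max2ext}, no generalized Riemann hypothesis is needed here. First I would invoke \cite[Lemma 2.1]{Schoof:2009}: the sequence
$$
0 \longrightarrow \extt{\Z[\frac{1}{3}],[2]}{\calE[2]}{\calE[2]} \longrightarrow \extt{\Z[\frac{1}{3}]}{\calE[2]}{\calE[2]} \longrightarrow \Hom_{G_\Q}(\calE[2],\calE[2])
$$
is exact, where $\extt{\Z[\frac{1}{3}],[2]}{\calE[2]}{\calE[2]}$ is the subgroup of extensions killed by $2$. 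By Lemma~\ref{lem:end_gal_equivariant2} the group $\Hom_{G_\Q}(\calE[2],\calE[2])=\End_{G_\Q}(\calE[2])$ has order $2$, and $\calE[4]$ is an extension of $\calE[2]$ by $\calE[2]$ over $\Z[\frac{1}{3}]$ that is not killed by $2$, hence non-split. Thus it is enough to show $\extt{\Z[\frac{1}{3}],[2]}{\calE[2]}{\calE[2]}=0$: then $\extt{\Z[\frac{1}{3}]}{\calE[2]}{\calE[2]}$ embeds into a group of order $2$ and contains the non-zero class of $\calE[4]$, so it is generated by $\calE[4]$.

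So let $\gsc$ be an extension of $\calE[2]$ by $\calE[2]$ over $\Z[\frac{1}{3}]$ killed by $2$; I must prove $\gsc$ splits. For the generic fibre: by Proposition~\ref{prop:categoryprop} the scheme $\gsc$ lies in $\catb$, so $\Q(\gsc)\subseteq\maxextcatb\subseteq\Q(\calE[4])$ by Lemma~\ref{j027-lem:max2ext}, and since $\gsc$ contains $\calE[2]$ we have $\Q(\calE[2])\subseteq\Q(\gsc)\subseteq\Q(\calE[4])$. The degree-$4$ extension $\Q(\calE[4])/\Q(\calE[2])$ is wildly ramified at $2$, and a discriminant estimate in the spirit of Lemma~\ref{j027-lem:max2ext} — bounding the ramification of $\Q(\gsc)/\Q(\calE[2])$ at $2$ and at $3$ against Fontaine's bounds \cite[Corollary 3.3.2]{Fontaine:1985} together with the level of ramification allowed for $\Q(\gsc)/\Q$ — should show that no proper subextension of $\Q(\calE[4])/\Q(\calE[2])$ is cut out by such a $\gsc$, forcing $\Q(\gsc)=\Q(\calE[2])$. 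Then $\gsc_\Q$ is an $\F_2[S_3]$-module which is an extension of $V:=\calE[2]_\Q$ by itself; as in the proof of Lemma~\ref{j027-lem:f2s3_modules}, $\sigma-1$ acts invertibly on $V$, hence on $\gsc_\Q$, so $\gsc_\Q$ is a module over $\F_2[S_3]/(\sigma^2+\sigma+1)\cong M_2(\F_2)$ and is therefore semisimple; hence $0\to V\to\gsc_\Q\to V\to 0$ splits.

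To upgrade generic splitting to a splitting over $\Z[\frac{1}{3}]$ I would apply Proposition~\ref{prop:mayervietoris} with the primes $2$ and $3$. Since $2$ does not split in $\Q(\calE[2])/\Q$, the map $\Hom_\Q(\calE[2],\calE[2])\to\Hom_{\Q_2}(\calE[2],\calE[2])$ is an isomorphism, so the long exact sequence gives an injection
$$
\extt{\Z[\frac{1}{3}]}{\calE[2]}{\calE[2]}\longhookrightarrow\extt{\Z_2}{\calE[2]}{\calE[2]}\times\extt{\Z[\frac{1}{6}]}{\calE[2]}{\calE[2]}.
$$
Over $\Z[\frac{1}{6}]$ the scheme $\calE[2]$ is étale, so an extension is determined by its generic fibre; as $\gsc_\Q$ splits, the image of $\gsc$ in $\extt{\Z[\frac{1}{6}]}{\calE[2]}{\calE[2]}$ vanishes. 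Over $\Z_2$, the curve $E$ is supersingular, so $\calE[2]_{\Z_2}$ is local-local and, by \cite[Proposition 3.3.2]{Raynaud:1974}, is the unique finite flat group scheme over $\Z_2$ with generic fibre $\calE[2]_{\Q_2}$; since $\gsc_{\Q_2}\cong\calE[2]_{\Q_2}\oplus\calE[2]_{\Q_2}$, the scheme-theoretic closure in $\gsc_{\Z_2}$ of one summand is $\cong\calE[2]_{\Z_2}$, the quotient by it is finite flat with generic fibre $\calE[2]_{\Q_2}$, hence again $\cong\calE[2]_{\Z_2}$, and the closure of the other summand maps to this quotient by a morphism $\calE[2]_{\Z_2}\to\calE[2]_{\Z_2}$ that is generically the identity, hence an isomorphism, so $\gsc_{\Z_2}$ splits. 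Therefore $\gsc$ splits, $\extt{\Z[\frac{1}{3}],[2]}{\calE[2]}{\calE[2]}=0$, and the lemma follows from the first paragraph. I expect the real obstacle to be the middle step, namely proving $\Q(\gsc)=\Q(\calE[2])$: this is the only genuinely arithmetic input, and it is where the $p=2$ case differs from the $p=3$ case of Lemma~\ref{lem:ext_3tors_3tors_grh}, since the uniqueness-of-prolongation input there used $e<p-1$, which fails for $p=2$; this is why the local argument above is routed through \cite[Proposition 3.3.2]{Raynaud:1974} rather than through Raynaud's uniqueness theorem.
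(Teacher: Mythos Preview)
Your overall architecture matches the paper's: reduce via \cite[Lemma 2.1]{Schoof:2009} and Lemma~\ref{lem:end_gal_equivariant2} to showing $\extt{\Z[\frac{1}{3}],[2]}{\calE[2]}{\calE[2]}=0$, then use the Mayer--Vietoris injection from Proposition~\ref{prop:mayervietoris} and prove both the generic and the $\Z_2$-local images vanish. Your local argument over $\Z_2$ is a legitimate variant: the paper observes that $\gsc_{\Z_2}$ is biconnected (since $\calE[2]_{\Z_2}$ is local-local and self-dual) and invokes \cite[Section 3.3.5]{Raynaud:1974}, whereas you take the schematic closure of a generic summand and use \cite[Proposition 3.3.2]{Raynaud:1974}; both are fine.

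The gap is exactly where you flag it, and your proposed fix does not work. You suggest that Fontaine's bounds together with the level-of-ramification constraints force $\Q(\gsc)=\Q(\calE[2])$. They do not: by the ramification filtration computed in the proof of Lemma~\ref{j027-lem:max2ext}, the extension $\Q(\calE[4])/\Q$ has level of ramification exactly $1$ at $2$, which is precisely Fontaine's bound for schemes killed by $2$; and its level at $3$ is $\le \tfrac12$. Every intermediate field between $\Q(\calE[2])$ and $\Q(\calE[4])$ therefore satisfies all the discriminant/level constraints you have available, so no estimate ``in the spirit of Lemma~\ref{j027-lem:max2ext}'' can separate $\Q(\calE[2])$ from the larger fields.

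What the paper actually uses here is \cite[Proposition 6.4]{Schoof:2003}, a finite Honda system computation specific to self-extensions of the local-local simple group scheme over $\Z_2$: it shows that the $\bad_2$-part of the conductor of $\Q(\gsc)/\Q(\calE[2])$ is at most $\bad_2^{2}$. Since the two nontrivial candidate fields, $\Q(\calE[2],i)$ and $\Q(\calE[4])$, are the ray class fields of conductor $\bad_2^{4}$ and $\bad_2^{4}\bad_3$ respectively, they are excluded, forcing $\Q(\gsc)=\Q(\calE[2])$; then your $\F_2[S_3]$-semisimplicity argument (equivalently Lemma~\ref{j027-lem:f2s3_modules}) finishes the generic step. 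Note also that the same Proposition~6.4 is the engine in the $p=3$ case (Lemma~\ref{lem:ext_3tors_3tors_grh}); the genuine difference between the two primes is not, as you suggest, in the local prolongation step but in which form of Raynaud one quotes there ($3.3.3$ for $p=3$, $3.3.5$ for $p=2$).
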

\begin{proof}
A non-trivial extension of $\calE[2]$ by itself
that is of order $4$ and that is not annihilated by $2$,
is given by the group scheme $\calE[4]$.
It remains to show that there is no other non-trivial extension.
By Lemma \ref{lem:end_gal_equivariant2},
the endomorphisms of the Galois modules $\calE[2](\Alg{\Q})$ and $\calE[2](\Alg{\Q}_{2})$
are scalar multiplications.
Thus the group $\Hom_{\Q}(\calE[2],\calE[2])$ is of order $2$,
and by \cite[Lemma 2.1]{Schoof:2009} we then know that $\extt{\Z[\frac{1}{3}]}{(\calE[2]}{\calE[2]}$
is generated by $\calE[4]$ and by extensions that are annihilated by $2$.
To prove the lemma, it remains to show that there are no non-trivial extensions of
$\calE[2]$ by itself that are annihilated by $2$.

To show this, we start by noting that
also $\Hom_{\Q_{2}}(\calE[2],\calE[2])$ is of order $2$ because
the prime $2$ does not split in $\Q(\calE[2])/\Q$.
In the long exact sequence of Proposition \ref{prop:mayervietoris},
the map 
$$\Hom_{\Q}(\calE[2],\calE[2]) \times \Hom_{ \Z_{2} }(\calE[2],\calE[2]) \longrightarrow
\Hom_{\Q_{2} }(\calE[2],\calE[2])
$$
is therefore surjective.
As a consequence, we obtain the inclusion: 
\begin{align*}
\extt{\Z[\frac{1}{3}]}{(\calE[2]}{\calE[2]} \longhookrightarrow \extt{\Z_{2}}^{1}{\calE[2]}{\calE[2]} \times \extt{\Z[\frac{1}{6}]}^{1}{\calE[2]}{\calE[2]} .
\end{align*}
By restricting the inclusion to the subgroup of extensions that are annihilated by $2$, we obtain:
\begin{align} \label{eqn:extincl}
\extt{\Z[\frac{1}{3}],[2]}^{1}{\calE[2]}{\calE[2]} \longhookrightarrow \extt{\Z_{2},[2]}^{1}{\calE[2]}{\calE[2]} \times
\extt{\Z[\frac{1}{6}],[2]}^{1}{\calE[2]}{\calE[2]} .
\end{align}
Now let $\gsc$ be an extension of $\calE[2]$ by $\calE[2]$ and 
suppose that $\gsc$ is annihilated by $2$.
So $\Q(\gsc)$ is contained in $\Q(\calE[4])$ by Lemma \ref{j027-lem:max2ext}
and the order of $\Gal(\Q(\gsc)/\Q(\calE[2]))$ must divide $2$.
As we saw in Lemma \ref{j027-lem:max2ext}, the only extensions of 
$\Q(\calE[2])$ that satisfy these conditions
are the ray class field of conductor $\bad_{2}^{4}\bad_{3}$
which is equal to $\Q(\calE[4])$,
or the ray class field of conductor $\bad_{2}^{4}$ which is equal to $\Q(\calE[2], i)$.
By \cite[Proposition 6.4]{Schoof:2003}, a result based on a calculation with finite Honda systems,
the number field $\Q(\gsc)$ is contained in a ray class field of $\Q(\calE[2])$ whose conductor
is such that the exponent at $\bad_2$ is at most $2$. 
Hence $\gsc_\Q$ is an $\F_2[S_3]$-module.
Again by Lemma \ref{j027-lem:f2s3_modules},
this module is semi-simple and we deduce that the sequence splits generically.

Next we consider the group scheme $\gsc_{\Z_2}$.
Both the scheme $\gsc_{\Z_{2}}$ and its Cartier dual are local group schemes:
They are extensions of the connected group scheme $\calE[2]_{\Z_2}$ by itself. 
The fact that the Cartier dual of $\gsc_{\Z_{2}}$ is such an extension follows from the self-duality of $\calE[2]$.
Hence $\gsc_{\Z_{2}}$ is biconnected and according to \cite[Section 3.3.5]{Raynaud:1974},
$\gsc_{\Z_{2}}$ is uniquely determined by its generic fiber that splits.
This implies that $\gsc_{\Z_{2}}$ also splits.

Now use inclusion (\ref{eqn:extincl}) above to deduce that all extensions annihilated by $2$
in fact split. The non-trivial extension that remains is $\calE[4]$.
\end{proof}

\subsection{Proof of Theorem \ref{j027-thm:main}} \label{j027-sec:proof}

\begin{proof}
Let $A$ be an abelian variety over $\Q$ with
good reduction outside $3$ and with level of ramification 
$\frac{1}{2}$ at $3$.
Let $\calA$ be the N\'eron model of $A$ over $\Z[\frac{1}{3}]$ and 
let $\calA[2]$ be the finite flat commutative $2$-torsion subgroup scheme.
The group scheme $\calA[2]$ is an object in $\catb$.
First we show that $\calA[2]$ does not have subquotients that are \'etale or of multiplicative type.
In order to deduce this from Theorem \ref{cat-thm:abvar_torsionfilter},
we check that the conditions in that theorem are satisfied for our category $\catb$.

By Proposition \ref{j027-prop:ext_mu2_z2},
the category $\catb$ satisfies 
the condition that for all simple non-\'etale group schemes $T$ in $\catb$ 
and all simple \'etale group schemes $\gsc_{\text{\'et}}$ in $\catb$,
the group $\extt{\Z[\frac{1}{3}]}^{1}{T}{\gsc_{\text{\'et}}}$ is trivial.
The second condition requires that the maximal abelian
extension of $\Q$, which is unramified outside $3$ and at most tamely ramified at $3$,
is cyclic. 
This maximal abelian extension is equal to $\Q(\zeta_3)$ and is cyclic over $\Q$. 
Hence $\calA[2]$ does not have subquotients that are \'etale or that are of multiplicative type.
The rest of the proof is analogous to the proof in Section \ref{sec:modularityj032}.
\end{proof}

\section{Good reduction outside $7$} \label{chap:j049}

Consider the newforms $f=49A$ and $g=49B$ of level $49$ and weight $2$
that occur in Table \ref{tab:modformlevel}.
We denote the abelian varieties over $\Q$ attached to the newforms $f$ and $g$ by $E$ and $B$
respectively.
The abelian variety $E$ is an elliptic curve and $B$ is a $2$-dimensional abelian variety
that is closely related to $E$:
It is isogenous to $E \times E$ over the field $\Q(\zeta_7)^+ := \Q(\zeta_7 + \zeta_7^{-1})$.
The aim of this chapter is to prove the following part of Theorem \ref{maintheorem}:

\begin{theorem} \label{j049-thm:main}
Let $A$ be an abelian variety over $\Q$ with good reduction outside $7$
such that $A$ obtains semi-stable reduction over an at most tamely ramified 
extension of $\Q$.
Then, under assumption of the generalized Riemann hypothesis, 
$A$ is isogenous to a product of copies of $E$ and a product
of copies of $B$.
\end{theorem}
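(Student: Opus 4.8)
The argument will follow the template of the proofs of Theorems~\ref{thm:mainj032} and~\ref{j027-thm:main}, now with $\good=2$, $\bad=7$ and level of ramification $0$ at $7$. First I would record that, for an abelian variety with good reduction outside $7$, becoming semi-stable over an at most tamely ramified extension of $\Q_7$ amounts to $\rho_{A,2}$ being trivial on the wild inertia at $7$, i.e.\ to $A$ being ramified of level $0$ at $7$; hence $\calA[2^n]$ is an object of the category $\catf$ of finite flat commutative group schemes of $2$-power order over $\Z[\frac{1}{7}]$ that are ramified of level $0$ at $7$. By Table~\ref{tab:modformlevel} both $\calE[2^n]$ and $\calB[2^n]$ lie in $\catf$ (since $u(\rho_{f,7})=u(\rho_{g,7})=0$), so $E=J_0(49)$ and $B$ supply the two expected families, and $\catf$ is closed under subobjects, quotients, products and extensions by Proposition~\ref{prop:categoryprop}.

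Next I would pin down the maximal $2$-torsion extension $T_{\catf}$. Proposition~\ref{prop:actionrootdiscb} gives $\delta_{T_{\catf}} < 2^{1+\frac{1}{2-1}}\cdot 7^{0+1}=28$, which exceeds the unconditional Odlyzko bound $22$ but lies below $42$, so under the generalized Riemann hypothesis the tables of~\cite{Martinet:1982} bound $[T_{\catf}:\Q]$ and force $T_{\catf}/\Q$ to be finite — this is the only use of GRH. I would then identify $T_{\catf}$ explicitly: it contains the compositum $F_0$ of $\Q(\calE[2])$, $\Q(\calB[2])$, $\Q(\sqrt{-1})$ and the Kummer extension attached to $T[2,7]$ (Definition~\ref{def:katzmazur}), and a ray class field computation over $F_0$ — using Lemma~\ref{lem:combinedconductors}, Lemma~\ref{j027-lem:conductorsubfield} and the root-discriminant bounds of Fontaine \cite[Corollary~3.3.2]{Fontaine:1985}, in the style of Propositions~\ref{prop:max3ext_grh} and~\ref{j027-lem:max2ext} — shows that no strictly larger subextension can occur.

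With $T_{\catf}$ available I would classify the simple objects of $\catf$ by matching irreducible $\F_2[\Gal(T_{\catf}/\Q)]$-modules against $\Z/2\Z$, $\mu_2$, their twists by characters of $\Gal(\Q(\zeta_7)/\Q)$, $\calE[2]$, and $\calB[2]$, discarding those that do not prolong to a finite flat group scheme over $\Z_2$ by the usual input from \cite{Raynaud:1974}, \cite{TateOort:1970} and finite Honda systems (as in Proposition~\ref{prop:j032_simple_groupschemes}); since any reducible prolongable candidate for $\bar\rho_{B,2}$ would be \'etale, which is impossible, $\calB[2]$ is itself simple of order $16$ and is neither \'etale nor of multiplicative type. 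I would then compute the relevant $\Ext^1$-groups over $\Z[\frac{1}{7}]$ via the Mayer--Vietoris sequence of Proposition~\ref{prop:mayervietoris}: the points to establish are that $\Ext^1_{\catf}$ of any non-\'etale simple by any \'etale simple is trivial; that $\Ext^1_{\catf}(\calE[2],\calB[2])$ and $\Ext^1_{\catf}(\calB[2],\calE[2])$ vanish, so that there is no mixing between the two non-\'etale simples; and that the self-extension groups are generated by $\calE[4]$ and $\calB[4]$ respectively. Since the maximal abelian extension, unramified outside $7$ and tame at $7$, of the field cut out by the \'etale simple objects is cyclic, Theorem~\ref{cat-thm:abvar_torsionfilter} then shows that $\calA[2]$ has no \'etale or multiplicative subquotients.

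Finally I would reassemble the $2$-divisible group. By the previous step every simple subquotient of $\calA[2]$ is $\calE[2]$ or $\calB[2]$, and the vanishing of the mixed $\Ext^1$'s splits $\calA[2^\infty]$ as the product of an $\calE$-isotypic and a $\calB$-isotypic $2$-divisible group. Having computed $\End(\calE[2^\infty])$ and $\End(\calB[2^\infty])$ (a Schur-type argument as in Lemma~\ref{lem:end_gal_equivariant}, combined with Tate's theorem on $2$-divisible groups) together with the self-extension groups just described, I would apply the rigidity result \cite[Theorem~8.3]{Schoof:2005} separately to the two parts to get $\calA[2^\infty]\simeq\calE[2^\infty]^{a}\times\calB[2^\infty]^{b}$ for some $a,b\in\N$, whence Faltings' isogeny theorem \cite{Faltings:1983} gives $A\sim E^{a}\times B^{b}$. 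The principal obstacle is the third paragraph: because $B$ contributes a genuinely new $16$-element simple object, both the determination of $T_{\catf}$ and — above all — the verification that the $E$- and $B$-contributions do not interact (so that the $2$-divisible group really decomposes) are substantially more delicate than the corresponding steps for $\badrat=2$ and $\badrat=3$.
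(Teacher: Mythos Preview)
Your template from the cases $\badrat=2,3$ breaks at the first substantive step: neither $\calE[2]$ nor $\calB[2]$ is a simple object of $\catf$. In the earlier sections the auxiliary prime $\good$ was such that the CM elliptic curve in play is \emph{supersingular} at $\good$, which made $\calE[\good]_{\Z_\good}$ local--local and hence simple. Here $E=J_0(49)$ has CM by $\Q(\sqrt{-7})$, in which $2$ splits, so $E$ is \emph{ordinary} at $2$; concretely $\Q(\calE[2])=\Q(\sqrt{-7})$, the mod-$2$ representation has a Galois-fixed line, and $\calE[2]$ already admits a closed flat subgroup scheme of order $2$. The actual classification (Proposition~\ref{j049-prop:simple_groupschemes}) is that every simple object of $\catf$ is one of $\Z/2\Z$, $\mu_2$, $C(\chi)$, $C(\chi)^*$, $C(\chi^2)$, $C(\chi^2)^*$, each of order $2$ or $4$ and each either \'etale or of multiplicative type. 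In particular your claim that $\calB[2]$ is simple of order $16$ is false, and Theorem~\ref{cat-thm:abvar_torsionfilter} is inapplicable in principle: its conclusion would force $\calA[2]$ to have no \'etale and no multiplicative subquotients, hence $\calA[2]=0$. The paper says this explicitly, and Lemma~\ref{j049-lem:ext_mu2_Z2_Gpi} moreover shows that the relevant $\Ext^1(\mu_2,\Z/2\Z)$ is nontrivial after base change, so even the hypothesis of that theorem fails.

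The paper's route is accordingly quite different. One base-changes to $K=\Q(\zeta_7)$ and works in the category $\cate$ over $O_K[\frac{1}{7}]$, where the CM splitting $(2)=\pi\pibar$ decomposes $\calE[2]$ as $\calE[\pi]\times\calE[\pibar]$. The $\Ext^1$ computations of Lemmas \ref{j049-lem:ext_mu2_Z2_Gpi}--\ref{j049-lem:ext_mu2_Epi} then give a four-step filtration of $\calA[2^n]_{O_K[1/7]}$ with successive graded pieces built from $\mu_2$, then $\calE[\pi]$, then $\calE[\pibar]$, then $\Z/2\Z$; the outer two pieces have rank bounded independently of $n$, and this is enough (Lemma~\ref{j049-lem:isogeny}, via Theorem~\ref{j049-thm:grpschm_filtration} and Corollary~\ref{j049-cor:twodivfiltered}) to produce a morphism of $2$-divisible groups $\calA[2^\infty]\to\calE[2^\infty]^{\dim A}$ with bounded kernel and cokernel, hence an isogeny $A\sim E^{\dim A}$ \emph{over $K$}. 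The abelian variety $B$ plays no role at the group-scheme level; it enters only in the descent to $\Q$ (Lemma~\ref{j049-lem:tatemod_descent}), where Frobenius reciprocity and Mackey's criterion show that any $G_\Q$-representation restricting to $(\rho_{E,2}|G_K)^{\dim A}$ over $K$ is a direct sum of copies of $\rho_{E,2}$ and $\rho_{B,2}$.
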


In order to prove Theorem \ref{j049-thm:main},
we work with the following category:

\begin{definition}
Let $\catf$ be the category whose objects are finite flat commutative group schemes $\gsc$
of $2$-power order over $\Z[\frac{1}{7}]$ such that $\Q(\gsc)/\Q$ is at most tamely ramified at $7$
and whose morphisms are group scheme morphisms over $\Z[\frac{1}{7}]$.
\end{definition}

Let $\calE$ and $\calB$ denote the N\'eron models of $E$ and $B$ respectively.
If we adjoin the $2$-torsion points of $E$ to $\Q$, we obtain the number field $\Q(\sqrt{-7})$.
Therefore the subgroup scheme $\calE[2]$ of $2$-torsion points of $\calE$ is an object
in $\catf$.
Since $B$ becomes isogenous to $E \times E$ over $\Q(\zeta_7)^+$,
and since $\Q(\zeta_7)^+/\Q$ is tamely ramified at $7$, 
also the group scheme $\calB[2]$ is an object in $\catf$.

\subsection{Simple group schemes} \label{j049-sec:simple}

We classify the simple group schemes in the category $\catf$.
We do this by using the maximal $2$-torsion extension $\maxextcatc$
of $\catf$, that is, the field generated by all $\Alg{\Q}$-points
of group schemes in $\catf$ annihilated by $2$.
The field $\maxextcatc$ is contained in the field $L$ occurring in the next lemma.

\begin{lemma} \label{j049-lem:simple_groupschemes}
The maximal extension $L$ of $\Q$ that is at most tamely ramified at $7$
and ramified of level $1$ at $2$, 
is a degree $2$-power extension of $\Q(\zeta_{28})$.
\end{lemma}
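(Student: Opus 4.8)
The plan is to follow the pattern of Lemma~\ref{j027-lem:max2ext} and Proposition~\ref{prop:max3ext_grh}: first bound the root discriminant of $L$ and use Odlyzko's tables to make $L/\Q$ finite (and its Galois group solvable), then identify $\Q(\zeta_{28})$ as the maximal abelian subextension, and finally use class field theory over $\Q(\zeta_{28})$ to kill the prime-to-$2$ part of $\Gal(L/\Q(\zeta_{28}))$. To begin, I would check that $\Q(\zeta_{28})=\Q(\zeta_{7},i)\subseteq L$. The field $\Q(\zeta_{7})$ is ramified only at $7$, where its ramification is cyclic of order $6$, hence tame; so $\Q(\zeta_7)/\Q$ is at most tamely ramified at $7$ and (trivially) ramified of level $1$ at $2$, and $\Q(\zeta_7)\subseteq L$. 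The field $\Q(i)=\Q(\zeta_4)$ is ramified only at $2$, where the lower-numbering ramification groups are $I(2)_{0}=I(2)_{1}=\Z/2\Z$ and $I(2)_{i}=1$ for $i\geq 2$, so $u_{\Q_2(i)/\Q_2}=1$ and $\Q(i)/\Q$ is ramified of level $1$ at $2$ and trivially tame at $7$; hence $\Q(i)\subseteq L$. Since the compositum of two extensions each at most tamely ramified at $7$ (resp.\ ramified of level $1$ at $2$) has the same property --- a consequence of the good behaviour of the upper numbering under quotients, Lemma~\ref{thm:unumquotient} --- we obtain $\Q(\zeta_{28})\subseteq L$, and by the same token every $\Q$-conjugate of a valid extension is valid, so $L/\Q$ is Galois.

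Next I would bound $\delta_{L}$. Applying Proposition~\ref{prop:globalrootdisc} with $i=1$ at the prime $2$ and $i=0$ at the prime $7$ gives $v_{2}(\delta_{L})<2$ and $v_{7}(\delta_{L})<1$, and $L$ is unramified at every other prime, so $\delta_{L}<2^{2}\cdot 7=28$. Since $28<42$, under the generalized Riemann hypothesis Odlyzko's discriminant bounds (\cite{Martinet:1982}, Table~3) give an explicit upper bound for $[L:\Q]$; in particular $L/\Q$ is finite. Combining this with $\delta_{\Q(\zeta_{28})}=2\cdot 7^{5/6}\approx 10.1$ (conductor--discriminant formula, $\Delta_{\Q(\zeta_{28})}=2^{12}7^{10}$), one gets a bound on $[L:\Q(\zeta_{28})]$ which a refinement of the Odlyzko estimate taken over the base field $\Q(\zeta_{28})$ --- exactly as in the proof of Proposition~\ref{prop:max3ext_grh} --- pushes into the range where $\Gal(L/\Q)$ is solvable.

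I would then show that $\Q(\zeta_{28})$ is the maximal abelian subextension of $L/\Q$. By the Kronecker--Weber theorem an abelian extension $F/\Q$ inside $L$ lies in some $\Q(\zeta_{2^{a}7^{b}})$, since $L$ is unramified outside $\{2,7\}$. As $F/\Q$ is at most tamely ramified at $7$ (Lemma~\ref{lem:levramquotient}), its conductor exponent at $7$ is at most $1$, so $b\leq 1$; and as $F/\Q$ is ramified of level $1$ at $2$ while $\Q(\zeta_{2^{a}})/\Q$ has top upper-ramification break $a-1$ at $2$, we get $a\leq 2$. Hence $F\subseteq\Q(\zeta_{4\cdot 7})=\Q(\zeta_{28})$, so $\Q(\zeta_{28})$ is precisely the maximal abelian subextension and $N:=\Gal(L/\Q(\zeta_{28}))=[\Gal(L/\Q),\Gal(L/\Q)]$.

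Finally --- the heart of the proof --- I would show that $N$ is a $2$-group, which then gives $[L:\Q(\zeta_{28})]=\#N$ a power of $2$. Since $\Gal(L/\Q)$ is finite and solvable, if $N$ were not a $2$-group there would be a subfield $\Q(\zeta_{28})\subsetneq F\subseteq L$ with $F/\Q$ Galois and $\Gal(F/\Q(\zeta_{28}))$ an elementary abelian $q$-group for some odd prime $q$ (take $F$ fixed by an appropriate term of a chief series, passing through the $2$-groups). I would rule this out by class field theory over $\Q(\zeta_{28})$: Fontaine's root-discriminant bounds (\cite{Fontaine:1985}, Corollary~3.3.2), together with the level-of-ramification constraints at $2$ and $7$, confine the conductor of such an $F$ to finitely many ideals $\frakp_{2}^{a}\frakp_{7}^{b}$ (the reduction to small $a,b$ being handled by Lemma~\ref{lem:combinedconductors}), and one checks, by a Magma computation of the ray class groups of $\Q(\zeta_{28})$ of these conductors (as in the Appendix), that their prime-to-$2$ parts vanish, or else produce extensions whose root discriminant exceeds Fontaine's bound. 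Hence $N$ has no odd-order quotient and, being finite and solvable, is a $2$-group. The main obstacle is this last step: one must ensure the Odlyzko bound leaves $[L:\Q(\zeta_{28})]$ small enough that solvability is available, and that Fontaine's bounds genuinely confine the class-field-theory computation to a manageable finite list of conductors.
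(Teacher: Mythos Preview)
Your overall strategy --- bound $\delta_L$, apply Odlyzko under GRH, identify $\Q(\zeta_{28})$ as the maximal abelian subextension via Kronecker--Weber, then finish with class field theory --- matches the paper's. But two steps have real gaps.

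\textbf{Solvability.} Odlyzko's GRH bound gives $[L:\Q]<725$, hence $[L:\Q(\zeta_{28})]\leq 60$. This does \emph{not} force solvability: $A_5$ has order exactly $60$. In Proposition~\ref{prop:max3ext_grh} the analogous index lands below $60$, so solvability is automatic there; here it is not, and no unspecified ``refinement of the Odlyzko estimate'' closes the gap. The paper (Claim~1) argues directly that $\Gal(L/\Q(\zeta_{28}))\not\simeq A_5$: any decomposition group is solvable, hence has order $\leq 12$ inside $A_5$, and wild inertia at $2$ must be a normal $2$-subgroup of it. Enumerating the possible inertia groups at $2$ and $7$ under these constraints and feeding the resulting conductor exponents through the conductor--discriminant formula, one finds in every case a root discriminant too small to allow $[L:\Q]=720$ against the Odlyzko tables.

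\textbf{The $2$-group step.} Your reduction is incorrect: a solvable $N=\Gal(L/\Q(\zeta_{28}))$ that is not a $2$-group need \emph{not} have an elementary abelian $q$-quotient for odd $q$. Take $N\simeq S_3$: its abelianisation is $\Z/2\Z$, and its only normal subgroups (all characteristic, hence normal in $\Gal(L/\Q)$) give quotients $S_3$, $\Z/2\Z$, $1$. A chief series yields an odd chief \emph{factor}, but the corresponding field $F$ has $\Gal(F/\Q(\zeta_{28}))$ equal to a quotient of $N$ that still contains the $2$-group layers above that factor; what you actually get is an odd-degree abelian step over some intermediate $2$-power extension of $\Q(\zeta_{28})$, so class field theory would have to be carried out over that larger, as yet unknown, field rather than over $\Q(\zeta_{28})$. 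The paper proceeds differently: it computes the maximal abelian extension of $\Q(\zeta_{28})$ inside $L$ explicitly as the ray class field $R$ of conductor $\pi_7\pi^2\overline{\pi}^2$ (note that $2$ splits as $\pi\overline{\pi}$ in $\Q(\zeta_{28})$, so your single ``$\frakp_2$'' is not right), finds $[R:\Q(\zeta_{28})]=16$, so $N/N'$ is a $2$-group and $\#N'<4$, and then invokes the group-theoretic \cite[Corollary~3.2]{Schoof:2003} --- designed precisely for this configuration --- to conclude that $N$ is a $2$-group.
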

\begin{proof}
The root discriminant $\delta_L$ of the extension $L$ is strictly smaller than $28$.
Using the tables of \cite{Martinet:1982} under the assumption of the generalized Riemann hypothesis,
this implies that $L$ is a finite extension over $\Q$ and has degree smaller than $725$.
The $\Alg{\Q}$-points of the extension 
 of $\Z/2\Z$ by $\mu_2$, represented by the scheme 
$\Spec( \prod_{i=0}^{i=1} \Z[\frac{1}{7}] [X_i] /(X_{i}^2 - (-7)^{i})  )$ over $\Z[\frac{1}{7}]$,
generate the extension $\Q(\sqrt{-7})/\Q$.
Another group scheme in $\catf$ that is annihilated by $2$ is 
the permutation group scheme:
Over $\Z[\zeta_7/7]$ it becomes isomorphic to $(\Z/2\Z)^6$, but over $\Z[\frac{1}{7}]$
the cyclic Galois group $\Gal(\Q(\zeta_7)/\Q)$ permutes the factors.
Together with the non-trivial extension 
$T[2,-1]$ of $\Z/2\Z$ by $\mu_2$ over $\Z[\frac{1}{7}]$,
we have the following extensions of number fields:
$$
\Q \subset_{12} \Q(i, \zeta_7) = \Q(\zeta_{28}) \subset_{\leq 60} L .
$$
The discriminant of $\Q(\zeta_{28})$ is $2^{12} 7^{10}$.
By the Kronecker-Weber Theorem and the bound on the ramification at the primes
$2$ and $7$ in $L/\Q$,
the extension $\Q(\zeta_{28})/\Q$ is the maximal abelian extension inside $L/\Q$.

\vspace{1ex}
\noindent
{\bf Claim 1} :
The Galois group $\Gal(L/\Q(\zeta_{28}))$ is solvable.
\begin{proof}
We proceed by contradiction 
and suppose that the Galois group $\Gal(L/\Q(\zeta_{28})$ is not solvable.
Then $\Gal(L/\Q(\zeta_{28})$ is isomorphic to the alternating group $A_5$
of order $60$.

First suppose that the extension $L/\Q(\zeta_{28})$ is unramified at the primes 
lying over $2$ and $7$.
Then the root discriminant of $L$ is equal to the root discriminant
of $\Q(\zeta_{28})$.
According to the tables in \cite{Martinet:1982},
the extension $L/\Q$ cannot have degree $720$.
On the other hand, the extension $L/\Q(\zeta_{28})$ cannot be ramified at $7$:
If $L/\Q(\zeta_{28})$ is ramified at $7$, the inertia subgroup at the prime lying over $7$
of $\Gal(L/\Q(\zeta_{28}))$ is cyclic.
We note that any decomposition group is solvable and the inertia subgroup
in a decomposition group is normal.
All solvable subgroups of $A_5$ have order less or equal to $12$.
The order of any normal cyclic subgroup in any of these solvable subgroups
is at most $5$.
Therefore, the inertia subgroup of a prime lying above $7$ inside the Galois
extension $L/\Q$ has order at most $30$.
This means that $\delta_L < 4 \cdot 7^{29/30}$.
Again using the tables in \cite{Martinet:1982} we see that $[L : \Q]=720$ is impossible.
Finally, the extension $L/\Q(\zeta_{28})$ cannot be ramified at a prime lying above $2$ either:
The prime $2$ splits in $\Q(\zeta_{28})/\Q$ into two primes $\pi$ and $\pibar$.
We use again that all solvable subgroups of $A_5$ have order at most $12$.
We may assume that such a subgroup corresponds to the inertia group at $\pi$ or $\pibar$.
First we argue that these subgroups must be abelian.
The non-abelian subgroups of $A_5$ of order at most $12$ are isomorphic to
either the dihedral groups $D_3, D_5$ or the alternating group $A_4$.
Because they correspond to a ramification group at $\pi$ or $\pibar$, the wild inertia subgroup is normal. But
$D_3, D_5$ or $A_4$ do not admit normal subgroups of order resp. $2$, $2$ or $4$.
Hence we assume that the ramification group at $\pi$ or $\pibar$ is abelian,
which implies that it is of order at most $5$.

Since the the level of ramification at $2$ of $L/\Q$ is at most $1$,
the exponent of the Artin conductor of 
any character of the extension corresponding to the inertia subgroup at $\pi$ or $\pibar$
is at most $2$.
Suppose that the inertia subgroup at $\pi$ of $L/\Q(\zeta_{28})$ has order $4$.
We only have wild ramification at $\pi$ (resp. $\pibar$)
inside $L/\Q(\zeta_{28})$: 
We count one trivial character and
three characters of conductor $\pi^2$ (resp. $\pibar^2$).
Then using the conductor-discriminant formula
we compute that the $2$-adic valuation of the root discriminant of $L$ is at most $\frac{14}{8}$.
Hence $\delta_L$ is at most $7^{5/6} \cdot 2^{14/8}$.
The tables in \cite{Martinet:1982} then imply that $[L : \Q]=720$ is impossible.
The same argument works if the order of the inertia subgroup of a prime lying above $2$
in $L/\Q$ has order $1,2,3$ or $5$; we would only get smaller root discriminants for $L/\Q$.
\end{proof}

\noindent
{\bf Claim 2} :
The Galois group $\Gal(L/\Q(\zeta_{28}))$ is a $2$-group.
\begin{proof}

Let $\pi_{7}, \pi$ and $\overline{\pi}$ be primes in the ring of integers
of $\Q(\zeta_{28})$ such that $\pi_7$ lies over $7$, and such that $\pi$ and $\pibar$ are distinct and
lie over $2$.
The ray class field $R$ of conductor $\pi_{7} \overline{\pi}^2 \pi^2$
is an extension of degree $16$ of $\Q(\zeta_{28})$ with root discriminant  
$7^{11/12} \cdot 2^{30/16}$ and that the level of ramification at primes over $2$ is $1$.
It is the maximal abelian extension of $\Q(\zeta_{28})$ inside $L/\Q(\zeta_{28})$.
It suffices to show that there is no extension of degree $3$ of $R$ inside $L$.
To do this we apply \cite[Corollary 3.2]{Schoof:2003} with the groups $\Gamma=\Gal(L/\Q)$
and $\Gamma'=\Gal(L/\Q(\zeta_{28}))$:
Given a finite group $\Gamma$ such that $\Gamma' / \Gamma''$ is a $2$-group and
$\# \Gamma'' < 25$,
then either $9$ divides $\# \Gamma''$ or $\Gamma''$ is a $2$-group.
\end{proof}

\end{proof}

Let $\chi$ be one of the two non-trivial characters $\Gal(\Q(\zeta_7)^+/\Q) \rightarrow \F_4^*$
of conductor $7$ and order $3$. 
If we want the coefficients of $\chi$ to be in $\F_2$ instead of $\F_4$, 
then $\chi$ becomes a $2$-dimensional representation with image generated by
$
M=\left(\begin{array}{cc}1 & 1\\1 & 0\end{array}\right)$ .
Similarly, the representation $\chi^{-1}$ with coefficients in $\F_2$ gives a $2$-dimensional representation.

Let $C(\chi)$ denote the twist of the group scheme 
$(\Z/2\Z \times \Z/2\Z)$ by the character $\chi$.
The scheme $C(\chi)$ is a prolongation of the representation $\chi$ to a finite flat commutative group scheme over $\Z[\frac{1}{7}]$.
This group scheme is flat over $\Z[\frac{1}{7}]$ since $\chi$ is only ramified at $7$.
The Cartier dual of $C(\chi)$ is denoted by $C(\chi)^*$ and is a twist of $(\mu_2 \times \mu_2)$ by $\chi$.

\begin{proposition} \label{j049-prop:simple_groupschemes}
Any simple group scheme in $\catf$ is isomorphic to either
$\mu_2$, $\Z/2\Z$, $C(\chi)$, $C(\chi)^*$, $C(\chi^2)$ or $C(\chi^2)^*$.
\end{proposition}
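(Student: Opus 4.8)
The plan is to follow the same strategy as for the categories $\catb$ and $\catd$: classify the simple objects of $\catf$ through their generic fibers, now using Lemma~\ref{j049-lem:simple_groupschemes} to control the relevant field. So let $\gsc$ be a simple group scheme in $\catf$. Being simple of $2$-power order, $\gsc$ is killed by $2$, hence $\Q(\gsc)$ lies in the maximal $2$-torsion extension $\maxextcatc$ of $\catf$, and therefore in the field $L$ of Lemma~\ref{j049-lem:simple_groupschemes}. Since $\Z[\frac{1}{7}]$ is a Dedekind domain, the scheme-theoretic closure in $\gsc$ of any $G_{\Q}$-submodule of the generic fiber $\gsc_{\Q}$ is a closed flat subgroup scheme; by simplicity, $\gsc_{\Q}$ is therefore an irreducible $\F_{2}[G_{\Q}]$-module, and it factors through $\Gal(L/\Q)$.

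First I would cut down the possible generic fibers. By Lemma~\ref{j049-lem:simple_groupschemes} the group $\Gal(L/\Q(\zeta_{28}))$ is a $2$-group, and it is normal in $\Gal(L/\Q)$; since a non-zero $\F_{2}$-representation of a $2$-group has non-zero invariants, irreducibility of $\gsc_{\Q}$ forces $\Gal(L/\Q(\zeta_{28}))$ to act trivially. Thus $\gsc_{\Q}$ is an irreducible $\F_{2}$-representation of $\Gal(\Q(\zeta_{28})/\Q)\cong(\Z/28\Z)^{*}\cong\Z/2\Z\times\Z/2\Z\times\Z/3\Z$. There are exactly two such representations: the trivial one of dimension $1$, and the $2$-dimensional one factoring through the order-$3$ quotient $\Gal(\Q(\zeta_{7})^{+}/\Q)$, which is the representation $\chi$ (equivalently $\chi^{2}$) with coefficients taken in $\F_{2}$, i.e.\ the generic fiber of $C(\chi)$. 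Hence $\gsc$ has order $2$ or $4$. In the order-$2$ case, the Oort--Tate classification~\cite{TateOort:1970} shows, there being no non-trivial twists since $\F_{2}^{*}=1$, that $\gsc\cong\Z/2\Z$ or $\gsc\cong\mu_{2}$.

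The order-$4$ case is the crux, and I expect it to be the main obstacle, because Raynaud's uniqueness of prolongations is unavailable at the prime $2$: there $e=1=p-1$, and indeed $\Z/2\Z$ and $\mu_{2}$ already exhibit non-uniqueness. Here $\gsc_{\Q}\cong C(\chi)_{\Q}$ is unramified at $2$, since it factors through $\Gal(\Q(\zeta_{7})^{+}/\Q)$ in which $2$ is inert; in particular $\End_{G_{\Q_{2}}}(\gsc_{\Q})=\End_{G_{\Q}}(\gsc_{\Q})=\F_{4}$. I would reduce to the local problem over $\Z_{2}$: classify the finite flat prolongations, killed by $2$, of the $2$-dimensional irreducible unramified $\F_{2}[G_{\Q_{2}}]$-module underlying $C(\chi)$. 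Using the connected--\'etale sequence over $\Z_{2}$ together with Cartier duality, such a prolongation is \'etale, multiplicative, or biconnected; the biconnected case is excluded because a biconnected finite flat group scheme over $\Z_{2}$ killed by $2$ has wildly ramified generic fiber --- this is where the theory of finite Honda systems (\cite{Fontaine:1977}, \cite{Conrad:1997}, and the biconnectedness argument of Lemma~\ref{j027-lem:ext_2tors_2tors}, cf.\ \cite{Schoof:2003}) enters --- whereas our module is unramified. So $\gsc_{\Z_{2}}$ is either the \'etale prolongation $C(\chi)_{\Z_{2}}$ or its Cartier dual, the multiplicative prolongation $C(\chi)^{*}_{\Z_{2}}$.

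Finally I would glue back to $\Z[\frac{1}{7}]$ via Artin's description~\cite{Artin:1970}, \cite[Proposition 2.3]{Schoof:2003}, writing $\gsc$ as a triple $(\gsc_{\Q},\gsc_{\Z_{2}},\theta_{\gsc})$ and similarly for $C(\chi)$ and $C(\chi)^{*}$. Using that $C(\chi)^{*}_{\Q}\cong C(\chi)_{\Q}$ as $G_{\Q}$-modules --- the dual of the $2$-dimensional $\F_{2}$-representation of $\Z/3\Z$ is isomorphic to it, since $\Gl_{2}(\F_{2})\cong S_{3}$ has a single conjugacy class of elements of order $3$ --- and that all the automorphism groups involved equal $\F_{4}^{*}$, the gluing isomorphism can be normalised, giving $\gsc\cong C(\chi)$ in the \'etale case and $\gsc\cong C(\chi)^{*}$ in the multiplicative case. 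These, together with $\Z/2\Z$ and $\mu_{2}$ and the identifications $C(\chi)\cong C(\chi^{2})$, $C(\chi)^{*}\cong C(\chi^{2})^{*}$, are exactly the group schemes listed in the statement.
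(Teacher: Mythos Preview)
Your argument is essentially the paper's, with two small differences worth noting. First, in excluding the biconnected case over $\Z_2$, you assert that such a group scheme has \emph{wildly} ramified generic fiber; in fact the ramification is tame (the paper invokes \cite[Proposition~6.1]{Schoof:2003} to see that the points of a simple local--local group scheme of order $4$ over $\Z_2$ generate a degree-$6$ extension of $\Q_2$, and contrasts this with the degree-$3$ unramified extension at hand). Your argument is easily repaired: what you actually need, and what your connected--\'etale reasoning already gives, is that a biconnected prolongation cannot have \emph{unramified} generic fiber---over $\Z_2^{nr}$ the generic fiber would then be constant, the closure of any line would be a $\mu_2$, and the quotient would again be $\mu_2$, forcing the Cartier dual to be \'etale. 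Second, for the endgame the paper does not glue \`a la Artin but instead, in the connected case, untwists by a power of $\chi$ to obtain an extension of $\mu_2$ by $\mu_2$, dualises, and reads off the answer; your gluing argument (with the observation that all the relevant automorphism groups equal $\F_4^{*}$, so the map $\Aut_{\Q}\times\Aut_{\Z_2}\to\Aut_{\Q_2}$ is surjective) is a clean alternative. You also correctly note that $C(\chi)\cong C(\chi^{2})$ as $\F_2$-group schemes, so the list in the statement is redundant; the paper does not comment on this.
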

\begin{proof}
Any $p$-power order normal subgroup of a Galois group of a finite extension of $\Q$
acts trivial on the generic fiber of a simple $p$-power order finite flat commutative group
scheme over a subring of $\Q$.
Therefore, by Lemma \ref{j049-lem:simple_groupschemes}, 
the Galois action on the generic fiber of a simple group scheme in $\catf$ factors through $\Gal(\Q(\zeta_7)^+/\Q)$.
It follows that the generic fibers of the simple group schemes of $\catf$ are $\F_2[C_3]$-modules,
where $C_3$ denotes the cyclic group of order $3$.
Hence they are of order $2$ (trivial character) or of order $4$ (the representation $\chi \oplus \chi^{-1}$).

To classify group schemes of order $2$, we use \cite[Theorem 2]{TateOort:1970}:
All group schemes in $\catf$ of order $2$ are isomorphic to either $\mu_2$ or $\Z/2\Z$.

To classify group schemes of order $4$ we proceed as follows:
Let $\gsc$ be a simple finite flat group scheme over $\Z[\frac{1}{7}]$ of order $4$.
We first prove the following claim:

\vskip 10pt
\noindent
{\bf Claim }: The group scheme $\gsc_{\Z_2}$ is isomorphic to one of the group schemes 
$C(\chi)_{\Z_2}$, $C(\chi)_{\Z_2}^*$, $C(\chi^2)_{\Z_2}$ or $C(\chi^2)_{\Z_2}^*$.
\begin{proof}
Since the prime $2$ is inert in $\Q(\zeta_7)^+/\Q$, the action of $\Gal(\Alg{\Q}_2/\Q_2)$ 
on $\gsc(\Alg{\Q}_2)$ is also irreducible.
This means that $\gsc_{\Z_2}$ is a simple group scheme over $\Z_2$
and that $\gsc$ is local, i.e., connected over the ring of integers of an algebraic closure of $\Q_2$, or \'etale.
Also the Cartier dual of $\gsc_{\Z_2}$ is a simple group scheme over $\Z_2$ and either local or \'etale.

\begin{itemize}
\item
If $\gsc_{\Z_2}$ is an \'etale-local group scheme, which means that $\gsc_{\Z_2}$ is \'etale and its Cartier dual
$\gsc_{\Z_2} ^*$ is local, then by Proposition \ref{j049-prop:simple_groupschemes}
the group scheme $\gsc_{\Z_2}$ is isomorphic to either $C(\chi)_{\Z_2}$ or $C(\chi^2)_{\Z_2}$.

\item
If $\gsc_{\Z_2}$ is a local-\'etale group scheme, then $\gsc_{\Z_2}$ is isomorphic to $C(\chi)_{\Z_2}^*$ or $C(\chi^2)_{\Z_2}^*$.

\item
The group scheme $\gsc_{\Z_2}$ cannot be \'etale-\'etale since $\gsc_{\Z_2}$ has order a power of $2$.

\item
Finally, the group scheme $\gsc_{\Z_2}$ cannot be local-local: 
If $\gsc_{\Z_2}$ is local-local, then by \cite[Proposition 6.1]{Schoof:2003} the extension $\Q_2(\gsc(\Alg{\Q}_2))/\Q_2$ is
a degree $6$ extension.
However, as we noticed before, the extension $\Q_2(\gsc(\Alg{\Q}_2))/\Q_2$ is a degree $3$ extension.
\end{itemize}

\end{proof}

From the claim it follows that $\gsc$ is \'etale or connected.
If $\gsc$ is \'etale, then $\gsc$ is isomorphic to either $C(\chi)$ or $C(\chi^2)$, and we are done.
If $\gsc$ is connected, then we can twist the group scheme $\gsc$ by a power of $\chi$ such that the Galois action on the generic fiber is trivial.
This twisted group scheme, say $\gsc'$, is not simple anymore and an extension of $\mu_2$ by $\mu_2$.
The Cartier dual of $\gsc'$ is therefore \'etale and isomorphic to either $C(\chi)$ or $C(\chi^2)$.
It follows that $\gsc$ is isomorphic to one of the group schemes $C(\chi)^*$ or $C(\chi^2)^*$.
\end{proof}

\subsection{Extensions of simple group schemes}

We calculate extension groups of 
some of the simple group schemes in the category $\catf$.
However, we perform these calculations not in the category $\catf$ but
in the following category $\cate$:

\begin{definition}
Let $K=\Q(\zeta_7)$ and let $\pi_7$ be the unique prime
lying above $7$ in $O_K$.
Let $\cate$ be the category of finite flat commutative group schemes $\gsc$
of $2$-power rank over $O_K[\frac{1}{7}]$ such that $K(\gsc)/K$ is at most
tamely ramified at $\pi_7$. 
\end{definition}

Every object in $\catf$ becomes an object in $\cate$ by base change.
Besides the group scheme $\calE[2]$,
there are other group schemes in the category $\cate$ associated to $E$
that we will now describe.
The prime $2$ splits into two primes $\pi$ and $\pibar$ in the extension $\Q(\sqrt{-7})/\Q$.
Since $K/\Q(\sqrt{-7})$ is inert at $\pi$ and $\pibar$, 
also in $K$ the ideals $\pi$ and $\pibar$ are prime ideals.
The elliptic curve $E$ has complex multiplication over the field $\Q(\sqrt{-7})$.
So the $\pi$- and $\pibar$-torsion of $\calE$
are finite flat commutative group schemes of $2$-power order over the ring $O_K[\frac{1}{7}]$.
They are objects in the category $\cate$ and we denote them by $\calE[\pi]$ and $\calE[\pibar]$ respectively.
An explicit description of $\calE[\pi]$ and $\calE[\pibar]$ is given as follows: 
\begin{align*}
\calE[\pi] & = \Spec(O_K[\frac{1}{7}][X]/X^2 - \pi X ) \quad \text{with} \quad \Delta_{\calE[\pi]} : X \mapsto X \otimes 1 + 1 \otimes X - \pibar X \otimes X , \\
\calE[\pibar] & = \Spec(O_K[\frac{1}{7}][X]/X^2 - \pi X) \quad \text{with} \quad \Delta_{\calE[\pibar]} : X \mapsto X \otimes 1 + 1 \otimes X - \pi X \otimes X 
\end{align*}
where $\Delta_{\calE[\pi]}$ and $\Delta_{\calE[\pibar]}$ denote the comultiplication maps.
The counit maps send $X$ to zero.
We will not discuss the coinverse maps.
The group schemes $\calE[\pi]$ and $\calE[\pibar]$ are each other's Cartier dual.
Note that over the completions at the primes above $2$, 
we have the following isomorphisms:
$$
\calE[\pi]_{O_{K_{\pi}}} \simeq \mu_2 , \quad \calE[\pibar]_{O_{K_{\pibar}}} \simeq \mu_2 , \quad 
\calE[\pi]_{O_{K_{\pibar}}} \simeq \Z/2\Z \quad  \text{and} \quad \calE[\pibar]_{O_{K_{\pi}}} \simeq \Z/2\Z .
$$

The group scheme $\calE[\pi] \times \calE[\pibar]$ is an extension of $\mu_2$ by $\Z/2\Z$ over $O_K[\frac{1}{7}]$.
If we write 
$$
\calE[\pi] \times \calE[\pibar] = \Spec \left( O_K[\frac{1}{7}][X,Y]/(X^2 - \pi X , Y^2 - \pibar Y) \right)
$$
and $\Z/2\Z = \Spec(O_K[\frac{1}{7}][Z]/(Z^2 - Z))$, then the inclusion morphism of $\Z/2\Z$ into
$\calE[\pi] \times \calE[\pibar]$ is given by the algebra maps 
$X \mapsto \pi Z$ and $Y \mapsto \pibar Z$.
See also \cite[p. 428]{Schoof:2003}.

\begin{lemma} \label{j049-lem:ext_mu2_Z2_Gpi}
The group $\extt{O_K[\frac{1}{7}]}{\mu_2}{\Z/2\Z}$ is generated by the extension $\calE[\pi] \times \calE[\pibar]$ and
has order $2$.
\end{lemma}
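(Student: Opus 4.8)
The plan is to prove the statement in two steps: that $\calE[\pi]\times\calE[\pibar]$ defines a non-split extension, and that $\Ext^1_{O_K[\frac{1}{7}]}(\mu_2,\Z/2\Z)$ has order at most $2$; since every extension that appears will automatically lie in $\cate$, these together give the lemma. For non-splitness I would first observe that $\Hom_{O_K[\frac{1}{7}]}(\Z/2\Z,\mu_2)=0$: a non-zero homomorphism of these order-$2$ group schemes would be an isomorphism, which is impossible because over a prime above $2$ one of them is connected and the other \'etale. Hence the only closed subgroup schemes of $\mu_2\times\Z/2\Z$ of order $2$ are the two factors (a third one would produce, integrally or generically, a homomorphism $\Z/2\Z\to\mu_2$). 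But the first factor $\calE[\pi]\subset\calE[\pi]\times\calE[\pibar]$ is a closed subgroup scheme of order $2$ isomorphic to neither $\mu_2$ nor $\Z/2\Z$ over $O_K[\frac{1}{7}]$, since by the local descriptions recorded just above the lemma $\calE[\pi]_{O_{K_\pi}}\simeq\mu_2$ while $\calE[\pi]_{O_{K_{\pibar}}}\simeq\Z/2\Z$. Therefore $\calE[\pi]\times\calE[\pibar]$ is not isomorphic to $\mu_2\times\Z/2\Z$, so the extension is non-trivial and the $\Ext$-group is non-zero.

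For the upper bound, let $0\to\Z/2\Z\to\gsc\to\mu_2\to0$ be an extension over $O_K[\frac{1}{7}]$. I would first check that $\gsc$ is killed by $2$. Over $O_{K_{\mathfrak{P}}}$ for a prime $\mathfrak{P}\mid 2$ of $K$, in the connected--\'etale sequence of $\gsc_{O_{K_{\mathfrak{P}}}}$ the connected part meets the \'etale subgroup $\Z/2\Z$ trivially, hence injects into $\mu_2$, and it cannot be trivial (otherwise $\mu_2$, a quotient of an \'etale scheme, would be \'etale); so the connected part is $\mu_2$ and the \'etale part is $\Z/2\Z$. As the connected--\'etale sequence splits over the henselian ring $O_{K_{\mathfrak{P}}}$ and $\Hom_{O_{K_{\mathfrak{P}}}}(\Z/2\Z,\mu_2)=0$, the extension itself splits over $O_{K_{\mathfrak{P}}}$, so $\gsc_{O_{K_{\mathfrak{P}}}}\simeq\mu_2\times\Z/2\Z$ is killed by $2$; since $\gsc/\gsc[2]$ is finite flat over the connected scheme $\Spec O_K[\frac{1}{7}]$ and has rank $1$ after base change to $O_{K_{\mathfrak{P}}}$, it is the trivial group scheme, i.e.\ $\gsc=\gsc[2]$. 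In particular $\Ext^1_{O_{K_{\mathfrak{P}}}}(\mu_2,\Z/2\Z)=0$ for both $\mathfrak{P}\mid 2$.

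Next I would feed this into the evident analogue over $O_K[\frac{1}{7}]$ of the Mayer--Vietoris sequence of Proposition~\ref{prop:mayervietoris} (with $\good=2$, so that $\Z_{\good}$ is replaced by $\prod_{\mathfrak{P}\mid 2}O_{K_{\mathfrak{P}}}$, $\Z[\frac{1}{\good\badrat}]$ by $O_K[\frac{1}{14}]$, and $\Q_{\good}$ by $\prod_{\mathfrak{P}\mid 2}K_{\mathfrak{P}}$), applied to $\gsc''=\mu_2$ and $\gsc'=\Z/2\Z$. Here $\Hom_{O_K[\frac{1}{7}]}(\mu_2,\Z/2\Z)$ and each $\Hom_{O_{K_{\mathfrak{P}}}}(\mu_2,\Z/2\Z)$ vanish, each $\Ext^1_{O_{K_{\mathfrak{P}}}}(\mu_2,\Z/2\Z)$ vanishes by the previous step, and over $O_K[\frac{1}{14}]$ and each $K_{\mathfrak{P}}$ one has $\mu_2\simeq\Z/2\Z$ (as $2$ is invertible), so the remaining $\Hom$-groups are $\Z/2\Z$. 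The sequence then collapses to an exact sequence $0\to\Z/2\Z\to\Ext^1_{O_K[\frac{1}{7}]}(\mu_2,\Z/2\Z)\to \ker\!\bigl(\Ext^1_{O_K[\frac{1}{14}]}(\mu_2,\Z/2\Z)\to\textstyle\prod_{\mathfrak{P}\mid 2}\Ext^1_{K_{\mathfrak{P}}}(\mu_2,\Z/2\Z)\bigr)$, the left-hand $\Z/2\Z$ being $\prod_{\mathfrak{P}\mid 2}\Hom_{K_{\mathfrak{P}}}(\mu_2,\Z/2\Z)\cong(\Z/2\Z)^2$ modulo the image of the diagonally embedded $\Hom_{O_K[\frac{1}{14}]}(\mu_2,\Z/2\Z)$. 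The remaining kernel parametrises quadratic extensions of $K=\Q(\zeta_7)$ that are unramified outside $7$ and $\infty$ and split at both primes above $2$, i.e.\ quadratic extensions unramified outside $\{\pi_7,\infty\}$ (tameness at $\pi_7$ being automatic, since the residue characteristic $7$ is odd). Such an extension would correspond to an index-$2$ subgroup of the ray class group of $\Q(\zeta_7)$ of conductor $\pi_7$; but $\Q(\zeta_7)$ has class number $1$ and the cyclotomic units $(1-\zeta_7^{\,j})/(1-\zeta_7)\equiv j\pmod{\pi_7}$ already surject onto $\F_7^{\ast}$, so this ray class group is trivial and the kernel vanishes. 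Hence $\Ext^1_{O_K[\frac{1}{7}]}(\mu_2,\Z/2\Z)\simeq\Z/2\Z$; since each such extension is killed by $2$ (so $K(\gsc)/K$ is at most quadratic, hence tame at $\pi_7$), this coincides with $\Ext^1_{\cate}(\mu_2,\Z/2\Z)$, and by the first step the non-trivial class $\calE[\pi]\times\calE[\pibar]$ generates it.

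I expect the main obstacle to be the careful set-up of the Mayer--Vietoris sequence over $O_K[\frac{1}{7}]$ --- in particular the fact that $2$ has two primes above it in $\Q(\zeta_7)$, so the local terms occur as products --- together with the verification that the ray class group of conductor $\pi_7$ over $\Q(\zeta_7)$ is trivial; the rest is formal.
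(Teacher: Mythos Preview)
Your proof is correct and follows essentially the same route as the paper: local splitting of the extension at both primes above $2$ via the connected--\'etale sequence, the Mayer--Vietoris sequence of Proposition~\ref{prop:mayervietoris} (in its evident $O_K[\frac{1}{7}]$-variant), and the triviality of the ray class field of $\Q(\zeta_7)$ of conductor $\pi_7$. Two minor additions you supply beyond the paper's argument are the explicit verification that $\calE[\pi]\times\calE[\pibar]$ is non-split (the paper merely asserts this) and a direct computation of the ray class group via cyclotomic units in place of the paper's Magma calculation; both are welcome and correct.
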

\begin{proof}
We obtain the following exact sequence from the long exact sequence of Proposition \ref{prop:mayervietoris}:
\begin{align*}
0 & \longrightarrow  \Hom_{O_K[\frac{1}{7}]}(\mu_2,\Z/2\Z) \longrightarrow 
\Hom_{O_K[\frac{1}{7}] \otimes \Z_2} (\mu_2,\Z/2\Z) \times \Hom_{K}(\mu_2,\Z/2\Z) \\
& \quad\quad \longrightarrow \Hom_{K \otimes \Q_2}(\mu_2,\Z/2\Z) \longrightarrow \extt{O_K[\frac{1}{7}]}{\mu_2}{\Z/2\Z} \\
& \qquad\qquad \longrightarrow \extt{K}{\mu_2}{\Z/2\Z} \times \extt{O_K[\frac{1}{7}] \otimes \Z_2}{\mu_2}{\Z/2\Z}.
\end{align*}
The groups $\Hom_{O_K[\frac{1}{7}]} (\mu_2,\Z/2\Z)$ and $\Hom_{O_K[\frac{1}{7}] \otimes \Z_2} (\mu_2,\Z/2\Z)$
are trivial, $\Hom_{K}(\mu_2,\Z/2\Z)$ has order $2$,
$\Hom_{K \otimes \Q_2}(\mu_2,\Z/2\Z)$ has order $4$
and $\extt{O_K[\frac{1}{7}] \otimes \Z_2}{\mu_2}{\Z/2\Z}$
is trivial since every extension is split by its connected component.
It suffices to prove that the image of $\extt{O_K[\frac{1}{7}]}{\mu_2}{\Z/2\Z}$ in $\extt{K}{\mu_2}{\Z/2\Z}$ is trivial:
It then follows that $\extt{O_K[\frac{1}{7}]}{\mu_2}{\Z/2\Z}$ has order $2$,
and $\calE[\pi] \times \calE[\pibar]$ is a non-trivial extension.

Let $\gsc$ be an extension of $\mu_2$ by $\Z/2\Z$ over $O_K[\frac{1}{7}]$.
Since the group $\extt{O_K[\frac{1}{7}] \otimes \Z_2}{\mu_2}{\Z/2\Z}$ is trivial, 
the group scheme $\gsc$ is annihilated by $2$ and 
the extension
$K(\gsc)/K$ is unramified at both the primes $\pi$ and $\pibar$.
One easily shows that the Galois extension $K(\gsc)/K$ is at most of degree $2$,
hence we can use class field theory.

The extension $K(\gsc)/K$ is at most tamely ramified at the prime lying above $7$.
A calculation (see Appendix) shows that 
the ray class field of conductor $\pi_7$ is trivial.
Therefore, $K(J)/K$ is trivial and we conclude that the image of $\extt{O_K[\frac{1}{7}]}{\mu_2}{\Z/2\Z}$ in $\extt{K}{\mu_2}{\Z/2\Z}$
is trivial.
This concludes the proof of the lemma.
\end{proof}

\begin{lemma} \label{lem:j049_ext_Gpi_Gpibar}
The groups $\extt{O_K[\frac{1}{7}]}{\calE[\pi]}{\calE[\pibar]}$ and $\extt{O_K[\frac{1}{7}]}{\calE[\pibar]}{\calE[\pi]}$ are trivial.
\end{lemma}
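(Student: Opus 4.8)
The plan is to run the Mayer--Vietoris argument of Proposition~\ref{prop:mayervietoris} over $O_K[\frac{1}{7}]$ with $\good = 2$, in the form used in the proof of Lemma~\ref{j049-lem:ext_mu2_Z2_Gpi}, and then to pin down the resulting semilocal and generic pieces. Complex conjugation on $K = \Q(\zeta_7)$ fixes $7$ and exchanges the two primes $\pi$ and $\pibar$ above $2$, hence exchanges $\calE[\pi]$ and $\calE[\pibar]$ and identifies the two groups in the statement, so it is enough to treat $\extt{O_K[\frac{1}{7}]}{\calE[\pi]}{\calE[\pibar]}$. First I would record the $\Hom$-computations: $\calE[\pi]$ and $\calE[\pibar]$ are non-isomorphic simple group schemes over $O_K[\frac{1}{7}]$, so $\Hom_{O_K[\frac{1}{7}]}(\calE[\pi],\calE[\pibar]) = 0$; locally $\Hom_{O_{K_\pi}}(\mu_2,\Z/2\Z) = 0$ while $\Hom_{O_{K_{\pibar}}}(\Z/2\Z,\mu_2) = \mu_2(O_{K_{\pibar}})$ has order $2$; hence $\Hom_{O_K[\frac{1}{7}]\otimes\Z_2}(\calE[\pi],\calE[\pibar])$ and $\Hom_K(\calE[\pi],\calE[\pibar])$ both have order $2$, whereas $\Hom_{K\otimes\Q_2}(\calE[\pi],\calE[\pibar])$ has order $4$. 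The point --- and this is where the situation differs from Lemma~\ref{j049-lem:ext_mu2_Z2_Gpi} --- is that the images of the semilocal and the generic $\Hom$-group together exhaust $\Hom_{K\otimes\Q_2}(\calE[\pi],\calE[\pibar])$: the nonzero homomorphism over $O_{K_{\pibar}}$ contributes the class supported at $\pibar$, and the nonzero homomorphism over $K$ contributes the diagonal class. Hence the connecting map in the long exact sequence of Proposition~\ref{prop:mayervietoris} vanishes, giving an injection
\[
\extt{O_K[\frac{1}{7}]}{\calE[\pi]}{\calE[\pibar]} \longhookrightarrow \extt{K}{\calE[\pi]}{\calE[\pibar]} \times \extt{O_K[\frac{1}{7}]\otimes\Z_2}{\calE[\pi]}{\calE[\pibar]} .
\]
It now suffices to show that every extension $\gsc$ of $\calE[\pi]$ by $\calE[\pibar]$ over $O_K[\frac{1}{7}]$ is split both over $K$ and over $O_K[\frac{1}{7}]\otimes\Z_2$.

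Next I would argue that $\gsc$ is killed by $2$: over $O_{K_\pi}$ the sequence $0 \to \Z/2\Z \to \gsc_{O_{K_\pi}} \to \mu_2 \to 0$ splits because $\mu_2$ is the connected component of $\gsc_{O_{K_\pi}}$, so $\gsc_{O_{K_\pi}} \cong \mu_2 \times \Z/2\Z$ is killed by $2$; since $\Spec O_K[\frac{1}{7}]$ is connected, the finite locally free group scheme $\gsc/\gsc[2]$ has the same (hence trivial) rank everywhere, so $\gsc = \gsc[2]$. Thus $\gsc_K$ is an extension of the constant group scheme $\calE[\pi]_K \cong \Z/2\Z$ by $\calE[\pibar]_K \cong \Z/2\Z$, i.e. $K(\gsc) = K(\sqrt{d})$ for some $d \in K^\times/(K^\times)^2$. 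As $\gsc$ lies in $\cate$ --- an extension of objects of $\cate$ is again in $\cate$ by Proposition~\ref{prop:categoryprop} --- the extension $K(\gsc)/K$ is at most tamely ramified at $\pi_7$; it is unramified at $\pi$ because $\gsc_{O_{K_\pi}}$ is split with rational points; and at $\pibar$ it is ramified of level at most $1$ by \cite[Theorem~2.1]{Fontaine:1985} applied to $\gsc_{O_{K_{\pibar}}}$, which is killed by $2$. So $K(\gsc)$ lies in a ray class field of $K$ of conductor supported on $\pi_7$ and $\pibar$, with conductor exponent at most $2$ at $\pibar$; the class field theory computations of the Appendix --- which give in particular that the ray class field of $K$ of conductor $\pi_7$ is trivial and that $K = \Q(\zeta_7)$ has class number one --- then force $K(\gsc) = K$, so $\gsc_K$ is split.

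Finally, $K(\gsc) = K$ makes $\gsc_{K_{\pibar}}$ the constant group scheme $(\Z/2\Z)^2$; the schematic closure in $\gsc_{O_{K_{\pibar}}}$ of a $K_{\pibar}$-rational section of $\gsc_{K_{\pibar}} \twoheadrightarrow \calE[\pi]_{K_{\pibar}}$ is a finite flat subgroup scheme of order $2$ splitting $0 \to \mu_2 \to \gsc_{O_{K_{\pibar}}} \to \Z/2\Z \to 0$, and $\gsc_{O_{K_\pi}}$ was already seen to be split; hence $\gsc$ maps to $0$ in the product above and $\gsc = 0$. The symmetric argument (via complex conjugation) then gives $\extt{O_K[\frac{1}{7}]}{\calE[\pibar]}{\calE[\pi]} = 0$ as well. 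The main obstacle will be the analysis at $\pibar$: unlike the extension at $\pi$, the sequence $0 \to \mu_2 \to \gsc_{O_{K_{\pibar}}} \to \Z/2\Z \to 0$ is not split by its connected component, and the local group $\extt{O_{K_{\pibar}}}{\Z/2\Z}{\mu_2}$ is genuinely nonzero, so the vanishing of the $\pibar$-component of $\gsc$ has to be deduced from the global statement $K(\gsc) = K$ via the schematic-closure argument, which itself rests on a careful class field theory computation over $K = \Q(\zeta_7)$ bounding the ramification of $K(\gsc)/K$ at $\pibar$.
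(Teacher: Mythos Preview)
Your overall architecture is right and matches the paper's: Mayer--Vietoris, then kill the generic and semilocal images. Your reduction via complex conjugation is fine (the paper uses Cartier duality instead), and your $\Hom$-count showing that the connecting map vanishes is correct.

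The gap is in the generic step. You bound the conductor of $K(\gsc)/K$ by $\pi_7\pibar^2$ and then assert that the Appendix computations ``force $K(\gsc)=K$'', citing only that the ray class field of conductor $\pi_7$ is trivial and that $h_K=1$. But the ray class field of $K=\Q(\zeta_7)$ of conductor $\pi_7\pibar^2$ is \emph{not} trivial: it has degree $2$ over $K$ (this is exactly the computation the Appendix performs for $\pi_7\pi^2$, and complex conjugation gives the same for $\pi_7\pibar^2$). So your argument, as written, does not exclude $K(\gsc)$ being that quadratic field. The missing idea --- which you in fact set up but never use --- is that $\gsc_{O_{K_\pi}}\cong \mu_2\times\Z/2\Z$ has all its points rational over $K_\pi$, so $\pi$ \emph{splits completely} in $K(\gsc)/K$; the Appendix then checks that $\pi$ does \emph{not} split in the degree-$2$ ray class field of conductor $\pi_7\pibar^2$, and this contradiction forces $K(\gsc)=K$. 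That splitting argument is the crux of the paper's proof, and without it your class field theory step is incomplete.

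A smaller point: once $K(\gsc)=K$, your schematic-closure splitting of $0\to\mu_2\to\gsc_{O_{K_{\pibar}}}\to\Z/2\Z\to 0$ needs more care --- you must argue that the closure $H$ actually maps isomorphically to $\Z/2\Z$, not just generically. It is cleaner to note (as the paper does, invoking \cite[Prop.~8.10.5]{KatzMazur:1985}) that $\Ext^1_{O_{K_{\pibar}}}(\Z/2\Z,\mu_2)\cong O_{K_{\pibar}}^\times/(O_{K_{\pibar}}^\times)^2$ injects into $K_{\pibar}^\times/(K_{\pibar}^\times)^2$, so generic triviality implies integral triviality.
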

\begin{proof}
It suffices to prove that $\extt{O_K[\frac{1}{7}]}{\calE[\pibar]}{\calE[\pi]}$ is trivial. 
By Cartier duality, it then
follows that the group $\extt{O_K[\frac{1}{7}]}{\calE[\pi]}{\calE[\pibar]}$ is also trivial.

By applying Proposition \ref{prop:mayervietoris} with $\gsc' = \calE[\pibar]$
and $\gsc''=\calE[\pi]$, we obtain the long exact sequence
\begin{align*}
0 \longrightarrow & \Hom_{O_K[\frac{1}{7}]}(\calE[\pibar],\calE[\pi]) \longrightarrow \Hom_{O_K[\frac{1}{7}] \otimes \Z_2} (\calE[\pibar],\calE[\pi]) \times \Hom_{K}(\calE[\pibar],\calE[\pi]) \\
& \quad \longrightarrow \Hom_{K \otimes \Q_2}(\calE[\pibar],\calE[\pi]) \longrightarrow \extt{O_K[\frac{1}{7}]}{\calE[\pibar]}{\calE[\pi]} \\
& \qquad \longrightarrow \extt{K}{\calE[\pibar]}{\calE[\pi]} \times \extt{O_K[\frac{1}{7}] \otimes \Z_2}{\calE[\pibar]}{\calE[\pi]}.
\end{align*}
From this, together with the fact that
$\extt{O_K[\frac{1}{7}],[2]}{\calE[\pibar]}{\calE[\pi]} = \extt{O_K[\frac{1}{7}]}{\calE[\pibar]}{\calE[\pi]}$,
we deduce the injectivity of the map
\begin{align*}
\extt{O_K[\frac{1}{7}]}{\calE[\pibar]}{\calE[\pi]} \longhookrightarrow \extt{K,[2]}{\calE[\pibar]}{\calE[\pi]}
\times \extt{O_K[\frac{1}{7}] \otimes \Z_2, [2]}{\calE[\pibar]}{\calE[\pi]} .
\end{align*}
The image in $\extt{K,[2]}{\calE[\pibar]}{\calE[\pi]}$ is trivial
by the following argument:
Any extension $\gsc$ of $\calE[\pibar]$  by $\calE[\pi]$ over $O_K[\frac{1}{7}]$
generates an extension $K(\gsc)/K$.
The extension $K(\gsc)/K$ sits inside the ray class field of $K$ of conductor $\pi_7 \pi^2$:
First of all, there is no ramification at $\pibar$ since $\gsc_{O_{K_{\pibar}}}$
splits by the connected component.
Secondly, the $\pi^2$ in the conductor comes from the fact that 
$\gsc_{O_{K_{\pi}}}$ is isomorphic to an extension of $\Z/2\Z$ by $\mu_2$ annihilated by $2$,
whose conductor at $\pi$ is at most $2$; cf. 
\cite[Proposition 8.10.5, p. 263]{KatzMazur:1985}.

Another calculation (see Appendix) shows that the ray class field 
of conductor $\pi_7 \pi^2$ is of degree $2$ over $K$.
Since $\gsc_{O_{K_{\pibar}}}$ splits, the prime $\pibar$ splits in the number field extension $K(\gsc)/K$.
However, $\pibar$ does not split in the ray class field of $K$ of conductor $\pi_7 \pi^2$.
Therefore, the extension $\gsc$ is trivial over $K$.

Similarly, the image of $\extt{O_K[\frac{1}{7}]}{\calE[\pibar]}{\calE[\pi]}$ in $\extt{O_K \otimes \Z_2}{\calE[\pibar]}{\calE[\pi]}$ is trivial: 
We already saw that the extension $\gsc$ splits locally at $\pibar$.
Locally at $\pi$ we get an extension again as in 
\cite[Proposition 8.10.5, p. 263]{KatzMazur:1985}.
with trivial Galois action. 
Hence, $\gsc_{O_{K_{\pi}}}$ is also trivial.
\end{proof}

Because of Lemma \ref{lem:j049_ext_Gpi_Gpibar} and as $\calE[4]$ is a non-trivial extension of 
$\calE[2]=\calE[\pi] \times \calE[\pibar]$ by $\calE[2]$,
we know that $\extt{O_K[\frac{1}{7}]}{\calE[\pi]}{\calE[\pi]}$ and $\extt{O_K[\frac{1}{7}]}{\calE[\pibar]}{\calE[\pibar]}$ are both at least of order $2$.
The next lemma says that these extension groups are exactly of order $2$.

\begin{lemma} \label{j049-lem:ext_gpi_by_gpi}
The groups $\extt{O_K[\frac{1}{7}]}{\calE[\pi]}{\calE[\pi]}$ and $\extt{O_K[\frac{1}{7}]}{\calE[\pibar]}{\calE[\pibar]}$ are both of order $2$.
\end{lemma}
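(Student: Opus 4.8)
The plan is to reduce to $\calE[\pi]$ by Cartier duality. Since $\calE[\pi]$ and $\calE[\pibar]$ are Cartier dual, one has $\extt{O_K[\frac{1}{7}]}{\calE[\pi]}{\calE[\pi]}\simeq\extt{O_K[\frac{1}{7}]}{\calE[\pibar]}{\calE[\pibar]}$, so it is enough to show that $\extt{O_K[\frac{1}{7}]}{\calE[\pi]}{\calE[\pi]}$ has order exactly $2$. The remark preceding the lemma already exhibits the non-trivial class of $\calE[\pi^2]$ in this group, so only the upper bound remains. First I would use \cite[Lemma 2.1]{Schoof:2009} to obtain the exact sequence
$$
0 \longrightarrow \extt{O_K[\frac{1}{7}],[2]}{\calE[\pi]}{\calE[\pi]} \longrightarrow \extt{O_K[\frac{1}{7}]}{\calE[\pi]}{\calE[\pi]} \longrightarrow \Hom_{G_K}(\calE[\pi],\calE[\pi]) ,
$$
whose last term has order $2$: since $E$ has complex multiplication over $\Q(\sqrt{-7})\subseteq K$, the group $G_K$ acts on $\calE[\pi](\Alg{\Q})$ through $(O_{\Q(\sqrt{-7})}/\pi)^{*}=1$, so $\calE[\pi]_{K}$ is the trivial $\F_2[G_K]$-module. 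Hence it suffices to prove that $\extt{O_K[\frac{1}{7}],[2]}{\calE[\pi]}{\calE[\pi]}=0$, i.e.\ that every extension $\gsc$ of $\calE[\pi]$ by $\calE[\pi]$ over $O_K[\frac{1}{7}]$ that is annihilated by $2$ is split.

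The heart of the matter is to control such a $\gsc$ at the two primes $\pi,\pibar$ of $K$ above $2$ and generically. As $\cate$ is closed under extensions (cf.\ Proposition~\ref{prop:categoryprop} with level $0$), $\gsc$ lies in $\cate$, so $K(\gsc)/K$ is at most tamely ramified at $\pi_7$. At $\pi$ we have $\calE[\pi]_{O_{K_\pi}}\simeq\mu_2$, so $\gsc_{O_{K_\pi}}$ is an extension of $\mu_2$ by $\mu_2$; it is connected and its Cartier dual, an extension of $\Z/2\Z$ by $\Z/2\Z$, is \'etale, so $\gsc_{O_{K_\pi}}$ is of multiplicative type, hence a form of $\mu_2\times\mu_2$ over $O_{K_\pi}$ classified by an unramified cocycle; in particular $K_\pi(\gsc)/K_\pi$ is unramified. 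Dually, at $\pibar$ we have $\calE[\pi]_{O_{K_{\pibar}}}\simeq\Z/2\Z$, so $\gsc_{O_{K_{\pibar}}}$ is an extension of $\Z/2\Z$ by $\Z/2\Z$, hence \'etale, and again $K_{\pibar}(\gsc)/K_{\pibar}$ is unramified. Therefore $K(\gsc)/K$ is unramified at every prime over $2$ and at every prime not over $7$, and at most tamely ramified at $\pi_7$; moreover $\Gal(K(\gsc)/K)$ is abelian, being a subgroup of the group of unipotent upper triangular matrices in $\Gl_2(\F_2)$ (it fixes $\calE[\pi]_K$ and acts trivially on $\calE[\pi]_K$ and on $\gsc_K/\calE[\pi]_K$). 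Hence $K(\gsc)$ is contained in the ray class field of $K$ of conductor $\pi_7$, which is trivial (as used in the proof of Lemma~\ref{j049-lem:ext_mu2_Z2_Gpi}); thus $K(\gsc)=K$ and $\gsc_K\simeq(\Z/2\Z)^2$ is split.

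Finally I would glue, applying Proposition~\ref{prop:mayervietoris} with $\gsc'=\gsc''=\calE[\pi]$ exactly as in the proof of Lemma~\ref{lem:j049_ext_Gpi_Gpibar}. The map $\Hom_{O_K[\frac{1}{7}]\otimes\Z_2}(\calE[\pi],\calE[\pi])\to\Hom_{K\otimes\Q_2}(\calE[\pi],\calE[\pi])$ is an isomorphism — both sides are $\F_2\times\F_2$, with $\End(\mu_2)=\F_2$ at $\pi$ and $\End(\Z/2\Z)=\F_2$ at $\pibar$ — so the connecting homomorphism into $\extt{O_K[\frac{1}{7}]}{\calE[\pi]}{\calE[\pi]}$ vanishes and, after restricting to extensions killed by $2$,
$$
\extt{O_K[\frac{1}{7}],[2]}{\calE[\pi]}{\calE[\pi]}\longhookrightarrow \extt{K,[2]}{\calE[\pi]}{\calE[\pi]}\times\extt{O_{K_\pi},[2]}{\mu_2}{\mu_2}\times\extt{O_{K_{\pibar}},[2]}{\Z/2\Z}{\Z/2\Z} .
$$
The image of $\gsc$ in the first factor is $0$ because $\gsc_K$ is split; in the second, $\gsc_{O_{K_\pi}}$ is a form of $\mu_2\times\mu_2$ with split generic fiber, and such a form is determined by its generic fiber, so $\gsc_{O_{K_\pi}}\simeq\mu_2\times\mu_2$; in the third, $\gsc_{O_{K_{\pibar}}}$ is \'etale with trivial generic fiber, hence $\simeq(\Z/2\Z)^2$. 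Thus $\gsc$ maps to $0$, so it is split over $O_K[\frac{1}{7}]$. This proves $\extt{O_K[\frac{1}{7}],[2]}{\calE[\pi]}{\calE[\pi]}=0$, whence $\extt{O_K[\frac{1}{7}]}{\calE[\pi]}{\calE[\pi]}\simeq\Hom_{G_K}(\calE[\pi],\calE[\pi])$ has order $2$, and Cartier duality gives the statement for $\calE[\pibar]$. The step I expect to be hardest is the second paragraph: extracting that $K(\gsc)/K$ is \emph{unramified} above $2$, which is what makes the triviality of the ray class field of conductor $\pi_7$ force generic splitting, and this relies on $\calE[\pi]$ being of multiplicative type at one prime over $2$ and \'etale at the other.
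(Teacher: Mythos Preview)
Your proof is correct and follows essentially the same route as the paper: reduce to $\calE[\pi]$ by Cartier duality, use \cite[Lemma~2.1]{Schoof:2009} to reduce to showing $\extt{O_K[\frac{1}{7}],[2]}{\calE[\pi]}{\calE[\pi]}=0$, then establish this via the Mayer--Vietoris injection of Proposition~\ref{prop:mayervietoris} by checking that $\gsc$ is generically split (because $K(\gsc)/K$ is unramified above $2$ and sits in the trivial ray class field of conductor $\pi_7$) and locally split at both primes above $2$ (multiplicative type at $\pi$, \'etale at $\pibar$, with trivial Galois action). Your justification for why the connecting map vanishes (the Hom groups over $O_K[\frac{1}{7}]\otimes\Z_2$ and over $K\otimes\Q_2$ coincide) is a detail the paper leaves implicit, but otherwise the arguments are the same.
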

\begin{proof}
We only prove the lemma for $\extt{O_K[\frac{1}{7}]}{\calE[\pi]}{\calE[\pi]}$ as the statement for
$\extt{O_K[\frac{1}{7}]}{\calE[\pibar]}{\calE[\pibar]}$ follows by Cartier duality.
Denote by $\extt{O_K[\frac{1}{7}],[2]}{\calE[\pi]}{\calE[\pi]}$ the subgroup of 
extensions of $\calE[\pi]$ by $\calE[\pi])$ that are killed by $2$.
Then, by \cite[Lemma 2.1]{Schoof:2009}, the following sequence is exact:
\begin{align*}
0 \longrightarrow \extt{O_K[\frac{1}{7}],[2]}{\calE[\pi]}{\calE[\pi]} \longrightarrow \extt{O_K[\frac{1}{7}]}{\calE[\pi]}{\calE[\pi]} \longrightarrow 
\Hom_{ G_K }(\calE[\pi],\calE[\pi])  .
\end{align*}
The group $\Hom_{ G_K }(\calE[\pi],\calE[\pi])$ is of order $2$.
Since we know that $\extt{O_K[\frac{1}{7}]}{\calE[\pi]}{\calE[\pi]}$ is at least of order $2$,
to prove the lemma it suffices to show that $\extt{O_K,[2]}{\calE[\pi]}{\calE[\pi]}$ is trivial.
We again use Proposition \ref{prop:mayervietoris} to obtain the long exact sequence
\begin{align*}
0 & \longrightarrow \Hom_{O_K[\frac{1}{7}]}(\calE[\pi],\calE[\pi]) \longrightarrow 
\Hom_{O_K[\frac{1}{7}] \otimes \Z_2} (\calE[\pi],\calE[\pi]) \times \Hom_{K}(\calE[\pi],\calE[\pi]) \\
& \quad \longrightarrow \Hom_{K \otimes \Q_2}(\calE[\pi],\calE[\pi]) \longrightarrow \extt{O_K[\frac{1}{7}]}{\calE[\pi]}{\calE[\pi]} \\
& \qquad \longrightarrow \extt{K}{\calE[\pi]}{\calE[\pi]} 
\times \extt{O_K[\frac{1}{7}] \otimes \Z_2}{\calE[\pi]}{\calE[\pi]} .
\end{align*}
We deduce from this the injectivity of the following map:
\begin{align} \label{j049-eqn:map:ext_gpi_by_gpi}
\extt{O_K[\frac{1}{7}],[2]}{\calE[\pi]}{\calE[\pi]} \longhookrightarrow 
\extt{K,[2]}{\calE[\pi]}{\calE[\pi]} \times \extt{O_K[\frac{1}{7}] \otimes \Z_2,[2]}{calE[\pi]}{\calE[\pi]} .
\end{align}
We prove that the image of $\extt{O_K[\frac{1}{7}],[2]}{\calE[\pi]}{\calE[\pi]}$ in $\extt{K,[2]}{\calE[\pi]}{\calE[\pi]}$ is trivial.
From this, we will then also deduce that the image of $\extt{O_K[\frac{1}{7}],[2]}{\calE[\pi]}{\calE[\pi]}$ in 
$\extt{O_K[\frac{1}{7}] \otimes \Z_2,[2]}{calE[\pi]}{\calE[\pi]}$ is trivial.

So let $\gsc$ be an extension of $\calE[\pi]$ by $\calE[\pi]$ over $O_K[\frac{1}{7}]$ killed by $2$.
The extension $K(\gsc)/K$ is only ramified at the prime above $7$ and at the prime $\pi$.
The group scheme $\gsc_{O_{K_\pi}}$ is an extension of $\mu_2$ by itself.
The Cartier dual $\gsc_{O_{K_\pi}}^*$ of $\gsc_{O_{K_\pi}}$ is an extension of $\Z/2\Z$ by itself.
Since $\gsc_{O_{K_\pi}}^*$ is annihilated by $2$ and is \'etale, $\gsc_{O_{K_\pi}}^*$ is split over an unramified
extension of $K_\pi$.
This means that also $\gsc_{O_{K_\pi}}$ is split over an unramified extension
of $K_\pi$.
We deduce that $K(\gsc)/K$ is only tamely ramified at the prime above $7$
and is abelian, 
since the group $\Gal( K(\gsc)/K )$ has order dividing $2$.
However, we saw that the ray class field of conductor $\pi_7$ is trivial.
Hence, $\gsc$ is generically a split extension. 

It remains to show that the image
$\extt{O_K[\frac{1}{7}],[2]}{\calE[\pi]}{\calE[\pi]}$ in
$\extt{O_K[\frac{1}{7}] \otimes \Z_2,[2]}{calE[\pi]}{\calE[\pi]}$ is trivial.
The group scheme $\gsc_{O_{K_\pi}}$ 
is a group scheme of multiplicative type
and the group scheme $\gsc_{O_{K_\pibar}}$ is \'etale.
Since the Galois action on $\gsc(\Alg{K})$ is trivial,
both the extensions $\gsc_{O_{K_\pi}}$ and $\gsc_{O_{K_\pibar}}$ are trivial.
From the injectivity of the map (\ref{j049-eqn:map:ext_gpi_by_gpi}), we see that $\extt{O_K[\frac{1}{7}],[2]}{\calE[\pi]}{\calE[\pi]}$ is trivial.
\end{proof}

\begin{lemma} \label{j049-lem:ext_mu2_Epi}
The extension groups 
$$
\extt{O_K[\frac{1}{7}]}{\mu_2}{\calE[\pi]}, \,
\extt{O_K[\frac{1}{7}]}{\mu_2}{\calE[\pibar]}, \, 
\extt{O_K[\frac{1}{7}]}{\calE[\pi]}{\Z/2\Z}, \,
\extt{O_K[\frac{1}{7}]}{\calE[\pibar]}{\Z/2\Z}
$$
are trivial.
\end{lemma}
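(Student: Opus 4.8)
The plan is to reduce the four groups to a single computation and then imitate the localisation argument of Lemmas~\ref{j049-lem:ext_mu2_Z2_Gpi} and~\ref{lem:j049_ext_Gpi_Gpibar}. Cartier duality over $O_K[\frac{1}{7}]$ is an exact anti-equivalence carrying $\calE[\pi]$ to $\calE[\pibar]$, $\mu_2$ to $\Z/2\Z$ and $\Z/2\Z$ to $\mu_2$, so it yields isomorphisms $\extt{O_K[\frac{1}{7}]}{\mu_2}{\calE[\pi]}\simeq\extt{O_K[\frac{1}{7}]}{\calE[\pibar]}{\Z/2\Z}$ and $\extt{O_K[\frac{1}{7}]}{\mu_2}{\calE[\pibar]}\simeq\extt{O_K[\frac{1}{7}]}{\calE[\pi]}{\Z/2\Z}$. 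Moreover complex conjugation in $\Gal(K/\Q)$ fixes $O_K[\frac{1}{7}]$ and the unique prime $\pi_7$ above $7$, interchanges the two primes $\pi$ and $\pibar$ above $2$ and hence (through the CM structure of $E$) the group schemes $\calE[\pi]$ and $\calE[\pibar]$, and preserves the category $\cate$. So it suffices to prove that $\extt{O_K[\frac{1}{7}]}{\calE[\pi]}{\Z/2\Z}$ is trivial; the case of $\calE[\pibar]$ follows by the same argument with $\pi$ and $\pibar$ interchanged, and the $\mu_2$-versions follow by the Cartier duality isomorphisms above.

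I would apply Proposition~\ref{prop:mayervietoris} with $\badrat=7$, $\good=2$, $\gsc''=\calE[\pi]$ and $\gsc'=\Z/2\Z$. First, $\Hom_{O_K[\frac{1}{7}]}(\calE[\pi],\Z/2\Z)=0$: a non-zero morphism between these order-$2$ group schemes would be an isomorphism, which is impossible since $\calE[\pi]_{O_{K_\pi}}\simeq\mu_2$ is connected while $\Z/2\Z$ is étale. By \cite[Lemma~2.1]{Schoof:2009} (as in the proof of Lemma~\ref{j049-lem:ext_gpi_by_gpi}) every extension of $\calE[\pi]$ by $\Z/2\Z$ over $O_K[\frac{1}{7}]$ is therefore killed by $2$. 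Using $\calE[\pi]_{O_{K_\pi}}\simeq\mu_2$, $\calE[\pi]_{O_{K_\pibar}}\simeq\Z/2\Z$ and the triviality of the $G_K$-action on $\calE[\pi](\Alg{K})$, I would compute the $\Hom$-terms of the Mayer--Vietoris sequence, verify that the map into $\Hom_{K\otimes\Q_2}(\calE[\pi],\Z/2\Z)$ is surjective, and conclude that the connecting homomorphism vanishes, giving an injection
\[
\extt{O_K[\frac{1}{7}]}{\calE[\pi]}{\Z/2\Z}\longhookrightarrow \extt{K,[2]}{\calE[\pi]}{\Z/2\Z}\times\extt{O_K[\frac{1}{7}]\otimes\Z_2,[2]}{\calE[\pi]}{\Z/2\Z},
\]
whose second factor equals $\extt{O_{K_\pi},[2]}{\mu_2}{\Z/2\Z}\times\extt{O_{K_\pibar},[2]}{\Z/2\Z}{\Z/2\Z}$.

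Now let $\gsc$ be an extension of $\calE[\pi]$ by $\Z/2\Z$ over $O_K[\frac{1}{7}]$, necessarily killed by $2$. At $\pi$ the scheme $\gsc_{O_{K_\pi}}$ is an extension of the connected group scheme $\mu_2$ by the étale group scheme $\Z/2\Z$, so it splits by its connected component (the fact $\extt{O_{K_\pi}}{\mu_2}{\Z/2\Z}=0$ already used in Lemma~\ref{j049-lem:ext_mu2_Z2_Gpi}). At $\pibar$ the scheme $\gsc_{O_{K_\pibar}}$ is an extension of $\calE[\pi]_{O_{K_\pibar}}\simeq\Z/2\Z$ by $\Z/2\Z$, hence étale, so $K_\pibar(\gsc)/K_\pibar$ is unramified. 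Since $\gsc\in\cate$ and $G_K$ acts trivially on the sub $\Z/2\Z$ and on the quotient $\calE[\pi]$, the extension $K(\gsc)/K$ has degree dividing $2$, is at most tamely ramified at $\pi_7$, and is unramified at $\pi$ and at $\pibar$; therefore $K(\gsc)$ lies in the ray class field of $K$ of conductor $\pi_7$, which is trivial (see the Appendix, as in the proof of Lemma~\ref{j049-lem:ext_mu2_Z2_Gpi}). Thus $\gsc$ is generically split, so its image in $\extt{K,[2]}{\calE[\pi]}{\Z/2\Z}$ vanishes. Knowing that $G_K$ acts trivially on $\gsc(\Alg{K})$, the étale scheme $\gsc_{O_{K_\pibar}}$ is the constant group scheme $\Z/2\Z\times\Z/2\Z$, and $\gsc_{O_{K_\pi}}$ splits as above, so the image of $\gsc$ in the local factor is also zero; by the injection above, $\gsc$ splits over $O_K[\frac{1}{7}]$, which proves $\extt{O_K[\frac{1}{7}]}{\calE[\pi]}{\Z/2\Z}=0$, and hence the lemma. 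The step I expect to be the main obstacle is the ramification bookkeeping at the primes above $2$: one must make sure that the local structure of $\gsc$ at $\pi$ and at $\pibar$ really forces $K(\gsc)/K$ to be unramified there, so that the trivial ray class field of conductor $\pi_7$ is the only candidate for $K(\gsc)$ — this, together with checking that the Mayer--Vietoris connecting map vanishes, is what makes the argument close.
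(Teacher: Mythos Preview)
Your proof is correct and follows essentially the same route as the paper: reduce to $\extt{O_K[\frac{1}{7}]}{\calE[\pi]}{\Z/2\Z}$ by Cartier duality (and the $\pi\leftrightarrow\pibar$ symmetry), apply the Mayer--Vietoris sequence of Proposition~\ref{prop:mayervietoris}, use that $\gsc_{O_{K_\pi}}$ splits by its connected component and $\gsc_{O_{K_\pibar}}$ is \'etale to force $K(\gsc)/K$ into the (trivial) ray class field of conductor $\pi_7$, and then conclude from generic triviality that the local images vanish as well. The only cosmetic difference is that you justify ``killed by $2$'' via \cite[Lemma~2.1]{Schoof:2009} and the vanishing of $\Hom_{O_K[\frac{1}{7}]}(\calE[\pi],\Z/2\Z)$, whereas the paper obtains it from the local splitting at $\pi$; both arguments are valid, and you in fact also record the local splitting later.
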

\begin{proof}
By Cartier duality, it suffices to prove that
\extt{O_K[\frac{1}{7}]}{\calE[\pi]}{\Z/2\Z}
and $\extt{O_K[\frac{1}{7}]}{\calE[\pibar]}{\Z/2\Z}$
are trivial.
We only prove that $\extt{O_K[\frac{1}{7}]}{\calE[\pi]}{\Z/2\Z}$ is trivial: 
The exact same argument given below with $\pi$ replaced by $\pibar$ shows
that $\extt{O_K[\frac{1}{7}]}{\calE[\pibar]}{\Z/2\Z}$ is trivial.
From Proposition \ref{prop:mayervietoris},
we obtain the following exact sequence:
\begin{align*}
0 & \longrightarrow \Hom_{O_K[\frac{1}{7}]}(\calE[\pi],\Z/2\Z) \longrightarrow
 \Hom_{O_K[\frac{1}{7}] \otimes \Z_2} (\calE[\pi],\Z/2\Z) \times \Hom_{K}(\calE[\pi],\Z/2\Z)  \\
& \quad \longrightarrow \Hom_{K \otimes \Q_2}(\calE[\pi],\Z/2\Z) \longrightarrow \extt{O_K[\frac{1}{7}]}{\calE[\pi]}{\Z/2\Z} \\
& \qquad \longrightarrow \extt{K}{\calE[\pi]}{\Z/2\Z} \times \extt{O_K[\frac{1}{7}] \otimes \Z_2}{\calE[\pi]}{\Z/2\Z}.
\end{align*}
From this long exact sequence we deduce that
\begin{align} \label{eqn:incl_Gpi_z2z}
\extt{O_K[\frac{1}{7}]}{\calE[\pi]}{\Z/2\Z} \longhookrightarrow \extt{K}{\calE[\pi]}{\Z/2\Z} \times \extt{O_K[\frac{1}{7}] \otimes \Z_2}{\calE[\pi]}{\Z/2\Z}
\end{align}
is injective.
We prove that the image of $\extt{O_K[\frac{1}{7}]}{\calE[\pi]}{\Z/2\Z}$ in $\extt{K}{\calE[\pi]}{\Z/2\Z}$ is trivial.
Any extension $\gsc$ in $\extt{O_K[\frac{1}{7}]}{\calE[\pi]}{\Z/2\Z}$ is annihilated by $2$ as
$\gsc_{O_{K_\pi}}$ is split.
Moreover, $\gsc_{O_{K_\pibar}}$ is an \'etale extension.
Hence $K(\gsc)/K$ is unramified outside $\pi_7$.
As we have seen before, there is no abelian extension of $K$ that is only tamely ramified at $\pi_7$.
The image of 
$\extt{O_K[\frac{1}{7}]}{\calE[\pi]}{\Z/2\Z}$
in $\extt{O_K[\frac{1}{7}] \otimes \Z_2}{\calE[\pi]}{\Z/2\Z}$
is also trivial since the group scheme 
$\gsc_{O_{K_\pibar}}$ is \'etale, annihilated by $2$ and has trivial Galois action.
We conclude from (\ref{eqn:incl_Gpi_z2z}) that  $\extt{O_K[\frac{1}{7}]}{\calE[\pi]}{\Z/2\Z}$ is trivial.
\end{proof}

\subsection{Proving that certain abelian varieties are modular}

We prove Theorem \ref{j049-thm:main}.
Contrary to the case of good reduction outside $3$, we cannot apply Theorem \ref{cat-thm:abvar_torsionfilter}.
Consider an abelian variety $A$ over $\Q$ 
with good reduction outside $7$ that obtains semi-stable reduction
over an at most tamely ramified extension at $7$ over $\Q$.
Let $\calA$ denote the N\'eron model of $A$.
The $2^n$-torsion group schemes $\calA[2^n]$ 
are objects in the category $\catf$.
It follows by Proposition \ref{j049-prop:simple_groupschemes}, Lemmas \ref{j049-lem:ext_mu2_Z2_Gpi},
\ref{lem:j049_ext_Gpi_Gpibar} and \ref{j049-lem:ext_mu2_Epi},
that $\calA[2^n]$ as an object in the category $\cate$ can be filtered as follows
$$
0 \subset M(n) \subset \gsc'(n) \subset \gsc''(n) \subset \calA[2^n] ,
$$
where $M(n), \gsc'(n)$ and $\gsc''(n)$ are closed flat subgroup schemes of 
$\calA[2^n]$ such that $M(n)$ is filtered by copies of $\mu_2$,
the quotient $\gsc'(n)/M(n)$ is filtered by copies of $\calE[\pi]$,
the quotient $\gsc''(n)/\gsc'(n)$ is filtered by copies of $\calE[\pibar]$
and $\calA[2^n]/\gsc''(n)$ is filtered by copies of $\Z/2\Z$.

We are now going to prove that any $2$-power order group scheme over $O_K[\frac{1}{7}]$
that can be filtered by copies of $\calE[\pi]$ and $\calE[\pibar]$, is isomorphic to 
a group scheme of the form $\calE[2^n]$.
To prove this, we first generalize \cite[Corollary 8.2]{Schoof:2005} to the following theorem:

\begin{theorem} \label{j049-thm:grpschm_filtration} 
Let $O$ be a Noetherian domain of characteristic $0$ and let
$\catc$ be a full subcategory of the category of finite flat group schemes of $p$-power order
over $O$.
Suppose that $\catc$ is closed under taking products, closed flat subgroup schemes and quotients by closed
flat subgroup schemes.
Let $\{ G_i \}_{i \in I}$ be a finite set of $p$-divisible groups over $O$ such that
each $G_i$ satisfies:
\begin{itemize}
\item the endomorphism ring $R_i := \End(G_i)$ is a discrete valuation ring with uniformizer $\pi_i$ and residue field $k=R_i /\pi_i R_i$

\item every group scheme $G_i[\pi_i^n]$ is an object in the category $\catc$

\item the map
$$
\Hom_O(G_i[\pi_i],G_i[\pi_i]) \xrightarrow{\delta} \extt{\catc}{G_i[\pi_i]}{G_i[\pi_i]}
$$
associated to the exact sequence $0 \rightarrow G_i[\pi_i] \rightarrow G_i[\pi_i^2] \rightarrow G_i[\pi_i] \rightarrow 0$
is an isomorphism of $1$-dimensional $k$-vector spaces

\item for each $i,j \in I$ with $i \not= j$, the groups $\extt{\catc}{G_i[\pi_i]}{G_j[\pi_j]}$ and $\extt{\catc}{G_j[\pi_j]}{G_i[\pi_i]}$
are trivial .
\end{itemize}
Then every group scheme in $\catc$ that admits a filtration using only copies of the
group schemes $G_i[\pi_i]$ for $i \in I$ is of the following form:
$$ 
\bigoplus_{s_i=1}^{r_i} G_i[\pi_i^{n_{s_i}}]  \quad \text{where} \quad n_{s_i} \in \N .
$$
\end{theorem}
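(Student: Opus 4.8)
\emph{Overview.} The plan is to reduce first to a single $p$-divisible group by a block decomposition, then to carry out the key $\Hom$/$\Ext^1$ computation for the group schemes $G_i[\pi_i^a]$, and finally to run an induction on the length of the filtration, normalizing an extension class by Gaussian elimination. Write $M_i := G_i[\pi_i]$.

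\emph{Block decomposition.} I would first show that any $H$ in $\catc$ admitting a filtration by copies of the $M_i$ splits as $H \simeq \bigoplus_i H^{(i)}$ with $H^{(i)}$ filtered only by copies of $M_i$. Induct on the length of the filtration: peel off the bottom subquotient $M_j$, so that $0 \to M_j \to H \to \overline{H} \to 0$ with $\overline{H} \simeq \bigoplus_i \overline{H}^{(i)}$ by the inductive hypothesis. The class of this extension lies in $\Ext^1_\catc(\overline{H}, M_j) = \bigoplus_i \Ext^1_\catc(\overline{H}^{(i)}, M_j)$, and for $i \neq j$ the summand $\Ext^1_\catc(\overline{H}^{(i)}, M_j)$ vanishes by a dévissage along the filtration of $\overline{H}^{(i)}$ starting from the hypothesis $\Ext^1_\catc(M_i, M_j) = 0$; here one uses the long exact $\Hom$--$\Ext^1$ sequence attached to a short exact sequence of group schemes, which is available because $\catc$ is closed under subobjects, quotients and products (so that the pushouts defining the connecting maps stay in $\catc$), exactly as such sequences are used in \cite{Schoof:2005,Schoof:2009}. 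Hence the extension splits off all blocks $i \neq j$, and the claim follows. After this reduction it suffices, fixing one index and writing $G = G_i$, $R = R_i$, $\pi = \pi_i$, $k = R/\pi R$, $M = G[\pi]$, to prove that any $H$ in $\catc$ filtered by $r$ copies of $M$ is isomorphic to a direct sum $\bigoplus_s G[\pi^{n_s}]$.

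\emph{The key computation.} Since $G$ is $\pi$-divisible with $\End(G) = R$ and $\Hom_O(M,M) = k$ (part of the hypothesis on $\delta$), a routine dévissage gives $\Hom_O(G[\pi^a], G[\pi^b]) \simeq R/\pi^{\min(a,b)}R$ for all $a,b \geq 1$, with $\Hom_O(G[\pi^a], M)$ generated by $[\pi^{a-1}] \colon G[\pi^a] \to G[\pi]$. Applying $\Hom_\catc(-,M)$ to $0 \to G[\pi] \to G[\pi^n] \to G[\pi^{n-1}] \to 0$ (the last map being $[\pi]$) and inducting on $n$, I would then prove that $\Ext^1_\catc(G[\pi^n], M)$ is a one-dimensional $k$-vector space generated by the class $\epsilon_n$ of $0 \to M \to G[\pi^{n+1}] \to G[\pi^n] \to 0$: the case $n = 1$ is the $\delta$-hypothesis, and in the inductive step the restriction map $\Hom_\catc(G[\pi^n], M) \to \Hom_\catc(G[\pi], M)$ vanishes for $n \geq 2$ (since $[\pi^{n-1}]$ kills $G[\pi]$), which forces the connecting map $\Hom_\catc(G[\pi], M) \to \Ext^1_\catc(G[\pi^{n-1}], M)$ to be an isomorphism and hence the transition map $\Ext^1_\catc(G[\pi^{n-1}], M) \to \Ext^1_\catc(G[\pi^n], M)$ to be zero, squeezing $\Ext^1_\catc(G[\pi^n], M)$ between $0$ and $\Ext^1_\catc(G[\pi], M) \simeq k$; it is nonzero because $G[\pi^{n+1}]$ is not annihilated by $\pi$. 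I would also record the functoriality needed below: pulling $\epsilon_n$ back along the inclusion $G[\pi^{n'}] \hookrightarrow G[\pi^n]$ with $n' \leq n$ yields $\epsilon_{n'}$, pulling it back along $[\pi^v]$ with $v \geq 1$ yields $0$, and $\Aut(M) = k^\times$ acts by pushout transitively on the nonzero vectors of $\Ext^1_\catc(G[\pi^n], M)$.

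\emph{Induction and normalization.} Now induct on $r$. For $r = 1$, $H = G[\pi]$. For $r > 1$, pick $M \subset H$ with $H/M$ filtered by $r-1$ copies of $M$, so by the inductive hypothesis $H/M \simeq Q := \bigoplus_{s=1}^{r-1} G[\pi^{n_s}]$, and $H$ is classified by a class $\xi \in \Ext^1_\catc(Q, M) = \bigoplus_s k\,\epsilon_{n_s}$. Describing $\Aut(Q)$ by invertible matrices with $(s,t)$-entry in $\Hom_O(G[\pi^{n_t}], G[\pi^{n_s}])$ and using the pullback formulas above, one checks that the induced action on $\Ext^1_\catc(Q, M)$ is block upper triangular with respect to decreasing exponent and acts through $\Gl$ of each constant-exponent block over $k$; combining this with the $\Aut(M) = k^\times$-action, a Gaussian elimination brings $\xi$ either to $0$, whence $H \simeq M \oplus Q = G[\pi] \oplus Q$, or to a single coordinate equal to $\epsilon_{n^*}$ with $n^* = \max\{n_s : \xi_s \neq 0\}$, whence $H \simeq G[\pi^{n^*+1}] \oplus \bigoplus_{s \neq s_0} G[\pi^{n_s}]$. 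In either case $H$ has the asserted form, and reassembling the blocks from the first stage gives the theorem. I expect the second stage to be the main obstacle: establishing that $\Ext^1_\catc(G[\pi^n], M)$ is one-dimensional is precisely where the rigidity hypothesis on $\delta$ is indispensable, and one must take care that the homological machinery (six-term exact sequences, additivity of $\Ext^1$ in each variable, the pushout/pullback calculus) is genuinely legitimate inside the possibly non-abelian exact category $\catc$ rather than only in an ambient abelian category.
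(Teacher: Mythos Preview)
Your proposal is correct and follows essentially the same strategy as the paper. The paper's proof is a two-line reduction: it invokes \cite[Corollary 8.2]{Schoof:2005} for the single-index case (one $p$-divisible group $G_i$) and then observes that the vanishing of the cross-$\Ext$ groups $\Ext^1_\catc(G_i[\pi_i],G_j[\pi_j])$ for $i\neq j$ yields the block decomposition. Your ``block decomposition'' step is exactly this last observation, and your ``key computation'' together with the ``induction and normalization'' amount to a self-contained reproof of Schoof's cited result, carried out by the same d\'evissage and Gaussian-elimination method. So there is no genuine difference in approach; you have simply unfolded the citation.
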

\begin{proof}
This follows immediately from \cite[Corollary 8.2]{Schoof:2005} and the condition that
the groups of extensions $\extt{\catc}{G_i[\pi_i]}{G_j[\pi_j]}$ and $\extt{\catc}{G_j[\pi_j]}{G_i[\pi_i]}$ are trivial.
\end{proof}

This theorem allows us to prove:

\begin{corollary} \label{j049-cor:twodivfiltered}
Any group scheme in the category $\cate$ that can be filtered by copies of $\calE[\pi]$ and $\calE[\pibar]$
is isomorphic to a group scheme of the form 
$$
\bigoplus_{s=1}^{r_1} \calE[\pi^{n_{s}}] \times \bigoplus_{t=1}^{r_2} \calE[\pibar^{n_{t}}] .
$$
\end{corollary}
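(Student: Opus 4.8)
The plan is to deduce the corollary directly from Theorem~\ref{j049-thm:grpschm_filtration}, applied with $O=O_K[\frac17]$, $p=2$, $\catc=\cate$, index set $I=\{\pi,\pibar\}$, and the two $2$-divisible groups $G_\pi:=\calE[\pi^\infty]$ and $G_\pibar:=\calE[\pibar^\infty]$. Since $\pi_7$ is the unique prime of $O_K$ above $7$ we have $O_K[\frac17]=O_K[\frac1{\pi_7}]$, so $\cate$ is one of the categories $\catgen$ of Section~\ref{sec:grplevelram}, namely with $K=\Q(\zeta_7)$, the prime $\pi_7$, $\ell=2$ and level $i=0$; hence by Proposition~\ref{prop:categoryprop} it is closed under products, closed flat subgroup schemes and quotients, as Theorem~\ref{j049-thm:grpschm_filtration} requires. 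Moreover, because $E$ has complex multiplication by the maximal order $\mathcal{O}$ of $\Q(\sqrt{-7})\subset K$ and $2=\pi\pibar$ splits there, the idempotents of $\mathcal{O}/2^n\mathcal{O}\cong\mathcal{O}/\pi^n\mathcal{O}\times\mathcal{O}/\pibar^n\mathcal{O}$ give a decomposition $\calE[2^n]=\calE[\pi^n]\times\calE[\pibar^n]$ over $O_K[\frac17]$; thus $G_\pi$ and $G_\pibar$ are genuine (height-one) $2$-divisible groups over $O$ with $G_\pi[\pi]=\calE[\pi]$ and $G_\pibar[\pibar]=\calE[\pibar]$.

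It then remains to verify the four hypotheses of Theorem~\ref{j049-thm:grpschm_filtration}. For the first, faithfulness of the generic-fibre functor on finite flat group schemes over the Dedekind domain $O$ embeds $\End_O(G_\pi)$ into $\End_{G_K}(T_\pi\calE)$, which is $\Z_2$ because $T_\pi\calE$ is free of rank one over $\mathcal{O}_\pi\cong\Z_2$; as the complex multiplication already gives $\mathcal{O}_\pi\subseteq\End_O(G_\pi)$, we conclude $\End_O(G_\pi)\cong\Z_2$, a discrete valuation ring with uniformizer $\pi$ and residue field $\F_2$ (and likewise for $G_\pibar$). For the second, $\calE[\pi^n]$ is a closed flat subgroup scheme of $\calE[2^n]$, and $\calE[2^n]\in\cate$ because the newform $49A$ has $\rho_{f,7}$ at most tamely ramified (level $0$ in Table~\ref{tab:modformlevel}) and $\Q(\zeta_7)/\Q$ is itself tame at $7$, so $K(\calE[2^n])/K$ is at most tamely ramified at $\pi_7$; hence $\calE[\pi^n],\calE[\pibar^n]\in\cate$ for all $n$. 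For the fourth, $\extt{\cate}{\calE[\pi]}{\calE[\pibar]}$ and $\extt{\cate}{\calE[\pibar]}{\calE[\pi]}$ are trivial by Lemma~\ref{lem:j049_ext_Gpi_Gpibar}. The third hypothesis is the only one needing a short argument: the source $\Hom_O(\calE[\pi],\calE[\pi])\cong\Hom_{G_K}(\calE[\pi],\calE[\pi])$ has order $2$, the target $\extt{\cate}{\calE[\pi]}{\calE[\pi]}$ has order $2$ by Lemma~\ref{j049-lem:ext_gpi_by_gpi}, and the connecting map $\delta$ associated to $0\to\calE[\pi]\to\calE[\pi^2]\to\calE[\pi]\to0$ sends the identity to the class of $\calE[\pi^2]$, which is nonzero since $\calE[\pi^2]$ is not killed by $\pi$; a nonzero $\F_2$-linear map between one-dimensional $\F_2$-vector spaces is an isomorphism, and symmetrically for $\pibar$.

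With all four hypotheses checked, Theorem~\ref{j049-thm:grpschm_filtration} gives that any group scheme in $\cate$ filtered by copies of $G_\pi[\pi]=\calE[\pi]$ and $G_\pibar[\pibar]=\calE[\pibar]$ is isomorphic to $\bigoplus_{s=1}^{r_1}G_\pi[\pi^{n_s}]\times\bigoplus_{t=1}^{r_2}G_\pibar[\pibar^{n_t}]=\bigoplus_{s=1}^{r_1}\calE[\pi^{n_s}]\times\bigoplus_{t=1}^{r_2}\calE[\pibar^{n_t}]$, which is exactly the claim. I expect essentially no obstacle: the proof is a matter of assembling earlier results, the single non-bookkeeping point being the verification that $\delta$ is an isomorphism, which reduces to the order count of Lemma~\ref{j049-lem:ext_gpi_by_gpi} together with the nontriviality of $\calE[\pi^2]$ as a self-extension of $\calE[\pi]$ and the standard identification of $\Hom$ over $O_K[\frac17]$ with homomorphisms of $G_K$-modules.
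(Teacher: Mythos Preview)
Your proof is correct and follows essentially the same approach as the paper: both apply Theorem~\ref{j049-thm:grpschm_filtration} with $\catc=\cate$ and the two $2$-divisible groups $\calE[\pi^\infty]$, $\calE[\pibar^\infty]$, and then verify the four hypotheses using Lemmas~\ref{lem:j049_ext_Gpi_Gpibar} and~\ref{j049-lem:ext_gpi_by_gpi}. The only noteworthy difference is in the first hypothesis: the paper computes $\End(\calE[\pi^\infty])$ by first showing $\End_{G_\Q}(T_2(E))\simeq\Z_2$ via absolute irreducibility and Schur's Lemma, then invoking Tate's theorem and the splitting of $2$ in $K$, whereas you argue more directly from the rank-one $\mathcal{O}_\pi$-structure on $T_\pi\calE$ coming from complex multiplication --- your route is slightly cleaner here.
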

\begin{proof}
We verify that the $2$-divisible groups 
$G_1 = \calE[\pi^\infty]$ and $G_2 = \calE[\pibar^\infty]$
satisfy the conditions of Theorem \ref{j049-thm:grpschm_filtration}
where we take the category $\catc$ to be the category $\cate$.

We verify the first condition: 
The Galois representation $\rho_{E,\good}$ is absolutely irreducible since it is an induction
of a Hecke character from the quadratic imaginary field $\Q(\sqrt{-7})$ to $\Q$.
By Schur's Lemma, we obtain that the endomorphism ring 
$\End_{G_\Q}(T_2(E) \otimes \Q_2)$ is isomorphic to $\Q_2$.
Therefore, $\End_{G_\Q}(T_2(E)) \simeq \Z_2$.
By Tate's Theorem \cite[Corollary of Theorem 4]{Tate:1967},
we have that $\End_{G_\Q}(T_2(E)) \simeq \End(\calE[2^\infty])$.
Since the prime $2$ splits in $K/\Q$, it then follows
that also $\End(\calE[\pi^\infty]) \simeq \Z_2$ and $\End(\calE[\pibar^\infty]) \simeq \Z_2$.

The third condition holds by Lemma \ref{j049-lem:ext_gpi_by_gpi}:
The group $\extt{\cate}{\calE[\pi]}{\calE[\pi]}$ is $1$-dimensional over $\F_2$.
The group of homomorphisms
$\Hom_{O_K[\frac{1}{7}]}(\calE[\pi],\calE[\pi])$
is also $1$-dimensional and we see that $\delta$ is an isomorphism.
The same holds for the group scheme $\calE[\pibar]$ instead of $\calE[\pi]$.
\end{proof}

However, we cannot simply apply Theorem \ref{cat-thm:abvar_torsionfilter} to
deduce that the $2^n$-torsion subgroup scheme of $\calA$ can only be filtered
by the group schemes $\calE[\pi]$ and $\calE[\pibar]$.
Lemma \ref{j049-lem:ext_mu2_Z2_Gpi} shows that the first condition of
Theorem \ref{cat-thm:abvar_torsionfilter} does not hold.
Nevertheless, we have the following result:

\begin{lemma} \label{j049-lem:isogeny}
The abelian varieties $A$ and $E^{\dim(A)}$ are isogenous over $K$.
\end{lemma}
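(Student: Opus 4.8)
The plan is to prove the stronger assertion that over $O_K[\frac{1}{7}]$ (with $K=\Q(\zeta_7)$) the $2$-divisible group of the N\'eron model $\calA$ of $A$ is isomorphic to $\calE[2^\infty]^{\dim(A)}$; the lemma then follows by passing to Tate modules, invoking Tate's theorem \cite{Tate:1967} to identify $T_{2}(A)$ with $T_{2}(E)^{\dim(A)}$ as $G_K$-modules, and applying Faltings' isogeny theorem \cite{Faltings:1983}. Recall that, as an object of $\cate$, each $\calA[2^n]_{O_K[\frac{1}{7}]}$ sits in the filtration $0\subset M(n)\subset\gsc'(n)\subset\gsc''(n)\subset\calA[2^n]$ whose successive quotients are filtered by copies of $\mu_2$, $\calE[\pi]$, $\calE[\pibar]$ and $\Z/2\Z$ respectively. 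The first aim is to show that $\calA[2^n]_{O_K[\frac{1}{7}]}$ can actually be filtered using only copies of $\calE[\pi]$ and $\calE[\pibar]$; once this is done, Corollary \ref{j049-cor:twodivfiltered} gives $\calA[2^n]_{O_K[\frac{1}{7}]}\cong\bigoplus_{s=1}^{r_1}\calE[\pi^{n_s}]\times\bigoplus_{t=1}^{r_2}\calE[\pibar^{n_t}]$, and passing to the limit over $n$ yields $\calA[2^\infty]_{O_K[\frac{1}{7}]}\cong\calE[\pi^\infty]^{r_1}\times\calE[\pibar^\infty]^{r_2}$.

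To pin down $r_1$ and $r_2$: a polarization of $A$ induces an isogeny $\calA[2^\infty]\to\calA[2^\infty]^\vee$ of $2$-divisible groups over $O_K[\frac{1}{7}]$, and isogenies preserve the multiplicative and \'etale ranks; since $\calE[\pi^\infty]$ and $\calE[\pibar^\infty]$ are Cartier dual to one another and are not isogenous over $O_K[\frac{1}{7}]$ (at the prime $\pi$ one is of multiplicative type and the other \'etale, and likewise at $\pibar$), comparing $\calE[\pi^\infty]^{r_1}\times\calE[\pibar^\infty]^{r_2}$ with its Cartier dual $\calE[\pibar^\infty]^{r_1}\times\calE[\pi^\infty]^{r_2}$ forces $r_1=r_2$; a height count then gives $r_1=r_2=\dim(A)$, so $\calA[2^\infty]_{O_K[\frac{1}{7}]}\cong\calE[2^\infty]^{\dim(A)}$.

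It remains to carry out the reorganization of the filtration, i.e.\ to remove the multiplicative-type bottom step $M(n)$ and the \'etale top quotient $\calA[2^n]/\gsc''(n)$ by absorbing them into copies of $\calE[2]=\calE[\pi]\times\calE[\pibar]$. The ingredients are: the vanishing of the cross-extension groups $\Ext^1_{O_K[\frac{1}{7}]}(\mu_2,\calE[\pi])$, $\Ext^1_{O_K[\frac{1}{7}]}(\mu_2,\calE[\pibar])$, $\Ext^1_{O_K[\frac{1}{7}]}(\calE[\pi],\Z/2\Z)$, $\Ext^1_{O_K[\frac{1}{7}]}(\calE[\pibar],\Z/2\Z)$ (Lemma \ref{j049-lem:ext_mu2_Epi}); the vanishing of $\Ext^1_{O_K[\frac{1}{7}]}(\calE[\pi],\calE[\pibar])$ and $\Ext^1_{O_K[\frac{1}{7}]}(\calE[\pibar],\calE[\pi])$ (Lemma \ref{lem:j049_ext_Gpi_Gpibar}); the fact that the only non-split extension of $\mu_2$ by $\Z/2\Z$ over $O_K[\frac{1}{7}]$ is $\calE[\pi]\times\calE[\pibar]$ (Lemma \ref{j049-lem:ext_mu2_Z2_Gpi}); and the equality, from the isogeny argument above, of the number of $\mu_2$-steps and the number of $\Z/2\Z$-steps in the filtration. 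I would argue by induction on $n$ and on the number of multiplicative steps: peel off one copy of $\mu_2$ at the bottom; the vanishing cross-extensions show it cannot be entangled with the $\calE[\pi]$- and $\calE[\pibar]$-layers, and the count provides a $\Z/2\Z$ at the top to match it with; Lemma \ref{j049-lem:ext_mu2_Z2_Gpi} then replaces the matched pair by a copy of $\calE[\pi]$ and a copy of $\calE[\pibar]$, after which one continues with a shorter filtration.

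The main obstacle is exactly this reorganization. The filtration has length four with genuinely non-abelian structure, so one has to rule out simultaneously that a stray $\mu_2$ or $\Z/2\Z$ survives and that a mixed non-split extension (for example of $\calE[\pi]$ by $\mu_2$, or of $\Z/2\Z$ by $\calE[\pibar]$) intervenes; this requires using all of Lemmas \ref{j049-lem:ext_mu2_Z2_Gpi}, \ref{lem:j049_ext_Gpi_Gpibar} and \ref{j049-lem:ext_mu2_Epi} together, along with careful bookkeeping via Cartier duality. Setting it up as an induction that strips off one multiplicative layer --- equivalently, by duality, one \'etale layer --- at a time is the natural route and is where essentially all the remaining work lies.
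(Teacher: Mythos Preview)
Your plan aims for a stronger conclusion than the paper's --- isomorphism of $2$-divisible groups rather than mere isogeny --- and hits a genuine obstruction at the reorganization step. The difficulty is more basic than you indicate: by Jordan--H\"older for finite flat group schemes, the multiset of simple subquotients of $\calA[2^n]$ is intrinsic, so if $\mu_2$ or $\Z/2\Z$ appears as a subquotient in one filtration it appears in every filtration. You therefore cannot ``replace the matched pair by a copy of $\calE[\pi]$ and a copy of $\calE[\pibar]$'' unless you first prove that $\mu_2$ and $\Z/2\Z$ were never simple subquotients to begin with, which is precisely what you are trying to establish. Even the preliminary manoeuvre of moving a bottom $\mu_2$ up past the $\calE[\pi]$- and $\calE[\pibar]$-layers would require the vanishing of $\Ext^1_{O_K[\frac{1}{7}]}(\calE[\pi],\mu_2)$ and $\Ext^1_{O_K[\frac{1}{7}]}(\calE[\pibar],\mu_2)$, and these are not among the groups computed: Lemma~\ref{j049-lem:ext_mu2_Epi} gives $\Ext^1_{O_K[\frac{1}{7}]}(\mu_2,\calE[\pi])=0$, the opposite direction, which is exactly what was used to push the $\mu_2$'s \emph{down} when assembling the four-step filtration in the first place.

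The paper sidesteps all of this. Instead of eliminating $M(n)$ and $\calA[2^n]/\gsc''(n)$, it shows (via \cite{Verhoek:2009}) that their orders are \emph{bounded independently of~$n$}. One then defines $\phi_n\colon\calA[2^n]\to\calE[2^n]^{\dim(A)}$ as the composite of multiplication by a suitable bounded power of~$2$ (to land in $\gsc''(n)$), the quotient by the fixed $M(m)$, and the identification of the middle piece furnished by Corollary~\ref{j049-cor:twodivfiltered}. These $\phi_n$ have uniformly bounded kernel and cokernel; a cofinal compatible subsystem assembles into a morphism of $2$-divisible groups, and Faltings' theorem gives the isogeny over~$K$. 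The conclusion is only an isogeny, not the isomorphism of $2$-divisible groups you were aiming for, and indeed the paper remarks just before the lemma that Theorem~\ref{cat-thm:abvar_torsionfilter} fails here precisely because $\Ext^1_{O_K[\frac{1}{7}]}(\mu_2,\Z/2\Z)\neq 0$.
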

\begin{proof}
The group schemes $M(n)$ and $\calA[2^n]/\gsc''(n)$ have bounded rank
using the argument in the proof of \cite[Proposition 2.6.3]{Verhoek:2009}. 

Since $\calA[2^{n-1}] \subset \calA[2^n]$, we also have that 
$M(n-1)$ is a subgroup scheme of $\calA[2^n]$.
So there exists an $m \in \N$ such that for all $n > m$, we can find a filtration of $\calA[2^n]$
such that $M(n) \simeq M(m)$. 
The group scheme $\calA[2^n]/M(m)$ does not contain a copy of $\mu_2$ in its filtration because
the rank of $M(n)$ is equal to the rank of $M(m)$.
If $n \geq m$, let $k(n)$ be the smallest positive integer such that 
$[2^{k(n)}] (A[2^n] )$ is a closed flat subgroup scheme of $\gsc''(n)$.
Let $q(n)$ be the quotient map $\calA[2^n] \rightarrow \calA[2^n]/M(m)$.
We have morphisms of group schemes
$$
\phi_n := q(n) \circ  [2^k(n)]  : \calA[2^n] \longrightarrow \gsc''(n) /M(m) \simeq \calE[2^n]^{\dim(A)}.
$$
The last isomorphism follows from Corollary \ref{j049-cor:twodivfiltered}.
The kernel and cokernel of the morphisms $\phi_n$ are bounded
because the numbers $k(n)$ and the rank of $M(m)$ are bounded.  

Consider the following diagram for $n \geq m$:
$$
\begin{xy}
\xymatrix{
\calA[2^n] \ar[d]^{[2]} \ar[rr]^{ \phi_n} & & \calE[2^n]^{\dim(A)} \ar[d]^{[2]}  \\
\calA[2^{n-1}]   \ar[rr]^{ \phi_{n-1} } &  & \calE[2^{n-1}]^{\dim(A)}  .
}
\end{xy} 
$$
The diagram commutes if and only if $k(n) = k(n-1)$.
There exists an infinite subset $I \subset \N$ such that the $(\phi_i)_{i \in I}$ are compatible,
because the numbers $k(n)$ are bounded and $k(n+i) \leq k(n)+i$ for all
positive integers $i$.
In other words, there exists a cofinal compatible system of the morphisms $\phi_n$.

Now let $\calA[2^\infty]$ denote the $2$-divisible group associated to $A$.
The cofinal compatible system of morphisms $\phi_n$ gives rise
to a morphism of $2$-divisible groups as follows.
For each $n$ there exists an $n' > n$ such that $n' \in I$ and
we let 
$$
f_n :=  \phi_{n'}| \calA[2^n] : \calA[2^n] \longrightarrow \calE[2^n]^{\dim(A)} .
$$
The family $(f_n)_{n \in \N}$ is then a morphism
of $2$-divisible groups $\calA[2^{\infty}] \rightarrow \calE[2^{\infty}]^{\dim(A)}$
with bounded kernel and cokernel.
By \cite{Faltings:1983}, it follows that $A$ is isogenous to 
$E^{\dim(A)}$ over $K$.
\end{proof}

In the next lemma we switch from $2$-divisible groups to $2$-adic Tate modules.
Recall that for an abelian variety $A$ over $\Q$, 
we write $\rho_{A,2} : G_\Q \rightarrow \Aut(T_2(A) ) \otimes \Q_2$
for the representation that describes the action of $G_\Q$ on the $2$-adic Tate module of $A$.
Also recall that $B$ is the $2$-dimensional abelian variety over $\Q$ attached to the newform $49B$.

\begin{lemma} \label{j049-lem:tatemod_descent}
Every representation $\rho : G_\Q \rightarrow \Gl_{2g}(\Q_2)$
such that
$$
\rho | \Gal(\Alg{\Q}/\Q(\zeta_7)) \simeq (\rho_{E,2}| \Gal(\Alg{\Q}/\Q(\zeta_7)))^g \quad \text{for some} \quad g \in \N,
$$
is a direct sum of copies of $\rho_{E,2}$ and $\rho_{B,2}$.
\end{lemma}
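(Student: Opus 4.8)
The plan is to analyze $\rho$ as a representation of $G_\Q$ that, when restricted to the index-$6$ subgroup $H := \Gal(\Alg\Q/\Q(\zeta_7))$, becomes isotypic, namely isomorphic to $(\rho_{E,2}|_H)^g$. First I would recall that $\rho_{E,2}|_H$ is irreducible: since $E$ has complex multiplication by $\Q(\sqrt{-7})$ and $\Q(\sqrt{-7}) \subset \Q(\zeta_7)$, the representation $\rho_{E,2}|_H$ is (the restriction of) an induced character, but over $\Q(\zeta_7)$ it is abelian and given by a pair of conjugate Hecke characters $\psi, \bar\psi$; I should be slightly careful here and argue that $\rho_{E,2}|_H$ remains $2$-dimensional and that $\End_{\Q_2[H]}(\rho_{E,2}|_H)$ is exactly $\Q_2$ (equivalently, the two characters $\psi,\bar\psi$ are distinct as characters of $H$, which holds because complex conjugation does not fix $\psi$). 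Granting this, $V := \Q_2^{2g}$ with its $H$-action is $V_0^{\oplus g}$ where $V_0 = \rho_{E,2}$, and $\End_{\Q_2[H]}(V) \cong M_g(\Q_2)$.

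Next I would push down to $G_\Q$. The conjugation action of $G_\Q/H = \Gal(\Q(\zeta_7)/\Q) \cong \Z/6\Z$ on $\End_{\Q_2[H]}(V) \cong M_g(\Q_2)$ gives, via Skolem–Noether, a projective representation of $\Z/6\Z$, hence (since $\Z/6\Z$ is cyclic and $H^2(\Z/6\Z,\Q_2^\times)$ vanishes) an honest action coming from a representation $\tau: \Z/6\Z \to \Gl_g(\Q_2)$, and after twisting the chosen $H$-isomorphism $V \cong V_0^{\oplus g}$ by $\tau$ we may arrange that $G_\Q$ acts on $V$ through $V_0 \otimes W$ for some $g$-dimensional $\Q_2$-representation $W$ of $G_\Q/H = \Gal(\Q(\zeta_7)/\Q)$ — where $V_0$ is extended to $G_\Q$ in a fixed way. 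The only subtlety is which extension of $V_0|_H$ to $G_\Q$ one uses: $\rho_{E,2}$ itself is one such extension, and any other differs from it by a character of $\Gal(\Q(\zeta_7)/\Q)$. Decomposing $W$ into characters $\chi_1,\dots$ of $\Gal(\Q(\zeta_7)/\Q)$, we get $\rho \simeq \bigoplus_j (\rho_{E,2} \otimes \chi_j)$. It then remains to identify each twist $\rho_{E,2}\otimes\chi$ with either $\rho_{E,2}$ or $\rho_{B,2}$: the trivial and the quadratic character of $\Gal(\Q(\zeta_7)/\Q)$ give $\rho_{E,2}$ back (the quadratic twist is trivial because $E$ already acquires its CM over $\Q(\sqrt{-7})$, i.e. $\rho_{E,2}\otimes\chi_{\Q(\sqrt{-7})}\simeq\rho_{E,2}$ since this is exactly the self-duality of the induced representation), while a cubic character $\chi$ of $\Gal(\Q(\zeta_7)^+/\Q)$ gives precisely $\rho_{B,2}$, using that $B$ is the abelian variety attached to $49B$ and becomes isogenous to $E\times E$ over $\Q(\zeta_7)^+$, so $\rho_{B,2}|_{\Gal(\Alg\Q/\Q(\zeta_7)^+)} \simeq (\rho_{E,2})^2|_{\cdots}$ and a dimension/trace comparison pins it down as $\rho_{E,2}\otimes\chi \oplus \rho_{E,2}\otimes\chi^2$.

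The main obstacle I anticipate is the bookkeeping in the descent step: making precise that isotypic-over-$H$ plus the cohomological triviality of $\Gal(\Q(\zeta_7)/\Q)$ forces $\rho$ to be a sum of twists of a \emph{fixed} extension of $\rho_{E,2}|_H$, and in particular ruling out that $\rho$ could involve some ``new'' $2$-dimensional piece of $G_\Q$ not of the form $\rho_{E,2}\otimes\chi$. Concretely one must check that the $6$-dimensional space of $G_\Q$-extensions is exhausted by the six characters of $\Gal(\Q(\zeta_7)/\Q)$ tensored with $\rho_{E,2}$, which follows from $\End_{\Q_2[G_\Q]}$ of the induced module $\Ind_H^{G_\Q}(\psi)$ being computed by Mackey: $\Ind_H^{G_\Q}(\psi)$ decomposes exactly into the $G_\Q$-orbits of $\psi$ under the $\Z/6\Z$-action, whose stabilizers produce precisely the twisting characters in question. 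Once this Mackey/Clifford-theory input is in place, the remaining identifications of $\rho_{E,2}\otimes\chi$ for the quadratic and cubic characters are short trace computations using Table \ref{tab:modformlevel} and the isogeny $B \sim E\times E$ over $\Q(\zeta_7)^+$.
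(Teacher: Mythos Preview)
There is a genuine gap. Your key claim that $\End_{\Q_2[H]}(\rho_{E,2}|_H)=\Q_2$ (with $H=\Gal(\Alg\Q/\Q(\zeta_7))$) is false, and this is precisely where the splitting behaviour of $2$ in the CM field bites. Since $-7\equiv 1\pmod 8$, the prime $2$ splits in $\Q(\sqrt{-7})$ (this is the pair $\pi,\bar\pi$ used throughout Section~\ref{chap:j049}), so $\Q_2\otimes_\Q\Q(\sqrt{-7})\cong\Q_2\times\Q_2$ and the two CM characters $\psi,\bar\psi$ are honest $\Q_2^\times$-valued characters of $H$. They are distinct, hence $\rho_{E,2}|_H\simeq\psi\oplus\bar\psi$ is \emph{reducible} over $\Q_2$ and $\End_{\Q_2[H]}(\rho_{E,2}|_H)\cong\Q_2\times\Q_2$. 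Your parenthetical ``equivalently, the two characters $\psi,\bar\psi$ are distinct'' is exactly backwards: distinctness gives $\Q_2\times\Q_2$, not $\Q_2$. Consequently $\End_{\Q_2[H]}(V)\cong M_g(\Q_2)\times M_g(\Q_2)$; the outer action of $\Gal(\Q(\zeta_7)/\Q)$ swaps the two factors (complex conjugation exchanges $\psi$ and $\bar\psi$), so it is not inner, Skolem--Noether does not apply, and you cannot conclude $V\cong V_0\otimes W$ for a representation $W$ of $\Z/6\Z$ on $\Q_2^g$. A related slip: the cubic characters of $\Z/6\Z$ are not $\Q_2$-valued either, since $\zeta_3\notin\Q_2$, so ``decomposing $W$ into characters $\chi_j$'' already forces a base change to $\Q_2(\zeta_3)$.

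The paper avoids this by running the Clifford/Frobenius argument at the level of the one-dimensional $\psi$ rather than the reducible two-dimensional $\rho_{E,2}|_H$. After first checking that $\rho$ is semi-simple (averaging an $H$-equivariant projection over $\Gal(\Q(\zeta_7)/\Q)$) and reducing to $\rho$ irreducible, Frobenius reciprocity gives a nonzero map $\rho\to\Ind_{H}^{G_\Q}(\psi|_H)$, so $\rho$ is a constituent of this $6$-dimensional induction. Inducing in stages through $\Gamma_2=\Gal(\Alg\Q/\Q(\sqrt{-7}))$ yields $\Ind_{H}^{G_\Q}(\psi|_H)\cong\rho_{E,2}\oplus\Ind_{\Gamma_2}^{G_\Q}(\chi\otimes\psi)$, and the second summand is irreducible by Mackey and is identified with $\rho_{B,2}$. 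Your final identifications (the quadratic twist fixes $\rho_{E,2}$, the cubic twist packages into $\rho_{B,2}$) are morally the same endgame, but the descent step that gets you there must be carried out for $\psi$, not for $\rho_{E,2}|_H$.
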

\begin{proof}
Let $\Gamma_1 = \Gal(\Alg{\Q}/\Q)$, 
$\Gamma_2 =  \Gal(\Alg{\Q}/\Q(\sqrt{-7}))$
and $\Gamma_3 =  \Gal(\Alg{\Q}/\Q(\zeta_7))$,
so that $\Gamma_1 \supset \Gamma_2 \supset \Gamma_3$.
We begin by remarking that $\rho$ is semi-simple:
Let $V$ be the vector space on which $\Gamma_1$ acts through $\rho$,
and let $W$ be a subspace stable under $\Gamma_1$.  
The representation $\rho | \Gamma_3$ is semi-simple.
The projection formula
$$
\frac{1}{[\Gamma_1:\Gamma_3]} \sum_{\sigma \in \Gamma_1/\Gamma_3} \rho(\tilde{\sigma}) \circ \Pi \circ \rho(\tilde{\sigma})^{-1} ,
$$
where $\tilde{\sigma}$ is a lift of $\sigma$ and where $\Pi$ is a $\Gamma_3$-linear projection of $V$ onto $W$,
shows that the complement of $W$ in $V$ is also $\Gamma_1$-stable.
So also $\rho$ is semi-simple.
This implies that we may reduce to the case that $\rho$ is irreducible.

Since $E$ has CM over $\Q(\sqrt{-7})$ (so in particular it has CM over $\Q(\zeta_7)$), 
the image of the representation $\rho_{E,2}|\Gamma_3$ is
$$
\left(\begin{array}{cc}\psi & 0 \\0 & \psi(\tilde{\sigma} \text{ . } \tilde{\sigma}^{-1})  \end{array}\right) | \Gamma_3
$$
where $\psi : \Gamma_2 \rightarrow \Q_2^*$ is a certain character and 
where $\tilde{\sigma}$ lifts $1 \not= \sigma \in \Gal(\Q(\sqrt{-7})/\Q)$.

By assumption, $\Hom_{\Gamma_3}(\rho|\Gamma_3, \rho_{E,2}|\Gamma_3)$ is not trivial.
In particular, $\Hom_{\Gamma_3}(\rho|\Gamma_3, \psi | \Gamma_3)$ is not trivial.
Then by Frobenius reciprocity we also have that
$\Hom_{\Gamma_1} \left( \rho , \Ind_{\Gamma_3}^{\Gamma_1}( \psi |\Gamma_3) \right)$ is non-trivial.
Therefore we study the irreducible representations of
$\Ind_{\Gamma_3}^{\Gamma_1}( \psi |\Gamma_3)$.
First we note that 
$$
\Ind_{\Gamma_3}^{\Gamma_1}( \psi |\Gamma_3) \simeq 
\Ind_{\Gamma_2}^{\Gamma_1} \left( \Ind_{\Gamma_3}^{\Gamma_2}( \psi |\Gamma_3) \right) .
$$
The representation $\Ind_{\Gamma_3}^{\Gamma_2}( \psi |\Gamma_3)$
is isomorphic to $\Ind_{\Gamma_3}^{\Gamma_2}( \Q_2^{\text{triv}} ) \otimes \psi |\Gamma_2$,
where $\Q_2^{\text{triv}}$ denotes the trivial $1$-dimensional representation.
The representation $\Ind_{\Gamma_3}^{\Gamma_2}( \Q_2^{\text{triv}} )$ is
isomorphic to $\Q_2^{\text{triv}} \oplus \chi$, where $\chi$ is the $2$-dimensional representation
$\chi : \Gamma_2/\Gamma_3 \rightarrow \Gl_2(\Q_2)$ that
corresponds to the non-trivial character of 
$\Gal(\Q(\zeta_7)^+/\Q)$ to $\F_4^*$ that we saw before.
To summarize, we have isomorphisms
\begin{align*}
\Ind_{\Gamma_3}^{\Gamma_1}( \psi |\Gamma_3) \simeq & 
\Ind_{\Gamma_2}^{\Gamma_1} \left(  ( \Q_2^{\text{triv}} \oplus \chi ) \otimes \psi |\Gamma_2  \right) \simeq
\Ind_{\Gamma_2}^{\Gamma_1} \left(  \psi |\Gamma_2 \oplus (\chi \otimes \psi |\Gamma_2 ) \right) \\
\simeq
& \Ind_{\Gamma_2}^{\Gamma_1}(  \psi |\Gamma_2) \oplus \Ind_{\Gamma_2}^{\Gamma_1}(\chi \otimes \psi |\Gamma_2  )
\simeq \rho_{E,2}  \oplus \Ind_{\Gamma_2}^{\Gamma_1}(\chi \otimes \psi |\Gamma_2 )  .
\end{align*}
The representation $\Ind_{\Gamma_2}^{\Gamma_1}(\chi \otimes \psi |\Gamma_2)$ is irreducible
by Mackey's criterion, see for example \cite[Proposition 23, p. 59]{Serre:1977}. Although we are dealing
here with infinite Galois groups and Mackey's criterion is only stated for finite groups,
we can just restrict the representations of the infinite Galois groups to
finite Galois groups and the induction restricted to these finite groups
is an irreducible representation. Hence $\Ind_{\Gamma_2}^{\Gamma_1}(\chi \otimes \psi |\Gamma_2)$ is also
irreducible.
Furthermore, the representation $\Ind_{\Gamma_2}^{\Gamma_1}(\chi \otimes \psi |\Gamma_2)$
is isomorphic to $\rho_{B,2}$ by the following argument.
We know that the group 
$\Hom_{\Gamma_3}(\rho_{B,2}|\Gamma_3 , (\rho_{E,2} \oplus \rho_{E,2}) | \Gamma_3  )$ is non-trivial,
hence
$\Hom_{\Gamma_1} \left( \rho_{B,2}, \Ind_{\Gamma_3}^{\Gamma_1} (\psi | \Gamma_3 ) \right)$ is non-trivial.
We also know that the abelian variety $B$ is not isomorphic to $E$.
We conclude that $\rho_{B,2} \simeq \Ind_{\Gamma_2}^{\Gamma_1}(\chi \otimes \psi |\Gamma_2 )$.
\end{proof}

The above lemma, together with our previous results, 
describes the $2$-adic Tate module of the abelian variety $A$ tensored with $\Q_2$.
It allows us to prove Theorem \ref{j049-thm:main}:

\begin{proof}[Proof of Theorem \ref{j049-thm:main}]
Let $A$ be an abelian variety over $\Q$
with good reduction outside $7$ that obtains semi-stable reduction
over an at most tamely ramified extension at $7$ over $\Q$.
It follows from Lemma \ref{j049-lem:isogeny} that
$$
\rho_{A,2} | \Gal(\Alg{\Q}/\Q(\zeta_7)) \simeq (\rho_{E,2}| \Gal(\Alg{\Q}/\Q(\zeta_7)))^{\dim(A)} 
$$
holds.
Therefore, we may apply Lemma \ref{j049-lem:tatemod_descent} 
and deduce that 
$$
\rho_{A,2} \simeq \rho_{E,2}^a \oplus \rho_{B,2}^b ,
$$
for $a$ and $b$ integers such that $a+2b=\dim(A)$.
By Falting's Theorem \cite{Faltings:1983}, we have
$$
\Hom(\rho_{A,2}  , \rho_{E,2}^a \oplus \rho_{B,2}^b )
\simeq \Hom(A,E^a \times B^b) \otimes \Q_2 .
$$
It follows that $A$ is isogenous to the abelian variety $E^a \times B^b$.
\end{proof}

\section{Conductors}

In this section we prove Theorem \ref{thm:conductorj027}
and \ref{thm:conductorj049}.
First, we briefly recall the definition of the conductor of an abelian variety.
Let $\locfield$ be a local field with uniformizer $\pi$ and ring of integers $O_\locfield$.
Furthermore, let $A$ be an abelian variety over $\locfield$, $\calA$ the N\'eron model of $A$
and $\calA^{0}_{\pi}$ be the connected component of the N\'eron model in the special fiber.
Consider the exact sequence of commutative algebraic groups over $O_\locfield/\pi O_\locfield$
$$
0 \longrightarrow T_{U_{A}} \longrightarrow \calA^{0}_{\pi} \longrightarrow B_A \longrightarrow 0 , 
$$
where $B$ is an abelian variety and $T_{U_A}$ an affine connected smooth scheme.
The scheme $T_{U_A}$ sits inside an exact sequence
$$
0 \longrightarrow U_A \longrightarrow T_{U_{A}} \longrightarrow T_A \longrightarrow 0
$$
with $T_A$ a torus and $U_A$ a unipotent group. Let $u_A=\dim(U_A)$ and $t_A=\dim(T_A)$.

\begin{definition}
Let $\good$ be a rational prime different from the residue characteristic of $\locfield$
and denote the lower ramification groups $({I_{\locfield(A[\good])/\locfield}})_{i}$ by $G_{i}$.
Then \emph{Serre's measure of wild ramification of $A$} is
$$
\delta_A := \sum_{i=1}^{\infty} \frac{ \# G_{i} }{ \# G_{0} } 
\dim_{O_\locfield/\pi O_\locfield}(A[\good](\Alg{\locfield}) / A[\good](\Alg{\locfield})^{G_{i} } ) .
$$
\end{definition}

We remark that $\delta_A$ does not depend on $\good$.

\begin{definition}
The \emph{exponent of the conductor of $A$ over a local field $\locfield$} 
is defined to be $c(\pi) = 2u_A+ t_A + \delta_A$.
The \emph{conductor of an abelian variety $A$ over a number field $K$} is defined to be
$c(A) = \prod_{\frakbad} \frakbad^{c(\frakbad)}$, where the product is taken 
over the prime ideals in $O_K$ and
$c(\frakbad)$ is the exponent of the conductor of $A$ over $K_{\frakbad}$.
\end{definition}

Note that $c(\frakbad)=0$ for almost all primes $\frakbad$, so the product is well-defined. 
To prove Theorems \ref{thm:conductorj027} we need a small group theoretic lemma:

\begin{lemma} \label{j027-lem:3actson2}
Let $G$ be a $3$-group acting non-trivially on a vector space $V \simeq \F_{2}^{2d}$.
Then $\dim_{\F_{2}} (V^{G})$ is even and strictly less than $2d$.
\end{lemma}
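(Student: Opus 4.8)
The plan is to prove the two assertions separately, the second being almost immediate. Since $G$ acts non-trivially, there is some $g\in G$ and some $v\in V$ with $gv\neq v$, so $V^G\subsetneq V$ and hence $\dim_{\F_2}(V^G)<2d$. (We may assume $G$ is finite, since the action on the finite set $V$ factors through a finite quotient of $G$, which is again a $3$-group.)

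For the parity statement I would use an orbit-counting argument modulo $3$. Regard $V$ merely as a finite set on which the $3$-group $G$ acts, and partition it into $G$-orbits. The orbits of size $1$ are exactly the elements of the subspace $V^G$, while every other orbit has cardinality a power of $3$, hence is divisible by $3$. Counting points gives
$$
|V| \equiv |V^G| \pmod 3 .
$$
Now $|V|=2^{2d}=4^{d}\equiv 1\pmod 3$ and $|V^G|=2^{\dim_{\F_2}(V^G)}$, so $2^{\dim_{\F_2}(V^G)}\equiv 1\pmod 3$. Since $2\equiv -1\pmod 3$, we have $2^m\equiv(-1)^m\pmod 3$, which is $\equiv 1$ precisely when $m$ is even; therefore $\dim_{\F_2}(V^G)$ is even, as claimed. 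Combined with the first paragraph this in fact forces $\dim_{\F_2}(V^G)\le 2d-2$.

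There is essentially no obstacle here; the only point to watch is that the hypothesis $\dim_{\F_2}V=2d$ is even is genuinely used — it is exactly what makes $|V|\equiv 1\pmod 3$ rather than $\equiv 2$, and the statement is false for odd-dimensional $V$ (e.g. $\Z/3$ acting on $\F_2^3\cong\F_2[\Z/3]$ has a $1$-dimensional fixed space). If one prefers a module-theoretic proof, one can instead invoke Maschke's theorem (valid since $\gcd(|G|,2)=1$) to write $V=V^G\oplus W$ with $W$ a sum of non-trivial irreducible $\F_2[G]$-modules, and then check that every non-trivial irreducible $\F_2[G]$-module has even $\F_2$-dimension: taking a central subgroup $\langle z\rangle$ of order $3$ in the image of $G$ in $\mathrm{GL}(M)$ and using $\F_2[\Z/3]\cong\F_2\times\F_4$, the module $M$ splits $\langle z\rangle$-equivariantly, hence $G$-equivariantly, and irreducibility together with the faithfulness of $z$ forces $M$ to be the $\F_4$-part, so $\dim_{\F_2}M$ is even. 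Either way, $\dim_{\F_2}(V^G)=2d-\dim_{\F_2}(W)$ is even.
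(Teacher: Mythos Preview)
Your proof is correct and follows essentially the same orbit-counting argument modulo $3$ as the paper: both reduce to $|V^G|\equiv |V|\equiv 1\pmod 3$ and then use $2\equiv -1\pmod 3$ to force the exponent to be even. Your alternative via Maschke's theorem is a pleasant extra but not needed.
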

\begin{proof}
We have the usual orbit formula
$\# V^{G} + \sum_{v \in V/ \sim} Gv = 2^{2d}$.
For $v \in V$ and $v \notin V^{G}$, the length of the orbit $Gv$ is divisible by $3$.
Therefore, considering the above equality modulo $3$, we obtain that
$\# V^{G} \equiv 2^{2d} \equiv 1 \pmod{3}$.

Suppose that $\dim V^{G} =2d-x$ with $x \in \N$.
Then $2^{2d-x} \equiv 2^{x} \equiv 1 \pmod{3}$, hence $x$ cannot be odd.
By assumption, $G$ acts non-trivially on $V$ and so $x \not= 0$.
Hence $\dim V^{G}$ is even, as we wanted to show.
\end{proof}

\begin{proof}[Proof of Theorem \ref{thm:conductorj027}]
Let $A$ be an abelian variety over $\Q$ of conductor $27$.
Then using the notation from above,
the exponent of the conductor of $E$ at $3$ is equal to $3=2u_A+t_A+\delta_A$.
For the elliptic curve $E$,
we find that $u_E=1, t_E=0$ and $\delta_E=1$.

If $u_A=0$, then $A$ is semi-stable over $\Q$.
By Theorem \ref{j027-thm:main}, $A$ is then isogenous to a product of $E$.
This contradicts the fact that $E$ is not semi-stable.
We conclude that $u_A$ must equal $1$.
 
Next suppose that $t_A=1$ and hence that $\delta_A=0$.
But if $\delta_A=0$, then $\rho_{A,\good}$ is at most tamely ramified at $3$ for some prime $\good$.
In particular, $A$ is ramified of level $\frac{1}{2}$ at $3$,
and it follows from Theorem \ref{j027-thm:main} that $A$ is isogenous to a product
of $J_{0}(27)$. This implies that $\rho_{A,\good}$ cannot be at most tamely ramified at $3$,
contradicting our assumption.
We conclude that we must have $u_A=1,t_A=0$ and $\delta_A=1$.

If we denote the lower ramification groups $I_{\Q(A[2]) / \Q}(3)_{i}$ by $G_{i}$,
then:
$$
\delta_A = \sum_{i=1}^{\infty} \frac{ \# G_{i} }{ \# G_{0} } \dim_{\F_{2}}(A[2](\Alg{\Q}) / A[2](\Alg{\Q})^{G_{i} } ) .
$$
Apply Lemma \ref{j027-lem:3actson2}
by taking $G=G_{i}$ and $V = A[2](\Alg{\Q}) \simeq (\Z/2\Z)^{2d}$. 
It follows that if $\# G_{i} \not= 1$, then 
$\dim_{\F_{2}}(A[2](\Alg{\Q}) / A[2](\Alg{\Q})^{ G_{i}  }) \geq 2$. 
This in turn implies
$$
\sum_{\substack{ G_{i} \not= 1 , \\ i > 0 }} \frac{ \# G_{i} }{ \# G_{0} } \leq \frac{1}{2} ,
$$
hence $A$ is ramified of level $\frac{1}{2}$ at $3$.
By Theorem \ref{j027-thm:main} $A$ is isogenous to a product of $J_{0}(27)$.
Since $A$ has conductor $27$, $A$ must be isogenous to $J_{0}(27)$.
\end{proof}

\begin{proof}[Proof of Theorem \ref{thm:conductorj049}]
Let $\calA^0_7$ be the connected component of the special fiber at $7$
of the N\'eron model $\calA$ of $A$.
Write the exponent of the conductor at $7$
as the sum $2=2 u + t + \delta$,
where $u, t$ and $\delta$ are as before.
Since $A$ is not semi-stable by \cite[Theorem 1.1]{Schoof:2005}, we have
that $u > 0$.
Hence, $t=\delta=0$. 
Therefore, we may apply Theorem \ref{j049-thm:main}
and conclude that $A$ is isogenous to $J_0(49)$.
\end{proof}

Under the assumption that the $L$-series of abelian varieties over $\Q$ have 
an analytic continuation and satisfy a functional equation
(which conjecturally holds for all abelian varieties over $\Q$),
Theorem \ref{thm:conductorj027} also follows from a result of Mestre \cite[Proposition, p.21]{Mestre:1986}.
Mestre shows that the conductor $N$ of an abelian variety $A$ over $\Q$
of dimension $g$ that satisfies these hypotheses,
respects the lower bound $N > 10^{g}$.
This implies that the abelian varieties of conductor $27$ are elliptic curves over,
in which case Theorem \ref{thm:conductorj027} is clear.

\section{Appendix} \label{sec:appendix}

\noindent
{\small
Calculations for $\Q(\calJ_0(32)[3])$:}

{\tiny
\begin{verbatim}
E := EllipticCurve([0,0,0,-1,0]);
_<x> := PolynomialRing(Rationals());
f := DivisionPolynomial(E,3);
K := SplittingField(f); 
_<y> := PolynomialRing(K);
xs := [r[1] : r in Roots(f,K) | IsIrreducible(y^2-r[1]^3 + r[1])]; 
L := ext<K | y^2-xs[1]^3+xs[1]>;
Labs := AbsoluteField(L);
G := GaloisGroup(Labs);
A := Group< s,t  | s^8 = t^2 = 1, s*t = t*s^3>;
A1 := PCGroup(A);
G1 := PCGroup(G);
IsIsomorphic(G1,A1);
a := PrimitiveElement(Labs);
H := ext<RationalField()| MinimalPolynomial(3*a) >;
O_H := MaximalOrder(H);
p3 := Decomposition(O_H, 3)[1][1];
p2 := Decomposition(O_H, 2)[1][1];
[ <i,j,Degree(RayClassField(p2^i * p3^j))> : i in [1..8] , j in [0,1] ];
[ <i,j,Degree(RayClassField(p3^i * p2^j))> : i in [1..5] , j in [0,1] ];
\end{verbatim}
}

\noindent
{\small
We calculate the extension $\Q(\calJ_0(27)[4])$:}

{\tiny
\begin{verbatim}
E := EllipticCurve([0,0,1,0,0]);
_<x> := PolynomialRing(Rationals());
f := DivisionPolynomial(E,4);
K := SplittingField(f);
_<y> := PolynomialRing(K);
xs := [r[1] : r in Roots(f,K) | IsIrreducible(y^2+y-r[1]^3)];
L := ext<K | y^2+y-xs[1]^3>;
Labs := AbsoluteField(L);
a := PrimitiveElement(Labs);
H := ext<RationalField()| MinimalPolynomial(2*a) >;
O_H := MaximalOrder(H);
Factorization(Discriminant(O_H));
\end{verbatim}
}

\noindent
{\small
We calculate the ramification groups and ray class field for $\Q(\calJ_0(27)[4])$:}
{\tiny
\begin{verbatim}
p2 := Decomposition(O_H, 2)[1][1];
p3 := Decomposition(O_H, 3)[1][1];
[ Order(RamificationGroup(p2,i)) : i in [0..5] ];
[<i,j,Degree(RayClassField(p2^i * p3^j))> : 
i in [1..10] , j in [0..1]];
[<i,j,Degree(RayClassField(p3^i * p2^j))> : 
i in [1..6] , j in [0..1]];
\end{verbatim}
}

\noindent
{\small
We check that the ray class field of conductor $\pi_7$ of the field $\Q(\zeta_7)$
is trivial:}
{\tiny
\begin{verbatim}
K := CyclotomicField(7);
p7 := Decomposition(OK, 7)[1][1];
RayClassField(p7);
\end{verbatim}
}

\noindent
{\small
Let $R$ be the ray class field of conductor $\pi_7 \pi^2$ of the field $\Q(\zeta_7)$.
We check that the prime $\pibar$ does not split in $R/\Q$:}

{\tiny
\begin{verbatim}
K := CyclotomicField(7);
p21 := Decomposition(OK, 2)[1][1];
p22 := Decomposition(OK, 2)[2][1];
p7 := Decomposition(OK, 7)[1][1];
R := AbsoluteField(NumberField(RayClassField(p7 * p21^2)));
OR := MaximalOrder(R);
Decomposition(OR, 2);
\end{verbatim}
}

\bibliography{index}
\bibliographystyle{alpha}
\end{document}